\newtheorem{thm}{Theorem}
\newtheorem{cor}[thm]{Corollary}
\newtheorem{defi}[thm]{Definition}
\newtheorem{rem}[thm]{Remark}
\newtheorem{nota}[thm]{Notation}
\newtheorem{ack}[thm]{Acknowledgement}
\newtheorem*{tempo*}{Template}
\newtheorem{theorem}[thm]{Theorem}
\newtheorem{lemma}[thm]{Lemma}
\newtheorem{definition}[thm]{Definition}
\newtheorem{corollary}[thm]{Corollary}
\newtheorem{convention}[thm]{Convention}
\newtheorem{proposition}[thm]{Proposition}
\newcommand\be{\begin{equation}}
\newcommand\ee{\end{equation}} 
\def\bdefi{\begin{defi}\rm}
\def\edefi{\end{defi}}
\def\bnota{\begin{nota}\rm}
\def\enota{\end{nota}}
\def\FIVE{\Pi_{1}^{1}\text{-\textup{\textsf{CA}}}_{0}}
\def\FIVEK{\Pi_{k}^{1}\text{-\textup{\textsf{CA}}}_{0}}
\def\FIVEo{\Pi_{1}^{1}\text{-\textup{\textsf{CA}}}_{0}^{\omega}}
\def\FIVEFIVE{\Delta_{2}^{1}\text{-\textsf{\textup{CA}}}_{0}}
\def\SIX{\Pi_{2}^{1}\text{-\textsf{\textup{CA}}}_{0}}
\def\ATR{\textup{\textsf{ATR}}}
\def\ind{\textup{\textsf{ind}}}
\def\ZFC{\textup{\textsf{ZFC}}}
\def\IST{\textup{\textsf{IST}}}
\def\MUO{\textup{\textsf{MUO}}}
\def\STP{\textup{\textsf{STP}}}
\def\DNR{\textup{\textsf{DNR}}}
\def\ns{\textup{\textsf{ns}}}
\def\RCA{\textup{\textsf{RCA}}}
\def\({\textup{(}}
\def\){\textup{)}}
\def\WO{\textup{\textsf{WO}}}
\def\RCAo{\textup{\textsf{RCA}}_{0}^{\omega}}
\def\ACAo{\textup{\textsf{ACA}}_{0}^{\omega}}
\def\WKL{\textup{\textsf{WKL}}}
\def\WWKL{\textup{\textsf{WWKL}}}
\def\bye{\end{document}}
\def\P{\textup{\textsf{P}}}
\def\N{{\mathbb  N}}
\def\R{{\mathbb  R}}
\def\MUC{\textup{\textsf{MUC}}}
\def\st{\textup{st}}
\def\di{\rightarrow}
\def\asa{\leftrightarrow}
\def\ACA{\textup{\textsf{ACA}}}
\def\paai{\Pi_{1}^{0}\textup{-\textsf{TRANS}}}
\def\Paai{\Pi_{1}^{1}\textup{-\textsf{TRANS}}}
\def\QFAC{\textup{\textsf{QF-AC}}}
\def\HBU{\textup{\textsf{HBU}}}
\def\CK{\textup{\textsf{CK}}}
\def\TRO{\textup{-\textsf{TR}}_{0}}
\def\POS{\textup{\textsf{POS}}}
\def\UATR{\textup{\textsf{UATR}}}
\def\mSEP{\textup{\textsf{-SEP}}}
\def\con{\textup{\textsf{con}}}
\def\SU{\textup{\textsf{SU}}}
\def\EPA{\textup{\textsf{E-PA}}}
\def\EPRA{\textup{\textsf{E-PRA}}}
\def\ECF{\textup{\textsf{ECF}}}
\def\LMP{\textup{\textsf{LMP}}}
\def\SCF{\textup{\textsf{SCF}}}
\def\SFF{\textup{\textsf{SFF}}}
\def\WFF{\textup{\textsf{WFF}}}
\def\SOT{\textup{\textsf{SOT}}}
\def\WCF{\textup{\textsf{WCF}}}
\def\HAC{\textup{\textsf{HAC}}}
\def\INT{\textup{\textsf{int}}}
\newcommand{\T}{\mathcal{T}}
\newcommand{\m}{{\bf m}}
\numberwithin{equation}{section}
\numberwithin{thm}{section}
\begin{document}
\title[Compactness in Computability Theory and Nonstandard Analysis]{The strength of compactness in Computability Theory and Nonstandard Analysis}
\author{Dag Normann}
\address{Department of Mathematics, The University of Oslo}
\email{dnormann@math.uio.no}
\author{Sam Sanders}
\address{TU Darmstadt \& University of Leeds}
\email{sasander@me.com}

\begin{abstract}
Compactness is one of {the} core notions of analysis: it connects local properties to global ones and makes limits well-behaved.  
We study the computational properties of the compactness of Cantor space $2^{\N}$ \emph{for uncountable covers}.  The most basic question is: \emph{how hard is it to {compute} a finite sub-cover from such a cover of $2^{\N}$}?   
Another natural question is: \emph{how hard is it to {compute} a sequence that covers $2^\N$ minus a measure zero set from such a cover}?  
The \emph{special} and \emph{weak} fan functionals respectively compute such finite sub-covers and sequences.  
In this paper, we establish the connection between these new fan functionals on one hand, and various well-known comprehension axioms on the other hand, including arithmetical comprehension, transfinite recursion, and the Suslin functional.
In the spirit of \emph{Reverse Mathematics}, we also analyse the logical strength of compactness in \emph{Nonstandard Analysis}.  Perhaps surprisingly, the results in the latter mirror (often perfectly) the computational properties of the special and weak fan functionals.
In particular, we show that compactness (nonstandard or otherwise) readily brings us to the outer edges of Reverse Mathematics (namely $\SIX$), and even into Schweber's higher-order framework (namely $\Sigma_{1}^{2}$-separation).  
\end{abstract}

\newpage
\maketitle
\thispagestyle{empty}

\section{Introduction}
The importance of (open-cover) \emph{compactness} can hardly be overstated, as it allows one to treat uncountable sets like Cantor space as `almost finite' by connecting local properties to global ones. 
A famous example is \emph{Heine's theorem}, i.e.\ the \emph{local} property of continuity implies the \emph{global} property of \emph{uniform} continuity on the unit interval.  
In general, Tao writes:  
\begin{quote}
Compactness is a powerful property of spaces, and is used in many ways in many
different areas of mathematics. One is via appeal to local-to-global principles; one
establishes local control on some function or other quantity, and then uses compactness to boost the local control to global control. \cite{taokejes}*{p.\ 168}
\end{quote} 
Compactness already has a long history: the \emph{Cousin lemma} (\cite{cousin1}*{p.\ 22}) on the open-cover compactness of subsets of $\R^{2}$, 
dates back\footnote{The collected works of Pincherle contain a footnote (see \cite{tepelpinch}*{p.\ 67}) which states that the associated \emph{Teorema} from 1882 corresponds to the Heine-Borel theorem.  This claim is repeated in \cite{werkskes}. Moreover, Weierstrass proves the Heine-Borel theorem (without explicitly formulating it) in 1880 in \cite{amaimennewekker}*{p.\ 204}.   A detailed motivation for these claims may be found in \cite{medvet}*{p. 96-97}.} 135 years. 
Despite its basic nature, its central role in analysis, and a long history, little is known about the logical and computational properties of compactness. 
The main aim of this paper is to study these computational properties.
In particular, we are interested in the following most basic and natural question (and its variations): 
\begin{center}
\emph{Given an uncountable cover of $2^{\N}$, how hard is it to compute a finite sub-cover}?  
\end{center}
To answer this question, we continue the project initiated in \cite{dagsam}, namely we study the computational properties of \emph{special fan functionals} (and their variations).  The latter compute the aforementioned finite sub-covers, as detailed in \textsf{(T.1)} below. 
In the spirit of \emph{Reverse Mathematics}, we also analyse the logical strength of compactness in \emph{Nonstandard Analysis} as in \textsf{(T.2)} below.  As it happens, the results in Nonstandard Analysis mirror (often perfectly) the results in {Computability Theory}. 
We assume basic familiarity with the aforementioned fields, in particular the program {Reverse Mathematics} founded by Friedman (RM hereafter; see \cite{simpson2,stillebron,simpson1} or \cite{dagsam}*{\S2.2}).   
In Section~\ref{weako}, we provide an overview of the results in \cite{dagsam}, and a list of the questions to be answered, all pertaining to the following two topics. 
Many questions left open in, or raised by, \cite{dagsam} are in fact answered in this paper.  We refer to \cite{dagsam, dagsamIII} for an introduction and overview to the project this paper is part of.  
In this paper, we explore the following topics:

\medskip

Topic \textsf{(T.1)}: We study two new classes of functionals, namely the \emph{special fan functionals}, an instance of which is denoted $\Theta$, and the (computationally weaker) \emph{weak fan functionals}, an instance of which is denoted $\Lambda$.  Intuitively speaking, any $\Theta$ computes a finite sub-cover from an uncountable cover of Cantor space, while any $\Lambda$ provides such a cover `in the limit'.  
These functionals are quite natural mathematical objects:
The special fan functionals emerge \emph{naturally and directly} from Tao's \emph{metastability} (\cite{samflo}) while the existence of $\Theta$  
 is equivalent to \emph{Cousin's lemma} (\cite{dagsamIII}*{\S3.3}), and to many basic properties of the \emph{gauge integral}; the latter in turn provides a unique/direct\footnote{There are a number of different approaches to the formalisation of Feynman’s path integral.  However, 
if one requires the formalisation to be close to Feynman’s original formulation, then the gauge integral
is the only approach (see \cite{dagsamIII}*{\S3.3} for a discussion).  Another argument in favour of the gauge integral is that this formalism gives rise to so-called {physical} solutions, i.e.\ in line with the observations from physics  (see \cite{pouly,nopouly,nopouly2,nopouly3}), in particular the absence of `imaginary time'.} formalisation of Feyman's path integral (\cite{mullingitover}).
From the perspective of higher-order computability theory, these new fan functionals are interesting as they fall \emph{outside} the well-studied classes, like e.g.\ the continuous functionals or the so-called normal functionals. 
In this paper, we establish the connection between $\Lambda$ and $\Theta$ on one hand, and arithmetical comprehension, transfinite recursion, and the Suslin functional on the other hand.  
The new fan functionals will be seen to exhibit rather surprising behaviour.  
 
\medskip 
 
Topic \textsf{(T.2)}: We study the \emph{nonstandard counterparts} of the `Big Five' systems $\WKL_{0}$, $\ACA_{0}$, and $\FIVE$ of RM.  These counterparts are respectively: the nonstandard compactness of Cantor space $\STP$, the \emph{Transfer} axiom limited to $\Pi_{1}^{0}$-formulas $\paai$, and the \emph{Transfer} axiom limited to $\Pi_{1}^{1}$-formulas $\Paai$.  While the original Big Five systems are linearly ordered  as follows
\[
\FIVE\di\ATR_{0} \di\ACA_{0}\di \WKL_{0}\di \RCA_{0},
\] 
the non-implications $\paai\not\di \STP\not\!\leftarrow \Paai$ hold for the respective nonstandard counterparts, as proved in \cite{dagsam}.  
In this paper, we study the strength of $\Paai+\STP$ which (indirectly) yields results about the strength of the combination of $\Theta$ and the Suslin functional.  
We study Schweber's third-order framework (\cite{schtreber,schtreberphd}) via Nonstandard Analysis and obtain some results 
involving \emph{compactness of function spaces}.  While interesting in its own right, the aforementioned compactness is essential to the \emph{gauge integral} over function spaces, which in turn formalises the \emph{Feynman path integral}.

\medskip

As it turns out, topics \textsf{(T.1)} and \textsf{(T.2)} are intimately connected: (non-) computability results in \textsf{(T.1)} are obtained \emph{directly} from (non-) implications in \textsf{(T.2)}, and vice versa.  In fact, $\Theta$ first arose from nonstandard compactness as in $\STP$ when studying the computational content of Nonstandard Analysis (\cite{samGH}), while instances of the axiom \emph{Transfer} give rise to (well-known) comprehension and choice functionals.  
As it happens, the connection between $\Theta$ and metastability was first proved \emph{via Nonstandard Analysis} (\cite{samflo}).  It should be noted that our definition of these new fan functionals, to be found in Section \ref{weakopi}, is \emph{different} from the (original) definition used in e.g.\ \cite{samGH}.   The definitions are equivalent as shown in Section \ref{peqingduck}.  

\medskip 

We now sketch the main results of this paper as follows.  A detailed discussion may be found in Section \ref{qopen}.  Feferman's $\mu^{2}$ is introduced in Section \ref{knowledge} and constitutes a form of arithmetical comprehension.
\begin{enumerate}
  \renewcommand{\theenumi}{\roman{enumi}}
\item The Suslin functional is not computable from $\Theta+\mu^{2}$ (Section \ref{prelim2}).
The combination $\Theta+\mu^{2}$ (directly) computes a realiser for $\ATR_{0}$ (Section \ref{atrsec}).  
\item  The combination $\Paai+\STP$ exists at the level of $\SIX$ (Section~\ref{SIXTUS}).  This result yields results not involving Nonstandard Analysis.    
\item We identify a weak fan functional $\Lambda_{1}$ and show in Section~\ref{essdag} that $\Lambda_{1}+\mu^{2}$ computes the same objects as $\mu^{2}$.  
This shows that we cannot in general compute a special fan functional from a weak one.  
\item We show that some of our results, Theorem \ref{mikeh} in particular, generalise to Schweber's third-order arithmetic \cites{schtreber,schtreberphd} (Section \ref{schweber}).  
\end{enumerate}
Finally, this paper connects Computability Theory and Nonstandard Analysis.  The first author contributed most results in the former, while the second author did so for the latter.  However, many questions were answered by translating them from one field to the other, 
solving them, and translating everything back, i.e.\ both authors contributed somehow to most of the paper.  
As suggested by the above, this paper is part of a series of papers by the authors, as follows.  
In our first two papers (\cite{dagsam} and this paper) we link Nonstandard Analysis and higher order Computability Theory, while in the other three (\cites{dagsamIII, dagsamV, dagsamVI}) we focus on the logical and computational content of classical theorems in mathematical analysis.

\section{Previous work and open questions}\label{weako}
We introduce the weak and special fan functionals and discuss their connection to nonstandard compactness.
We discuss the associated results in Computability Theory and Nonstandard Analysis from \cite{dagsam} and list the open questions to be answered below.  
We first make our notion of `computability' precise as follows.  
\begin{enumerate}
\item[(I)] We adopt $\ZFC$ set theory as the official metatheory for all results, unless explicitly stated otherwise.
\item[(II)] We adopt Kleene's notion of \emph{higher-order computation} as given by his nine clauses S1-S9 (see \cites{longmann, Sacks.high}) as our official notion of `computable'.
\end{enumerate}
In Section \ref{dagsec}, we provide the basic definitions of Computability Theory needed for (II), but do assume some familiarity with Computability Theory as a whole.
We refer to \cite{dagsam}*{\S2} or \cite{SB} for an introduction to Nelson's system $\IST$ and the fragments $\P$ and $\P_{0}$ which are conservative extensions of Peano arithmetic and $\RCA_{0}$.
For completeness, the systems $\P$ and $\P_{0}$ can be found in Appendix \ref{tomzeiker}. 

\medskip

Finally, to improve readability, we often omit types if they can be gleaned from context; we sometimes make use of set theoretical notation.  For instance, `$\alpha^{1}\leq1$' expresses that $\alpha$ is a binary sequence, but could also be written $\alpha\leq 1$ or $\alpha\in 2^{\N}$ or $\alpha \in C$.
Details regarding the former notation may be found in Notation \ref{equ}.
\subsection{The special and weak fan functionals}\label{weakopi}
First of all, we define two new classes of functionals.  The \emph{special fan functionals} intuitively output a finite sub-cover on input an uncountable cover of $2^{\N}$.  The (computationally weaker) \emph{weak fan functionals} take an additional input $k\in \N$ and output a finite sub-cover for a subset of $2^{\N}$ of measure at least $1-\frac{1}{2^{k}}$.  We usually simplify the type of these fan functionals to `$3$'.  We reserve the symbols $\Theta$  and $\Lambda$ to denote instances of the special and weak fan functionals.  It goes without saying these functionals are not unique: one can always add extra binary sequences to the finite sub-cover.  

\medskip

We now introduce the class of special fan functionals. 
We write `$f\in [\sigma]$' for $\overline{f}|\sigma|=_{0^{*}}\sigma$, where $\tau^{*}$ is the type of finite sequences of type $\tau$ objects.  For $w^{\tau^{*}}=\langle t_{0}, \dots, t_{k}\rangle$, we write $|w|=k+1$ and $w(i)=t_{i}$ for $i<|w|$. 
These `finite sequence' notations are discussed in detail in Notation \ref{skim}.
\bdefi[Special fan functionals]\label{dodier}
$\SFF(\Theta)$ is as follows for $\Theta^{2\di 1^{*}}$:
\be\label{kijkma}
(\forall G^{2})(\forall f^{1}\leq1)(\exists g\in \Theta(G))(f\in [\overline{g}G(g)]), 
\ee
Any functional $\Theta$ satisfying $\SFF(\Theta)$ is referred to as a \emph{special fan functional}.
\edefi
Intuitively, any functional $G^{2}$ gives rise to a `canonical cover' $\cup_{f\in 2^{\N}}[\overline{f}G(f)]$ of Cantor space, and $\Theta(G)$ is a finite sub-cover thereof, i.e.\ $\cup_{g\in \Theta(G)}[\overline{g}G(g)]$ also covers $2^{\N}$.
Note that Cousin (\cite{cousin1}) and Lindel\"of (\cite{blindeloef}) make use of such canonical covers (for $\R^{n}$) rather than the modern/general notion of cover.
In light of \eqref{kijkma}, special fan functionals may be called `realisers for the Heine-Borel theorem or Cousin lemma for $C$'.
As it happens, $\Theta$ actually arises from the \emph{nonstandard compactness of $C$} as in \emph{Robinson's theorem} (\cite{loeb1}*{p.\ 42}), as discussed in Sections \ref{pampson} and \ref{peqingduck}.     

\medskip

Secondly, we introduce the class of \emph{weak fan functionals} $\Lambda$, which are strictly weaker than $\Theta$ in general.  
As will become clear below, $\Lambda$ is not just `more of the same' but occupies an important place relative to $\Theta$.  
Where $\Theta(G)$ provides a finite sub-cover of $C$, $\Lambda(G,k)$ only yields a finite sub-cover of a subset of $C$ with measure at least $1-\frac{1}{2^{k}}$, i.e.\ we have the following:
\be\label{forp}\textstyle
\m(\{f\in C: (\exists g\in \Lambda(G, k))(f\in [\overline{g}G(g)])\} )\geq 1-\frac{1}{2^{k}},
\ee
where $\m$ is the usual coin-toss measure on $2^{\N}$.
It is straightforward, but cumbersome, to formally express \eqref{forp} in our formal language.
\bdefi[Weak fan functionals]\label{wdodier}
$\WFF(\Lambda)$ is as follows for $\Lambda^{(2\times 0)\di 1^{*}}$:
\be\textstyle\label{stylez}
(\forall G^{2}, k^{0})\big[\m(\{f\in C: (\exists g\in \Lambda(G, k))(f\in [\overline{g}G(g)])\} )\geq 1-\frac{1}{2^{k}}
\big].
\ee
Any functional $\Lambda$ satisfying $\WFF(\Lambda)$ is referred to as a \emph{weak fan functional}.
\edefi
Weak fan functionals are not realisers of theorems from the literature, but these functionals do capture the core complexity of several theorems concerning measure-theoretic approximations, like the Vitali Covering Theorem (\cite{vitaliorg}). This is investigated further in \cite{dagsamVI}.
As it happens, weak fan functionals also arise from \emph{nonstandard compactness}, as discussed in Sections \ref{pampson} and \ref{peqingduck}.     

\medskip

Finally, $\Theta$ appears similar in name and behaviour to Tait's `classical' fan functional (esp.\ on the continuous functionals).  
However, $\Theta$ and $\Lambda$ behave quite differently in that 
they cannot be computed by \emph{any} type two functional (see Section~\ref{knowledge}).  

\subsection{Nonstandard compactness and related notions}\label{pampson}
In this section, we introduce some axioms of Nonstandard Analysis.  We will observe that the special and weak fan functionals emerge from the \emph{nonstandard compactness of Cantor space}. 

\medskip
  
First of all, we mention the crucial theorem which connects $\P$ and Peano arithmetic.  Definitions may be found in \cite{brie}, \cite{dagsam}*{\S2}, \cite{SB}*{Appendix}, or Appendix \ref{tomzeiker}
\begin{thm}[Term extraction]\label{consresultcor}
If $\Delta_{\INT}$ is a collection of internal formulas and $\psi$ is internal, and
\be\label{bog}
\P + \Delta_{\INT} \vdash (\forall^{\st}\underline{x})(\exists^{\st}\underline{y})\psi(\underline{x},\underline{y}, \underline{a}), 
\ee
then one can extract from the proof a sequence of closed terms $t$ in $\mathcal{T}^{*}$ such that
\be\label{effewachten}
\textup{\textsf{E-PA}}^{\omega*} + \Delta_{\INT} \vdash (\forall \underline{x})(\exists \underline{y}\in t(\underline{x}))\psi(\underline{x},\underline{y},\underline{a}).
\ee
\end{thm}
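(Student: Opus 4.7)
The plan is to establish the theorem by adapting the \emph{nonstandard Dialectica-style functional interpretation} introduced by van den Berg, Briseid, and Safarik (i.e.\ the standard source for $\P$, cited above as \cite{brie}). The key idea is to associate with each formula $\varphi$ of the language of $\P$ a translation $\varphi^{S_{\st}} \equiv (\forall^{\st}\underline{x})(\exists^{\st}\underline{y})\,\varphi_{S_{\st}}(\underline{x},\underline{y})$, where the inner formula $\varphi_{S_{\st}}$ is internal, and then to prove a soundness theorem stating that any $\P$-proof of $\varphi$ can be converted into a closed term $t\in\mathcal{T}^{*}$ such that $\textup{\textsf{E-PA}}^{\omega*}\vdash \varphi_{S_{\st}}(\underline{x}, t(\underline{x}))$. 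The target conclusion \eqref{effewachten} is then a direct consequence of this soundness applied to the hypothesis \eqref{bog}, reading the existential witness $\underline{y}\in t(\underline{x})$ from the extracted term.

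The steps I would carry out, in order, are as follows. First, fix the class of formulas and inductively define the translation $\varphi^{S_{\st}}$, arranging that internal formulas $\varphi$ receive the trivial translation $\varphi_{S_{\st}} \equiv \varphi$ with empty tuples of quantified variables; this ensures the side hypotheses $\Delta_{\INT}$ pass through unchanged. Second, prove soundness by induction on the length of the proof in $\P$: the standard logical rules are handled by the usual Dialectica-style manipulations (with the twist that witnesses are \emph{finite sequences} of standard objects rather than single objects), and the axioms of $\textup{\textsf{E-PA}}^{\omega*}$ are verified by their own canonical realisers. Third, verify the soundness clauses for the genuinely nonstandard axioms of $\P$ one by one: the external induction schema, nonstandard extensionality, the standardness predicate axioms, and crucially the nonstandard idealisation axiom schema $\I$. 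Fourth, apply soundness to \eqref{bog} and unpack the extracted term to obtain precisely \eqref{effewachten}, noting that closure of $\mathcal{T}^{*}$ under the relevant operations guarantees the terms produced stay inside $\mathcal{T}^{*}$.

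The main obstacle, and the substantive part of the argument, is the verification of soundness for the idealisation axiom $\I$ and the resulting need to have witnesses appear as \emph{finite tuples} rather than single objects. This is precisely what forces the `$\underline{y}\in t(\underline{x})$' formulation in \eqref{effewachten} instead of the more familiar `$\underline{y}=t(\underline{x})$' seen in the classical Dialectica setting: the monotone/Herbrand-style interpretation must gather all potential witnesses into a finite sequence, because idealisation produces a standard finite set containing the witness rather than the witness itself. Once one commits to this finite-tuple interpretation from the outset, the inductive clauses for logical rules go through by familiar bookkeeping, and the soundness verifications for external induction and the transfer-free fragment are routine; the only other mildly delicate point is confirming that the extracted terms do not depend on the nonstandard parameter $\underline{a}$ in an essential way, which follows from the fact that $\underline{a}$ occurs only as a free parameter on both sides of the turnstile.
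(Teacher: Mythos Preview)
Your proposal is correct and follows exactly the route the paper indicates: the paper's proof is simply a citation to \cite{samGH,SB} and the remark that the functional interpretation $S_{\st}$ from \cite{brie} does the job, and your sketch is a faithful outline of that interpretation and its soundness proof. Two small corrections to your checklist: the system $\P$ does \emph{not} include a standard-extensionality axiom (the paper explicitly notes that $(\textsf{E})^{\st}$ is problematic and is excluded), so there is nothing to verify there; and you should add $\HAC_{\INT}$ to the list of nonstandard axioms whose soundness must be checked, as it is part of $\P$ alongside $\I$ and is in fact the axiom most directly responsible for the Herbrandised (finite-sequence) form of the extracted witness.
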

\begin{proof}
See \cite{samGH}*{\S2} or \cite{SB}*{Appendix}.  The route from \eqref{bog} to \eqref{effewachten} involves a functional interpretation called `$S_{\st}$', introduced in \cite{brie}. 
\end{proof}
The system $\RCAo\equiv \textsf{E-PRA}^{\omega}+\QFAC^{1,0}$ is Kohlenbach's \emph{base theory of higher-order Reverse Mathematics} as introduced in \cite{kohlenbach2}*{\S2}.  
We permit ourselves a slight abuse of notation by also referring to the system $\textsf{E-PRA}^{\omega*}+\QFAC^{1,0}$ as $\RCAo$.
\begin{cor}\label{consresultcor2}
The previous theorem and corollary go through for $\P$ and $\textsf{\textup{E-PA}}^{\omega*}$ replaced by $\P_{0}\equiv \textsf{\textup{E-PRA}}^{\omega*}+\T_{\st}^{*} +\HAC_{\INT} +\textsf{\textup{I}}+\QFAC^{1,0}$ and $\RCAo$.  
\end{cor}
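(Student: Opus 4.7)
The plan is to re-run the proof of Theorem \ref{consresultcor} line-for-line under the weaker base, keeping careful track of which term-forming operations are actually used. Recall that the quoted proof routes through the $S_{\st}$ functional interpretation introduced in \cite{brie}: for a formula $\varphi$ one defines $\varphi^{S_{\st}} \equiv (\forall^{\st}\underline{x})(\exists^{\st}\underline{y})\varphi_{S_{\st}}(\underline{x},\underline{y})$, and a soundness lemma shows that whenever a theorem is provable in $\P$ one can extract, in a primitive-recursive-on-the-proof fashion, terms realising the matrix. My proposal is to argue that each step of this extraction is available already over $\textsf{E-PRA}^{\omega*}$, and hence the corresponding term calculus $\mathcal{T}^{*}$ can be replaced by its primitive recursive fragment, giving conclusions in $\RCAo$.

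Concretely, I would proceed in three steps. First, observe that the three nonstandard axiom schemas of $\P_{0}$, namely $\T_{\st}^{*}$, $\HAC_{\INT}$ and $\I$, as well as $\QFAC^{1,0}$, are identical to the corresponding schemas of $\P$; only the underlying finite-type arithmetic has been weakened from $\textsf{E-PA}^{\omega*}$ to $\textsf{E-PRA}^{\omega*}$. Second, inspect the soundness proof of $S_{\st}$: for each logical rule and for each of the nonstandard axioms above, the witnessing terms produced by the interpretation are built using $\lambda$-abstraction, application, pairing and the term constructors of the base theory, together with a finite amount of primitive recursion at the relevant finite types. None of these clauses invoke Gödel's $R$ at higher types beyond what $\textsf{E-PRA}^{\omega*}$ already supplies; in particular, the interpretations of $\HAC_{\INT}$ and of nonstandard induction $\I$ use only the induction and recursion available in the base. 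Third, conclude that a proof of $(\forall^{\st}\underline{x})(\exists^{\st}\underline{y})\psi$ in $\P_{0}+\Delta_{\INT}$ yields, via this interpretation, terms $t$ in the primitive recursive fragment of $\mathcal{T}^{*}$ such that $\RCAo+\Delta_{\INT}\vdash(\forall\underline{x})(\exists\underline{y}\in t(\underline{x}))\,\psi$, using $\QFAC^{1,0}$ (which is part of $\RCAo$) exactly where the original proof used its counterpart in $\P$.

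The only real obstacle is making precise what the ``primitive recursive fragment of $\mathcal{T}^{*}$'' is and checking that the soundness proof never secretly appeals to $\textsf{E-PA}^{\omega*}$ when verifying an extracted term. The cleanest way to discharge this is to appeal directly to the formulation of the term extraction theorem in \cite{SB}*{Appendix}, where the soundness theorem for $S_{\st}$ is already stated in a modular form that parametrises over the base theory; one then instantiates that parameter with $\textsf{E-PRA}^{\omega*}$ rather than $\textsf{E-PA}^{\omega*}$. The accompanying book-keeping — verifying that each axiom of $\P_{0}$ has an $S_{\st}$-interpretation witnessable by a closed term of the primitive recursive calculus, and that the interpretation of modus ponens and generalisation preserves this property — is routine but tedious, and is the only place where genuine verification work is required. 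Everything else transfers from the proof of Theorem \ref{consresultcor} without modification.
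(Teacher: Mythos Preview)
The paper states this corollary without proof, so there is no argument to compare against; your approach---re-running the $S_{\st}$ soundness proof over the weaker base and observing that the extracted terms stay in the primitive recursive fragment---is the standard way to establish it and is essentially correct.

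Two small corrections are worth flagging. First, $\I$ is \emph{Idealisation}, not ``nonstandard induction''; the external induction schema is $\textsf{IA}^{\st}$, and it is precisely the \emph{absence} of $\textsf{IA}^{\st}$ from $\P_{0}$ that keeps the extracted terms at the $\textsf{E-PRA}^{\omega*}$ level. Your claim that ``only the underlying finite-type arithmetic has been weakened'' is therefore slightly inaccurate: in passing from $\P$ to $\P_{0}$ one drops $\textsf{IA}^{\st}$ (and adds $\QFAC^{1,0}$), and this is the crucial point, since the $S_{\st}$-interpretation of full external induction is exactly where G\"odel's higher-type recursors enter. Second, and relatedly, your inspection step should explicitly note that the interpretations of $\T_{\st}^{*}$, $\HAC_{\INT}$, and $\I$ are realised by trivial (projection/identity) terms, while the restricted induction of $\textsf{E-PRA}^{\omega*}$ only needs the type-level-zero recursor already present there; this is what makes the argument go through, and you gesture at it but do not quite say it.
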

From now on, the notion `normal form' refers to a formula as in \eqref{bog}, i.e.\ of the form $(\forall^{\st}x)(\exists^{\st}y)\varphi(x,y)$ for $\varphi$ internal.  
We now provide a general template how term extraction is used below, as this will shorten a number of proofs.  
\begin{rem}[Using term extraction]\label{doeisnormaal}\rm
First of all, term extraction as in Theorem~\ref{consresultcor} is restricted to normal forms.  We now show that normals forms are `closed under implication', as follows. 
Let $\varphi, \psi$ be internal and consider the following implication between normal forms:
\be\label{nora}
(\forall^{\st}x)(\exists^{\st}y)\varphi(x, y)\di (\forall^{\st}z)(\exists^{\st}w)\psi(z, w).  
\ee
Since standard functionals have standard output for standard input, \eqref{nora} implies
\be\label{nora2}
(\forall^{\st}\zeta)\big[(\forall^{\st}x)\varphi(x, \zeta(x))\di (\forall^{\st}z)(\exists^{\st}w)\psi(z, w)\big].  
\ee
Bringing all standard quantifiers outside, we obtain the following normal form:
\be\label{nora3}
(\forall^{\st}\zeta, z)(\exists^{\st} w, x)\big[\varphi(x, \zeta(x))\di \psi(z, w)\big],
\ee
as the formula in square brackets is internal.  Now, \eqref{nora3} is equivalent to \eqref{nora2}, but one usually weakens the latter as follows:  
\be\label{nora4}
(\forall^{\st}\zeta, z)(\exists^{\st} w)\big[(\forall x)\varphi(x, \zeta(x))\di \psi(z, w)\big],
\ee
as \eqref{nora4} is closer to the usual mathematical definitions.  

\medskip
\noindent
Secondly, assuming \eqref{nora} is provable in $\P$, so is \eqref{nora4} and we obtain a term $t$ with
\be\label{nora5}
(\forall \zeta, z)(\exists w\in t(\zeta, z))\big[(\forall x)\varphi(x, \zeta(x))\di \psi(z, w)\big]
\ee
being provable in $\EPA^{\omega*}$.  We now omit the term $t$ and bring all quantifiers inside again, yielding that $\EPA^{\omega*}$ proves:
\be\label{nora6}
(\exists \zeta)(\forall x)\varphi(x, \zeta(x))\di  (\forall z)(\exists w)\psi(z, w).
\ee
Finally, we shall often shorten the below proofs by just providing normal forms and jumping straight from \eqref{nora} to \eqref{nora6} whenever possible.  
 \end{rem}  
Secondly, $\P$ does not involve Nelson's axiom \emph{Transfer}, as `small' fragments are already quite strong.  
Indeed, \emph{Transfer} restricted to $\Pi_{1}^{0}$-formulas as follows
\be\tag{$\paai$}
(\forall^{\st}f^{1})\big[  (\forall^{\st}n)f(n)\ne0 \di (\forall m)f(m)\ne0  \big]
\ee
is the nonstandard counterpart of arithmetical\footnote{Similar to how one `bootstraps' $\Pi_{1}^{0}$-comprehension to the latter, the system $\P_{0}+\paai$ proves $\varphi\asa \varphi^{\st}$ for any internal arithmetical formula (only involving standard parameters).} comprehension as in $\ACA_{0}$.    
Furthermore, the fragment\footnote{The `bootstrapping' trick for $\paai$ does not work for $\Paai$ (or $\FIVE$) as the latter is restricted to type one objects (like $g^{1}$ in $\Paai$) occurring as `call by value'. } of \emph{Transfer} for $\Pi_{1}^{1}$-formulas as follows 
\be\tag{$\Paai$}
(\forall^{\st}f^{1})\big[ (\exists g^{1})(\forall n^{0})(f(\overline{g}n)=0)\di (\exists^{\st}g^{1})(\forall n^{0})(f(\overline{g}n)=0)\big]
\ee
is the nonstandard counterpart of $\FIVE$.   It is an interesting exercise to show that if the antecedent of \eqref{nora} is $\paai$ (resp.\ $\Paai$), the antecedent of \eqref{nora6} is $(\mu^{2})$ (resp.\ $(\mu_{1})$), to be introduced in Section \ref{knowledge}.

\medskip

The following fragment of \emph{Standard Part} is the nonstandard counterpart of weak K\"onig's lemma (\cite{keisler1}):
\be\tag{$\STP$}
(\forall \alpha^{1}\leq1)(\exists^{\st}\beta^{1}\leq1)(\alpha\approx_{1}\beta),
\ee  
where $\alpha\approx_{1}\beta$ is $(\forall^{\st}n)(\alpha(n)=_{0}\beta(n))$.  Note that $\STP$ expresses the \emph{nonstandard compactness of $2^{\N}$} as in \emph{Robinson's theorem} (\cite{loeb1}*{p.\ 42}),  
The following fragment of \emph{Standard Part} is the nonstandard counterpart of weak weak K\"onig's lemma (\cite{pimpson}).
We reserve the variable `$T^{1}$' for trees and `$T^{1}\leq1$' means that $T$ is a binary tree.  
\be\tag{$\LMP$}
(\forall T \leq1)\big[ \mu(T)\gg0\di (\exists^{\st} \beta\leq1)(\forall^{\st} m)(\overline{\beta}m\in T) \big],
\ee
where `$\mu(T)\gg 0$' is just the formula $(\exists^{\st} k^{0})(\forall^{\st} n^{0})\Big(\frac{\{\sigma \in T: |\sigma|=n    \}}{2^{n}}\geq_{0} \frac{1}{2^{k}}\Big)$.  

\medskip

Note that there is no deep philosophical meaning to be found in the words `nonstandard counterpart':  this is just what the principles $\STP$, $\LMP$, $\paai$, and $\Paai$ are called in the literature (\cite{pimpson, sambon, keisler1}).  The following theorems from \cite{dagsam} provide normal forms for $\STP$ and $\LMP$ and establish the latter's relationships with the special and weak fan functionals.  In particular, the latter emerge from $\STP$ and $\LMP$ when applying Theorem \ref{consresultcor}.  
Recall the `finite sequence' notations from Notation \ref{skim}.
\begin{thm}\label{lapdog}
In $\P_{0}$, $\STP$ is equivalent to the following:
\begin{align}\label{frukkklk}
(\forall^{\st}g^{2})(\exists^{\st}w^{1^{*}}\leq 1, k^{0})(\forall T^{1}\leq1)\big[ & (\forall \alpha^{1} \in w)(\overline{\alpha}g(\alpha)\not\in T)\\
&\di(\forall \beta\leq1)(\exists i\leq k)(\overline{\beta}i\not\in T) \big], \notag
\end{align}  
and is equivalent to $(\forall^{\st}G^{2})(\exists^{\st}w^{1^{*}})(\forall f^{1}\leq{1})(\exists g\in w)({f}\in [\overline{g}G(g)])$, and to:
\begin{align}\label{fanns}
(\forall T^{1}\leq1)\big[(\forall^{\st}n^{0})(\exists \beta^{0^{*}})&(|\beta|=n \wedge \beta\in T ) \di (\exists^{\st}\alpha^{1}\leq1)(\forall^{\st}n^{0})(\overline{\alpha}n\in T)   \big].
\end{align}
Furthermore, $\P_{0}$ proves $(\exists^{\st}\Theta)\SFF(\Theta)\di \STP$.
\end{thm}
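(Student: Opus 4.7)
The plan is to establish the three reformulations of $\STP$ as equivalent via a cycle, and then observe that the final implication is immediate. Write $(\star)$ for the intermediate form $(\forall^{\st}G^{2})(\exists^{\st}w^{1^{*}})(\forall f\leq 1)(\exists g\in w)(f\in[\overline{g}G(g)])$. I would prove $\STP\Leftrightarrow(\star)$, $(\star)\Leftrightarrow\eqref{frukkklk}$, and $\STP\Leftrightarrow\eqref{fanns}$ separately; the last assertion then follows at once, since $\Theta(G)$ witnesses the $(\exists^{\st}w)$ in $(\star)$ whenever $\Theta$ and $G$ are standard.

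For $\STP\to(\star)$, the key observation is that for standard $G^{2}$, any $\beta\approx_{1}f$ with $\beta$ standard makes $G(\beta)$ standard, so $\overline{f}G(\beta)=\overline{\beta}G(\beta)$ and $f\in[\overline{\beta}G(\beta)]$; hence $\STP$ yields $(\forall f\leq 1)(\exists^{\st}g\leq 1)(f\in[\overline{g}G(g)])$. Applying idealization $\I$ to the external assumption that no standard finite $w$ covers then produces an $f\leq 1$ lying outside every standard $[\overline{g}G(g)]$, contradicting the previous line. Conversely, given $(\star)$ and $f\leq 1$, apply $(\star)$ to the standard constant-$n$ functional $g\mapsto n$ for each standard $n$ to obtain a standard finite $w_{n}$ with some $g\in w_{n}$ satisfying $\overline{f}n=\overline{g}n$; standardness of $w_{n}$ ensures that $\overline{f}n$ is itself standard, and $\HAC_{\INT}$ amalgamates the coherent family $(\overline{f}n)_{n\textup{ std}}$ into a standard $\beta\leq 1$ with $\beta\approx_{1}f$.

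For $(\star)\Leftrightarrow\eqref{frukkklk}$, the forward direction takes $k:=\max_{\alpha\in w}g(\alpha)$: if each $\overline{\alpha}g(\alpha)\notin T$, then any $\beta\leq 1$ has some $\alpha\in w$ with $\overline{\beta}g(\alpha)=\overline{\alpha}g(\alpha)\notin T$ and $g(\alpha)\leq k$. Conversely, given $g$ and a putative uncovered $\beta\leq 1$ for a candidate $w$ from \eqref{frukkklk}, the tree $T_{w}:=\{\sigma\leq 1:(\forall\alpha\in w)(\overline{\alpha}g(\alpha)\not\sqsubseteq\sigma)\}$ satisfies $\overline{\alpha}g(\alpha)\notin T_{w}$ for all $\alpha\in w$ yet contains every initial segment of $\beta$, contradicting \eqref{frukkklk}. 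For $\STP\Leftrightarrow\eqref{fanns}$, the forward direction applies overspill to the hypothesis to produce a length-$N$ branch of $T$ for nonstandard $N$, extends it arbitrarily to $\tilde\beta\leq 1$, and invokes $\STP$ to extract standard $\alpha\approx_{1}\tilde\beta$; the tree property of $T$ yields $\overline{\alpha}n=\overline{\tilde\beta}n\in T$ for every standard $n$. The converse applies \eqref{fanns} to the tree of initial segments of the given $f\leq 1$, which trivially has a branch of every length.

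The main obstacle will be the step $(\star)\to\STP$, where one must carefully pass from the $n$-indexed standard data $(\overline{f}n)_{n\textup{ std}}$ to a genuinely standard $\beta\leq 1$; this is the only place where the full strength of $\HAC_{\INT}$ in $\P_{0}$ is essential. The remaining steps are routine manipulations of normal forms, overspill, and tree codings, with the main subtlety being the verification that $T_{w}$ and the various derived trees are internally $\Delta_{0}$-definable from their parameters.
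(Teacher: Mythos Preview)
Your argument for $(\star)\to\STP$ has a genuine gap. Applying $(\star)$ to the constant functional $g\mapsto n$ only yields that $\overline{f}n$ is standard for each standard $n$, but this is \emph{trivially} true for every $f\leq 1$: the sequence $\overline{f}n$ is a binary string of standard length $n$, hence equals one of the $2^{n}$ (standardly many) standard binary strings of that length, and is therefore standard by the axiom $\st(x)\wedge x=y\to\st(y)$. So your use of $(\star)$ here extracts no information. The remaining ``amalgamation'' step---from $(\forall^{\st}n)\,\st(\overline{f}n)$ to $(\exists^{\st}\beta\leq 1)(\beta\approx_{1}f)$---is thus exactly $\STP$ itself, and $\HAC_{\INT}$ does not deliver it: the Herbrandised choice only produces a standard $F$ with $\overline{f}n\in F(n)$ for standard $n$, and $F(n)$ may contain spurious sequences, so no standard $\beta$ can be read off from $F$ without reference to the nonstandard $f$.

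The paper's route for $(\star)\to\STP$ works through the \emph{contrapositive}, and this is where $\HAC_{\INT}$ is genuinely used. Assuming $\neg\STP$, fix $f\leq 1$ with $(\forall^{\st}g\leq 1)(\exists^{\st}k)(\overline{f}k\neq\overline{g}k)$; apply $\HAC_{\INT}$ to obtain a standard $\Phi$ with $(\forall^{\st}g\leq 1)(\exists k\in\Phi(g))(\overline{f}k\neq\overline{g}k)$, and set $G(g):=\max\Phi(g)$. This $G$ is standard and satisfies $f\notin[\overline{g}G(g)]$ for every standard $g\leq 1$, so no standard finite $w$ can witness $(\star)$ for this $G$ (as each $g\in w$ is standard). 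In other words, $\HAC_{\INT}$ is applied to the negation to manufacture a \emph{single} standard $G$ that defeats $(\star)$, rather than to amalgamate witnesses. Your other implications ($\STP\to(\star)$, $(\star)\leftrightarrow\eqref{frukkklk}$, $\STP\leftrightarrow\eqref{fanns}$, and the final claim about $\Theta$) are correct as sketched.
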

\begin{proof}
All results are established in \cite{dagsam}, except the following equivalence: 
\be\label{kuuit}
\STP\asa(\forall^{\st}G^{2})(\exists^{\st}w^{1^{*}})(\forall f^{1}\leq{1})(\exists g\in w)({f}\in [\overline{g}G(g)]). 
\ee 
To establish \eqref{kuuit}, use $\HAC_{\INT}$ to establish that $(\exists^{\st} g\leq1)(\forall^{\st}k^{0})(\overline{f}k=_{0}\overline{g}k)$ is equivalent to  
$(\forall^{\st}G^{2})(\exists^{\st} g\leq1)(\overline{f}G(g)=_{0^{*}}\overline{g}G(g))$ (by considering the negations of the latter two formulas).  Now prepend `$(\forall f^{1}\leq1)$' to the latter formula and use \emph{Idealisation} to pull the `$(\exists^{\st}g\leq1)$' to the front as in \eqref{kuuit}.  
\end{proof}
By \eqref{fanns} in the theorem, $\STP$ is just $\WKL^{\st}$ with the leading `st' dropped; this observation explains why $\STP$ deserves the monicker `nonstandard counterpart of $\WKL$'.  
The following theorem follows in the same way. 
\begin{thm}\label{lapdoc}
In $\P_{0}$, the principle $\LMP$ is equivalent to:
\[
(\forall^{\st}g^{2},k^{0})(\exists^{\st}w^{1^{*}}\leq 1, n^{0})
\textstyle(\forall T\leq1)\big[(\forall \alpha\in w)(\overline{\alpha}g(\alpha)\not\in T)\di \frac{|\{\sigma \in T: |\sigma|=n    \}|}{2^{n}}\leq\frac{1}{2^{k}}\big].
\]  
Furthermore, $\P_{0}$ proves $(\exists^{\st} \Lambda)\WFF(\Lambda)\di \LMP$.  
\end{thm}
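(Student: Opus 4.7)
The plan is to mirror the proof of Theorem \ref{lapdog} verbatim as far as possible, replacing the tree-membership bookkeeping of $\STP$ with the measure-theoretic bookkeeping demanded by $\LMP$. To begin, I would unpack $\mu(T)\gg 0$ as $(\exists^{\st} k^{0})(\forall^{\st} n^{0})(\mu_{n}(T)\geq 1/2^{k})$, where I abbreviate $\mu_{n}(T):=|\{\sigma\in T:|\sigma|=n\}|/2^{n}$, and rewrite the conclusion $(\exists^{\st}\beta)(\forall^{\st}m)(\overline{\beta}m\in T)$ as $(\forall^{\st} g^{2})(\exists^{\st}\beta)(\overline{\beta}g(\beta)\in T)$. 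This last equivalence is obtained by exactly the $\HAC_{\INT}$-plus-double-negation argument used for \eqref{kuuit} in the previous theorem: its negation $(\forall^{\st}\beta)(\exists^{\st} m)(\overline{\beta}m\notin T)$ is equivalent via $\HAC_{\INT}$ to $(\exists^{\st} G)(\forall^{\st}\beta)(\overline{\beta}G(\beta)\notin T)$, and re-negating gives the claim. Pulling the resulting standard quantifiers out of the implication then presents $\LMP$ as
\[
(\forall^{\st} k, g)(\forall T\leq1)\big[(\forall^{\st} n)(\mu_{n}(T)\geq 1/2^{k})\rightarrow(\exists^{\st}\beta)(\overline{\beta}g(\beta)\in T)\big].
\]

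Next I would take the contrapositive of the inner implication and apply the conversion trick from Remark \ref{doeisnormaal} to obtain the single-block form $(\forall^{\st} k,g)(\forall T)(\exists^{\st}\beta, n)[(\overline{\beta}g(\beta)\notin T)\rightarrow \mu_{n}(T)< 1/2^{k}]$. Applying \emph{Idealisation} then exchanges $(\forall T)(\exists^{\st}\beta,n)$ for $(\exists^{\st} s^{(1\times0)^{*}})(\forall T)(\exists(\beta,n)\in s)$, producing a standard finite list of pairs. Given such a standard list $\langle(\beta_{1},n_{1}),\ldots,(\beta_{r},n_{r})\rangle$, I would set $w:=\langle\beta_{1},\ldots,\beta_{r}\rangle$ and $n:=\max_{i\leq r}n_{i}$; both are standard. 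For any $T\leq 1$, the pair $(\beta_{i},n_{i})$ supplied by the list falls into one of two cases: either $\overline{\beta_{i}}g(\beta_{i})\in T$, whence $(\forall\alpha\in w)(\overline{\alpha}g(\alpha)\notin T)$ fails and the target implication is vacuous; or $\mu_{n_{i}}(T)< 1/2^{k}$, and since $\mu_{m}(T)$ is non-increasing in $m$ for binary trees (each length-$m$ element of $T$ has at most two extensions of length $m+1$ in $T$) one concludes $\mu_{n}(T)\leq\mu_{n_{i}}(T)< 1/2^{k}$. This establishes the stated normal form; the converse direction is obtained by reversing the same chain of equivalences, each step of which is bi-directional.

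For the implication $(\exists^{\st}\Lambda)\WFF(\Lambda)\to\LMP$, I would fix a standard $\Lambda$ and, for standard $g,k$, set $w:=\Lambda(g,k)$ and $n:=\max_{\alpha\in w}g(\alpha)$, both standard. Supposing $(\forall\alpha\in w)(\overline{\alpha}g(\alpha)\notin T)$, for any $\alpha\in w$ and any $f\in[\overline{\alpha}g(\alpha)]$ we have $\overline{f}g(\alpha)=\overline{\alpha}g(\alpha)\notin T$, and hence $\overline{f}n\notin T$ by the tree property together with $n\geq g(\alpha)$. So $\{f:\overline{f}n\in T\}$ is disjoint from $\bigcup_{\alpha\in w}[\overline{\alpha}g(\alpha)]$, and the complement of the latter has measure at most $1/2^{k}$ by \eqref{stylez}, yielding $\mu_{n}(T)\leq 1/2^{k}$ and hence the normal form. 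The main anticipated obstacle is the quantifier bookkeeping around the inner $\exists^{\st} k$ in the unpacking of $\mu(T)\gg 0$, together with the measure-theoretic collapsing of the finite list of $n_{i}$'s into a single $n$ via monotonicity; apart from these, the argument is a mechanical transcription of Theorem \ref{lapdog}.
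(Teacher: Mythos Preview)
Your proposal is correct and is exactly what the paper intends: it states only that the result ``follows in the same way'' as Theorem~\ref{lapdog}, and your argument is the natural transcription of that proof with the measure bookkeeping and the monotonicity of $\mu_n(T)$ in place of the height bound. The only cosmetic points to tidy are keeping track of the $\beta\leq 1$ restriction throughout and, for the converse direction, instantiating the normal form at $k+1$ to pass from $\leq 1/2^{k+1}$ to $<1/2^{k}$ when unwinding back to $\mu(T)\gg 0$.
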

Despite $\STP$ and $\LMP$ being the nonstandard counterparts of $\WKL$ and $\WWKL$, the former behaves \emph{quite} differently from the latter (and \eqref{linord}) in that the former does not follow from $\paai$ or $\Paai$, i.e.\ the nonstandard counterparts of $\ACA_{0}$ and $\FIVE$, as discussed in Section \ref{knowledgebase}.    

\medskip

Finally, we discuss the exact connection between our systems of Nonstandard Analysis and Computability theory provided by Theorem \ref{consresultcor}.  
The crucial point here is that in the syntactic theory of Nonstandard Analysis, the usual quantifiers $\exists$ and $\forall$ play the role of `uniform quantifiers' (see \cite{uhberger}) which are \emph{ignored} by the functional interpretation $S_{\st}$ used in the proof of Theorem \ref{consresultcor}, while the standard quantifiers $\exists^{\st}$ and $\forall^{\st}$ are given computational meaning.  
Indeed, the functional interpretation $S_{\st}$ applied to the proof of \eqref{bog} yields a term $t(\underline{x})$ in which the $(\forall^{\st}\underline{x})$ quantifier in \eqref{bog} describes the input variables, while the $(\exists^{\st}\underline{y})$ quantifier describes the output variables. 
This gives each of the nonstandard axioms a clear computational meaning entirely independent of Nonstandard Analysis per se, which may be of comfort to some who find Nonstandard Analysis alien. 
Those interested in this kind of development should consult \cite{SB}.

\subsection{Known results in Computability Theory}\label{knowledge}
A substantial number of results regarding the special and weak fan functionals were obtained in \cite{dagsam, dagsamIII, samflo}, some of which we list in this section as they are needed below or give rise to open questions.  We recall an oft-made observation regarding $\WWKL_{0}$ and the `Big Five' of RM,  namely that these six systems satisfy the strict implications:
\be\label{linord}
\FIVE\di \ATR_{0}\di \ACA_{0}\di\WKL_{0}\di \WWKL_{0}\di \RCA_{0}.
\ee
We mention \eqref{linord} as our results show that the situation is quite different in a higher-order or nonstandard setting.  
More results of this nature are in \cite{dagsamIII, dagsamV, dagsamVI}.

\medskip

First of all, it turns out that the fan functionals $\Theta$ and $\Lambda$ are hard to compute. 
\begin{thm}\label{import2}
Let $\varphi^{2}$ be a type two functional.  
There is no functional $\Theta^3$ as in $\SFF(\Theta)$ and no functional $\Lambda^3$ as in $\WFF(\Lambda)$ computable in $\varphi$.  
\end{thm}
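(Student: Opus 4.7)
Plan: The theorem is proved by contradiction; suppose some $\Theta$ satisfying $\SFF(\Theta)$ (resp.\ some $\Lambda$ satisfying $\WFF(\Lambda)$) is Kleene-computable in $\varphi^{2}$ via an index $e$, so that $\Theta(G)=\{e\}^{\varphi}(G)$ for every $G^{2}$. The driving observation is a \emph{countable dependence lemma} for S1-S9 computations with a type-two oracle $\varphi$ and type-two input $G$: the well-founded computation tree of $\{e\}^{\varphi}(G_{0})$ has at most $\aleph_{0}$-many immediate subcomputations at every node, since each application of a type-two functional unfolds into $\omega$ subqueries and the clauses S1--S8 branch finitely. The tree is therefore countable, and so the set $A_{G_{0}}\subset \N^{\N}$ of type-one arguments at which $G_{0}$ is actually consulted during the computation is countable. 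Consequently $\{e\}^{\varphi}(G)=\{e\}^{\varphi}(G_{0})$ whenever $G\upharpoonright A_{G_{0}} = G_{0}\upharpoonright A_{G_{0}}$, which I would establish by induction on the rank of the computation tree in the manner of \cite{longmann,Sacks.high}.

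With the dependence lemma in hand, I set up a diagonal argument. Taking $G_{0}\equiv c$ constant for a large integer $c$, $\SFF(\Theta)$ forces $\Theta(G_{0})=\langle g_{0},\ldots,g_{k-1}\rangle$ with $\{\overline{g_{i}}c : i<k\} = 2^{c}$. I then perturb $G_{0}$ to a new $G$ by keeping $G=G_{0}$ on $A_{G_{0}}$ and setting $G(g_{i})$ to be an enormous integer for each $g_{i}\notin A_{G_{0}}$. By the dependence lemma $\{e\}^{\varphi}(G)$ returns the same output $\langle g_{0},\ldots,g_{k-1}\rangle$, but the cylinders $[\overline{g_{i}}G(g_{i})]$ for $g_{i}\notin A_{G_{0}}$ collapse to arbitrarily thin sets, leaving some $f\in 2^{\N}$ outside $\cup_{i}[\overline{g_{i}}G(g_{i})]$ and contradicting $\SFF(\Theta)$. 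The analogous step for $\Lambda$ is technically easier: one only needs to drive the measure of the finite union below $1-2^{-k}$, which is achieved by shrinking finitely many off-$A$ cylinders to have total measure less than $2^{-k}$.

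The main obstacle is arranging that at least one $g_{i}$ lies outside $A_{G_{0}}$; \emph{a priori} the entire output could sit inside the queried set and defeat the naive modification. My plan to circumvent this is to iterate: starting from $G_{0}$, if $\Theta(G_{0})\subseteq A_{G_{0}}$, I would modify $G_{0}$ \emph{inside} $A_{G_{0}}$ at one of the outputs so as to break the cover, forcing the computation $\{e\}^{\varphi}$ to deliver a different sub-cover on the resulting $G_{1}$. Iterating gives a sequence $\{G_{n}\}$ whose associated dependence sets $A_{G_{n}}$ together remain countable, yet the outputs $w_{n}$ must keep adapting in order to remain valid; a careful bookkeeping combining the countable-dependence bound with either a compactness argument in $2^{\N}$ (for $\Theta$) or a measure estimate (for $\Lambda$) eventually exhibits a $G$ on which the computation converges to a non-cover. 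Calibrating this iteration against the dependence lemma without losing control of the growing $A_{n}$'s is the technical crux of the argument, and is the step I expect to be most delicate.
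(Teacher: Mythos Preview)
The paper does not give a self-contained proof here; it simply invokes \cite{dagsam}*{Cor.~3.8 and Theorem~3.14}, so there is no in-paper argument against which to compare your sketch directly.

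Your countable-dependence lemma is correct and is the natural tool: any terminating Kleene computation $\{e\}(\varphi,G)$ with both oracles of type two has a well-founded computation tree with countable branching at each node, hence countably many nodes, and thus $G$ is consulted at only a countable set $A_G\subseteq\N^{\N}$.  You also correctly identify the obstacle to the naive diagonalisation: nothing prevents $\Theta(G_0)\subseteq A_{G_0}$, in which case perturbing $G_0$ off $A_{G_0}$ leaves both the output $\Theta(G_0)$ and the values $G_0(g)$ for $g\in\Theta(G_0)$ unchanged, so the cover is unaffected.

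Your proposed repair, however, does not close the gap.  Once you modify $G_n$ \emph{inside} $A_{G_n}$ to obtain $G_{n+1}$, the dependence lemma no longer applies: $\{e\}(\varphi,G_{n+1})$ has a new dependence set $A_{G_{n+1}}$ and a new output $w_{n+1}$, and neither is controlled by the earlier data.  There is no termination or convergence mechanism---the outputs $w_n$ may keep adapting indefinitely---and the observation that $\bigcup_n A_{G_n}$ remains countable does not by itself single out a $G$ on which the cover fails (you need one fixed $G$, not a sequence).  The ``compactness argument in $2^{\N}$'' and ``careful bookkeeping'' you invoke are precisely the missing content.  The same objection applies verbatim to your treatment of $\Lambda$: if $\Lambda(G_0,k)\subseteq A_{G_0}$, then the on-$A_{G_0}$ cylinders already carry measure at least $1-2^{-k}$, and shrinking the off-$A_{G_0}$ cylinders achieves nothing, so the $\Lambda$ case is not ``technically easier'' in the way you suggest.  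As written, the sketch is a reasonable plan whose crucial step is left open.
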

\begin{proof}
Immediate from \cite{dagsam}*{Cor.\ 3.8 and Theorem 3.14}.
\end{proof}
We now list some well-known type two functionals which will be needed below.  
\emph{Feferman's search operator} as in $(\mu^{2})$ (see e.g.\ \cite{avi2}*{\S8}) is equivalent to $(\exists^{2})$ over Kohlenbach's system $\RCAo$ by \cite{kooltje}*{\S3}:
\be\tag{$\mu^{2}$}
(\exists \mu^{2})\big[(\forall f^{1})\big( (\exists n^{0})(f(n)=0)\di f(\mu(f))=0   \big)  \big],
\ee
\be\tag{$\exists^{2}$}
(\exists \varphi^{2})\big[(\forall f^{1})\big( (\exists n^{0})(f(n)=0)\asa \varphi(f)=0   \big)  \big],
\ee
Furthermore, $\ACAo\equiv \RCAo+(\mu^{2})$ is a $\Pi_{2}^{1}$-conservative extension of $\ACA_{0}$ (\cite{yamayamaharehare}*{Theorem 2.2}). 
The \emph{Suslin functional} $(S^{2})$ and the related $(\mu_{1})$ (see \cite{avi2}*{\S8.4.1}, \cite{kohlenbach2}*{\S1}, and \cite{yamayamaharehare}*{\S3}) give rise to $\FIVE$: 
\be\tag{$\mu_{1}$}
(\exists \mu_{1}^{1\di 1})(\forall f^{1})\big[  (\exists g^{1})(\forall x^{0})(f(\overline{g}x)=0)\di (\forall x^{0})(f(\overline{\mu_{1}(f)}x)=0)  \big].
\ee
\be\tag{$S^{2}$}
(\exists S^{2})(\forall f^{1})\big[  (\exists g^{1})(\forall n^{0})(f(\overline{g}n)=0)\asa S(f)=0  \big].
\ee
In fact, $\FIVEo\equiv \RCAo+(\mu_{1})$ is a $\Pi_{3}^{1}$-conservative extension of $\FIVE$ (\cite{yamayamaharehare}*{Theorem 2.2}).
We let $\SU(S)$ and $\MUO(\mu_{1})$ be $(S^{3})$ and $(\mu_{1})$ without the leading existential quantifiers.  
Similarly, we introduce $\FIVEK^{\omega}\equiv \RCAo+(S_{k}^{2})$, where $(S_{k}^{2})$ states the existence of a type two function $S_{k}^{2}$ which decides $\Pi_{k}^{1}$-formulas; note that $S_{1}$ is the Suslin functional.    
The higher-order version of second-order arithmetic $\textsf{Z}_{2}$ is $\textsf{Z}_{2}^{\Omega}\equiv\RCAo+(\exists^{3})$, where the latter is  
\be\tag{$\exists^3$}\label{hah}
(\exists \xi^{3})(\forall Y^{2})\big[  (\exists f^{1})(Y(f)=0)\asa \xi(Y)=0  \big].
\ee
Note that $\textsf{Z}_{2}^{\Omega}$ and $\textsf{Z}_{2}$ prove the same sentences by \cite{hunterphd}*{\S2}.
We reserve `$\exists^3$' for the unique functional $\xi^{3}$ from $(\exists^3)$.  
We do the same for other functionals, like $\mu^{2}, \mu_{1}, S^{2}, \dots$ introduced above.  
\begin{thm}\label{import3}
A functional $\Theta^{3}$ as in $\SFF(\Theta)$ can be computed from $\exists^3$.  
\end{thm}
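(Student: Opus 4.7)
The plan is to execute the classical proof that $2^{\N}$ has a finite subcover of any canonical cover, using $\exists^3$'s two key capabilities: deciding $\Sigma_{1}^{1}$-statements with parameter $G^{2}$, and selecting a binary sequence bit-by-bit from a non-empty $\Sigma_{1}^{1}$-class. Heuristically, $\exists^3$ makes all of second-order arithmetic computable, so any ZFC-provable existence of a finite subcover should be turnable into an $\exists^3$-computation.

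First I would reduce the problem from infinite sequences to finite strings. For $G^{2}$ and $n\in \N$, set
\[
S_{n,G} := \{\tau \in 2^{\leq n} : (\exists g\leq 1)(G(g) = |\tau| \wedge \overline{g}|\tau| =_{0^{*}} \tau)\}.
\]
Since the defining condition is $\Sigma_{1}^{1}$ in $G,n,\tau$, $\exists^3$ decides membership in $S_{n,G}$. Each $f^{1}\leq 1$ lies in $[\overline{f}G(f)]$ with $\overline{f}G(f)\in S_{G(f),G}$, so $\bigcup_{n}\{[\tau]:\tau\in S_{n,G}\}$ already covers $2^{\N}$. By open-cover compactness of $2^{\N}$ (available in the ZFC metatheory), there is a least $N_{G}$ such that $\{[\tau]:\tau\in S_{N_{G},G}\}$ covers $2^{\N}$; the predicate ``$\{[\tau]:\tau\in S_{n,G}\}$ covers $2^{\N}$'' is $\Pi_{1}^{1}$ in $G,n$, so $\exists^3$ locates $N_{G}$ by a linear search in $n$.

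Second, for each $\tau\in S_{N_{G},G}$ the class $A_{\tau}:=\{g\leq 1 : G(g)=|\tau| \wedge \overline{g}|\tau|=\tau\}$ is non-empty, and I would use $\exists^3$ to pick a canonical witness $g_{\tau}\in A_{\tau}$ bit-by-bit: at stage $k$, put $g_{\tau}(k)=0$ just in case some element of $A_{\tau}$ extends the prefix $\overline{g_{\tau}}k$ with a $0$, and otherwise put $g_{\tau}(k)=1$. Define $\Theta(G):=\langle g_{\tau}: \tau\in S_{N_{G},G}\rangle$. To verify \eqref{kijkma}, given $f\leq 1$ pick $\tau\in S_{N_{G},G}$ with $\overline{f}|\tau|=\tau$; then $\overline{g_{\tau}}G(g_{\tau})=\overline{g_{\tau}}|\tau|=\tau=\overline{f}|\tau|=\overline{f}G(g_{\tau})$, so $f\in [\overline{g_{\tau}}G(g_{\tau})]$, as required.

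The delicate step is the bit-by-bit selection: it rests on the standard higher-order computability fact that $\exists^3$ uniformly computes a selector for any non-empty $\Sigma_{1}^{1}$-class of binary sequences, which is a routine Kleene S1--S9 construction but the point most worth checking carefully. Everything else is the usual open-cover compactness argument combined with $\exists^3$-decidability of the relevant $\Sigma_{1}^{1}$ and $\Pi_{1}^{1}$ formulas.
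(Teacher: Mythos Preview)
Your reduction to strings and the search for $N_G$ are fine, but the bit-by-bit selection step has a genuine gap. The set $A_\tau=\{g\le 1:G(g)=|\tau|\wedge\overline g|\tau|=\tau\}$ is \emph{not} a $\Sigma^1_1$-class in the sense to which selection theorems apply: it involves the type-two parameter $G$, and for arbitrary $G$ the set $A_\tau$ can be any subset of $[\tau]$, in particular a non-closed one. Your bit-by-bit procedure only produces an element of the \emph{closure} of $A_\tau$, and nothing forces $G(g_\tau)=|\tau|$ for that limit.

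Concretely, take $G(0^\omega)=1$ and $G(g)=0$ for all $g\ne 0^\omega$. Then $S_{0,G}=\{\langle\rangle\}$ (witnessed by any $g\ne 0^\omega$), so $N_G=0$, and $A_{\langle\rangle}=2^\N\setminus\{0^\omega\}$. Your bit-by-bit selector always finds an extension by $0$, hence outputs $g_{\langle\rangle}=0^\omega\notin A_{\langle\rangle}$. Now $\overline{g_{\langle\rangle}}G(g_{\langle\rangle})=\langle 0\rangle$, so your $\Theta(G)=\langle 0^\omega\rangle$ yields only the neighbourhood $[0]$, which misses every $f$ with $f(0)=1$. Thus $\SFF(\Theta)$ fails.

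The ``standard fact'' you invoke---that $\exists^3$ selects from non-empty $\Sigma^1_1$-classes---is true for lightface or boldface $\Sigma^1_1$ with type-one parameters, but $A_\tau$ lives at a higher level because of $G$. The actual construction (in the cited \cite{dagsam}*{Theorem~3.9}) proceeds differently and does not rely on selecting witnesses from preimages $G^{-1}(m)$; you will need either a genuinely different algorithm for producing the $g_i$ or an appeal to a selection principle valid for normal type-three functionals with type-two oracles, which is not the bit-by-bit argument you sketched.
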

\begin{proof}
Immediate from \cite{dagsam}*{Theorem 3.9}.
\end{proof}
By the following theorem, the exotic properties of $\Theta$ are not due to its high type.  
As discussed in \cite{dagsamIII}, $\HBU$ is essentially \emph{Cousin's lemma}, dating as far back as 1882.  
\begin{thm}\label{nolapdog}
 $\ACAo+\QFAC^{2,1}$ proves $(\exists \Theta)\SFF(\Theta)\asa \HBU$; the latter is
\be\label{zosimpelistnie}\tag{$\HBU$}\textstyle
(\forall \Psi^{2}:\R\di \R^{+})(\exists w^{1^{*}}){(\forall x^{1}\in [0,1])(\exists y\in w)(x\in I_{y}^{\Psi}   )}, 
\ee  
where $I_{y}^{\Psi}$ is $(y-\Psi(y), y+\Psi(y))$.
No system $\FIVEK^{\omega}$ proves $\HBU$.  
\end{thm}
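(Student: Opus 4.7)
The plan is to establish the equivalence of $(\exists\Theta)\SFF(\Theta)$ and $\HBU$ by translating between covers of $2^{\N}$ and covers of $[0,1]$ through binary expansion, and then to derive the non-provability of $\HBU$ in any $\FIVEK^{\omega}$ from Theorem \ref{import2}.

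For the forward direction $(\exists\Theta)\SFF(\Theta) \to \HBU$, given a gauge $\Psi\colon \R \to \R^{+}$, I would define $G^{2}$ on $2^{\N}$ by setting $G(\alpha)$ to be the least $n$ such that the dyadic interval coded by $\overline{\alpha}n$ is contained in $(r_{\alpha}-\Psi(r_{\alpha}),r_{\alpha}+\Psi(r_{\alpha}))$, where $r_{\alpha} := \sum_{i}\alpha(i)\cdot 2^{-i-1}$. Such an $n$ is computable from $\mu^{2}$, which is available in $\ACAo$. Applying $\Theta$ to $G$ yields a finite $w\subset 2^{\N}$ with $\bigcup_{g\in w}[\overline{g}G(g)] = 2^{\N}$, and $\{r_{g}:g\in w\}$ is then the required finite $\Psi$-subcover of $[0,1]$, once one checks that every $x\in [0,1]$ has some binary representation in $[\overline{g}G(g)]$ for some $g\in w$.

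For the reverse direction $\HBU \to (\exists\Theta)\SFF(\Theta)$, given $G^{2}$, I would apply $\QFAC^{2,1}$ to pick for each real $x\in [0,1]$ a binary expansion $\alpha_{x}\in 2^{\N}$, and set $\Psi_{G}(x) := 2^{-G(\alpha_{x})-1}$. By $\HBU$ applied to $\Psi_{G}$, there is a finite $w\subset [0,1]$ with $[0,1]\subseteq \bigcup_{y\in w}I_{y}^{\Psi_{G}}$, and defining $\Theta(G) := \langle \alpha_{y}:y\in w\rangle$ satisfies $\SFF(\Theta)$: any $\beta\in 2^{\N}$ codes a real lying in some $I_{y}^{\Psi_{G}}$, and the radius $2^{-G(\alpha_{y})-1}$ forces $\overline{\beta}G(\alpha_{y})=\overline{\alpha_{y}}G(\alpha_{y})$, i.e.\ $\beta\in [\overline{\alpha_{y}}G(\alpha_{y})]$.

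For the non-provability of $\HBU$ in $\FIVEK^{\omega}$, the plan is to leverage Theorem \ref{import2} via a model/term-extraction argument. Suppose towards a contradiction that $\FIVEK^{\omega}\vdash \HBU$. Since $\HBU$ has the form $(\forall \Psi^{2})(\exists w^{1^{*}})\varphi(\Psi,w)$ with $\varphi$ arithmetical modulo $\mu^{2}$, a term $t$ built from $S_{k}^{2}$ can be extracted producing a finite subcover $t(\Psi)$ from any $\Psi$. Feeding $\Psi_{G}$ from the reverse construction into $t$ then exhibits a functional $\Theta$ computable in $S_{k}^{2}$ satisfying $\SFF(\Theta)$, contradicting Theorem \ref{import2}. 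The principal obstacle is that the reverse construction invokes $\QFAC^{2,1}$, which is not a component of $\FIVEK^{\omega}$; this is to be handled by choosing $\alpha_{x}$ canonically via the non-terminating binary expansion, treating the countably many dyadic rationals as a separately specified finite table for any given $G$, so that $\Psi_{G}$ remains a well-defined object in the $S_{k}^{2}$-computable type structure.
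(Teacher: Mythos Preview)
The paper does not give a proof here; it cites \cite{dagsamIII}*{Theorems 3.1 and 3.3} and nothing more. Your forward direction $(\exists\Theta)\SFF(\Theta)\to\HBU$ is essentially correct. The other two steps, however, contain genuine gaps.

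\textbf{Reverse direction.} The claim that $|r_\beta - y| < 2^{-G(\alpha_y)-1}$ forces $\overline{\beta}G(\alpha_y)=\overline{\alpha_y}G(\alpha_y)$ is false: metric closeness of reals does not imply agreement of binary prefixes. Concretely, take $\alpha_y = 1000\cdots$ (so $y=r_{\alpha_y}=1/2$) and $G(\alpha_y)=1$; then $\Psi_G(y)=1/4$ and $I_y^{\Psi_G}=(1/4,3/4)$, yet $\beta = 0110\cdots$ has $r_\beta=3/8\in I_y^{\Psi_G}$ while $\overline{\beta}1=0\ne 1=\overline{\alpha_y}1$. The dyadic interval corresponding to the cylinder $[\overline{\alpha_y}n]$ need not contain a metric ball around $y$ of any positive radius when $y$ is a dyadic endpoint. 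A correct construction must pad $\Theta(G)$ with additional sequences (or pass through a homeomorphic embedding of $2^{\N}$ into $[0,1]$, e.g.\ via the Cantor middle-thirds set, so that the binary-expansion degeneracy disappears).

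\textbf{Non-provability.} Your term-extraction step is not justified. The system $\FIVEK^{\omega}$ includes $\QFAC^{1,0}$, and you provide no argument that a proof of $(\forall\Psi^2)(\exists w^{1^*})\varphi(\Psi,w)$ in it yields a closed term $t$ built from $S_k^2$ with $(\forall\Psi)\varphi(\Psi,t(\Psi))$; such extraction results require a functional interpretation compatible with all axioms present, and you do not supply one. Moreover, even if such a $t$ existed, your plan to produce $\Theta$ from it reuses the broken reverse construction above. The argument actually used in \cite{dagsamIII} is model-theoretic: the type structure of functionals Kleene-computable in $S_k^2$ is a model of $\FIVEK^{\omega}$, and one exhibits an $S_k^2$-computable gauge with no $S_k^2$-computable finite subcover (a relativisation of the phenomenon behind Theorem~\ref{cor.alt.6.8}), so $\HBU$ fails in that model.
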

\begin{proof}
Immediate from \cite{dagsamIII}*{Theorems 3.1 and 3.3}.
\end{proof}
A similar result can be obtained for $\Lambda$: the existence of the latter is equivalent 
to the fact that a finite sub-cover exists for any open cover of the Martin-L\"of random reals in Cantor space \emph{minus} some $U_{k}$, where the latter is 
the $k$-th set in the universal Martin-L\"of test (see \cite{samcie18}).  This result originates from the RM of $\WWKL$ as in \cite{avi1337}.  

\medskip

Theorem \ref{nolapdog} already deals a significant blow to the elegant picture in \eqref{linord}, but $\HBU$ can even collapse part of the latter linear order, namely as in Theorem~\ref{frigjr}.  
Now, $\ATR_{0}$ is $\ACA_{0}$ plus \emph{arithmetical transfinite recursion} as follows:
\be\tag{$\ATR_{\theta}$}
(\forall X^{1})\big[\WO(X)\di (\exists Y^{1})H_{\theta}(X, Y) \big], 
\ee
for any arithmetical $\theta$.  Here, $\WO(X)$ expresses that $X$ is a countable well-ordering and $H_{\theta}(X, Y)$ expresses that $Y$ is the result from iterating $\theta$ along $X$.  
Details and definitions may be found in \cite{simpson2}*{V.2}.  
For Theorem \ref{frigjr}, we need the following `trivially uniform' version of $\ATR_{0}$:
\be\tag{$\UATR$}
(\exists \Phi^{1\di 1})(\forall X^{1}, f^{1})\big[\WO(X)\di H_{f}(X, \Phi(X,f)) \big], 
\ee
where $H_{f}(X, Y)$ is just $H_{\theta}(X, Y)$ with $\theta(n, Z)$ defined as $(\exists m^{0})(f(n,m, \overline{Z}m)=0)$.
Note that the base theory in the following theorem is conservative over $\WKL_{0}$.
\begin{thm}\label{frigjr}
The system $\RCAo+\HBU+\QFAC^{2,1}$ proves $(\mu^{2})\asa \UATR$.  
\end{thm}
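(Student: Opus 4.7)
The plan is to prove the two implications separately, leaning heavily on the machinery already established in the paper.

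For the forward direction $\UATR \di (\mu^{2})$: the functional $\Phi$ provided by $\UATR$ can be applied with $X$ a trivial (e.g.\ two-element) well-ordering and $f$ chosen so that the associated arithmetical operator $\theta_{f}(n,Z) \equiv (\exists m)(f(n,m,\overline{Z}m)=0)$ encodes unbounded existential quantification over $\N$. A single iteration via $\Phi$ then witnesses $(\exists^{2})$ uniformly in the input parameter, and the equivalence $(\mu^{2})\asa(\exists^{2})$ over $\RCAo$ (cited from \cite{kooltje}*{\S3}) recovers Feferman's search operator. This direction uses only the base theory and does not require $\HBU$.

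For the reverse direction $(\mu^{2}) \di \UATR$, I would work in $\RCAo+\HBU+\QFAC^{2,1}+(\mu^{2})$. The plan proceeds in two steps. First, since $(\mu^{2})$ gives $\ACAo$, Theorem \ref{nolapdog} yields a special fan functional $\Theta$ satisfying $\SFF(\Theta)$. Second, one invokes item (i) of the introduction, that $\Theta+\mu^{2}$ directly computes a realiser for $\ATR_{0}$, to produce the desired $\Phi(X,f)$ uniformly in the inputs $X^{1}$ and $f^{1}$.

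To flesh out the second step concretely: the predicate $H_{f}(X,Y)$ is arithmetical in $X,f,Y$, hence $(\mu^{2})$ can decide for any candidate $Y^{1}$ whether it correctly represents the $\theta_{f}$-iteration along $X$. To produce such a $Y$, define a type-two gauge $G^{2}$ on $2^{\N}$ by interpreting each $\beta$ as a putative encoding of the iteration sequence, and letting $G(\beta)$ be the least stage (computable using $(\mu^{2})$) at which $\beta$ fails the arithmetical consistency conditions $H_{f}(X\restriction n, Y_{\beta}\restriction n)$, or $0$ when no failure is detected. Applying $\Theta$ to $G$ yields a finite list $w=\langle \beta_{0},\ldots,\beta_{m}\rangle$ realising the canonical finite sub-cover of $2^{\N}$. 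A well-founded induction along $X$, exploiting $\WO(X)$, then shows that some $\beta_{i}$ must encode a correct iteration, and $(\mu^{2})$ selects such a $\beta_{i}$ as $\Phi(X,f)$.

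The main obstacle lies in this final step: establishing that a correct candidate really does appear in the finite sub-cover produced by $\Theta$. The argument needs the gauge $G$ to be carefully matched to the structure of $H_{f}$ so that simultaneous failure of every $\beta_{i}$ at some $X$-stage $n$ propagates strictly downward along $<_{X}$, contradicting well-foundedness. This is precisely where the strength of $\HBU$ beyond the $(\mu_{1})$-hierarchy (cf.\ the final sentence of Theorem \ref{nolapdog}, which asserts that $\HBU$ is not provable in any $\FIVEK^{\omega}$) is deployed, since $\UATR$ provably transcends what arithmetical comprehension alone delivers.
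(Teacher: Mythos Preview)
Your overall plan matches the paper's: convert $\HBU$ to $(\exists\Theta)\SFF(\Theta)$ via Theorem~\ref{nolapdog} (which is \cite{dagsamIII}*{Theorem 3.3}), then use $\Theta+\mu^{2}$ to produce the realiser $\Phi$ for $\UATR$ (which is \cite{dagsam}*{Cor.~6.7}); the paper's proof is literally just these two citations, and your forward direction $\UATR\di(\mu^{2})$ is the standard one.

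Where you differ slightly is in fleshing out the second step. The result cited from \cite{dagsam} was originally obtained by term extraction from $\P_{0}+\paai+\STP\vdash\ATR_{0}^{\st}$ (Theorem~\ref{mikeh}), whereas your gauge construction is the direct combinatorial argument developed later in this paper as Theorem~\ref{theorem.ATR}. Both routes are valid. One refinement worth noting: in the precise version, $G(\beta)$ does not return ``the least stage of failure'' but rather encodes the $<_{X}$-\emph{minimal} element $a$ at which $Y[\beta]_{a}\ne\hat\Gamma(Y[\beta]^{a})$ together with a witness $k$ in the symmetric difference; and the endgame is not that some $\beta_{i}$ in the sub-cover is outright correct, but that if every $\beta_{i}$ fails (with $<_{X}$-minimal failure points arranged as $a_{1}\leq_{X}\cdots\leq_{X}a_{n}$), then any $\beta$ agreeing with $\beta_{n}$ below $a_{n}$ and correct at level $a_{n}$ escapes every neighbourhood $[\overline{\beta_{i}}G(\beta_{i})]$, contradicting the cover property. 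Your sketch gestures at this via ``propagates strictly downward along $<_{X}$'', but the actual mechanism is this uncovered-point argument rather than a descending-chain argument.
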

\begin{proof}
Immediate from \cite{dagsam}*{Cor.\ 6.7} and \cite{dagsamIII}*{Theorem 3.3}.
\end{proof}

The previous theorem is based on an effective result where $\Phi$ as in $\UATR_{0}$ is defined from $\Theta$ and $\mu^{2}$ via a term of G\"odel's $T$.  
This effective result in turn derives from Theorem \ref{mikeh}, i.e.\ via term extraction applied to Nonstandard Analysis.
\begin{thm}\label{frigjr22121}
$\RCAo+(\exists \Theta)\SFF(\Theta)$ is a conservative extension of $\RCA_{0}^{2}+\WKL$.
\end{thm}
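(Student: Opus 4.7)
The plan is to prove this conservation result via the Effective Continuous Functionals (ECF) interpretation, a standard tool of higher-order Reverse Mathematics. The forward inclusion of $\WKL$ (at the second-order level) into $\RCAo+(\exists\Theta)\SFF(\Theta)$ is direct: given a binary tree $T\leq 1$ with no infinite path, set $G^{2}(f):=\mu n(\overline{f}n\notin T)$, well-defined by $\RCAo$'s quantifier-free comprehension. Then for any witness $\Theta$ to $\SFF$, the number $N:=\max_{g\in \Theta(G)}G(g)$ uniformly bounds the depth of $T$ (since any $\sigma\in T$ with $|\sigma|>N$ extends to $f$ with $f\notin[\overline{g}G(g)]$ for every $g\in\Theta(G)$), so $T$ is finite, yielding $\WKL$.

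For the substantive direction, I would apply the ECF interpretation, which replaces higher-type functionals by continuous ones represented by associates in $\N^{\N}$, acts as the identity on second-order sentences, and sends $\RCAo$-provable statements to statements provable in $\RCA_{0}$. I claim that the ECF-translation of $(\exists\Theta^{3})\SFF(\Theta)$ is equivalent to $\WKL$ over $\RCA_{0}$. One direction is just the paragraph above, translated to associates. For the reverse, given $\WKL$ and an associate $\alpha$ of a continuous $G^{2}$, the tree $B_{\alpha}:=\{\sigma\in 2^{<\N}:\alpha(\tau)=0\text{ for all }\tau\preceq\sigma\}$ has no infinite path (by totality of $\alpha$), hence bounded height $N_{1}$ by $\WKL$; setting $N:=\max\big(N_{1},\max_{\sigma\in 2^{N_{1}}}(\alpha(\sigma)\dminus 1)\big)$ provides a uniform bound on $G$ over $2^{\N}$, and $\Theta(G):=\langle \sigma\cdot 0^{\N}:\sigma\in 2^{N}\rangle$ witnesses $\SFF(\Theta)$. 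This construction is arithmetic in $\alpha$, so $\Theta$ itself has an associate definable in $\WKL_{0}$.

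Combining these, any second-order sentence $\sigma$ provable in $\RCAo+(\exists\Theta)\SFF(\Theta)$ is, under ECF, provable in $\WKL_{0}$, and since ECF is the identity on $\sigma$, we conclude. The main obstacle is verifying the ECF-interpretation details at type $3$: one must ensure that the $\Theta$ constructed above is itself ECF-representable, i.e.\ admits an associate definable in $\WKL_{0}$ encoding uniform dependence on $\alpha_{G}$. The $\WKL$-based determination of $N_{1}$ must be expressible as a well-defined search operation at the level of the base theory, and one must check that ECF commutes appropriately with the nested type-$2$ and type-$1$ quantifiers inside $\SFF(\Theta)$.
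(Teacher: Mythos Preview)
Your proposal is correct and uses essentially the same approach as the paper, namely the $\ECF$-interpretation. The paper shortcuts your direct construction of an associate for $\Theta$ by sandwiching: from the $\RCAo$-provable chain $(\exists\Omega)\MUC(\Omega)\to(\exists\Theta)\SFF(\Theta)\to\WKL$ (where $\Omega$ is the intuitionistic fan functional) and the known equivalence $[(\exists\Omega)\MUC(\Omega)]_{\ECF}\leftrightarrow\WKL$, one obtains $\WKL\leftrightarrow[(\exists\Theta)\SFF(\Theta)]_{\ECF}$ without spelling out the associate for $\Theta$ explicitly.
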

\begin{proof}
Immediate from \cite{dagsam}*{Cor.\ 3.5}.
\end{proof}

Combining Theorems \ref{nolapdog} and \ref{frigjr}, it would seem that $\Theta$ produces non-hyper-arithmetical outputs, which turns out to be correct.  
By contrast, there are weak instances of $\Lambda$ which are `closed on the hyperarithmetical'.
\begin{thm}\label{cor.alt.6.8}
For any $\Theta$ such that $\SFF(\Theta)$, there is hyperarithmetical $G^{2}$ such that $\Theta(G)$ is not hyperarithmetical.  
\end{thm}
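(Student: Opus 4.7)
\emph{Plan.} I would argue by contradiction. Suppose some $\Theta$ satisfies $\SFF(\Theta)$ and also that $\Theta(G)$ is hyperarithmetical for every hyperarithmetical $G^{2}$. The goal is to produce \emph{some} special fan functional $\Theta^{*}$ (not necessarily equal to $\Theta$) that is Kleene-computable in a type-$2$ functional, directly contradicting Theorem~\ref{import2}.

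First, recall that a type-$1$ object is hyperarithmetical iff it is Kleene-computable in the Suslin functional $S^{2}$. The predicate ``$w^{1^{*}}$ is a finite sub-cover of the canonical cover of $G$'' is arithmetical in $G$ and $w$, namely $(\forall f \leq 1)(\exists g \in w)(f \in [\overline{g}G(g)])$, while the extra clause ``$w$ is hyperarithmetical-in-$G$'' is $\Pi^{1}_{1}$ in $G$. By the extensional hypothesis, the resulting $\Pi^{1}_{1}$-in-$G$ predicate on $w$ is satisfiable for every hyperarithmetical $G$, and hence for every $G$ by absoluteness of $\Sigma^{1}_{1}$-statements.

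Next, I would apply $\Pi^{1}_{1}$-uniformisation (Kondo's theorem) to extract a uniform selector $\Theta^{*}$ with $\Theta^{*}(G)$ a valid finite sub-cover, with graph $\Pi^{1}_{1}$ in $G$ and values pointwise hyperarithmetical-in-$G$. The fact that $S^{2}$ decides $\Pi^{1}_{1}$-predicates uniformly in a type-$2$ parameter then allows one to convert this selection into a Kleene S1--S9 procedure computing $\Theta^{*}$ from $S^{2}$. Since $\Theta^{*}$ satisfies $\SFF(\Theta^{*})$ and is Kleene-computable in the type-$2$ functional $S^{2}$, this contradicts Theorem~\ref{import2} and finishes the proof.

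The principal obstacle is the final conversion: a uniform $\Pi^{1}_{1}$-in-$G$ selection with pointwise hyperarithmetical-in-$G$ values does not automatically yield a Kleene $S^{2}$-computable functional, as general $\Pi^{1}_{1}$ uniformisations can be of high ordinal complexity. Bridging this requires effective uniformisation in the style of Moschovakis, together with the fact that bounded $\Pi^{1}_{1}$-selections of hyperarithmetical objects may be evaluated via iteration along $\mathcal{O}$-indices, for which $S^{2}$ suffices. A more concrete alternative, avoiding uniformisation, would be a direct diagonalisation: build a hyperarithmetical $G^{2}$ by transfinite recursion along a recursive ordinal, at each stage defeating one candidate hyperarithmetical finite tuple via a local modification of $G$. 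The obstacle there is the absence of a hyperarithmetical enumeration of all hyperarithmetical reals; this would be circumvented by stratifying the construction along Kleene's $\mathcal{O}$ and using $\Sigma^{1}_{1}$-boundedness to keep each stage (and hence $G$) hyperarithmetical.
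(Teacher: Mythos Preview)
Your proposed contradiction argument has two genuine gaps, and the paper (and its predecessor \cite{dagsam}) proceeds by a completely different, \emph{constructive} route.

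\textbf{First gap: the absoluteness step is invalid.} You assume only that the given $\Theta$ maps hyperarithmetical $G$ to hyperarithmetical $\Theta(G)$; this says nothing about non-hyperarithmetical $G$. To invoke Theorem~\ref{import2} you need $\Theta^{*}$ to be a special fan functional on \emph{all} inputs, so your $\Pi^1_1$-in-$G$ predicate ``$w$ is a hyperarithmetical-in-$G$ sub-cover'' must be satisfiable for every $G$. You claim this follows ``by absoluteness of $\Sigma^1_1$-statements,'' but the statement $(\exists w)[w\in\mathrm{HYP}(G)\wedge w\text{ covers}]$ is not $\Sigma^1_1$ in $G$: the clause $w\in\mathrm{HYP}(G)$ is itself $\Pi^1_1$-in-$G$, so the existential is $\Sigma^1_2$ at face value, and Shoenfield absoluteness between models of different $G$ is not what is at stake here anyway. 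There is no relativization principle that turns ``this fixed $\Theta$ preserves $\mathrm{HYP}$'' into ``every $G$-cover has a $\mathrm{HYP}(G)$ sub-cover.''

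\textbf{Second gap: uniformisation does not yield $S^{2}$-computability.} You correctly flag this yourself. Kondo uniformisation applies to $\Pi^1_1$ subsets of $\N^\N\times\N^\N$, not to relations with a genuine type-$2$ parameter $G$; relativising pointwise gives you, for each $G$, a canonical $w_G$ that is $\Pi^1_1$-singleton-in-$G$, but the map $G\mapsto w_G$ is a type-$3$ object and there is no general theorem producing it from $S^{2}$ via S1--S9. Your appeal to ``effective uniformisation in the style of Moschovakis'' and ``iteration along $\mathcal{O}$-indices'' is a hope, not an argument.

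\textbf{What the paper does instead.} The paper simply cites the direct construction in \cite{dagsam}, and Section~\ref{atrsec} gives an alternative: one builds an \emph{explicit} arithmetical $G$ (from a well-ordering $\langle A,<_A\rangle$, a parameter $Z$, and an arithmetical operator $\hat\Gamma$) so that \emph{any} finite sub-cover $g_1,\dots,g_n$ of the $G$-canonical cover computes, uniformly arithmetically, either a solution $Y$ of the transfinite recursion $H(Y,Z)$ or an $<_A$-descending sequence. Choosing the parameters so that neither alternative can be hyperarithmetical forces $\Theta(G)$ out of $\mathrm{HYP}$ for \emph{every} $\Theta$. This is stronger than your target statement (one $G$ defeats all $\Theta$ simultaneously) and entirely sidesteps both of your obstacles: there is no uniformisation, no selection, and no need to treat non-hyperarithmetical inputs.
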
  
\begin{proof}
Immediate from \cite{dagsam}*{Theorem 5.1}.
\end{proof}
\begin{thm}\label{cor.5.6}
There is a $\Lambda_{0}$ such that $\WFF(\Lambda_{0})$ and such that for any total, hyperarithmetical $G^{2}$, $\Lambda_{0}(G,k)$ is a finite list of hyperarithmetical functions.
\end{thm}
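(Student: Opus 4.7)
The plan is to construct $\Lambda_{0}$ by an iterative greedy procedure. Given $G^{2}$ and $k^{0}$, I maintain a finite list $W_{n}=\{g_{0},\dots,g_{n-1}\}$ of witnesses and compute the uncovered clopen set $C_{n}=2^{\N}\setminus\bigcup_{g\in W_{n}}[\overline{g}G(g)]$, halting when $\m(C_{n})\leq \frac{1}{2^{k}}$ and outputting $W_{n}$. The challenge is to select, at each stage and uniformly in $G$, a new witness $g_{n}\in C_{n}$ that is hyperarithmetical whenever $G$ is.

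The selection exploits that $C_{n}$ is a finite union of basic clopens $[\tau_{1}],\dots,[\tau_{m}]$, and that the full canonical cover $\bigcup_{g\leq 1}[\overline{g}G(g)]$ equals $2^{\N}$, so for each $[\tau_{j}]\subseteq C_{n}$ there are witnesses coming from inside $[\tau_{j}]$ contributing to the cover. The key technical ingredient is a relativized basis theorem: for hyperarithmetical $G$ and any canonical prefix $\sigma$, the set $A_{\sigma}=\{g\in[\sigma]: G(g)=|\sigma|\}$ is hyperarithmetical in $G$, is nonempty when $\sigma$ is canonical, and contains a hyperarithmetical element via a Kond\^{o}-style uniformization relativized to $G$. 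Using such a selection, $\Lambda_{0}$ enumerates hyperarithmetical witnesses for canonical prefixes inside each $[\tau_{j}]$; by $\sigma$-additivity applied to any countable refinement of the cover, $\m(C_{n})$ drops below $\frac{1}{2^{k}}$ after finitely many steps, yielding the required finite list, each entry of which is hyperarithmetical in $G$.

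The hard part will be justifying the relativized selection for arbitrary hyperarithmetical $G$. For continuous $G$ the witnesses $\tau\cdot 0^{\N}$ suffice, because $G(\tau\cdot 0^{\N})$ can be computed directly once $|\tau|$ exceeds the modulus of continuity, matching $\overline{g}G(g)$ on a dense hyperarithmetical subset. For discontinuous hyperarithmetical $G$ (such as those recursive in Kleene's jump functional $^{2}E$), one must instead invoke the observation that nonempty hyperarithmetical-in-$G$ sections of the form $A_{\sigma}$ contain hyperarithmetical-in-$G$ elements, a statement provable by a level-by-level analysis of the hyperarithmetical hierarchy and \emph{not} generally available for $\Pi_{1}^{0}$ classes of positive measure without such an analysis. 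A final point is that $\Lambda_{0}$ must be total on every $G^{2}$, not only hyperarithmetical ones: should the greedy search fail to terminate (which cannot happen for hyperarithmetical $G$ but may in general), $\Lambda_{0}$ defaults to the output of some fixed weak fan functional, whose existence follows, e.g., from Theorem~\ref{import3} via any instance of $\Theta$.
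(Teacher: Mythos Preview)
Your greedy selection has a genuine gap. The claim that a nonempty hyperarithmetical set $A_{\sigma}=\{g\in[\sigma]:G(g)=|\sigma|\}$ always contains a hyperarithmetical element is false: nonempty $\Delta^{1}_{1}$ subsets of $2^{\N}$ need not have $\Delta^{1}_{1}$ members. For a concrete instance, let $\prec$ be a recursive pseudo-wellordering and set $G(g)=0$ if $g$ encodes (via a fixed coding of $\N^{\N}$ into $2^{\N}$) a $\prec$-descending sequence, and $G(g)=1$ otherwise. Then $G$ is arithmetical, the empty string is a canonical prefix, and $A_{\langle\rangle}$ is nonempty but contains no hyperarithmetical element by the defining property of a pseudo-wellordering. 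Kond\^{o} uniformisation only yields $\Pi^{1}_{1}$ selectors, not hyperarithmetical ones, and no ``level-by-level analysis of the hyperarithmetical hierarchy'' circumvents this obstruction.

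The fix, and the route taken in the paper's cited reference, is measure-theoretic rather than pointwise. One shows (as in Lemma~\ref{5.2}(i)) that for any total $G$ the set of infinite sequences $(f_{j})_{j\in\N}$ with $\m\big(\bigcup_{j}[\overline{f_{j}}G(f_{j})]\big)=1$ has full measure in $C^{\N}$; this set is arithmetical in $G$. For hyperarithmetical $G$ one then applies the Sacks--Tanaka basis theorem (Proposition~\ref{prop1}(ii)) together with Gandy selection (Proposition~\ref{prop1}(iii)) to extract a hyperarithmetical such sequence, and defines $\Lambda_{0}(G,k)$ to be a suitable finite initial segment. If you want to rescue your prefix-based strategy, you must restrict attention to those $A_{\sigma}$ of \emph{positive} measure: these do have hyperarithmetical members by Sacks--Tanaka, and they still cover a set of measure~$1$ since the union of the null $A_{\sigma}$ is a countable union of null sets. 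Your fallback clause for non-hyperarithmetical $G$ is fine.
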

\begin{proof}
Immediate from \cite{dagsam}*{Cor.\ 5.14}.
\end{proof}
As noted in \cite{dagsam}*{Appendix}, and proved in full in Section \ref{essdag}, 
one can define a weak fan functional that is very weak as follows:
\begin{thm}\label{import24}
There exists a functional $\Lambda_{1}$ satisfying $\WFF(\Lambda_{1})$ such that all functions computable in $\Lambda_1$ and $\exists^2$ are hyperarithmetical.  
\end{thm}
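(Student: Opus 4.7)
My plan is to construct $\Lambda_1$ by a canonical hyperarithmetical selection at each query, uniformly in the queried $G$, and then to argue that the Kleene-computability closure of $(\Lambda_1, \exists^2)$ stays inside $\HYP$.

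First, for any $G^2$ and $k$, I observe that the set of valid $\WFF$-outputs $w$ witnessing \eqref{stylez} is enumerable uniformly in $G$ using $\exists^2$. Indeed, the measure $\m(\{f \in C : (\exists g \in w)(f \in [\overline{g}G(g)])\})$ is determined by the finite list of binary strings $\overline{g}G(g)$ for $g \in w$, so one can effectively enumerate candidate sub-covers by enumerating lists of binary strings $\langle \sigma_0, \ldots, \sigma_{m-1}\rangle$, checking the measure condition using the natural dyadic computation, and reconstructing the corresponding $g_i$ as canonical extensions of the $\sigma_i$ (subject to the $G$-compatibility condition $G(g_i) = |\sigma_i|$, which is decidable using $\exists^2$). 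Selecting the leftmost valid list in a suitable recursive ordering defines a baseline $\Lambda_1(G, k)$ that is uniformly hyperarithmetical in $G$; this satisfies $\WFF(\Lambda_1)$ and already refines the $\Lambda_0$ of Theorem~\ref{cor.5.6}.

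The second step is to show that no Kleene computation from $(\Lambda_1, \exists^2)$ escapes $\HYP$. The subtlety is that during a computation, $\Lambda_1$ may be queried at a type-$2$ object $G$ constructed by the ongoing computation, which is not known a priori to be hyperarithmetical. I would handle this by induction along the Kleene-computation tree: because each S1--S9 step produces a value hyperarithmetical in the previously computed objects, and because the hyperarithmetical selection above preserves this closure property \emph{uniformly} in the oracle $G$, every subtree value lies in $\HYP$. Since every convergent Kleene computation has well-founded computation-tree height below $\omega_1^{\textup{CK}}$, the final type-$1$ output is hyperarithmetical.

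The main obstacle is establishing the uniformity of the baseline selection in a form strong enough to sustain the induction — i.e.\ exhibiting a single hyperarithmetical index that computes $\Lambda_1(G, k)$ from any oracle $G$, rather than a separately chosen hyperarithmetical procedure per $G$. This uniformity, together with the absoluteness of the measure-approximation search described above, is precisely what closes the induction and forces the Kleene-closure of $(\Lambda_1, \exists^2)$ into $\HYP$.
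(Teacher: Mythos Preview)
Your approach has a fatal gap at the very first step. You propose to make $\Lambda_1(G,k)$ ``uniformly hyperarithmetical in $G$'', i.e.\ computable from $G$ and $\mu$ by a single fixed Kleene index. But then $\Lambda_1$ itself, as a type~3 functional, would be computable (S1--S9) in the type~2 object $\mu$, and Theorem~\ref{import2} says outright that no weak fan functional is computable in any type~2 functional. So the baseline $\Lambda_1$ you describe cannot exist. (Concretely, your enumeration scheme breaks down: to output $g_i$ with $\overline{g_i}G(g_i)=\sigma_i$ you must search $2^{\N}$ for an extension of $\sigma_i$ on which $G$ returns $|\sigma_i|$; such an extension need not exist, and even when it does, $\exists^2$ does not let you find it.)

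Your second step is also too optimistic. In an S8.3 call, the inner $G=\lambda g.\{e_1\}(\Lambda_1,\mu,g,\vec g,\vec b)$ must be evaluated at \emph{every} $g\in C$, not only hyperarithmetical ones, before $\Lambda_1$ can be applied; there is no reason the ordinal rank of these sub-computations should be bounded below $\omega_1^{\CK}$, and for a generic type~3 oracle it is not. The paper's construction (Section~\ref{essdag}) handles exactly this: it builds a transfinite double sequence $[f]=\{(f_i)\}_{i\in W}$ along a pseudo-well-ordering of length $\omega_1^{\CK}$, choosing each $(f_i)$ via Sacks--Tanaka measure-theoretic uniformity and Gandy selection so that \emph{for a measure-one set of parameter tuples} the $\Lambda$-calls encountered in any computation of rank $\leq\alpha_i$ are already settled by $[f]_i$. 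The resulting $\Lambda_1$ is \emph{not} computable in $\mu$ (it uses a default $\Lambda_0$ off the controlled set), yet every function computable in $\Lambda_1,\mu$ with integer inputs is hyperarithmetical, because $\Sigma_1$-admissibility of $\omega_1^{\CK}$ lets one bound the stage at which the relevant measure-one approximations stabilise (Theorem~\ref{theorem.lambda}). This delicate interplay between measure-one genericity and admissibility is precisely the idea your proposal is missing.
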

\begin{proof}
The proof is given in Section \ref{essdag}.  See Theorem \ref{theorem.lambda}.
\end{proof}
\begin{cor}
There exists a functional $\Lambda_{1}$ satisfying $\WFF(\Lambda_{1})$ such that no $\Theta$ satisfying $\SFF(\Theta)$ is computable in $\Lambda_{1}$ and $\exists^{2}$.
\end{cor}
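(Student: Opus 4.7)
The plan is to argue by contradiction, combining Theorem \ref{import24} with the non-computability statement in Theorem \ref{cor.alt.6.8}. Fix a functional $\Lambda_{1}$ as provided by Theorem \ref{import24}, so that $\WFF(\Lambda_{1})$ holds and every function computable in $\Lambda_{1}$ and $\exists^{2}$ is hyperarithmetical. Suppose, towards a contradiction, that there is some $\Theta$ with $\SFF(\Theta)$ that is computable in $\Lambda_{1}$ and $\exists^{2}$; our goal is to show that this forces every $\Theta(G)$ on a hyperarithmetical input $G$ to be hyperarithmetical, which contradicts Theorem \ref{cor.alt.6.8}.

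The key step is to justify a relativisation of Theorem \ref{import24}: for any hyperarithmetical $G^{2}$, every function computable in $\Lambda_{1}$, $\exists^{2}$, and $G$ is still hyperarithmetical. The informal reason is that adjoining a hyperarithmetical parameter to an oracle family whose outputs are all hyperarithmetical keeps everything within the hyperarithmetical sets, by the standard closure of $\HYP$ under hyperarithmetical reducibility. More concretely, an application of $G$ inside an S1--S9 computation tree can be replaced by a hyperarithmetical subcomputation producing the needed finite initial segments of $G$-values, and the resulting overall computation is still hyperarithmetical by Theorem \ref{import24} applied to each such subcomputation. Thus, for hyperarithmetical $G^{2}$, the finite list $\Theta(G)$ --- being computed in $\Lambda_{1}$, $\exists^{2}$, and $G$ by assumption --- consists of hyperarithmetical binary sequences.

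By Theorem \ref{cor.alt.6.8}, however, there is a hyperarithmetical $G^{2}$ such that $\Theta(G)$ is not hyperarithmetical, meaning at least one element of the finite list $\Theta(G)$ fails to be hyperarithmetical. This directly contradicts the conclusion of the previous paragraph, so no such $\Theta$ can be computable in $\Lambda_{1}$ and $\exists^{2}$.

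The main obstacle is the relativisation step: strictly speaking, Theorem \ref{import24} is stated for computations in $\Lambda_{1}$ and $\exists^{2}$ alone, and one must verify that the argument given in Section \ref{essdag} (Theorem \ref{theorem.lambda}) is robust enough to allow a hyperarithmetical oracle $G$ as an additional parameter. Once this standard verification is in place, the corollary follows immediately by the two-step chain above.
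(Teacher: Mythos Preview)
Your proof is correct and follows the paper's approach: combine Theorem~\ref{import24} with Theorem~\ref{cor.alt.6.8} to derive a contradiction. However, you have made the argument more complicated than necessary by introducing a ``relativisation'' step that is in fact trivial.

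The point you are missing is that in this setting, a type~2 functional $G$ is \emph{hyperarithmetical} precisely when it is computable in $\exists^{2}$ (equivalently, in $\mu$); see the preliminaries in Section~\ref{prelim1}. Hence, if $\Theta$ is computable in $\Lambda_{1}$ and $\exists^{2}$, and $G$ is hyperarithmetical, then $G$ is already computable from $\exists^{2}$ alone, and therefore $\Theta(G)$ is computable in $\Lambda_{1}$ and $\exists^{2}$ with no additional oracle. Theorem~\ref{import24} then applies directly to give that (each function coded by) $\Theta(G)$ is hyperarithmetical, contradicting Theorem~\ref{cor.alt.6.8}. No appeal to the internals of Section~\ref{essdag}, and no genuine relativisation argument, is required. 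The paper's proof is accordingly just one sentence.
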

\begin{proof}
Theorems \ref{cor.alt.6.8} and \ref{import24} immediately yield the corollary.  
\end{proof}
Finally, Theorem \ref{frigjr22121} is proved using the $\ECF$\emph{-translation}, which will be needed below.  
We therefore discuss the proof of the former theorem in some detail.
\begin{rem}[$\ECF$-translation and $\Theta$]\label{ECFREM}\rm
As discussed in \cite{kohlenbach2}*{\S3}, one can modify the proofs in \cite{troelstra1}*{\S2.6} to establish that $\RCAo+(\exists \Omega^{3})\MUC(\Omega)$ is conservative over $\RCA_{0}^{2}+\WKL$, where $\Omega^{3}$ is called the \emph{intuitionistic fan functional} as follows:
\be\tag{$\MUC(\Omega)$}
(\forall Y^{2}) (\forall f^{1}, g^{1}\leq1)(\overline{f}\Omega(Y)=\overline{g}\Omega(Y)\notag \di Y(f)=Y(g)),   
\ee
In the latter reference, the so-called $\ECF$-interpretation is defined which, intuitively speaking, replaces all higher-order functionals (of type two or higher) by type one \emph{codes} (in the sense of Reverse Mathematics) which \emph{represent}  (automatically continuous) higher-type functionals.  
The $\ECF$-interpretation has 
the following convenient property (discussed in \cite{kohlenbach2}*{\S3}) for any formula in the language of finite types:
\be\label{TUG}
\textup{If $\RCAo\vdash A$, then $\RCA_{0}^{2}\vdash [A]_{\ECF}$}.
\ee
Now, the $\ECF$-interpretation of $(\exists \Omega^{3})\MUC(\Omega)$ expresses that there is a code $\alpha^{1}$ which yields a modulus of uniform continuity on Cantor space on input a code $\beta^{1}$ representing an (automatically continuous) type two functional.  
As follows from the discussion in \cite{longmann}*{p.\ 459}, we have $[(\exists \Omega^{3})\MUC(\Omega)]_{\ECF}\asa \WKL$.  
Alternatively, one can explicitly define the aforementioned code $\alpha^{1}$ and show that it has the required properties using (the contraposition of) $\WKL$, as done in \cite{troelstra1}*{2.6.6} and \cite{noortje}*{p.\ 101}.  

\medskip

Theorem \ref{frigjr22121} can now be obtained in at least two ways:  First of all, one considers $(\exists \Omega)\MUC(\Omega)\di (\exists \Theta)\SFF(\Theta)\di \WKL$ (provable in $\RCAo$), which follows from the results in \cite{dagsam}*{\S3} or \cite{samGH}*{\S3}, and applying the $\ECF$-translation and the above results yields $\WKL\di [(\exists \Theta)\SFF(\Theta)]_{\ECF}\di \WKL$.  Secondly, one can also explicitly define the code for $\Theta$ required for $[(\exists \Theta)\SFF(\Theta)]_{\ECF}$ in terms of the aforementioned code $\alpha^{1}$, as the classical fan functional trivially computes $\Theta(G)$ in case $G^{2}$ is continuous on Cantor space.  This finishes the proof of Theorem \ref{frigjr22121}.    
\end{rem}

\subsection{Known results in Nonstandard Analysis}\label{knowledgebase}
A substantial number of results regarding nonstandard compactness were obtained in \cite{dagsam}, some of which we list in this section 
as they are needed below or give rise to open questions.

\medskip

First of all,  although the Big Five and $\WWKL_{0}$ are linearly ordered as in \eqref{linord}, the nonstandard counterparts behave quite differently.  
\begin{thm}\label{forqu}
$\P+\Paai$ and $\P+\paai$ do not prove $\STP$ or $\LMP$.  
\end{thm}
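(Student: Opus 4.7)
The plan is to argue by contradiction, converting any putative derivation into a computability statement that contradicts Theorem \ref{import2}. Suppose, toward a contradiction, that $\P + \paai \vdash \STP$; then $\P_0 + \paai \vdash \STP$ since the large fragment plays no role. By Theorem~\ref{lapdog}, $\STP$ is $\P_0$-equivalent to the normal form
\[
(\forall^{\st}G^{2})(\exists^{\st}w^{1^{*}})(\forall f^{1}\leq 1)(\exists g\in w)\big(f\in[\overline{g}G(g)]\big).
\]
The principle $\paai$, in turn, is straightforwardly rewritten as a normal form
\[
(\forall^{\st}f^{1})(\exists^{\st}n^{0})\big[(\exists m^{0})(f(m)=0)\to f(n)=0\big],
\]
whose term-extracted antecedent (via $\QFAC^{1,0}$ and the $\exists^{\st}$-quantifier) is exactly Feferman's $\mu^{2}$ satisfying $\MU(\mu^{2})$, as hinted after the definition of $\paai$.

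With both sides in normal form, one follows the procedure codified in Remark~\ref{doeisnormaal}: the implication $\paai\to \STP$ is promoted to a normal-form implication in $\P_0$, and Theorem~\ref{consresultcor} (term extraction for $\P_0$ via Corollary~\ref{consresultcor2}) yields a closed term $t$ in Gödel's $\T^{*}$ such that $\RCAo$ proves
\[
(\exists\mu^{2})\MU(\mu^{2})\;\to\;(\exists\Theta)\SFF(\Theta),\qquad \text{with } \Theta=t(\mu^{2}).
\]
Since every term of $\T^{*}$ denotes an S1--S9 computable functional in its arguments, this would exhibit a special fan functional $\Theta$ that is Kleene-computable from the type-two functional $\mu^{2}$. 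This directly contradicts Theorem~\ref{import2}, which rules out any $\Theta^{3}$ satisfying $\SFF(\Theta)$ from being computable in any $\varphi^{2}$.

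The same template settles the remaining three cases. For $\P+\Paai\vdash\STP$, the normal form of $\Paai$ term-extracts to $\mu_{1}$ (a type-two object in the relevant sense), giving $\Theta$ computable from $\mu_{1}$, again contradicting Theorem~\ref{import2}. For $\LMP$, one replaces Theorem~\ref{lapdog} by Theorem~\ref{lapdoc}, whose normal form is $\SFF$-like and term-extracts to a weak fan functional $\Lambda$ satisfying $\WFF(\Lambda)$; since Theorem~\ref{import2} equally forbids $\Lambda$ from being computable in any $\varphi^{2}$, the same contradiction is reached from either $\paai\to\LMP$ or $\Paai\to\LMP$.

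The one step requiring care, and where I expect the bulk of the bookkeeping to lie, is the conversion to normal form and the verification that the rewriting procedure of Remark~\ref{doeisnormaal} applies to the particular shapes of $\paai$ and $\Paai$; once these normal forms are in hand, the remainder of the argument is essentially a single application of Theorem~\ref{consresultcor} composed with Theorem~\ref{import2}. In particular, the argument is entirely \emph{uniform}: the same term $t$ would work for every $\mu^{2}$ (resp.\ $\mu_{1}$), which is exactly what makes the non-computability result of Theorem~\ref{import2} applicable.
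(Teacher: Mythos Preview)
Your approach is correct in spirit and is almost certainly the argument given in \cite{dagsam}*{Cor.\ 4.6}, which the paper simply cites without reproducing. The paper itself explicitly signals this route: it remarks after introducing $\paai$ and $\Paai$ that term extraction applied to these as antecedents yields $(\mu^{2})$ and $(\mu_{1})$ respectively, and Theorem~\ref{import2} is exactly the non-computability input you invoke.

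There is, however, one genuine slip. Your reduction ``$\P + \paai \vdash \STP$; then $\P_0 + \paai \vdash \STP$ since the large fragment plays no role'' is unjustified: $\P$ is based on $\textsf{E-PA}^{\omega*}$ (full induction) while $\P_0$ is based on $\textsf{E-PRA}^{\omega*}$, so the implication goes the wrong way. Fortunately this step is entirely unnecessary---Theorem~\ref{consresultcor} applies directly to $\P$, so you should extract a term over $\textsf{E-PA}^{\omega*}$ rather than invoke Corollary~\ref{consresultcor2}. The extracted term still lies in G\"odel's $\T^{*}$ and is therefore Kleene-computable in its inputs, so the contradiction with Theorem~\ref{import2} goes through unchanged.

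One small point worth making explicit for the $\STP$ case: term extraction gives a finite \emph{list} $t(\mu,G)$ of candidate covers, not a single one; you should note that concatenating them yields a valid $\Theta(G)$ (a super-cover of a cover is a cover). The analogous step for $\LMP$ is handled by Theorem~\ref{makker}, which interconverts $\WCF$ and $\WFF$ by terms of~$\T$.
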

\begin{proof}
Immediate from \cite{dagsam}*{Cor.\ 4.6}.
\end{proof}
Secondly, in light of the failure of $\paai\di \STP$, it is a natural question how strong the combination $\paai+\STP$ is.  
As it turns out, we readily obtain $\ATR^{\st}$ from $\paai+\STP$.    The same theorem for $\LMP$ fails.
\begin{thm}\label{mikeh}
The system $\P_{0}+\paai+\STP$ proves $\ATR^{\st}_{0}$ while $\P+\paai+\LMP$ does not.
\end{thm}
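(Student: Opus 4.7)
The theorem has two directions, which I treat separately.

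For the positive direction, $\P_{0} + \paai + \STP \vdash \ATR^{\st}_{0}$: the plan is to mimic, internally inside Nonstandard Analysis, the classical derivation $\HBU + (\mu^{2}) \vdash \UATR$ recorded in Theorem \ref{frigjr}. The nonstandard counterparts are $\STP$ playing the role of $\HBU$ (via the normal forms \eqref{kuuit} and \eqref{fanns} from Theorem \ref{lapdog}) and $\paai$ playing the role of $(\mu^{2})$. Concretely, I would fix standard $X^{1}, f^{1}$ with $\WO(X)$ and aim for a standard $Y$ satisfying $H_{f}(X, Y)$. Applied to the canonical cover of $2^{\N}$ generated by the partial attempts $[\overline{g}G(g)]$ at iterating the operator coded by $f$ along $X$, the normal form of $\STP$ returns a standard finite witness $w$ whose components jointly encode a standard candidate $Y$ for the iteration. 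Verifying $H_{f}(X, Y)$ for this candidate then reduces to transferring $\Pi_{1}^{0}$-statements with standard parameters from the nonstandard to the standard world, which is exactly the service $\paai$ provides.

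For the negative direction, $\P + \paai + \LMP \not\vdash \ATR^{\st}_{0}$: the plan is term extraction combined with a hyperarithmeticity argument. Suppose for contradiction that $\P + \paai + \LMP \vdash \ATR^{\st}_{0}$. Cast $\ATR^{\st}_{0}$ in the normal form $(\forall^{\st}X, f)(\exists^{\st}Y)[\WO(X) \di H_{f}(X, Y)]$, and cast $\paai$ and $\LMP$ in their normal forms (the latter via Theorem \ref{lapdoc}). Theorem \ref{consresultcor}, used according to the template of Remark \ref{doeisnormaal}, then yields a closed term $t$ of G\"odel's $\T$ such that $\RCAo$ proves that $t(\mu^{2}, \Lambda, X, f)$ realises $\UATR$ uniformly in any $\mu^{2}$ realising $(\mu^{2})$ and any $\Lambda$ satisfying $\WFF(\Lambda)$. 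Specialising $\Lambda := \Lambda_{1}$ from Theorem \ref{import24}, the resulting $\UATR$-realiser is computable in $\Lambda_{1}$ and $\exists^{2}$, and hence hyperarithmetical (in the appropriately relativised form of Theorem \ref{import24}). This contradicts the classical fact (a close relative of Theorem \ref{cor.alt.6.8} for $\Theta$) that a realiser of $\UATR$ provides non-$\HYP$ outputs at suitable standard inputs.

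The main obstacle in the positive direction is to execute the nonstandard mimicking of the $\HBU + (\mu^{2}) \vdash \UATR$ argument carefully enough that the coherence of the iterated sequence can be verified using $\paai$ alone, rather than calling on a stronger transfer principle; the interplay between the standard finite witness supplied by $\STP$ and the $\Pi_{1}^{0}$-transfer supplied by $\paai$ is where the genuine work lies. The main obstacle in the negative direction is applying Theorem \ref{import24} to the extracted term in a \emph{parametrised} form, since the statement in Theorem \ref{import24} is parameter-free; a relativised companion of that theorem, straightforward but necessary, is what actually closes the contradiction.
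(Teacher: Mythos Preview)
The paper's proof is a bare citation to \cite{dagsam}, but Section~\ref{schweber} tells us that the positive direction there was obtained by proving $[\Sigma_{1}^{1}\mSEP]^{\st}$ directly in $\P_{0}+\paai+\STP$ (in the style of the proof of Theorem~\ref{stokeheo}) and then invoking $\ATR_{0}\leftrightarrow\Sigma_{1}^{1}\mSEP$ via Theorem~\ref{XXX}. Your route---internalising the compactness-based construction of the hierarchy---is a genuinely different argument. One warning: citing Theorem~\ref{frigjr} as the classical argument to mimic is circular, since the paragraph after Theorem~\ref{frigjr} states explicitly that it is \emph{derived from} Theorem~\ref{mikeh} via term extraction. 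What you should internalise is the independent direct argument of Theorem~\ref{theorem.ATR} in Section~\ref{atrsec}. With that correction your sketch is sound: from standard $X,f$ build the standard arithmetical $G$ of Theorem~\ref{theorem.ATR}; the normal form \eqref{kuuit} of $\STP$ yields a standard finite sub-cover; the construction of Theorem~\ref{theorem.ATR} then produces a standard pair $(Y,h)$ with either $H_{f}(X,Y)$ or $h$ a standard $<_{X}$-descending sequence, the latter contradicting $[\WO(X)]^{\st}$.

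Your negative direction is correct, and the obstacle you flag actually dissolves. The normal form one extracts from $\ATR_{0}^{\st}$ under $\paai$ is the dichotomy $(\forall^{\st}X,f)(\exists^{\st}Y,g)\big[(\forall n)(g(n{+}1)<_{X}g(n))\vee H_{f}(X,Y)\big]$, so the term $t$ produces, for each $X,f$, a finite list of pairs $(Y,g)$ one of which witnesses the disjunction. Now take $X$ a \emph{computable} pseudo-wellordering and $f$ the Turing-jump code: since these are computable, $\lambda n.\,t(\mu,\Lambda_{1},X,f)(n)$ is computable in $\Lambda_{1}+\mu$ alone and hence hyperarithmetical by the \emph{unrelativised} Theorem~\ref{import24}. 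But no hyperarithmetical $g$ descends in $X$, and no hyperarithmetical $Y$ can satisfy $H_{f}(X,Y)$ (such $Y$ would uniformly compute $\emptyset^{(\alpha)}$ for all $\alpha<\omega_{1}^{\CK}$). So the contradiction closes without any relativisation. Alternatively, the map $e\mapsto t(\mu,\Lambda_{1},\{e\},f)$ is itself computable in $\Lambda_{1}+\mu$, hence $\leq_{T}\emptyset^{(\gamma)}$ for some fixed $\gamma<\omega_{1}^{\CK}$, which already blocks any total $\UATR$-realiser; this is the sense in which Corollary~\ref{1.18} invokes ``no hyperarithmetical realiser for $\ATR_{0}$''.
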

\begin{proof}
Immediate from \cite{dagsam}*{Theorems 6.3 and 6.4}.
\end{proof}
Note that $\WKL$ and $\WWKL$ (and hence $\STP$ and $\LMP$) are `very close' in the sense that there is nothing between 
them in the RM zoo (\cite{damirzoo}) or the Weihrauch degrees (\cite{bratger}).  
\begin{thm}\label{mikeh2}
The system $\P+\paai+\LMP$ does not prove $\STP$.
\end{thm}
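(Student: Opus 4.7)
The plan is to derive a contradiction from the assumption that $\P+\paai+\LMP\vdash\STP$, by combining term extraction (Theorem~\ref{consresultcor} and Corollary~\ref{consresultcor2}) with the non-computability separation provided by the corollary following Theorem~\ref{import24}.

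\textbf{Step 1: Put everything in normal form.} By Theorem~\ref{lapdog}, $\STP$ is equivalent (over $\P_{0}$) to the normal form
\[
(\forall^{\st}G^{2})(\exists^{\st}w^{1^{*}})(\forall f^{1}\leq 1)(\exists g\in w)(f\in[\overline{g}G(g)]),
\]
which syntactically announces the existence of a special fan functional. Similarly, by Theorem~\ref{lapdoc}, $\LMP$ admits a normal form announcing a weak fan functional, and $\paai$ has a normal form whose term extract is (a functional equivalent to) Feferman's $\mu^{2}$, as remarked just below the statement of $\Paai$ in Section~\ref{pampson}.

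\textbf{Step 2: Extract a uniform term.} Assume towards a contradiction that $\P_{0}+\paai+\LMP\vdash\STP$. Writing the hypotheses and conclusion in normal form and applying the template from Remark~\ref{doeisnormaal} (turning the implication between normal forms into a single normal form, then extracting terms via Corollary~\ref{consresultcor2}), we obtain a closed term $t$ of G\"odel's $\T^{*}$ such that $\EPRA^{\omega*}+\QFAC^{1,0}$ proves: \emph{for every $\mu^{2}$ satisfying $\MU(\mu)$ and every $\Lambda$ satisfying $\WFF(\Lambda)$, the functional $t(\mu,\Lambda)$ satisfies $\SFF\bigl(t(\mu,\Lambda)\bigr)$}. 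In particular, $t(\mu,\Lambda)$ is computable (in the sense of Kleene S1--S9) in $\mu^{2}$ and $\Lambda$.

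\textbf{Step 3: Instantiate with the weak $\Lambda_{1}$.} By the corollary immediately following Theorem~\ref{import24}, there exists $\Lambda_{1}$ satisfying $\WFF(\Lambda_{1})$ such that \emph{no} functional $\Theta$ satisfying $\SFF(\Theta)$ is computable in $\Lambda_{1}$ and $\exists^{2}$. Since $\exists^{2}$ and $\mu^{2}$ are inter-computable over $\RCAo$, plugging $\Lambda_{1}$ and $\mu^{2}$ into the term $t$ from Step~2 would produce a special fan functional computable from $\Lambda_{1}$ and $\exists^{2}$, directly contradicting that corollary. Hence $\P+\paai+\LMP\not\vdash\STP$.

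\textbf{Main obstacle.} The delicate point is not the logical manipulation but \emph{ensuring that term extraction genuinely goes through with $\LMP$ and $\paai$ in the hypotheses}: one must verify that the normal-form passage in Remark~\ref{doeisnormaal} applies cleanly when several nonstandard axioms are assumed simultaneously (so that the eventual term depends only on the standard data for $\mu^{2}$ and $\Lambda$, and the uniform quantifiers are indeed ignored by the $S_{\st}$-interpretation). Once this is in place, the computability-theoretic separation from Section~\ref{knowledge} does all the work.
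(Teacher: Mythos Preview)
Your argument is correct in outline, but it is not the paper's proof. The paper's proof is a one-liner: it simply invokes Theorem~\ref{mikeh}, which already states that $\P_{0}+\paai+\STP$ proves $\ATR_{0}^{\st}$ while $\P+\paai+\LMP$ does not. Hence if $\P+\paai+\LMP$ proved $\STP$, it would also prove $\ATR_{0}^{\st}$, contradicting the second half of Theorem~\ref{mikeh}.

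Your route is genuinely different: rather than passing through $\ATR_{0}^{\st}$, you go directly via term extraction to obtain a special fan functional $t(\mu,\Lambda)$ computable in any $\mu^{2}$ and any weak fan functional $\Lambda$, and then invoke the corollary to Theorem~\ref{import24} (the $\Lambda_{1}$ from Section~\ref{essdag}) for the contradiction. This has the advantage of making the underlying computability-theoretic separation explicit and of not relying on the $\ATR_{0}^{\st}$ result imported from \cite{dagsam}; on the other hand, it depends on the rather heavy construction of $\Lambda_{1}$ in Section~\ref{essdag}, whereas the paper's one-line proof needs only the already-cited Theorem~\ref{mikeh}. One small slip: in Step~2 you write ``$\P_{0}+\paai+\LMP\vdash\STP$'' and cite Corollary~\ref{consresultcor2}, but the theorem concerns $\P$, so you should assume $\P+\paai+\LMP\vdash\STP$ and apply Theorem~\ref{consresultcor} instead; the rest of the argument is unaffected.
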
  
\begin{proof}
Immediate from Theorem \ref{mikeh}.
\end{proof}
Finally, we often use this theorem without mention.  
\begin{thm}\label{XXX}
If $\RCA_{0}$ proves $A$, then $\P_{0}$ proves $A^{\st}$.
\end{thm}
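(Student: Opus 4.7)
The plan is to proceed by induction on the length of a derivation in $\RCA_{0}$, verifying two things at each step: (i) for each axiom $\varphi$ of $\RCA_{0}$ that $\P_{0} \vdash \varphi^{\st}$, and (ii) that the usual first-order logical rules are compatible with the starred translation. Since starring commutes with the propositional connectives and sends each quantifier to its standard counterpart, i.e.\ $((\forall x)A(x))^{\st}\equiv (\forall^{\st}x)A^{\st}(x)$ and similarly for $\exists$, the inductive step for logical rules such as modus ponens and universal generalisation is immediate: a derivation in $\RCA_{0}$ translates step by step into one in $\P_{0}$ of the starred translation. One first observes that $\RCA_{0}$ embeds into Kohlenbach's $\RCAo \equiv \textsf{E-PRA}^{\omega} + \QFAC^{1,0}$, so it suffices to verify the claim for the axioms of $\RCAo$.

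For the axiom verification, the defining axioms of $\textsf{E-PRA}^{\omega}$ are quantifier-free, hence internal, so that $\varphi^{\st}$ coincides with $\varphi$ and these axioms are already present in $\P_{0} \supseteq \textsf{E-PRA}^{\omega*}$. The quantifier-free axiom of choice $\QFAC^{1,0}$ is an explicit axiom of $\P_{0}$; its starred version asks, given a standard $\Phi^{0\to 1}$ (or a standard instance generally), for a standard choice functional, and this follows by first applying $\QFAC^{1,0}$ internally to obtain a witness, and then invoking $\T_{\st}^{*}$ to conclude that the witness, being defined by a closed term from standard data, is itself standard. For the induction schema of $\RCA_{0}$, one notes that a $\Sigma_{1}^{0}$-formula, after starring, takes the form $(\exists^{\st}n)\psi(n,m)$ with $\psi$ internal, so that the starred induction becomes an instance of external induction for formulas of this shape, and this is exactly what the axiom $\I$ of $\P_{0}$ provides.

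The only subtle point, and thus the main obstacle, is the treatment of induction: one must check that the starred version of $\Sigma_{1}^{0}$-induction is actually covered by $\I$, since $\I$ is typically formulated as external induction over standard naturals, and one must verify that it applies to the external formulas $(\exists^{\st}n)\psi(n,m)$ that arise from starring. Once $\I$ is interpreted (as in \cite{brie} and \cite{SB}*{Appendix}) as external induction for arbitrary formulas in the extended language, this verification is routine. Modulo this observation, the theorem reduces to a straightforward syntactic inspection of the axioms of $\RCA_{0}$, which is why the paper applies it silently throughout.
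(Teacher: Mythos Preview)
Your overall strategy---verifying that $\P_{0}$ proves the starred axioms of $\RCA_{0}$ (via the embedding into $\RCAo$)---matches the paper's one-line proof. However, two of your detailed justifications are incorrect.

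First, your treatment of $[\QFAC^{1,0}]^{\st}$ does not work. The starred antecedent is $(\forall^{\st}f)(\exists^{\st}n)\varphi_{0}(f,n)$, which is \emph{not} the antecedent of internal $\QFAC^{1,0}$, so you cannot simply ``apply $\QFAC^{1,0}$ internally to obtain a witness''. The correct route is to apply $\HAC_{\INT}$ to obtain a standard $G$ with $(\forall^{\st}f)(\exists n\in G(f))\varphi_{0}(f,n)$, and then define $F(f)$ by a bounded search through $G(f)$; this $F$ is standard by $\T_{\st}^{*}$ since it is given by a closed term applied to standard $G$.

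Second, and more seriously, you misidentify the axiom $\textsf{I}$: in this paper $\textsf{I}$ is \emph{Idealisation}, not external induction. The external induction schema is $\textsf{IA}^{\st}$, which is part of $\P$ (via $\textsf{E-PA}^{\omega*}_{\st}$) but is \emph{not} listed among the axioms of $\P_{0}$. Fortunately this does not derail the argument: once you embed $\RCA_{0}$ into $\RCAo$, $\Sigma_{1}^{0}$-induction is \emph{derived} there from quantifier-free induction plus $\QFAC^{1,0}$, so you only need the starred versions of the latter two. Starred quantifier-free induction is immediate from the internal version (restricting to standard $n$), and starred $\QFAC^{1,0}$ is handled as above via $\HAC_{\INT}$. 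Your separate paragraph on $\Sigma_{1}^{0}$-induction is therefore both unnecessary and incorrectly justified.
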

\begin{proof}
One readily verifies that $\P_{0}$ proves the axioms of $\RCA_{0}$ relative to `st'.
\end{proof}

\subsection{Open questions}\label{qopen}
The above listed theorems from \cite{dagsam} give rise to the following open questions. They will be answered in this paper.

\medskip

First of all, in light of Theorem \ref{frigjr}, it is a natural question how strong $\Theta+\mu^{2}$ is compared to well-known functionals.  
We show in Section \ref{prelim2} that $S^{2}$ is not computable from $\Theta+\mu^{2}$.  
In Section \ref{atrsec}, we also provide a direct proof (not involving Nonstandard Analysis) of the fact that $\Theta+\mu^{2}$ 
computes a realiser for $\ATR_{0}$.  

\medskip

Secondly, in light of Theorem \ref{forqu}, it is a natural question `how high' $\Paai+\STP$ actually goes.  
We show in Section \ref{SIXTUS} that the latter combination exists at the level of $\SIX$, i.e.\ strictly stronger than $\FIVE$ and $\Paai$.  
As a result, $\FIVE^{\omega}+\QFAC^{2,1}+\HBU$ proves the $\Pi_{3}^{1}$-consequences of $\SIX$. 

\medskip

Thirdly, in light of Theorem \ref{cor.5.6} and \ref{mikeh}, it is a natural question whether weak fan functionals carry non-trivial strength.  
The answer is negative, in the following sense:  we will identify a weak fan functional $\Lambda_{1}$ and show in Section~\ref{essdag} that $\Lambda_{1}+\mu^{2}$ computes the same objects as $\mu^{2}$.  
This shows that we cannot in general compute a special fan functional from a weak one. This provides mathematical evidence for the intuition that compactness up to measure is strictly weaker than full compactness.

\medskip

Fourth, in light of Theorem \ref{mikeh}, it is a natural question whether our results somehow generalise to Schweber's generalisation of $\ATR_{0}$ in third-order arithmetic \cites{schtreber,schtreberphd}.   We obtain such a generalisation for Theorem \ref{mikeh} in Section \ref{schweber}.

\subsection{Equivalent definitions}\label{peqingduck}
We show that the definition of the special and weak fan functionals from Section \ref{weakopi} is equivalent to the original definition from \cite{samGH}.

\medskip

The following definition for special fan functionals was used in \cite{samGH}.
We reserve the variable `$T^{1}$' for trees and denote by `$T^{1}\leq1$' that $T$ is a binary tree.  
\bdefi\label{dodierorg}
The formula $\SCF(\nu)$ is as follows for $\nu^{(2\di (0\times 1^{*}))}$:
\[
(\forall g^{2}, T^{1}\leq1)\big[(\forall \alpha \in \nu(g)(2))  (\overline{\alpha}g(\alpha)\not\in T)
\di(\forall \beta\leq1)(\exists i\leq \nu(g)(1))(\overline{\beta}i\not\in T) \big]. 
\]
\edefi
The provenance of the name of the specification `$\SFF(\Theta)$' for the \underline{s}pecial \underline{f}an \underline{f}unctional is obvious. 
Similarly, $\SCF(\eta)$ was initially (and incorrectly) believed to be a special case of the (classical) fan functional, explaining its name. 
We now  have the following theorem.
\begin{thm}\label{2.23}
There are terms $s, t$ of G\"odel's $T$ of lowest level such that 
\be\label{amaaaai}
(\forall \Theta)(\SFF(\Theta)\di \SCF(t(\Theta)))\wedge (\forall \nu)(\SCF(\nu)\di \SFF(s(\nu))).
\ee
\end{thm}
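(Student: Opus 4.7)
The plan is to read the terms $s$ and $t$ directly off the two specifications. The new $\SFF(\Theta)$ carries only the finite list $\Theta(G)$ of approximating sequences, while the original $\SCF(\nu)$ additionally reports a numerical bound $\nu(g)(1)$ on how deep one must search into $T$. Hence the translation from $\SCF$ to $\SFF$ should simply discard the numerical component, and the reverse translation should reconstruct that component from the data already present in $\Theta(G)$ and $g$.

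For the direction $\SCF(\nu)\di \SFF(s(\nu))$, set $s(\nu)(G):=\nu(G)(2)$, i.e.\ the second projection of the pair. Given $G^{2}$ and $f^{1}\leq 1$, let $T_{f}$ be the binary tree whose elements are exactly the initial segments of $f$; as a type-$1$ object, $T_{f}$ is primitive recursively definable from $f$ at the lowest level. The key observation is that $\overline{\alpha}G(\alpha)\in T_{f}$ iff $f\in[\overline{\alpha}G(\alpha)]$, since $\overline{\alpha}G(\alpha)$ has length $G(\alpha)$. If no $g\in \nu(G)(2)$ satisfied $f\in[\overline{g}G(g)]$, then the antecedent of $\SCF(\nu)$ (instantiated with outer $g:=G$ and $T:=T_{f}$) would hold, forcing $(\forall \beta\leq 1)(\exists i\leq \nu(G)(1))(\overline{\beta}i\notin T_{f})$; but $\beta:=f$ contradicts this, since every initial segment of $f$ lies in $T_{f}$. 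Therefore some $g\in s(\nu)(G)$ satisfies $f\in[\overline{g}G(g)]$, which is $\SFF(s(\nu))$.

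For the direction $\SFF(\Theta)\di \SCF(t(\Theta))$, set $t(\Theta)(g):=\langle M_{\Theta,g},\,\Theta(g)\rangle$ with $M_{\Theta,g}:=\max\{g(\alpha):\alpha\in\Theta(g)\}$; this uses only bounded maximisation over a finite list and is a term of G\"odel's $T$ of lowest level. Given $g^{2}$ and $T^{1}\leq 1$ satisfying $(\forall \alpha\in\Theta(g))(\overline{\alpha}g(\alpha)\notin T)$, and any $\beta\leq 1$, $\SFF(\Theta)$ supplies some $g_{0}\in\Theta(g)$ with $\beta\in[\overline{g_{0}}g(g_{0})]$. Hence $\overline{\beta}g(g_{0})=\overline{g_{0}}g(g_{0})\notin T$, and $i:=g(g_{0})\leq M_{\Theta,g}=t(\Theta)(g)(1)$ is the required witness for the succedent of $\SCF(t(\Theta))$.

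The entire argument is essentially definitional; the only point requiring mild care is that the linear tree $T_{f}$ is genuinely representable as a type-$1$ object term-definable from $f$ at the lowest level, and that the pairing into $0\times 1^{*}$ matches the encoding conventions of Notation \ref{skim}. Neither of these is a real obstacle, so the two displayed implications in \eqref{amaaaai} follow directly with the terms $s$ and $t$ defined above.
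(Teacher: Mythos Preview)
Your proof is correct and follows essentially the same route as the paper's first (Computability Theory) argument: the paper defines $s(\nu):=\lambda g.\nu(g)(2)$ and $t(\Theta):=\lambda g.(\max\{g(\alpha)\mid\alpha\in\Theta(g)\}+1,\Theta(g))$, uses the tree of initial segments of a putative uncovered $\beta$ for the first direction, and for the second direction argues (contrapositively) via the covering property of $\Theta(g)$. Your choice of $\max$ rather than $\max+1$ and your direct (rather than contrapositive) handling of the second implication are cosmetic; the paper additionally offers an alternative proof via term extraction from Nonstandard Analysis, which you did not need.
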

\begin{proof}
We first provide a proof based on Computability Theory.   Define $s(\nu) := \lambda g.\nu(g)(2)$ and define $t(\Theta) := \lambda g.(\max\{g(\alpha) \mid \alpha \in \Theta(g)\} + 1, \Theta(g)).$
Assume $\SCF(\nu)$ and for given $g$ consider $\nu(g) = (n,\{\alpha_1 , \ldots , \alpha_k\})$. If $\SFF(s(\nu))$ fails for $g$, there is a $\beta \leq 1$ that is not in any $[\overline{\alpha_j}g(\alpha_j)]$. Let $T$ be the tree of all sequences $\bar \beta m$. Then the antecedent in $\SCF(\nu)$ holds for $g$ and this $T$, but not the conclusion.

\medskip

Now assume $\SFF(\Theta)$ and let $g$ be given. We have that $\Theta(g) = \{\alpha_1 , \ldots , \alpha_k\}$ where $C = [\overline{ \alpha_1}g(\alpha_1)] \cup \cdots \cup  [\bar \alpha_kg(\alpha_k)]$. We must prove $\SCF(t(\Theta))$. Again we argue by contradiction. Let $T$ be a binary tree such that there is a $\beta$ with $\overline \beta i \in T$, 
where $i = (\max\{g(\alpha) \mid \alpha \in \Theta(g)\}+1$, i.e.\ the conclusion in $SCF(t(\Theta))$ fails for this $T$.
Then $\beta \in [\bar \alpha_jg(\alpha_j)]$ for some $1 \leq j \leq k$ and $\bar \alpha_j g(\alpha_j)$ will be a sub-sequence of $\bar \beta i$. Thus the assumption in $\SCF(t(\Theta))$ does not hold for this $T$ either.

\medskip

We also provide a proof based on Nonstandard Analysis.  Following Theorem~\ref{lapdog}, $\P_{0}$ proves that the normal form \eqref{frukkklk} is equivalent to the normal form 
\be\label{obvious}
(\forall^{\st}G^{2})(\exists^{\st}w^{1^{*}})(\forall f^{1}\leq{1})(\exists g\in w)({f}\in [\overline{g}G(g)]).  
\ee
Since standard functionals provide standard output for standard input, 
$(\exists^{\st}\Theta)\SFF(\Theta)$ implies \eqref{obvious}.  
Hence, $\P_{0}$ also proves the following:
\be\label{lapjes}
(\forall^{\st}\Theta)[ \SFF(\Theta) \di \eqref{frukkklk}].
\ee
Now bring outside the standard quantifiers in the consequent of \eqref{lapjes} and apply term extraction as in Corollary \ref{consresultcor2} to obtain the first conjunct of \eqref{amaaaai}.
The second conjunct follows in the same way. 
\end{proof}

We now discuss the definition of the weak fan functionals similar to Definition~\ref{dodierorg}.
We first introduce \emph{weak weak K\"onig's lemma}.
\bdefi[Weak weak K\"onig's lemma]\label{leipi}~
\begin{enumerate}
 \renewcommand{\theenumi}{\roman{enumi}}
\item For $T\leq1$, define $L_{n}(T):=\frac{|\{\sigma \in T: |\sigma|=n    \}|}{2^{n}}$.
\item For $T\leq1$, define\footnote{Note that a statement of the form `$\lim_{n\di\infty}a_{n}>_{\R}b$' always makes sense as a formula of second-order arithmetic, namely $(\exists N^{0})(\exists k^0)(\forall n^{0}\geq N)(a_{n}>_{\R}b+\frac{1}{2^{k}})$, even if limit at hand cannot be proved to exist in a weak system, like the base theory $\RCA_{0}$.} `$\mu(T)>_{\R}0$' as `$\lim_{n\di\infty} L_{n}(T)>_{\R}0$'. 
\item We define $\WWKL$ as $(\forall T \leq1)\big[ \mu(T)>_{\R}0\di (\exists \beta\leq1)(\forall m)(\overline{\beta}m\in T) \big]$.
\end{enumerate}     
\edefi
As noted right after Definition \ref{dodier}, special fan functionals intuitively provide a finite sub-cover on input an uncountable cover of $2^{\N}$.  
Similarly, weak fan functionals provide an enumerated set of neighbourhoods covering a set of measure one.  Again similar to the special ones, the weak fan functionals originate from a weak version of the nonstandard compactness of Cantor space, as discussed in Section \ref{pampson}.   
\bdefi \label{fadier}
The formula $\WCF(\eta)$ is as follows for $\eta^{(2\di (1\times 1^{*}))}$:
\[\textstyle
(\forall k^{0},g^{2}, T^{1}\leq1)\big[(\forall \alpha \in \eta(g,k)(2))  (\overline{\alpha}g(\alpha)\not\in T)
\di   L_{\eta(g,k)(1)}(T)\leq\frac{1}{{2^{k}}}\big].
\]
\edefi
In contrast to $\nu$, $\eta$ only outputs (via the function $\lambda k.\eta(g, k)(1)$) a modulus for $\mu(T)=0$ rather than a finite upper bound for $T$.  
The antecedent in the definition of $\eta$ is similar to that of $\nu$:  a finite sequence of paths not in $T$ is provided (via $\eta(g, k)(2)$).  
Thus, there is a trivial term of G\"odel's $T$ computing $\eta$ in terms of $\nu$.  

\medskip

Similar to Theorem \ref{amaaaai}, we have the following equivalence. 
\begin{thm}\label{makker}
There are terms $s, t$ of G\"odel's $T$ of lowest level such that 
\be\label{amaaaai1}
(\forall \Lambda)(\WFF(\Lambda)\di \WCF(t(\Lambda)))\wedge (\forall \eta)(\WCF(\eta)\di \SFF(s(\eta))).
\ee
\end{thm}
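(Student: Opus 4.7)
The plan is to mimic the structure of Theorem \ref{2.23}, providing explicit Gödel $T$-terms and a short combinatorial verification (with an alternative nonstandard derivation via Theorem \ref{lapdoc} as a second proof). The natural choice of terms is
\[
s(\eta) := \lambda g,k.\,\eta(g,k)(2), \qquad t(\Lambda)(g,k) := \bigl(\, n_{g,k}^{\Lambda},\ \Lambda(g,k)\,\bigr),
\]
where $n_{g,k}^{\Lambda} := \max\{g(\alpha):\alpha\in\Lambda(g,k)\}+1$. Both are evidently terms of $T$ of the lowest level (the $\max$ is computed from the finite list $\Lambda(g,k)$ and the type-$2$ input $g$).

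For the implication $\WCF(\eta)\di\WFF(s(\eta))$, I would argue by contraposition. Fix $G^{2},k^{0}$ and let $w := \eta(G,k)(2)$. If the measure bound \eqref{stylez} failed for $s(\eta)$ at $(G,k)$, then the complement of the clopen set $U:=\bigcup_{\alpha\in w}[\overline{\alpha}G(\alpha)]$ would have measure strictly greater than $1/2^{k}$. Now take the binary tree $T:=\{\sigma\in 2^{<\N}:\sigma\text{ extends no }\overline{\alpha}G(\alpha)\text{ for }\alpha\in w\}$; trivially $\overline{\alpha}G(\alpha)\notin T$ for each $\alpha\in w$, so $\WCF(\eta)$ yields $L_{\eta(G,k)(1)}(T)\leq 1/2^{k}$. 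But $L_{n}(T)$ is non-increasing in $n$, so $L_{n}(T)\geq\mu([T])=\mu(C\setminus U)>1/2^{k}$ for every $n$, a contradiction.

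For the implication $\WFF(\Lambda)\di\WCF(t(\Lambda))$, fix $g^{2},k^{0},T^{1}\leq 1$ and suppose $(\forall\alpha\in\Lambda(g,k))(\overline{\alpha}g(\alpha)\notin T)$. Since $T$ is a tree, no $\sigma\in T$ can extend any $\overline{\alpha}g(\alpha)$ (else $\overline{\alpha}g(\alpha)\in T$ by closure under initial segments). Put $n:=n_{g,k}^{\Lambda}$; then every $\sigma\in T\cap 2^{n}$ belongs to the complement of the level-$n$ refinement of $\bigcup_{\alpha\in\Lambda(g,k)}[\overline{\alpha}g(\alpha)]$, a set whose relative size in $2^{n}$ is $\leq 1/2^{k}$ by $\WFF(\Lambda)$ (here one uses $n\geq g(\alpha)$ for all $\alpha$ in the finite list, so that the cylinders line up cleanly at level $n$). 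Hence $L_{n}(T)\leq 1/2^{k}$, which is exactly the conclusion of $\WCF(t(\Lambda))$.

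The only mildly delicate point is the passage from $\mu([T])\leq 1/2^{k}$ to an \emph{effective} level $n$ at which $L_{n}(T)$ drops below $1/2^{k}$; this is handled precisely by the tree-level truncation at $n_{g,k}^{\Lambda}$, which is where the tree structure of $T$ interacts with the antecedent of $\WCF$. A second, alternative proof proceeds exactly as in the nonstandard part of Theorem \ref{2.23}: from the normal form for $\LMP$ in Theorem \ref{lapdoc}, one derives in $\P_{0}$ the implication $(\forall^{\st}\Lambda)[\WFF(\Lambda)\di \text{normal form for }\LMP]$ (and its analogue for $\eta$), and then applies term extraction (Corollary \ref{consresultcor2}) to read off terms $s,t$ witnessing \eqref{amaaaai1}.
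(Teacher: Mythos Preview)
Your proof is correct and follows exactly the approach the paper intends: the paper simply says that the first proof of Theorem~\ref{2.23} is easily adjusted, and your terms $s(\eta)=\lambda g,k.\,\eta(g,k)(2)$ and $t(\Lambda)(g,k)=(\max\{g(\alpha):\alpha\in\Lambda(g,k)\}+1,\,\Lambda(g,k))$ are precisely the weak-fan analogues of the terms used there, with the same contraposition-via-tree argument and the same level-truncation trick. You have also correctly read the ``$\SFF$'' in the second conjunct as the evident typo for ``$\WFF$'' (a special fan functional cannot be obtained from a weak one, as Section~\ref{essdag} shows), and your alternative nonstandard derivation via Theorem~\ref{lapdoc} mirrors the paper's second proof of Theorem~\ref{2.23}.
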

\noindent
The first proof of Theorem \ref{2.23} is easily adjusted to a proof of Theorem \ref{makker}.

\section{Uniform computability for $\Theta$, $\Lambda$, and $\mu^{2}$}\label{dagsec}
In this section, we investigate uniform Kleene-computability for respectively  special and weak fan functionals $\Theta$ and $\Lambda$, combined with Feferman's $\mu$. 
In Section~\ref{prelim1} we discuss some preliminary results and notation.  
In Section~\ref{prelim2}, we show that \emph{only} hyperarithmetical functions can be \emph{uniformly} computed by $\Theta$ and $\mu$; as a result, the latter combination does not compute the Suslin functional.  In Section~\ref{atrsec}, we provide a \emph{direct} proof that $\ATR_{0}$ can be obtained from $\Theta$ and $\mu^{2}$, which was established \emph{indirectly} (using term extraction from Nonstandard Analysis) in \cite{dagsam}*{\S6}.  Thus, the combination $\Theta$ plus $\mu^{2}$ can compute \emph{non-hyperarithmetical} functions, \emph{but only non-uniformly}.    
By contrast,  in Section~\ref{essdag}, we construct $\Lambda_{1}$, a weak fan functional such that only hyperarithmetical functions are computable in $\Lambda_{1}$ and $\mu$.  As a consequence, special fan functionals are in general not computable from a weak fan functional $\Lambda$ combined with $\mu$.

\subsection{Preliminaries}\label{prelim1}
\subsubsection{Introduction}
In this section, we introduce the Kleene schemes S1-S9 and consider  some minor modifications due to the need for notational simplicity.  We are primarily interested in the computational power of special fan functionals $\Theta$ or weak fan functionals $\Lambda$, in conjunction with Feferman's $\mu$.
We establish our results with respect to  full Kleene computability.  
For this, it does not matter if we consider Kleene's $\exists^2$ or Feferman's $\mu$, but in case  we restrict ourselves to primitive recursion, $\mu$ is no longer computable in $\exists^2$. Thus, for studying the computational power of sub-classes of S1-S9 like fragments of G\"odel's $T$, it is better to use $\mu$. 

\medskip

 For the reader unacquainted with (higher-order) computability theory, we point to some well-known facts that we will use without further reference:
 \begin{enumerate}
   \renewcommand{\theenumi}{\roman{enumi}}
 \item For subsets of $\N$ or $\N^\N$, the \emph{hyperarithmetical} sets are exactly those computable in $\mu$, or equivalently in $\exists^2$, and exactly the $\Delta^1_1$-sets.
 \item The $\Pi^1_1$-sets are exactly the sets \emph{semi-computable} in $\mu$ (or  $\exists^2$), i.e.\ the domains of functions partially computable in $\mu$.
 \item The ordinal $\omega_1^{\textup{\textsf{CK}}}$ (`\textsf{CK}' for \emph{Church-Kleene}) is the least ordinal without a computable code. G\"odel's  $L_{\omega_1^{\textup{\textsf{CK}}}}$, the fragment of the universe of the constructible sets up to $\omega_1^{\textup{\textsf{CK}}}$, is the least $\Sigma_1$-admissible structure\footnote{\label{admi}A structure is $\Sigma_1$-admissible if it satisfies the Kripke-Platek axioms $\Delta_1$-comprehension and $\Sigma_1$-replacement. We say that an ordinal $\alpha$ is admissible if the corresponding fragment of L is admissible.}. 
 \end{enumerate}

\subsubsection{The functionals $\Theta$ and $\Lambda$}\label{3.1.2}
We will investigate uniform Kleene-computability for respectively $\Theta$ and $\Lambda$ combined with $\mu$.  We now provide suitable alternative definitions of these fan functionals to be used below. 

\medskip

According to the specification $\SFF(\Theta)$, $\Theta$ is a functional of type $2 \rightarrow  1^*$ where for each $F$, the set of neighbourhoods $C_{\bar g(F(g))}$, with $g \in \Theta(F)$, is a cover of the Cantor space. For adjustment to the Kleene schemes, it is better to use an alternative presentation, coding a finite sequence from $C$ into one, as follows.

\medskip

In this section, we let $\Theta(F)$ be an element of Cantor space that is not constant zero. Each such object $f$ will code a finite sequence $\langle g_1 , \ldots , g_k\rangle$ of binary functions by letting $k$ be the least positive number such that $f(k - 1) = 1$, and then decode $g(n) = f(n+k)$ into $k$ elements using the standard $k$-partition of $\N$, i.e. $g_i(m) = g(m \cdot k + i - 1)$. 
When $s$ is a finite binary sequence, we also use $C_s$ to denote the corresponding basic neighbourhood in $C$, essentially meaning the same as the formal expression $[s]$.
We will write $\Theta(F) = \langle g_1 , \ldots , g_k\rangle$ and we will assume that $\Theta$ satisfies that for all $F$, $\{C_{\bar g_i(F(g_i))} \mid i = 1 , \ldots , k\}$ is a cover of Cantor space. The latter is equivalent to stating that for some $n \in \N$ and for all binary sequences $s$ of length $n$ there is some $i$ such that $\bar g_i(F(g_i)) $ is an initial segment of $ s$.

\medskip

Similarly, according to the specification $\WFF(\Lambda)$,
$\Lambda(F)(k)$ is a finite sequence $\langle f_1 , \ldots , f_n\rangle$ from Cantor space such that $\m(\bigcup_{i = 1}^n C_{\bar f_i(F(f_i))}) \geq 1 - \frac{1}{2^{k}},$
where $\m$ denotes the standard product measure on Cantor space $C$. 
When studying aspects of computability relative to $\Lambda$ and $\mu$, we may equivalently let $\Lambda(F)$ be a sequence $(f) = (f_i)_{i \in \N}$ such that
$\m(\bigcup_{i \in \N}C_{\bar f_i(F(f_i))}) = 1.$
For notational reasons, this is the form for $\Lambda$ we will use in this section. 
\subsubsection{The Kleene Schemes}
Turing's famous model of computability (\cite{tur37}) is restricted to inputs of types zero and oracles of type one.  
By way of generalisation, Kleene introduces computations taking sequences $\vec \Phi$ of higher order functionals  $ \Phi$ of pure types as arguments (\cite{kleene}). In particular, via the schemes S1-S9, that are clauses in a grand  monotone inductive definition, he defined the relation $\{e\}(\vec \Phi) = a$, i.e.\ the $e$-th (Kleene) computation with input $\vec{\Phi}$ terminates with output $a\in \N$.

\medskip

For the purpose of this section, we will introduce the Kleene schemes S1-S9 with some minor modifications, motivated by he following:
\begin{enumerate}  
  \renewcommand{\theenumi}{\roman{enumi}}
\item In all our computations, at most one functional of type 2 is used as an argument, namely Feferman's $\mu$.
\item The scheme S8 for functional application was originally designed for functionals of pure type.   However, special fan functionals are of mixed type $(\N^\N \rightarrow \N) \rightarrow (\N \rightarrow \N)$ while weak fan functionals are of type $(\N^\N \rightarrow \N) \rightarrow (\N \rightarrow (\N \rightarrow \N))$.
\end{enumerate}
Instead of coding $\Theta$ and $\Lambda$ as objects of pure type 3, we modify the schemes S1-S9 so that they make sense for the one argument $\mu$ of type 2 and for any functionals $\Theta$, and later $\Lambda$, of the relevant mixed types. 
The only motivation for this adjustment to mixed types is readability: we will let the special and weak fan functionals appear directly in the schemes, and not in coded form.
It is a matter of unpleasant routine to show that this modification yields the same notion of computation as Kleene's original schemes via the standard reductions to pure types. In \cite{longmann}*{Section 5.1.3}, Kleene's notion of computation is extended to all finite types via some form of  $\lambda$-calculus, but we prefer not to introduce the general machinery here.

\medskip

Assume that the functional $\Theta$ is of the specified type.
Let $\vec g$ be a sequence of functions and $\vec b$ be a sequence of numbers.
We now define the relation $\{e\}(\Theta,\mu,\vec g,\vec b) = a$ by induction as follows.
\begin{defi}[Modified Kleene S1-S9]\label{1.1}\hspace*{2mm}{\em 
\begin{itemize}
\item[(S1)]$\{\langle 1 \rangle\}(\Theta , \mu , \vec g, a , \vec b) = a+1$
\item[(S2)]$\{\langle 2,a \rangle\}(\Theta , \mu ,\vec g, \vec b) = a$
\item[(S3)] $\{\langle 3 \rangle\}(\Theta , \mu , \vec g, a , \vec b) = a$
\item[(S4)] If $e = \langle 4 , e_1 , e_2\rangle$ and for some $b$ we have that
\begin{itemize}
\item[(i)] $\{e_1\}(\Theta ,\mu , \vec g,\vec b) = b$
\item[(ii)] $\{e_2\}(\Theta , \mu, \vec g, b , \vec b) = a$
\end{itemize} then $\{e\}(\Theta , \mu , \vec g,\vec b) = a$
\item[(S5)] If $e = \langle 5,e_1,e_2\rangle$ then (with the obvious interpretation, in analogy with S4)
\begin{itemize}
\item[(i)] $\{e\}(\Theta , \mu , \vec g,0 , \vec b) = \{e_1\}(\Theta , \mu , \vec g , \vec b)$
\item[(ii)] $\{e\}(\Theta , \mu , \vec g , a+1 , \vec b) = \{e_2\}(\Theta , \mu , \vec g, a , \{e\}(\Theta , \mu , \vec g,a , \vec b),\vec b)$
\end{itemize}
\item[(S6)] Let $\vec g = (g_1 , \ldots , g_k)$, $\vec b = (b_1 , \ldots , b_m)$ and let $\tau_1, \tau_2$ be permutations of $\{1 , \ldots , k\}$ and $\{1 , \ldots , m\}$ respectively. If  $e = \langle 6 , e_1 , \tau_1,\tau_2\rangle$ then 
\[
\{e\}(\Theta,\mu,g_1 , \ldots , g_k, a_1 , \ldots , a_n) = \{e_1\}(\Theta , \mu , g_{\tau_1(1)}, \ldots , g_{\tau_1(k)}, b_{\tau_2(1)} , \ldots , b_{\tau_2(m)})
\]
\item[(S7)] $\{\langle 7 \rangle\}(\Theta , \mu , g,\vec g, b ,  \vec b) = g(b)$
\item[(S8.1)] If $e = \langle 8,1,e_1\rangle$ and $\{e_1\}(\Theta , \mu , \vec g, a , \vec b)$ is defined for all $a\in \N$ then
\begin{itemize}
\item[(i)] $\{e\}(\Theta , \mu , \vec g,\vec b) = 0$ if $\{e_1\}(\Theta , \mu , \vec g , a , \vec b) = 0$ for all $a$
\item[(ii)] $\{e\}(\Theta , \mu , \vec g,\vec b) = a$ for the least $a$ such that $\{e_1\}(\Theta , \mu , \vec g, a , \vec b) > 0$ otherwise
\end{itemize}
\item[(S8.2)] If $e = \langle 8,2,e_1 \rangle$, let $F(g) = \{e_1\}(\Theta , \mu , g,\vec g,\vec b)$. If $F$ is total, we let $\{e\}(\Theta,\mu,\vec g , a, \vec b) = \Theta(F)(a)$.
\item[(S9)] If $e = \langle 9,i,j\rangle$, $i \leq k$ and $j \leq m$, then 
\[
\{e\}(\Theta,\mu,g_1 , \ldots , g_k , d , b_1 , \ldots , b_m) = \{d\}(\Theta , \mu , g_1 , \ldots , g_i , b_1 , \ldots , b_j)
\]

\end{itemize}
If we leave out S9 in the previous definition, we have the schemes for Kleene primitive recursion.
Furthermore, a definition of the relation $\{e\}(\Lambda,\mu,\vec g , \vec b) = a$, where $\Lambda$ is a weak fan functional, is obtained by replacing $\Theta$ with $\Lambda$ everywhere in S1 - S7, S9 and S8.1, and replacing S8.2 with the following formula:
\begin{itemize}
\item[(S8.3)] If $e = \langle 8,3,e_1\rangle$, put $F(g) = \{e_1\}(\Lambda , \mu , g,\vec g,\vec b)$. If $F$ is total, define the value $\{e\}(\Lambda , \mu , \vec g , i,a ,\vec b) $ as $ \Lambda(F)(i)(a)$.
\end{itemize}}
\end{defi}
All these schemes are viewed as clauses in a strictly positive inductive definition. If we leave out S9, then the definition may be viewed as a recursion on $e$.  The set of indices, together with the relevant arities, can then be defined by standard primitive recursion over $\N$.  
Moreover, in this case all `computations' will terminate, as partiality is only introduced via S9.

\subsection{Uniform computability in $\Theta$}\label{prelim2}
In this section, we will introduce the notion of a \emph{$\Theta$-structure} (see Definition \ref{zetas}) and use the associated model theory to prove two crucial theorems (Theorems \ref{1.2} and \ref{1.3}) regarding computability in $\mu$ and $\Theta$. 
As a corollary, we obtain that $\Theta+\mu$ does not compute $S^{2}$. 
The proof in this section can be viewed as an elaboration on the proof of \cite{longmann}*{Theorem 5.2.25}.

\medskip

First of all, as to notation, recall that $\omega_1^f$ is the least ordinal not represented by any well-ordering Turing-computable in $f$ (see \cite{Sacks.high}*{X.2.9}).  Also, throughout this section, the quantifier `$\forall \Theta$' is to be understood as `for all special fan functionals $\Theta$', i.e.\ $(\forall \Theta)(\SFF(\Theta)\di \dots)$, which we omit for reasons of space.  
\begin{thm}\label{1.2} There is a special fan functional $\Theta$ such that for all functions $f$ computable in $\Theta$ and $\mu$ we have that
$\omega_1^f = \omega_1^{\rm CK}.$
\end{thm}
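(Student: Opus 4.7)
The plan is to build a specific $\Theta$ whose outputs remain within the hyperarithmetical hierarchy, ensuring that any function computable from $\Theta$ and $\mu$ lies in $L_{\omega_1^{\rm CK}}$. The construction proceeds along the admissible ordinal $\omega_1^{\rm CK}$, and the notion of a $\Theta$-structure (to be introduced shortly) will package the consistency between a partial $\Theta$ and the functionals arising from Kleene computations relative to it.

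First, I would work within the admissible set $L_{\omega_1^{\rm CK}}$ and enumerate (inside this model) the potential Kleene computations $\{e\}(\Theta,\mu,\vec g,\vec b)$ whose termination could force new commitments for $\Theta$. Whenever an instance of scheme S8.2 calls for a value $\Theta(F)$, the functional $F$ has been built from $\mu$ together with $\Theta$-values committed at strictly earlier stages; by induction on the sub-computation, $F$ itself is hyperarithmetical. Since $2^{\N}$ is compact and $\WKL$ holds of the hyperarithmetical data, the canonical cover $\bigcup_{\alpha\leq1}[\overline\alpha F(\alpha)]$ admits a finite sub-cover, and a standard $\Sigma_{1}$-reflection argument inside $L_{\omega_1^{\rm CK}}$ allows us to choose such a sub-cover whose members are hyperarithmetical. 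We commit $\Theta(F)$ to this choice and iterate.

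Once every $F$ arising from a hyperarithmetical oracle is handled in this way, the partial function $\Theta$ is extended to all of its domain by any fixed hyperarithmetically-definable rule (for instance, by delegating to the tree-of-covers argument uniformly in $F$), so that $\SFF(\Theta)$ holds outright. To finish, one shows by induction on Kleene computations that any function $f$ with $f = \{e\}(\Theta,\mu,\vec g,\vec b)$ for hyperarithmetical $\vec g,\vec b$ remains in $L_{\omega_1^{\rm CK}}$: all of S1--S7 and S9 preserve this class, the $\mu$-step preserves it by the characterisation of hyperarithmetical sets recalled in Section \ref{prelim1}, and the construction of $\Theta$ was arranged precisely so that S8.2 preserves it as well. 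From $f\in L_{\omega_1^{\rm CK}}$ one immediately deduces $\omega_{1}^{f}=\omega_{1}^{\rm CK}$.

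The principal obstacle is the apparent circularity: a call $\Theta(F)$ may involve a functional $F$ whose very definition passes through $\Theta$. This is resolved by exploiting the monotone, inductive character of S1--S9 together with $\Sigma_{1}$-admissibility of $L_{\omega_1^{\rm CK}}$: each stage consults only earlier commitments, and every terminating computation tree has well-founded rank bounded below $\omega_{1}^{\rm CK}$ when the oracle data are hyperarithmetical, so the recursion genuinely closes off inside the admissible set. A secondary delicacy is the reflection step producing hyperarithmetical finite sub-covers; this uses $\Sigma_{1}$-bounding to pull the ordinal complexity of the sub-cover below $\omega_{1}^{\rm CK}$, uniformly in $F$. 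The formal machinery of $\Theta$-structures introduced in Definition \ref{zetas} will be the vehicle that organises this bookkeeping into a single, well-defined construction.
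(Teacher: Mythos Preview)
Your proposal has a genuine and fatal gap: you are attempting to prove a strictly stronger statement than Theorem~\ref{1.2}, namely that every function computable in $\Theta$ and $\mu$ lies in $L_{\omega_1^{\rm CK}}$ (i.e.\ is hyperarithmetical), and that stronger statement is \emph{false for every special fan functional}. This is exactly the content of Corollary~\ref{1.18} (and Theorem~\ref{cor.alt.6.8}): for any $\Theta$ there is a hyperarithmetical $G$ with $\Theta(G)$ non-hyperarithmetical, and more globally $\Theta+\mu$ always computes a realiser for $\ATR_0$, which cannot be hyperarithmetical. So the conclusion ``$f\in L_{\omega_1^{\rm CK}}$'' you aim for at the end simply cannot hold; the theorem only asserts the weaker $\omega_1^f=\omega_1^{\rm CK}$, and these are very different conditions.

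The specific step that breaks is your ``$\Sigma_1$-reflection'' claim that for hyperarithmetical $F$ one can always choose a finite sub-cover with hyperarithmetical members. Theorem~\ref{theorem.ATR} builds an arithmetical $F$ (from a computable pseudo-well-ordering) for which any finite sub-cover arithmetically computes either an $\ATR$-solution along the ordering or an infinite descending sequence in it; since neither can be hyperarithmetical, the nonempty class of finite sub-covers for this $F$ has no hyperarithmetical member. Reflection in $L_{\omega_1^{\rm CK}}$ does not help here: the relevant existential is over reals, not over ordinals below $\omega_1^{\rm CK}$.

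The paper's route avoids this entirely. One shows that the set of codes for countable $\Theta$-structures (Definition~\ref{zetas}) is arithmetical and nonempty (Lemma~\ref{3.8}), and then applies the Gandy basis theorem to obtain such a code $f$ with $\omega_1^f=\omega_1^{\rm CK}$; crucially, $f$ is \emph{not} required to be hyperarithmetical. The partial $\Theta_{\mathcal M}$ in this structure extends to a genuine special fan functional $\Theta_1$ (Lemma~\ref{1.9}), and every function computable in $\Theta_1$ and $\mu$ already appears in $M_1$ (Lemma~\ref{1.10}), hence is Turing-reducible to $f$, giving $\omega_1^g\le\omega_1^f=\omega_1^{\rm CK}$. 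The key conceptual point you missed is that one must allow $\Theta$ to produce non-hyperarithmetical outputs while still controlling the Church--Kleene ordinal of everything computable from it; the Gandy basis theorem is exactly the tool that decouples these two notions.
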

\begin{thm}\label{1.3}
The set $\{(e,\vec y,a) \mid \forall \Theta.\{e\}(\Theta , \mu , \vec y) = a\}$
is $\Pi^1_1$, where $\vec y$ ranges over all finite sequences of non-negative integers.
\end{thm}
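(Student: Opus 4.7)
My plan is to reformulate the universal-in-$\Theta$ statement as well-foundedness of a canonical computation tree that is effectively defined from $(e,\vec{y})$ and $\mu$, and to leverage the classical $\Pi^{1}_{1}$-analysis of Kleene computability in a normal functional like $\mu$.

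First I would define, uniformly in $(e,\vec{y})$ and $\mu$, a \emph{universal Kleene tree} $T(e,\vec{y})$ whose nodes are sub-computation states and whose branching follows S1--S9: for S1--S7 and S9 the branching is as in the classical Kleene tree; S8.1 branches over $\N$; and for an S8.2-node of index $\langle 8,2,e_{1}\rangle$ applied at $(\vec{g},a,\vec{b})$ we attach, for every $g\in\N^{\N}$, the sub-tree $T(e_{1},g,\vec{g},\vec{b})$, and then record the value of the S8.2-node as the unique $v$ (if it exists) for which every binary $w$ coding a valid cover of the resulting functional $F$ satisfies $w(a)=v$.

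Next I would prove the key equivalence: $\forall\Theta.\{e\}(\Theta,\mu,\vec{y})=a$ holds iff $T(e,\vec{y})$ is well-founded and its propagated root value equals $a$. For the forward direction, any $\Theta$ selects, at each S8.2-node, a specific valid $w$ as its output, and so traces out the actual Kleene computation for $\Theta$ inside $T(e,\vec{y})$; an infinite path in $T(e,\vec{y})$ would let us construct a $\Theta$ whose computation diverges, contradicting the uniform hypothesis. For the backward direction, given well-foundedness with uniform value $a$ at the root, any $\Theta$ yields a well-founded sub-structure of $T(e,\vec{y})$ terminating at leaves carrying value $a$.

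Finally, the well-foundedness of $T(e,\vec{y})$ is $\Pi^{1}_{1}$ by the standard argument for Kleene computability in $\mu$ (nonexistence of a type-$1$ infinite path through an effectively presented tree), and the root-value condition is $\Pi^{1}_{1}$ too, since at each S8.2-node the existence of a unique $v$ compatible with all valid $w$ is a universal statement over binary $w$ with an arithmetical validity-constraint in the sub-computed $F$; combining gives the claim. The main obstacle is the S8.2-step, where branching and value-rule depend on the functional $F$ being defined in sub-computations, so $T(e,\vec{y})$ is built by a subtle simultaneous induction and the S8.2-clause is not obviously monotone (larger sub-trees create more valid $w$ and hence a stronger constraint); I would discharge this either by a careful rank-by-rank top-down construction of $T(e,\vec{y})$, or, more cleanly, by using the $\Theta$-structure framework of Definition~\ref{zetas} to replace $\forall\Theta$ by quantification over countable structures, making the $\Pi^{1}_{1}$-bound transparent.
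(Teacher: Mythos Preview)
Your main approach via a universal Kleene tree has a genuine gap at the S8.2 clause. You propose to ``record the value of the S8.2-node as the unique $v$ (if it exists) for which every binary $w$ coding a valid cover of the resulting functional $F$ satisfies $w(a)=v$.'' But no such unique $v$ exists in general: distinct special fan functionals $\Theta$ produce distinct valid covers $\Theta(F)$, and these typically disagree at coordinate $a$. The final output $\{e\}(\Theta,\mu,\vec y)$ may well be independent of $\Theta$ even when all intermediate S8.2-values vary with $\Theta$ (different covers can lead to different sub-computations that nevertheless converge to the same end result). So your tree has no coherent value-propagation rule at S8.2, and the forward direction of your key equivalence breaks down. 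If instead you branch over all valid covers $w$ at S8.2, you face two problems: (i) validity of $w$ is a condition on $F$, which is only determined once the attached sub-trees are already well-founded with definite values, so membership in $T(e,\vec y)$ becomes circular exactly as you note; (ii) an infinite path through such a tree need not yield a \emph{consistent} choice of $\Theta$ (the same $F$ might receive different covers at different nodes), so ``infinite path $\Rightarrow$ some $\Theta$ diverges'' does not follow.

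The paper anticipates exactly this difficulty: it remarks that the least fixed point of the Kleene schemes is $\Pi^1_1$-complete already, so a direct inductive/tree analysis cannot drop the complexity. Its solution is the $\Theta$-structure framework you mention only as a fallback. The point is that by allowing \emph{all} fixed points $R$ (not just the least one) in a countable structure, the defining condition becomes first-order and the set of codes for $\Theta$-structures is arithmetical. Lemmas~\ref{1.9},~\ref{1.10} and~\ref{3.8} then give the equivalence: $\forall\Theta.\{e\}(\Theta,\mu,\vec y)=a$ iff every countable $\Theta$-structure satisfies $[e]_R(\Theta_{\mathcal M},\mu,\vec y)=a$, and the latter is visibly $\Pi^1_1$. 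Your last sentence points at the right argument; what is missing from your write-up is the realisation that this is not a cosmetic alternative but the actual mechanism that makes the bound go through.
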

The following corollary implies that $\Theta$ and $\mu$ cannot uniformly compute $S^{2}$.  
\begin{corollary} Let $f$ be a function such that for some $e$,
$\{e\}(\Theta , \mu , n) = f(n)$ for all $n$ and all special fan functionals  $\Theta$. Then $f$ is hyperarithmetical. 
\end{corollary}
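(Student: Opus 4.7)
The plan is to deduce the corollary directly from Theorem \ref{1.3} via the standard observation that a total function with $\Pi^1_1$ graph is hyperarithmetical.

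First, I would fix the index $e$ given in the hypothesis and consider the graph of $f$, namely the set
\[
G_f = \{(n,a)\in \N\times\N : f(n)=a\}.
\]
By the hypothesis, $f(n)=a$ if and only if $(\forall \Theta)(\{e\}(\Theta,\mu,n)=a)$, where $\Theta$ ranges over all special fan functionals. Instantiating Theorem \ref{1.3} with $\vec y$ being the singleton sequence $(n)$ and with the fixed index $e$, one obtains that $G_f$ is a $\Pi^1_1$ subset of $\N\times \N$.

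Next, I would use totality of $f$ to flip the pointclass. Since $f$ is total, for every $n$ there is exactly one $a$ with $(n,a)\in G_f$, so
\[
(n,a)\notin G_f \iff (\exists b\in\N)\bigl(b\neq a \wedge (n,b)\in G_f\bigr).
\]
The right-hand side applies a number quantifier to a $\Pi^1_1$ condition, and $\Pi^1_1$ is closed under $\exists^0$ (as well as $\forall^0$), so the complement of $G_f$ is again $\Pi^1_1$. Hence $G_f$ is simultaneously $\Pi^1_1$ and $\Sigma^1_1$, i.e.\ $\Delta^1_1$, and by the Suslin--Kleene theorem this makes $G_f$ hyperarithmetical. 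Since $f$ is total with hyperarithmetical graph, $f$ itself is hyperarithmetical, completing the proof.

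The main obstacle is essentially absent here: the substantive work is contained in Theorem \ref{1.3}, while the corollary is a one-paragraph descriptive-set-theoretic reduction. The only point requiring (mild) care is the closure of $\Pi^1_1$ under number quantifiers, which is what lets one pass from a $\Pi^1_1$ graph of a total function to a $\Delta^1_1$ graph; everything else is routine unpacking of the hypothesis.
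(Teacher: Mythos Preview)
Your proof is correct and is precisely the intended argument: the paper leaves the corollary without an explicit proof, but it is meant to follow immediately from Theorem~\ref{1.3} via the standard observation that a total function with $\Pi^1_1$ graph is $\Delta^1_1$ and hence hyperarithmetical, exactly as you have written.
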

We could, in Theorem \ref{1.3}, let $\vec y$ range over all sequences of objects of type zero and one, but we have not found any use for this observation.
The proof of Theorem \ref{1.3} will essentially be an application of the L\"owenheim-Skolem theorem, establishing the fact that the following statements are equivalent:
\begin{enumerate}
\item[(i)] For all $\Theta$, $\{e\}(\Theta , \mu , \vec g,\vec b) = a$.
\item[(ii)] For all countable models $\mathcal{M}$ containing $\vec g$ and a special fan functional $\Theta_{\mathcal{M}}$ (in the sense of the model as indicated), 
${\mathcal{M}} \models  \{e\}(\Theta_{\mathcal{M}} , \mu , \vec g,\vec b) = a$.
\end{enumerate}
We must, however, show some care in what we mean by `a model' and what we then mean by `${\mathcal{M}} \models  \{e\}(\Theta_{\mathcal{M}} , \mu , \vec g,\vec b) = a$'. 
For instance, we cannot use the usual inductive definition involved in Kleene computability directly, because the least fixed point of the Kleene schemes, even when restricted to a countable structure, is $\Pi^1_1$ itself. Moreover, the L\"owenheim-Skolem argument does not work for second-order concepts, so we need to replace Kleene's definition with something first-order.  It turns out that it suffices to consider \emph{all} fixed points of the Kleene schemes. 
Also, the proof of Theorem \ref{1.3} yields Theorem \ref{1.2} `almost for free'.

\medskip
\noindent
We introduce the notion of a $\Theta$-structure as follows.  
\begin{definition}\label{zetas}{\em A \emph{$\Theta$-structure} is a tuple ${\mathcal{M}} = \langle \N,M_1,M_2,\Theta_{\mathcal{M}},\mu,R\rangle$ such that
\begin{enumerate}
 \renewcommand{\theenumi}{\roman{enumi}}
\item $M_1$ is a set of functions $f:\N\di\N$  and $M_2$ is a set of functions $G: M_1\di \N$.
\item $\mu \in M_2$ satisfies the usual definition of $\mu$.
\item $\Theta_{\mathcal{M}}:M_2 \rightarrow \{0,1\}^\N$ and satisfies the modified $\SFF(\Theta)$ relative to $M_1, M_2$, see Section \ref{3.1.2}.
\item $R$ stands for a relation  $[e]_R(\Theta_{\mathcal{M}},\mu,\vec g,\vec b) = a,$ where $\vec g$ is a finite sequence from $M_1$ and $\vec b$ is a finite sequence from $\N$,  that satisfies:
\begin{enumerate}
\item For each $\vec g,\vec b$ there is at most one $a$ such that $[e]_R(\Theta_{\mathcal{M}},\mu,\vec g , \vec b) = a$.
\item If for some $\vec g,\vec b,e$, we have $(\forall b \in \N)(\exists a)([e]_R(\Theta_{\mathcal{M}},\mu, \vec g , b,\vec b) = a)$, then there is an $f \in M_1$ such that 
$(\forall b\in \N)( [e]_R(\Theta_{\mathcal{M}},\mu,\vec g , b ,\vec b) = f(b)).$
\item If for some $\vec g,\vec b,e$ we have $(\forall g \in M_1)( \exists a)( [e]_R(\Theta_{\mathcal{M}},\mu,g,\vec g,\vec b) = a)$, then there is a $G \in M_2$ such that $(\forall g \in M_1) [e]_R(\Theta_{\mathcal{M}},\mu,g, \vec g,\vec b) = G(g).$
\item The relation $R$ is a fixed point of the  Kleene schemes from Definition~\ref{1.1} interpreted over $\mathcal{M}$.
\end{enumerate}
\end{enumerate}}\end{definition}
We will not distinguish in notation between  $\mu$ in the structure $\mathcal{M}$ and $\mu$ in the full universe. 
For the below proofs, we need to code countable $\Theta$-structures as objects of type~1.   Clearly, the set of codes for countable $\Theta$-structures will be arithmetical:
\begin{definition}{\em 
Let ${\mathcal{M}} = \langle \N , M_1 , M_2 , \mu , \Theta_{\mathcal{M}} , R\rangle$ be a countable $\Theta$-structure.
\newline
A \emph{code} for $\mathcal{M}$ is a function $f = \langle f_1,f_2,f_3,f_4\rangle$ such that
\begin{enumerate}
 \renewcommand{\theenumi}{\roman{enumi}}
\item $f_1 = \langle f_{1,i}\rangle_{i \in \N}$ enumerates $M_1$ in a 1-1-way.
\item Let $f_2 = \langle f_{2,j}\rangle_{j \in \N}$ and let $F_j(f_{1,i}) = f_{2,j}(i)$. Then $\{F_j\}_{j \in \N}$ enumerates $M_2$ in a 1-1-way.
\item $f_3(\langle j,a\rangle) = \Theta_{\mathcal{M}}(F_j)(a)$ for all $j$ and $a$.
\item $f_4(\langle e, \langle i_1 , \ldots , i_k\rangle , \langle b_1 , \ldots , b_m\rangle , a \rangle) = 0$ if and only if \newline $ [e]_R(\Theta_{\mathcal{M}} , \mu , f_{i_1} , \ldots , f_{i_k} , b_1 , \ldots , b_m) = a$.
\end{enumerate}
}\end{definition}
It is essential for the below argument that the set of codes for $\Theta$-structures is arithmetical (or at least hyperarithmetical). 
The crucial part here is the first-order definition of special fan functionals.
The same result can be obtained for some other (classes of) functionals, but e.g.\ not for the Superjump or the Suslin functional. For those interested in such a generalisation, note that replacing $\SFF$ with another class of functionals $\Gamma$ requires that one can relativise $\Gamma$ to type structures $\mathcal M$.
\begin{definition}{\em Let $\mathcal{M}$ be a $\Theta$-structure. An \emph{extension} of $\Theta_{\mathcal{M}}$ is a special fan functional $\Theta_1$  such that whenever $F$ of type 2 is an extension of $G \in M_2$ then $\Theta_1(F) = \Theta_{\mathcal{M}}(G)$.}
\end{definition}
\begin{lemma}\label{1.9} 
For any $\Theta$-structure $\mathcal{M}$, the functional $\Theta_{\mathcal{M}}$ has an extension $\Theta_1$. 
\end{lemma}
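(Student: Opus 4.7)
The plan is to define $\Theta_1$ on all type-$2$ functionals by taking $\Theta_{\mathcal{M}}$ where it applies and otherwise appealing to the (external, ZFC) compactness of Cantor space. Concretely, given any $F:\N^\N\to\N$, consider the restriction $F\upharpoonright M_1:M_1\to\N$. If this restriction coincides with some $G\in M_2$, then $G$ is unique (two elements of $M_2$ agreeing on $M_1$ are equal) and I set $\Theta_1(F):=\Theta_{\mathcal{M}}(G)$. Otherwise, $F$ is genuinely outside the model, and I let $\Theta_1(F)$ be a code for any finite subcover of the canonical cover $\{C_{\bar f F(f)}:f\in 2^\N\}$; such a subcover exists in $\ZFC$ since $2^\N$ is compact.

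The extension property is then immediate from the first clause of the definition. To verify $\SFF(\Theta_1)$, the second case is automatic by construction, so the work is in the first case. The key input is the correct reading of the phrase ``$\Theta_{\mathcal{M}}$ satisfies the modified $\SFF(\Theta)$ relative to $M_1,M_2$'': the decoded finite sequence $\Theta_{\mathcal{M}}(G)=\langle g_1,\dots,g_k\rangle$ must have each $g_i\in M_1$ (otherwise the expression $G(g_i)$ appearing in $\SFF$ would not be defined, since $G$ has domain $M_1$) and must satisfy $\bigcup_{i\le k}C_{\bar g_i G(g_i)}=2^\N$. Since each $g_i\in M_1$, we have $F(g_i)=G(g_i)$, so the very same finite sequence $\langle g_1,\dots,g_k\rangle$ witnesses the cover property for $F$, yielding $\SFF(\Theta_1)$ at $F$.

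The main obstacle is not the construction itself but pinning down this reading of the relativised $\SFF$ condition; once one accepts that the decoded elements of $\Theta_{\mathcal{M}}(G)$ live in $M_1$, the extension is essentially forced and the verification reduces to the observation that $F$ and $G$ agree on $M_1$. Note also that no choice beyond ordinary $\ZFC$ compactness is needed: in the ``otherwise'' branch we may, for instance, fix a well-ordering of the finite binary sequences and canonically select the least finite subcover, which keeps $\Theta_1$ well-defined as a function and not just a relation.
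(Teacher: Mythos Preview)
Your proof is correct and follows essentially the same approach as the paper. The only cosmetic difference is that the paper handles the ``otherwise'' branch by citing a fixed special fan functional $\Theta_0$ constructed elsewhere rather than invoking compactness of $2^\N$ directly, and your verification that the decoded $g_i$ lie in $M_1$ (so $F(g_i)=G(g_i)$) spells out what the paper compresses into the phrase ``this relation will hold point-wise by construction.''
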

\begin{proof}
Let $\Theta_0$ be any special fan functional, for instance the one constructed in \cite{dagsam}*{\S5}.  We define 
\[
\Theta_1(F) :=
\begin{cases}
 \Theta_{\mathcal{M}}(G) & \textup{ if $G \in M_2$ and $F$ extends $G$}\\
 \Theta_0(F) & \textup{ if there is no such $G \in M_2$}
\end{cases}.
\]
The definition of special fan functionals does not require any connection between the values of $\Theta(F_1)$ and $\Theta(F_2)$ when $F_1 \neq F_2$: we have only specified how $F$ and $\Theta(F)$ are related for each $F$. This relation will hold point-wise for each $(F,\Theta_1(F)) $ by construction, so $\Theta_1$ will also be a special fan functional.
\end{proof}
We could provide a similar construction and prove a similar lemma for other classes of  type 3 functionals, but not for all. Actually, we would always be able to find \emph{extensions} in a set-theoretical sense as above, but not necessarily in the class of functionals that we are interested in. The key property for us is that for  a given $F$ we specify, individually for that $F$, what an acceptable output of $F$ will be in such a way that we only have to know $F$ restricted to a countable (in this case, finite) set to justify that an alleged output is an acceptable one. If we, for instance, were interested in computations relative to $\exists^3$, we could not prove an extension lemma as above, since the constant zero in $\mathcal M$ will have extensions that are not constant zero, so the value of $\exists^3_{\mathcal{M}}$ cannot be preserved through extensions.

\medskip

Even though the relation $R$ does not have to represent the least fixed point of the Kleene schemes restricted to $\mathcal{M}$, 
we will see that it  will contain this least fixed point as a sub-relation. In fact, we have the following lemma, where we only make use of extensions in general, not of the fact that we deal with special fan functionals.  
\begin{lemma}\label{1.10} Let ${\mathcal{M}} = \langle \N,M_1,M_2,\Theta_{
\mathcal{M}},\mu,R\rangle$ be a $\Theta$-structure. 
Let $\Theta_1$ be an extension of $\Theta_{\mathcal{M}}$ as above. Let $\vec g$ be a sequence from $M_1$ and $\vec b$ a sequence from $\N$.
If $ \{e\}(\Theta_1 , \mu ,  \vec g,\vec b) = a$, then $[e]_R(\Theta_{\mathcal{M}},\mu,\vec g,\vec b) = a$.
\end{lemma}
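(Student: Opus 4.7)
The plan is to argue by induction on the (well-founded) length of the Kleene computation tree underlying $\{e\}(\Theta_1,\mu,\vec g,\vec b)=a$, verifying the inductive step scheme by scheme from Definition \ref{1.1}. Throughout, the point is that condition (d) of Definition \ref{zetas} says $R$ is \emph{some} fixed point (not necessarily the least) of those same schemes interpreted over $\mathcal M$, so once the induction hypothesis places the values of all sub-computations of $\{e\}(\Theta_1,\mu,\ldots)$ inside $R$, the matching clause of the fixed-point equations for $R$ forces the corresponding value at $e$ into $R$ as well.

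The schemes S1, S2, S3, S6 and S7 have no sub-computations and are handled directly by (d), since $\mu$ and the parameters $g_i\in M_1$ are interpreted identically inside $\mathcal M$ and in the ambient universe. For S4 and S9, the induction hypothesis gives agreement on each sub-computation, and then (d) closes the case. For S5 (primitive recursion) and S8.1 (number search), the definitions require the inner computation to be total in its numerical argument; applying the induction hypothesis pointwise at each $a\in\N$ gives that $[e_1]_R(\Theta_{\mathcal M},\mu,\vec g,a,\vec b)$ is defined with the same value $\{e_1\}(\Theta_1,\mu,\vec g,a,\vec b)$, after which the S5/S8.1 clause of (d) yields the desired output for $e$.

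The crucial case is S8.2. Suppose $e=\langle 8,2,e_1\rangle$, that $F(g):=\{e_1\}(\Theta_1,\mu,g,\vec g,\vec b)$ is total on $\N^\N$, and that $\{e\}(\Theta_1,\mu,\vec g,a,\vec b)=\Theta_1(F)(a)$. For every $g\in M_1$, the induction hypothesis gives $[e_1]_R(\Theta_{\mathcal M},\mu,g,\vec g,\vec b)=F(g)$. Closure property (c) of a $\Theta$-structure then produces a $G\in M_2$ with $G(g)=F(g)$ for all $g\in M_1$; in other words, the ambient $F$ is an extension of $G$ in the sense of Definition preceding Lemma \ref{1.9}. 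Because $\Theta_1$ is an extension of $\Theta_{\mathcal M}$, we conclude $\Theta_1(F)=\Theta_{\mathcal M}(G)$. Finally, invoking (d) for the S8.2 clause relativised to $\mathcal M$ gives $[e]_R(\Theta_{\mathcal M},\mu,\vec g,a,\vec b)=\Theta_{\mathcal M}(G)(a)=\Theta_1(F)(a)=a$, as required.

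The main obstacle, and the only genuinely non-mechanical step, is precisely this S8.2 case: one must match the type-$2$ functional $F$ that arises during the ambient computation with some element $G$ of the internal universe $M_2$. Closure property (c) is tailored for exactly this purpose, and it is decisive that $\Theta_1(F)$ depends on $F$ only via the data $F\!\restriction\!M_1$, which is what the extension clause in Lemma \ref{1.9} (and hence the definition of special fan functional) guarantees. This is also where the argument is fragile: for functionals like $\exists^3$, an $F\in M_2$ that happens to be constantly zero on $M_1$ may be extended to a non-zero ambient functional, so no analogue of the extension lemma (and hence of Lemma \ref{1.10}) holds.
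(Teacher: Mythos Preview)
Your proof is correct and follows essentially the same approach as the paper: induction on the ordinal rank of the computation, case analysis on the Kleene schemes, with the decisive step being S8.2 handled via closure property (c) of the $\Theta$-structure together with the extension property $\Theta_1(F)=\Theta_{\mathcal M}(G)$. One small slip: S6 (permutation) does have a single sub-computation $\{e_1\}(\ldots)$, so it belongs with S4 and S9 rather than with the initial schemes, but this does not affect the argument. The paper also makes explicit use of closure property (b) in the S8.1 case (to place the function $\lambda x.[e_1]_R(\ldots)$ in $M_1$ before applying $\mu\in M_2$), whereas you appeal directly to the fixed-point clause; both readings are fine since the number domain is $\N$ in both the model and the ambient universe.
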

\begin{proof}
We prove this by induction on the ordinal rank of the computation of $\{e\}(\Theta_1,\mu,\vec g,\vec b)$.  The proof will be given by cases following the schemes. For the schemes S1, S2, S3 and S7, the cases of initial computations, the claim follows directly from the assumption that $R$ is a fixed point of the inductive operator whose least fixed point is the true set of terminating computations. 

\medskip

For the schemes S4 (composition), S5 (primitive recursion), S6 (permutation of arguments) and S9 (enumeration), the claim follows by the induction hypothesis and the assumption on $R$. This leaves us with the two special instances of S8:
\begin{itemize}
\item $ \{e\}(\Theta_1 , \mu, \vec g,\vec b) = \mu(\lambda x^0.\{d\}(\Theta_1 , \mu ,\vec g , x , \vec b))$. By the induction hypothesis and the closure properties of $\mathcal{M}$ we have
\[
\lambda x.\{d\}(\Theta_1,\mu,\vec g, x , \vec b) = \lambda x.[d]_R(\Theta_{\mathcal{M}},\mu,\vec g,x,\vec b) \in M_1,
\]
and the application of $\mu$ will yield the same result if we consider $\mu$ as an element of $M_2$ or as an element of full type 2. Then, since $R$ is a fixed point of the Kleene computation operator, we have that
\[
[e]_R(\Theta_{\mathcal{M}} , \mu , \vec g,\vec b) = \{e\}(\Theta_1 , \mu , \vec g,\vec b).
\]
\item $\{e\}(\Theta_1,\mu,m,\vec g,\vec b) = \Theta_1(\lambda g.\{d\}(\Theta_1 , \mu , g , \vec g,\vec b))(m)$. By the induction hypothesis and the closure properties of $\mathcal{M}$, we have that
\be\label{xyz}
\lambda g \in M_1.\{d\}(\Theta_1 , \mu , g, \vec g,\vec b) = \lambda g \in M_1 . [d]_R(\Theta_{\mathcal{M}},\mu,g,\vec g,\vec b) \in M_2.
\ee
Let $G^{2}$ be the function defined by \eqref{xyz}. Then $F = \lambda g \in \N^\N.\{e\}(\Theta_1 , \mu , g,\vec g,\vec b)$ is a total extension of $G$, so $\Theta_1(F) = \Theta_{\mathcal{M}}(G)$ by the assumption on $\Theta_1$. The induction step then follows as above.
\end{itemize}
We have now treated all nine schemes, and the proof is done.  
\end{proof}
We need one more lemma as follows.
\begin{lemma}\label{3.8} For each finite sequence $\vec f$ from $\N^\N$ and  special fan functional $\Theta_1$, there is a countable $\Theta$-structure ${\mathcal{M}} = \langle \N,M_1,M_2,\mu,\Theta_{\mathcal{M}},R\rangle$ with $\vec f$ in $M_1$ such that for all $e$, $\vec g \in M_1^*$, $\vec b \in \N^*$, and $a \in \N$, we have that
\[
[e]_R(\Theta_{\mathcal{M}}, \mu,\vec g , \vec b) = a \leftrightarrow   \{e\}(\Theta_1,\mu,\vec g , \vec b) = a.
\]
\end{lemma}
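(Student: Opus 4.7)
The plan is to construct $\mathcal{M}$ by a L\"owenheim--Skolem argument. Fix a cardinal $\kappa$ large enough that $\Theta_1$, $\mu$, the components of $\vec f$, and the Kleene computability predicate all sit in $H_\kappa$, and pick a countable elementary substructure $\mathcal{N}\prec(H_\kappa,\in)$ containing $\Theta_1,\mu,\vec f$. Put $M_1:=\mathcal{N}\cap\N^\N$, and let $M_2$ be the set of restrictions $F|_{M_1}$ where $F$ ranges over those elements of $\mathcal{N}$ that are total maps $\N^\N\to\N$. For $G=F|_{M_1}\in M_2$ set $\Theta_{\mathcal{M}}(G):=\Theta_1(F)$, and \emph{define} $[e]_R(\Theta_{\mathcal{M}},\mu,\vec g,\vec b)=a$ to mean $\{e\}(\Theta_1,\mu,\vec g,\vec b)=a$ in the full sense of Definition~\ref{1.1}; this makes the biconditional required by the lemma a tautology, so the remaining task is to verify that $\langle\N,M_1,M_2,\Theta_{\mathcal{M}},\mu,R\rangle$ really is a $\Theta$-structure.

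Two transfer principles from elementarity carry the proof. First, if $F_1,F_2\in\mathcal{N}$ are total on $\N^\N$ and distinct, then the true statement ``$\exists g\,(F_1(g)\neq F_2(g))$'' has a witness in $\mathcal{N}$, hence in $M_1$; contrapositively, total functions in $\mathcal{N}$ that agree on $M_1$ agree on all of $\N^\N$. This makes $\Theta_{\mathcal{M}}$ well defined on $M_2$, endows each $G\in M_2$ with a unique total extension $\tilde G\in\mathcal{N}$, and forces the finitely many binary functions decoded from $\Theta_1(F)\in\mathcal{N}$ to lie in $M_1$. Second, if a Kleene computation with parameters from $\mathcal{N}$ terminates for every $g\in M_1$, then by the same elementarity argument (any would-be non-terminating witness can be drawn from $\mathcal{N}$) it terminates for every $g\in\N^\N$.

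With these tools, conditions~(a)--(d) of Definition~\ref{zetas} are verified directly. Uniqueness~(a) is the determinism of Kleene computations. Closure of $M_1$ in~(b) holds because the total function $b\mapsto\{e\}(\Theta_1,\mu,\vec g,b,\vec b)$ is definable from parameters in $\mathcal{N}$ and therefore belongs to $\mathcal{N}$. Closure of $M_2$ in~(c) follows from the second transfer principle: totality on $M_1$ lifts to totality on $\N^\N$, providing the required $F\in\mathcal{N}$ whose restriction to $M_1$ witnesses membership in $M_2$. The fixed-point clause~(d) is routine for S1--S7, S9 and S8.1; the only case deserving attention is S8.2, where for $F(g)=\{e_1\}(\Theta_1,\mu,g,\vec g,\vec b)$ total on $M_1$ the transfer principle supplies its unique total extension $\tilde F\in\mathcal{N}$, hence $F|_{M_1}\in M_2$, and then $\{e\}(\Theta_1,\mu,\vec g,a,\vec b)=\Theta_1(\tilde F)(a)=\Theta_{\mathcal{M}}(F|_{M_1})(a)$ is exactly the value the model-side S8.2 demands.

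It remains to note that $\Theta_{\mathcal{M}}$ satisfies the modified $\SFF(\Theta)$ relative to $(M_1,M_2)$: since $\Theta_1$ is a special fan functional, the neighbourhoods indexed by the decoded entries of $\Theta_1(F)$ cover Cantor space for the full-universe $F$, those entries lie in $M_1$ by the well-definedness discussion above, and $F$ and $G=F|_{M_1}$ agree on them. The main obstacle is precisely this well-definedness of $\Theta_{\mathcal{M}}$: a single $G\in M_2$ could, in principle, arise as the restriction of several total extensions in $\mathcal{N}$ on which $\Theta_1$ disagrees, and the choice of $\mathcal{N}$ as an elementary substructure is exactly the device that eliminates this ambiguity.
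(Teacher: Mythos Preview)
Your proof is correct and follows essentially the same L\"owenheim--Skolem strategy as the paper. The paper constructs $M_1$ as a targeted Skolem hull closed under (i) the given $\vec f$, (ii) functions computable in $\Theta_1,\mu$ and parameters from $M_1$, and (iii) witnesses to non-totality of partial type~2 functionals computable from the same data; it then defines $M_2$, $\Theta_{\mathcal{M}}$, and $R$ exactly as you do. Your use of a full elementary substructure $\mathcal{N}\prec H_\kappa$ packages these closure conditions into a single appeal to elementarity, which is slicker but invokes heavier set-theoretic machinery; the paper's hand-crafted hull makes explicit precisely which closure properties are needed and where the axiom of choice enters (namely in picking the non-totality witnesses).
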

\begin{proof}
We define $M_1$ as a kind of Skolem hull, and we define $M_2$, $\Theta_{\mathcal{M}}$, and $R$ explicitly from $M_1$ and $\Theta_1$. We will need that $\Theta_1$ is a special fan functional in order to show that $\mathcal{M}$ models that $\Theta_{\mathcal{M}}$ is a special fan functional, but the rest of the proof works for all type three objects.

Thus, let $M_1$ be countable such that
\begin{itemize}
\item[(i)] Each $f_i$ from $\vec f$ is in $M_1$
\item[(ii)] If $g$ is computable in $\Theta_1$, $\mu$ and a sequence $\vec g$ from $M_1$, then $g \in M_1$
\item[(iii)] If $F$ is a partial functional of type 2 computable in $\Theta_1$, $\mu$, and some $\vec g$ from $M_1$, and there is some $g$ for which $F(g)$ is undefined, then there is some $g \in M_1$ such that $F(g)$ is undefined. (This is the main Skolem hull part, and here we need the axiom of choice in a non-trivial way.)
\end{itemize}
We then let $M_2$ consist of all restrictions of $F$ to $M_1$, where $F$ is total and computable in $\Theta_{1}$, $\mu$ and some $\vec g$ in $M_1$. If $G$ is the restriction of $F$ in this way, we put $\Theta_{\mathcal{M}}(G) := \Theta_1(F)$.
 We put $[e]_R(\Theta_{\mathcal{M}},\mu,\vec g , \vec b) = a$ if and only if $\{e\}(\Theta_1 , \mu , \vec g , \vec b) = a$ for $\vec g$ in $M_1$.
Then (iii) will ensure that totality of functionals of type 2 is absolute for $\mathcal{M}$: If ${\mathcal{M}}  \models \forall g \exists a [e]_R(\Theta , \mu , g , \vec g,\vec b) = a,$ then $F$, defined by $F(g) = \{e\}(\Theta_1,\mu,\vec g , \vec b)$, is total, and the restriction to $\mathcal{M}$ is in $M_2$.

\medskip

By a similar argument, we observe that $\Theta_{\mathcal{M}}$ will be extensional: If $F_1\neq F_2$, both are total and computable in $\Theta_1$, $\mu$ and elements from $M_1$, then the partial functional $F_3$, where $F_3(g) = 0$ when $F_1(g) = F_2(g)$ and undefined otherwise, will also be computable in $\Theta_1$, $\mu$ and elements from $M_1$, and by (iii), $M_1$ will contain a $g$ such that $F_1(g) \neq F_2(g)$. Thus, the restriction operator will be 1-1, and $\Theta_{\mathcal{M}}$ is thus extensional, that is, well defined.
Except for the construction of $M_1$, the construction of $\mathcal{M}$ is explicit. Moreover, if $F \in M_2$ and $G$ is the unique extension of $F$ computable in $\Theta_1$, $\mu$ and elements from $M_1$, we have that $\Theta_1(G) \in M_1$,  and that $\Theta_{\mathcal{M}}(F) = \Theta_1(G)$ codes a finite subset of $M_1$ that, together with $G$ (or $F$) forms a finite cover of $C$, so $\Theta_{\mathcal{M}}$ will be a special fan functional from the point of view of $\mathcal{M}$. Thus $\mathcal{M}$ will satisfy the claim of the lemma. 
\end{proof}
Finally, we can prove Theorems \ref{1.2} and \ref{1.3} as follows.
\begin{proof}(of Theorem \ref{1.2})
First of all, the functional $\Theta_0$ is defined in \cite{dagsam}*{\S5} and Lemma \ref{3.8} implies that there is at least one countable $\Theta$-structure $\mathcal{M}$, i.e.\ 
the set of codes for $\Theta$-structures is hyperarithmetical and \emph{non-empty}.
By (essentially) the Gandy basis theorem (\cite{Sacks.high}*{III.1.4}), there is then a code $f$ for a countable $\Theta$-structure ${\mathcal{M}} = \langle \N,M_1,M_2,\mu,\Theta_{\mathcal{M}},R\rangle$ such that $\omega_1^f = \omega_1^{\textup{\textsf{CK}}}$.
By Lemma \ref{1.9}, $\Theta_{\mathcal{M}}$ has an extension $\Theta_1$, and by Lemma \ref{1.10}, all functions $g$ computable in $\Theta_1$ and $\mu$ are elements of $M_1$, and thus Turing computable in $f$. Then also $\omega_1^g = \omega_1^{\textup{\textsf{CK}}}$. 
\end{proof}
The following provides a proof for Theorem \ref{1.3}.
\begin{proof}
By Lemmas \ref{1.9}, \ref{1.10} and \ref{3.8}, the following are equivalent, given $e, \vec g,\vec b, a$:
\begin{enumerate}
 \renewcommand{\theenumi}{\roman{enumi}}
\item For all special fan functionals $\Theta$, we have that $\{e\}(\Theta,\mu,\vec g,\vec b) = a$\label{corke1}
\item For all countable $\Theta$-structures $\mathcal{M} = \langle \N,M_1,M_2,\mu,\Theta_{\mathcal{M}} ,R\rangle$, we have that $[e]_R(\Theta_{\mathcal{M}},\mu ,\vec g,\vec b) = a$.\label{corke2}
\end{enumerate}
Via coding, the relation in \eqref{corke2} is $\Pi^1_1$, so the relation in \eqref{corke1} must also be $\Pi^1_1$.
\end{proof}

\subsection{Beyond the hyperarithmetical via $\Theta$ and $\mu^{2}$}\label{atrsec}
In this section, we provide a \emph{direct} proof that the combination $\Theta$ and $\mu^{2}$ computes a realiser for $\ATR_{0}$.  

\medskip

We proved in \cite{dagsam} that there is no instance $\Theta$ such that all functions computable in $\Theta$ and $\mu$ are hyperarithmetical. 
We gave two proofs: one by a direct construction of a hyperarithmetical functional $F$ such that $\Theta(F)$ can never be contained in the hyperarithmetical functions, and one by applying term extraction to 
\[
\P_{0}\vdash \paai+\STP\di [\ATR_{0}]^{\st}
\]
which (indirectly) yields a realiser for $\ATR_{0}$ in terms of $\Theta$ and $\mu^{2}$.
There are thus two proofs of essentially the same result, one explicit construction where we do not analyse the logical strength needed and one indirect, via term extraction, where the underlying logic is explicit. We consider both approaches to be of value.

\medskip

In a nutshell, the aim of this section is to prove (inside $\ACA_{0}$) that $\ATR_0$ follows from the \emph{Arithmetical Compactness of $C$}, defined as follows. 
\begin{definition}[Arithmetical Compactness of $C$]
For any arithmetically defined $F:C \rightarrow \N$, where we allow function parameters, there are $f_1 , \ldots , f_n \in C$ such that
\[
C \subseteq C_{\bar f_1(F(f_1))} \cup \cdots \cup C_{\bar f_n(F(f_n))}.
\]
\end{definition}

With the exception that we have used the symbol `$\Theta$'  for other purposes (namely to denote a special fan functional), we mostly adopt Simpson's notation regarding $\ATR_{0}$ from \cite{simpson2}*{V.2}, namely as follows.
\bnota
Let $\Gamma(n,X,Z)$ be an arithmetical formula, inducing the  operator 
\[
\hat \Gamma(X,Z) = \{n \mid \Gamma(n, X,Z)\}
\]
 seen as an inductive operator in the first set variable $X$. We assume $A \subseteq \N$ and let $<_A$ be a total ordering of $A$. We use $A$ and $<_A$ as hidden parameters, and when using the variable $Y$, we implicitly assume that $Y \subseteq  \N^2$.  We define $Y_a := \{n \mid (a,n) \in Y\}$ and $Y^a := \{(b,m) \mid (b,m) \in Y \wedge b <_A a\}$ for $a \in A$.  Finally, $H(Y,Z)$ is the arithmetical statement $(\forall a \in A) (Y_a = \hat \Gamma(Y^a,Z))$. 
\enota
\begin{theorem}\label{theorem.ATR} Given $\Gamma$ as above, there is an arithmetical function $G^{2}$ such that if $F(g) = G(g,A, <_A,Z)$ \($g$ varies over $C$\)  and $g_1 , \ldots , g_n$ are as in \emph{Arithmetical Compactness} for $F$, then we can construct \(uniformly arithmetically in $Z$, $A$, $<_A$ and $g_1 , \ldots , g_n$\) a pair $(Y,h)$ such that either $H(Y,Z)$ or $h:\N \rightarrow \N$ is a strictly $<_A$-descending sequence in $A$. The verification can be formalised in $\ACAo$.
\end{theorem}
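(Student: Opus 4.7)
The plan is to reduce the construction of $(Y,h)$ to the existence of a finite sub-cover of an arithmetically defined canonical cover of $C$, effectively internalising the transfinite recursion along $<_A$ into a compactness argument that can be carried out in $\ACAo$.

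First, I would define $G$ so that $F(g) := G(g, A, <_A, Z)$ serves as a ``witness length'' measuring how far a finite prefix of $g$ already certifies information about the iteration. Concretely, interpret a binary sequence $g \in C$ as coding a pair consisting of (i) an alleged strictly $<_A$-descending attempt $h_g : \N \to A$, and (ii) an alleged partial solution $Y_g \subseteq A \times \N$ together with an alleged index $a_g \in A$ up to which $Y_g$ is supposed to satisfy the defining equation for $H$. Using $\mu^2$, set $F(g)$ to be the least $k$ such that inspecting $\bar g(k)$ either exhibits a strict $<_A$-descent among the first few coded values of $h_g$ of length bounded by a function of $a_g$, or certifies that the bits of $Y_g$ read so far are incompatible with the relation $Y_{g,b} = \hat\Gamma((Y_g)^b, Z)$ for some $b <_A a_g$ that is decodable from $\bar g(k)$. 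Such a least $k$ exists for every $g$, since any binary code either eventually exhibits a descent or eventually exhibits an inconsistency, both of which are arithmetical conditions in the parameters; hence $F$ is total and the canonical cover $\{[\bar g F(g)] : g \in C\}$ is well-defined.

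Second, given a finite sub-cover $g_1, \ldots, g_n$ as provided by \emph{Arithmetical Compactness}, I would extract $(Y,h)$ uniformly arithmetically as follows. Using $\mu^2$, scan $g_1, \ldots, g_n$ for any $g_i$ whose certified prefix $\bar{g_i}(F(g_i))$ already exhibits a strictly $<_A$-descending sequence in $A$: if some such $g_i$ exists, extend its descending prefix to a full $h : \N \to \N$ in any arithmetical manner (padding after exhausting the prefix, with the guarantee deriving from the certified portion). Otherwise, build $Y$ by arithmetical recursion along $<_A$ using the $g_i$'s as oracular advice: at each candidate $a \in A$, locate by $\mu^2$-search an index $i$ such that $\bar{g_i}(F(g_i))$ is consistent with the partial $Y^a$ constructed so far and with $a_{g_i} = a$, and set $Y_a := \hat\Gamma(Y^a, Z)$. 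The covering property of $\{[\bar{g_i} F(g_i)]\}_i$ applied to the ``honest'' binary sequence that would code the true iteration up to $a$ forces at least one $g_i$ to supply the required certification, so the recursion proceeds at every stage unless an obstruction arises; any obstruction is itself witnessed arithmetically by a finite descending sequence among the $g_i$'s, which we then output as $h$.

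Third, I would verify the dichotomy inside $\ACAo$: the operator $\hat\Gamma$, the tests of compatibility, and the search through the finite list $g_1, \ldots, g_n$ are all arithmetical in the parameters and hence available via $\mu^2$. If $<_A$ is genuinely well-founded, no $g_i$ can certify a true descending sequence, so the recursion must succeed and yield $H(Y,Z)$; otherwise, the finite sub-cover pins down an $h$ as above.

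The main obstacle I expect is calibrating the coding so that $F$ remains arithmetical (not needing a Suslin-type operator) while being fine-grained enough that \emph{every} $g \in C$ is covered at some finite stage \emph{and} the finite sub-cover retains enough structural information to drive the transfinite recursion. In particular, the book-keeping linking the alleged index $a_g$ to the decodable portion of $Y_g$ and $h_g$ must be arranged so that the certification at length $F(g)$ propagates correctly along $<_A$, rather than merely giving a bounded initial segment of the iteration.
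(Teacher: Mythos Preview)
Your proposal has two genuine gaps that prevent it from working.

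First, your functional $F$ is not total. You claim that ``any binary code either eventually exhibits a descent or eventually exhibits an inconsistency'', but consider a $g$ whose coded $h_g$ is not $<_A$-descending anywhere (say constant) and whose coded $Y_g$ happens to satisfy $Y_{g,b} = \hat\Gamma((Y_g)^b,Z)$ for all $b <_A a_g$. Neither of your two termination conditions is ever witnessed by a finite prefix, so there is no least $k$ and $F(g)$ is undefined. Even if you flip the first test to search for a \emph{failure} of $h_g$ to descend, a $g$ whose $h_g$ is genuinely $<_A$-descending (which is possible, since $<_A$ is not assumed well-founded) and whose $Y_g$ is consistent below $a_g$ still escapes both tests.

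Second, and more seriously, your construction of $Y$ in the ``otherwise'' branch is circular: you write ``build $Y$ by arithmetical recursion along $<_A$ \ldots\ and set $Y_a := \hat\Gamma(Y^a,Z)$''. That \emph{is} arithmetical transfinite recursion along $<_A$, which is precisely what the theorem is meant to deliver; it is not available in $\ACAo$ a priori. The finite list $g_1,\dots,g_n$ appears only as a side condition (``consistent with the partial $Y^a$ constructed so far''), not as the actual source of $Y$, so the $g_i$'s are doing no real work.

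The paper's approach avoids both problems by a different design of $G$. It lets $g$ code a single candidate \emph{full} solution $Y[g]$ and sets $G(g,\dots)$ to $0$ if $Y[g]$ already satisfies $H$ (or if the set of failure points has no $<_A$-minimum), and otherwise to a code for the $<_A$-least point of failure together with a witnessing bit. Given the finite sub-cover, either some $g_i$ has $F(g_i)=0$ and you read off $Y$ or a descending sequence directly from that one $g_i$; or all $F(g_i)>0$, and a short combinatorial argument shows that either some pair $g_i,g_j$ yields a descending sequence or one can exhibit a point of $C$ not covered by any $[\bar g_i F(g_i)]$, contradicting the cover. No recursion along $<_A$ is ever performed; the candidate $Y$ is handed to you whole by one of the $g_i$.
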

\begin{proof}
Given $g$, we put $Y[g]: = \{(b,k) \mid g(\langle b,k \rangle) = 0\}$. We now define $G$ as follows:
 the number $G(g,A,<_A,Z) $ is defined to be
\begin{enumerate}
 \renewcommand{\theenumi}{\roman{enumi}}
\item $0$ if $H(Y[g],Z)$ or there is no $<_A$-minmal $a$ such that $(Y[g])_a \neq \hat \Gamma((Y[g])^a,Z)$.
\item $\langle a,k\rangle + 1$ if $a$ is $<_A$ minimal such that $(Y[g])_a \neq \hat \Gamma((Y[g])^a,Z)$ and $k$ is the least integer in the symmetric difference of $(Y[g])_a$ and $\hat \Gamma((Y[g])^a,Z)$.
\end{enumerate}
Let $F(g) = G(g,A,<_A,Z)$ and let $g_1 , \ldots , g_n$ be such that $C = C_{\bar g_1(G(g_1))} \cup \cdots \cup C_{\bar g_n(G(g_n))}.$
If for some $i$ we have $F(g_i) = 0$, then either $H(Y[g_i],Z)$ or this is not the case since there is no $<_A$-minimal $a$ such that $(Y[g_i])_a \neq \hat \Gamma((Y[g_i])^a,Z)$. 
We select the least such $g_i$ in the lexicographical ordering on $C$. In the first case, we let $Y = Y[g_i]$ and $h$ be the constant zero, and in the second case we may also let $Y = Y[g_i]$, but we combine $\mu$-recursion and primitive recursion and let $h$ be a strictly descending $<_A$ sequence of $a$'s such that $(Y[g_i])_a \neq \hat \Gamma((Y[g_i])^a)$. 

\medskip

The other possibility is that $F(g_i) = \langle a_i,k_i\rangle + 1$ for $i = 1 , \ldots , n$. If there are $i\neq j$ such that $Y[g_i] \cap A \times \N \neq Y[g_j]  \cap A \times \N$ and there is no $<_A$-minimal $a$ with $(Y[g_i])_a \neq (Y[g_j])_a$, we can extract an infinite descending sequence in $A$ from this information. We will show that the absence of such $i$ and $j$ will lead to a contradiction. So assume that there is no such $i$ and $j$. Without loss of generality, we may assume that $a_1 \leq_A a_2 \leq_A \cdots \leq_A a_n$. We make three observations:
\begin{enumerate}
 \renewcommand{\theenumi}{\roman{enumi}}
\item If $Y[g_i] \cap (A \times \N) = Y[g_j] \cap (A \times \N)$, then $a_i = a_j$.
\item If $Y[g_i] \cap (A \times \N )\neq Y[g_j] \cap (A \times \N)$ and $a$ is the $A$-least number where $(Y[g_i])_a$ and $(Y[g_j])_a$ differ, then $\min_A\{a_i,a_j\} \leq_A a.$
\item Given $i$, if $g$ is such that $(Y[g])^{a_i} = (Y[g_i])^{a_i}$ and $(Y[g])_{a_i} = \hat \Gamma((Y[g_i])^{a_i},Z)$, then $g$ is not covered by $C_{\bar g_i(F(g_i))}$. Moreover, if $a_i <_A a_j$, then $g_j$ will satisfy this property of $g$.
\end{enumerate}
It follows that if $g$ is such that $(Y[g])^{a_n} = (Y[g_n])^{a_n}$ and $(Y[g])_{a_n} = \hat \Gamma ((Y[g_n])^{a_n},Z)$, then $g$ is not in any of the sets $C_{\bar g_i(F(g_i))}$, so these sets do not form a cover. This is the desired contradiction. 

It is easy to see that all steps here can be formalised.
\end{proof}
This gives an alternative proof of the following corollary.  
\begin{corollary} \label{1.18} 
There is no special fan functional  $\Theta$ that, together with $\mu$, computes only hyperarithmetical functions.
\end{corollary}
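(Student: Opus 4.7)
My plan is to derive Corollary \ref{1.18} directly from Theorem \ref{theorem.ATR}, by showing that otherwise the whole jump hierarchy along all recursive well-orderings would be uniformly hyperarithmetical, contradicting classical facts about the hyperjump. I would argue by contradiction: suppose $\Theta$ is a special fan functional such that every function Kleene-computable from $\Theta$ and $\mu$ is hyperarithmetical.

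I would choose the arithmetical $\Gamma$ so that $\hat{\Gamma}(X,Z) = X'$, the Turing jump relativised to $Z$, so that any $Y$ with $H(Y,\emptyset)$ along a well-ordering $A$ satisfies $Y \equiv_{T} \emptyset^{(|A|)}$ by the standard iterated-jump calculation. Let $e \mapsto (A_{e}, <_{A_{e}})$ enumerate (in the usual way) all partial recursive linear orderings on $\N$. Applying Theorem \ref{theorem.ATR} uniformly in $e$ with $Z = \emptyset$ yields, via a Kleene computation in $\Theta$ and $\mu$, a total procedure that on input $e$ outputs $(Y_{e}, h_{e})$: concretely one forms $F(g) = G(g, A_{e}, <_{A_{e}}, \emptyset)$, applies $\Theta$ to obtain the finite list $\Theta(F) = \langle g_{1}, \ldots, g_{n}\rangle$, and then reads off $(Y_{e}, h_{e})$ arithmetically from this list together with $A_e$, $<_{A_e}$. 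Our hypothesis then forces the two-place function $(e,n) \mapsto Y_{e}(n)$ to be hyperarithmetical, so by the Spector--Kleene characterisation of $\Delta^{1}_{1}$ it is Turing reducible to $\emptyset^{(\beta)}$ for some fixed $\beta < \omega_{1}^{\textup{\textsf{CK}}}$; hence $Y_{e} \leq_{T} \emptyset^{(\beta)}$ for every $e$.

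To derive the contradiction, I would pick $e$ so that $A_{e}$ is a genuine recursive well-ordering with order type $|A_{e}| > \beta$; this is possible since the recursive ordinals are cofinal in $\omega_{1}^{\textup{\textsf{CK}}}$. Well-foundedness of $A_{e}$ rules out the second alternative in Theorem \ref{theorem.ATR}, so $H(Y_{e}, \emptyset)$ must hold, and hence $Y_{e} \equiv_{T} \emptyset^{(|A_{e}|)}$. Combined with the uniform bound $Y_{e} \leq_{T} \emptyset^{(\beta)}$, this contradicts the strict monotonicity of the iterated Turing jump along the recursive ordinals.

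The only step beyond cleanly invoking Theorem \ref{theorem.ATR} is the passage from \emph{hyperarithmetical} to \emph{uniformly bounded by $\emptyset^{(\beta)}$ for some recursive $\beta$}, and this is the main obstacle if one is not comfortable with the classical hierarchy theory: it rests on the fact that a total $\Delta^{1}_{1}$ function of two arguments lies at a fixed level of the hyperarithmetic hierarchy, i.e.\ has a single recursive ordinal as its complexity ceiling. Granting this, the argument is essentially a repackaging of Theorem \ref{theorem.ATR} together with the standard strictness of the iterated jump along recursive ordinals.
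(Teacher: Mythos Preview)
Your proof is correct and follows essentially the same approach as the paper's. The paper's proof is a one-liner: it simply invokes the classical fact that there is no hyperarithmetical realiser for $\ATR_0$ (citing Simpson V.2.6), while you unpack this fact by taking $\Gamma$ to be the Turing jump and deriving the contradiction explicitly from the strict hierarchy of iterated jumps along recursive ordinals. The extra detail you provide---bounding the hyperarithmetical two-place function $(e,n)\mapsto Y_e(n)$ by a fixed $\emptyset^{(\beta)}$ and then choosing a recursive well-ordering of type exceeding $\beta$---is exactly the content of the fact the paper quotes, so the two arguments differ only in how much of the classical background is left as a citation.
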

\begin{proof}It is well established that there is no hyperarithmetical realiser for $\ATR_0$, see e.g.\  the proof of V.2.6 in \cite{simpson2}.\end{proof}
We also have the following corollary relativising the proof above.  
\begin{corollary}
There is an arithmetically defined function $F:C^2\rightarrow \N$ such that for no special fan functional $\Theta$, the function
$F(x) = \Theta(\lambda y.F(x,y))$ is Borel. \end{corollary}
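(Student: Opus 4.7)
The strategy is to Borel-relativise the direct proof of Corollary~\ref{1.18} by parametrising Theorem~\ref{theorem.ATR} over $x\in C$. A Borel uniformisation of the $\ATR_0$-solutions produced by $\Theta+\mu$ would yield a Borel decision of well-foundedness, thereby collapsing the $\Pi^1_1$-complete set $\WO$ into a Borel set.

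First I would fix $\Gamma$ to be the arithmetical Turing-jump operator, $\Gamma(n,X,Z)\equiv{}$``$n$ codes a halting computation $\{e\}^{X\oplus Z}(k)$'', so that iterating $\Gamma$ along $<_A$ from parameter $Z$ produces the transfinite jump hierarchy along $<_A$. Applying Theorem~\ref{theorem.ATR} yields an arithmetical functional $G$, uniform in $(A,<_A,Z)$. For each $x\in C$, regard $x$ as coding a linear order $<_x$ on $\N$, and define
\[
F(x,y):=G(y,\N,<_x,x);
\]
evidently $F$ is arithmetically definable.

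Next, suppose towards a contradiction that some special fan functional $\Theta$ makes $\widetilde H(x):=\Theta(\lambda y.F(x,y))$ a Borel function of $x$. Then $\widetilde H$ is $\Delta^1_1(z)$ for some real $z$, so $\widetilde H(x)\in\HYP(z\oplus x)$ uniformly in $x$. By Theorem~\ref{theorem.ATR}, from $\widetilde H(x)$, $\mu$ and $x$ one arithmetically constructs a pair $(Y_x,h_x)$ satisfying either the fixed-point equation $H(Y_x,x)$ or that $h_x$ is strictly $<_x$-descending. Both conditions ``$H(Y_x,x)$ holds'' and ``$h_x$ is a strictly $<_x$-descending sequence'' are arithmetic in $(Y_x,h_x,x)$, and hence Borel in $x$. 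For the Turing-jump operator, the classical reverse-math analysis of $\ATR_0$ shows that $H(Y,x)$ admits a solution precisely when $<_x$ is well-founded: along any descending chain $a_0>_x a_1>_x\cdots$ the equations $Y_{a_i}=(Y^{a_i}\oplus x)'$ would force $Y_{a_0}\geq_T Y_{a_n}^{(n)}$ for every $n$, ultimately contradicting $Y_{a_0}$ being just one jump above $Y^{a_0}$. Hence $\WO$ is Borel, contradicting its $\Pi^1_1$-completeness.

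The main obstacle will be verifying the aforementioned classical fact that the Turing jump admits no pseudo-hierarchy on an ill-founded linear order. Pseudo-hierarchies may occur in weaker subsystems of second-order arithmetic, but in $\ZFC$ they are excluded by the tower-of-jumps argument sketched above; this is the technical crux that transfers the algorithmic dichotomy of Theorem~\ref{theorem.ATR} into a Borel decision of $\WO$.
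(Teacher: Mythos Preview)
Your proof has a genuine gap at the claimed ``classical fact'' that the jump-hierarchy condition $H(Y,x)$ admits a solution only when $<_x$ is well-founded. This is false in $\ZFC$. Observe that $\{x:(\exists Y)H(Y,x)\}$ is $\Sigma^1_1$ and contains the $\Pi^1_1$-complete set $\WO$, hence strictly contains it; concretely, along the Harrison order (and its relativisations) such pseudo-hierarchies exist as genuine sets. Your tower-of-jumps sketch correctly derives $Y_{a_0}\geq_T Y_{a_n}^{(n)}$, but this is no contradiction: $Y^{a_0}$ already encodes every column $Y_{a_n}$, so nothing bounds the Turing degree of its jump $Y_{a_0}$. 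You appear to conflate ``pseudo-hierarchies arise only in nonstandard models of weak subsystems'' with ``no set in $V$ satisfies $H(Y,x)$ for ill-founded $<_x$''; the latter is simply wrong, and consequently the dichotomy of Theorem~\ref{theorem.ATR} does not decide $\WO$.

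The paper avoids this by not trying to decide $\WO$. It argues instead that the section-wise output of Theorem~\ref{theorem.ATR} cannot be hyperarithmetical in \emph{any} parameter: for each $X$ there is a pseudo-well-ordering computable in $X$ with no hyperarithmetical-in-$X$ descending sequence, and any jump hierarchy along it has columns cofinal in the hyperarithmetical-in-$X$ degrees, so it is not hyperarithmetical in $X$ either. Thus neither horn of the dichotomy can be met within $\HYP(X)$, and the realiser $x\mapsto\Theta(\lambda y.F(x,y))$ cannot be Borel. Your parametrised setup $F(x,y)=G(y,\N,<_x,x)$ is perfectly good; only the last step needs to be replaced by this argument.
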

\begin{proof}
For $X\subset \N$, there is a total ordering computable in $X$ that is not a well-ordering, but such that there is no descending sequence in the ordering hyperarithmetical in $X$.  
Hence, there is no realiser for $\ATR_0$ hyperarithmetical in any $X$, i.e.\ no realiser that is Borel. 
Since we can obtain a realiser for $\ATR_0$ by section-wise application of $\Theta$ to an arithmetical functional of two variables, 
we are done.
\end{proof}
Finally, Hunter introduces a functional in \cite{hunterphd}*{p.\ 23} that constitutes a `uniform' version of $\ATR_{0}$.  
This functional is computable from $\Theta$ plus $\mu$, as follows  
\begin{cor}
Uniformly primitive recursive in $\mu^{2}$ and $\Theta^{3}$ there is a functional
$T:\N^\N \times (2^\N \rightarrow 2^\N) \rightarrow 2^\N$ such that when $f^{1}$ codes a well-ordering $<_f$, then $T(f,F)$ satisfies the following recursion equation for $a$ in the domain of $<_f$\textup{:}
\[
\{b : \langle b,a \rangle \in T(f,F)\} = F(\{\langle c,d \rangle \in T(f,F) : d <_f a\}).
\]
\end{cor}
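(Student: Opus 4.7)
The plan is to relativise the argument of Theorem \ref{theorem.ATR}, treating the input functional $F:2^{\N}\to 2^{\N}$ as an oracle in place of the arithmetical operator $\hat\Gamma(\cdot,Z)$. Since the original proof proceeded by a single application of $\Theta$ to a type-$2$ functional built from $\mu^{2}$ and the operator, the same template produces a uniformly primitive recursive definition of $T$ in $\mu^{2}$ and $\Theta^{3}$.

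Concretely, given $f^{1}$ coding a relation $<_f$ on its field $A_f$, and given $F$, I would define $G_{f,F}:2^{\N}\to\N$ exactly as in Theorem \ref{theorem.ATR}: on input $g\in 2^{\N}$, set $Y[g]:=\{(b,k)\mid g(\langle b,k\rangle)=0\}$ and, using $\mu^{2}$, search for the $<_f$-least $a\in A_f$ at which $(Y[g])_a\neq F((Y[g])^a)$; if such $a$ exists, put $G_{f,F}(g):=\langle a,k\rangle+1$ where $k$ is the least element of the symmetric difference of $(Y[g])_a$ and $F((Y[g])^a)$; otherwise put $G_{f,F}(g):=0$. The point is that checking equality of two sets at a given $a$ is $\Pi_{1}^{0}$ in $(F,f,g)$, so $G_{f,F}$ is arithmetical in $F$ and hence primitive recursive in $\mu^{2}$ (with $F$ and $f$ as parameters). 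Now apply $\Theta$ to $G_{f,F}$ to obtain a finite list $\langle g_1,\dots,g_n\rangle$ whose neighbourhoods $[\overline{g_i}G_{f,F}(g_i)]$ cover $C$, and define $T(f,F)$ to be $Y[g_i]$ for the lexicographically first $g_i$ with $G_{f,F}(g_i)=0$, with a fixed default (say $\emptyset$) when no such $g_i$ exists.

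The correctness verification, when $<_f$ is a well-ordering, is exactly the contradiction argument in Theorem \ref{theorem.ATR}: relabelling so that $a_1\leq_f\cdots\leq_f a_n$, the assumption that every $G_{f,F}(g_i)$ is nonzero allows one to construct a $g$ with $(Y[g])^{a_n}=(Y[g_n])^{a_n}$ and $(Y[g])_{a_n}=F((Y[g_n])^{a_n})$ escaping every $[\overline{g_i}G_{f,F}(g_i)]$, contradicting coverage. The main obstacle is bookkeeping rather than substance: one must check that the construction is genuinely \emph{primitive recursive} (no appeal to $S9$) in $\mu^{2}$ and $\Theta^{3}$. This follows because the definition of $G_{f,F}$ from $F$, $f$ and $\mu^{2}$ is explicit, $\Theta$ is then applied exactly once, and the selection of $g_i$ from $\Theta(G_{f,F})$ is bounded search over a finite list. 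The recursion equation for $T(f,F)$ at each $a\in A_f$ then holds by construction.
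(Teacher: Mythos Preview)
Your proposal is correct and follows essentially the same approach as the paper: the paper's proof consists of the single observation that in Theorem~\ref{theorem.ATR} the arithmeticity of $\Gamma$ is used only to ensure that $G$ is arithmetical, so the whole construction relativises to an arbitrary $F:2^{\N}\to 2^{\N}$. Your write-up simply spells out this relativisation in detail, including the primitive-recursiveness bookkeeping, and is if anything more explicit than the paper's two-sentence justification.
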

\begin{proof}
In the proof of Theorem \ref{theorem.ATR}, note the fact that $\Gamma$ is arithmetical is (only) used to prove that the defined functional $G$ is arithmetical. The full proof therefore relativises to any $F$ of the relevant type.
\end{proof}

\subsection{Not beyond the hyperarithmetical via $\Lambda$ and $\mu^{2}$}\label{essdag}
In this section, we introduce a functional  $\Lambda_1$ of type $(C \rightarrow \N) \rightarrow (\N \rightarrow C)$, where $C = 2^{\N}$ is the Cantor space, with the following two properties:
\begin{enumerate} 
  \renewcommand{\theenumi}{\roman{enumi}}
\item If $\Lambda_1(F) = \{f_i\}_{i \in \N}$, then $\bigcup_{i \in \N}C_{\bar f_i(F(f_i))}$ has measure 1.\label{godver}
\item Only hyperarithmetical functions are (S1-S9) computable in $\Lambda_1$ and $\mu^{2}$.\label{vaag}  
\end{enumerate}
Our motivation for introducing $\Lambda_1$ is to show that there is a weak fan functional in which no special fan functional is computable relative to $\mu^{2}$. 
In \cite{dagsamVI}, this result is linked to the RM of measure theory, the original Vitali Covering theorem (\cite{vitaliorg}) in particular, and it is also generalised  to recursion relative to the Suslin functional.

\medskip

As discussed in Section \ref{3.1.2}, item~\eqref{godver} just means that $\Lambda_1$ is a weak fan functional up to computational equivalence. The existence of a functional $\Lambda$ satisfying item~\eqref{godver} follows from the existence of $\Theta$, and we let $\Lambda_0$ be some fixed instance of $\Theta$.
We define  $\Lambda_{1}$  in equation \eqref{lambda1337} below, namely in terms of $\Lambda_{0}$ and by specifying a different value for certain $F$. Since item \eqref{godver} does not require any connection between $\Lambda_1(F)$ and $\Lambda_1(G)$ when $F \neq_{2} G$,  we have much freedom in constructing $\Lambda_1$. Of course, item \eqref{vaag} puts some clear restrictions on how we can define $\Lambda_1$. For instance, if $F$ is hyperarithmetical, i.e.\ computable in $\mu$, we must have that $\Lambda_1(F)$ is hyperarithmetical. This can be arranged using basic measure theory and the Sacks-Tanaka theorems for measure-theoretic uniformity (see below). The next challenge is presented by  `iterated' outputs like for instance
\be\label{mennejezus}
 \Lambda_1(\lambda f .\Lambda_1(\lambda g . F(f,g))(17));
\ee
these also need to be hyperarithmetical whenever $F$ is. Again, basic measure theory and the Sacks-Tanaka machinery come to our rescue: as it turns out, except for a set of $f$'s of measure zero, we can use the same value for $ \Lambda_1(\lambda g \in C F(f,g))$ \emph{independent} of $f$. However, we cannot expect to be able to use the same value as the output value in \eqref{mennejezus}: the more involved $\Lambda_1$ is in a computation $\{e\}(\Lambda_1,\mu, \vec b)$, the harder it is to find a hyperarithmetical output. Our guiding idea is that we may us the same hyperarithmetical value of $\Lambda(F)$ for almost all $F$ computable at a certain countable level. We make this precise, as follows.

\medskip

In the construction of $\Lambda_{1}$, we use the available machinery from measure theory and hyperarithmetical theory (i.e.\ the computability theory of $\mu$), to construct a well-ordered sequence of \emph{possible} values for $\Lambda_1$ indexed over  the first non-computable ordinal 
$\omega_1^{\textup{\textsf{CK}}}$ and (indirectly) a set $X$ of measure 1 so that whenever $F$ is computable in $\Lambda_1$ and elements from $X$, then we may let $\Lambda_1(F)$ be in that sequence. This may look circular, but in reality, $\Lambda_1$ and our sequence will be defined by a simultaneous transfinite recursion over $\omega_1^{\textup{\textsf{CK}}}$.  This transfinite recursion is unfortunately (and unavoidably, we believe) a rather complex one.

\medskip

Now, let us consider the machinery we need. First of all, we assume without mentioning that all sets and functions are measurable. Actually, we will only work with subsets of finite or countable products of the Cantor space $C$ that are $\Sigma^1_1$ or $\Pi^1_1$ relative to objects of type 1, so measurability will not be an issue. The Cantor space $C$ will have measure 1, so all products  will have measure 1. We use $\m$ for the measure on all such product spaces. We will let ``almost everywhere" mean that the property holds except possibly on a set of measure 0, which in our cases means that the property holds on a set of measure 1.  We write `a.a.' as short for `almost all'.  
We will rely on two facts from measure theory, where all spaces are products $E$ or $D$ of the Cantor space $C$.
\begin{proposition}\label{prop.measure}~
\begin{enumerate}
\item[(i)] A countable intersection of sets with measure 1 has measure 1.
\item[(ii)] If $X \subset E \times D$ then $\m(X) = 1$ if and only if $\m(\{e \mid (e,d) \in X\}) = 1$ for a.a.\ $d \in D$ (if and only if $\m(\{d \mid (e,d) \in X\}) = 1$ for a.a.\ $e \in E$).
\end{enumerate}
\end{proposition}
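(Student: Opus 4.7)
The plan is to derive both statements from classical measure-theoretic tools, since the authors flag them as ``facts from measure theory'' to be invoked later rather than as deep results.

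For (i), I would use countable subadditivity. If $X_n \subseteq E$ satisfies $\m(X_n) = 1$ for each $n \in \N$, then the complement $E \setminus X_n$ has measure $0$. By countable subadditivity, $\bigcup_n (E \setminus X_n)$ has measure $0$, so
\[
\textstyle \m\bigl(\bigcap_n X_n\bigr) = \m(E) - \m\bigl(\bigcup_n (E \setminus X_n)\bigr) = 1.
\]
This uses only that the product measure $\m$ on $E$ is a probability measure with $\m(E) = 1$, which holds here since each factor is $C$ with the coin-toss measure.

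For (ii), I would invoke the Fubini--Tonelli theorem applied to the characteristic function $\mathbf{1}_X$. Writing $X_d := \{e \in E : (e,d) \in X\}$, Tonelli gives
\[
\m(X) = \int_D \m(X_d)\, d\m(d).
\]
Because the integrand is bounded above by $\m(E) = 1$ and $\m(D) = 1$, the integral equals $1$ if and only if $\m(X_d) = 1$ for $\m$-almost every $d \in D$. The symmetric equivalence with the roles of $E$ and $D$ swapped follows by the same argument applied to $X^e := \{d \in D : (e,d) \in X\}$.

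The only point requiring comment is measurability, which the authors already address in the sentence preceding the proposition: the sets encountered in the construction of $\Lambda_1$ are $\Sigma^1_1$ or $\Pi^1_1$ subsets of finite or countable powers of $C$, and analytic (and coanalytic) sets are universally measurable with respect to Borel probability measures, so Fubini--Tonelli applies without further justification. Hence there is no real obstacle; the only care needed in the paper is that the sets to which (i) and (ii) are later applied remain within this measurable class, which will be automatic from the $\Pi^1_1$-definability of the objects arising from $\mu^2$-computations.
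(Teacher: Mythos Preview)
Your proposal is correct and matches the paper's treatment: the paper does not give a proof but simply remarks that these are textbook facts and that item (ii) is a special case of Fubini's theorem for characteristic functions, which is exactly what you invoke. Your added remark on measurability via analytic/coanalytic sets is also in line with the paper's standing assumption stated just before the proposition.
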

These facts can be found in any standard textbook on measure theory.  Item (ii) is actually a special case of Fubini's theorem for characteristic functions.

\medskip

By convention, we denote infinite sequences of binary functions as  $(f) := \{f_j\}_{j \in \N}$.  
\begin{definition}\label{kelow}{\em 
Let $F$ be a partial function from $C$ to $\N$ and let $(f)$ be a sequence.
\begin{enumerate}
  \renewcommand{\theenumi}{\roman{enumi}}
\item We say that $(f)$ \emph{suffices} for $F$ if $F(f_j)$ is defined for all $j \in \N$ and 
$\m(\bigcup_{j \in \N} C_{\bar f_j(F(f_j))}) = 1.$
\item We say that $(f)$ \emph{fails} $F$ if $F(f_i)$ is undefined for some $i \in \N$.
\end{enumerate}
}\end{definition}
Now, $(f)$ suffices for $F$ exactly when $(f)$ can be an acceptable value of $\Lambda_1(G)$ for all total $G$ extending $F$. In the next lemmas, we will make the following intuition precise: we can choose the same value $\Lambda_1(F)$ for large parameterised classes of $F$s, and  we have a lot of freedom in choosing this common value. 

\medskip
\noindent
All the below arguments are elementary from the point of view of measure theory.
\begin{lemma}\label{5.2} Let $F:C \rightarrow \N$ be a partial \(measurable\) functional with measurable domain.
\begin{enumerate}
  \renewcommand{\theenumi}{\roman{enumi}}
\item If the domain of $F$ has measure 1, then $\{(f)\mid (f) \;{\rm suffices\;for}~F\}$ has measure 1.\label{godverdoeme1}
\item If the domain of $F$ has measure less than 1, then $\{(f) \mid (f)\;{\rm fails}\; F\}$ has measure 1.\label{isdanouzom}
\end{enumerate}
\end{lemma}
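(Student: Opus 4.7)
The natural approach is probabilistic: view a sequence $(f) \in C^{\N}$ as a countable family of independent uniform draws from the coin-toss measure on $C$, and compute the probability of success (suffices for $F$) or failure. Part (ii) is essentially immediate. Setting $p := \m(C \setminus \mathrm{dom}(F)) > 0$, the event ``$f_j \in \mathrm{dom}(F)$ for every $j$'' is an intersection of independent events each of probability $1-p$, so has probability $\prod_{j \in \N}(1-p) = 0$. Thus $\m$-almost every $(f)$ has some $f_j$ outside $\mathrm{dom}(F)$, i.e.\ fails $F$.

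For (i), set $T_{(f)} := \bigcup_{j \in \N} C_{\bar f_j(F(f_j))}$ and $p_g := \m(\{f \in \mathrm{dom}(F) : g \in C_{\bar f(F(f))}\})$ for $g \in C$. Since $\m(\mathrm{dom}(F)) = 1$, Proposition~\ref{prop.measure}(i) applied to the sets $\{(f) : f_j \in \mathrm{dom}(F)\}$ (each of measure $1$) shows that a.a.\ $(f)$ have $F(f_j)$ defined for every $j$; so I may assume this throughout. For such $(f)$, independence yields $\Pr[g \notin T_{(f)}] = \prod_{j \in \N}(1-p_g)$, which is $0$ when $p_g > 0$ and $1$ when $p_g = 0$. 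Applying Proposition~\ref{prop.measure}(ii) to $\{(g,(f)) : g \notin T_{(f)}\} \subseteq C \times C^{\N}$ then gives
\[
\int_{C^{\N}} \m(C \setminus T_{(f)})\, d\m((f)) \;=\; \m(\{g \in C : p_g = 0\}),
\]
so the claim reduces to showing that $\{g : p_g = 0\}$ has measure zero.

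The key combinatorial step is then to exhibit a measure-$1$ set on which $p_g > 0$. Decompose $\mathrm{dom}(F) = \bigsqcup_{n \in \N} A_n$ with $A_n := F^{-1}(\{n\})$, measurable by assumption. For each $n$, partition $A_n$ along the length-$n$ cylinders and let $T_n := \{s \in 2^n : \m(A_n \cap C_s) > 0\}$; set $A' := \bigcup_n \bigcup_{s \in T_n}(A_n \cap C_s)$. By construction $\m(\mathrm{dom}(F) \setminus A') = 0$, so $\m(A') = 1$. For $g \in A'$, pick $n$ and $s \in T_n$ with $g \in A_n \cap C_s$; then any $f \in A_n \cap C_s$ satisfies $F(f) = n$ and $f \in C_s$, so $\bar f(F(f)) = \bar f(n) = s$, and hence $g \in C_s = C_{\bar f(F(f))}$. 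This shows $p_g \geq \m(A_n \cap C_s) > 0$, completing the proof. The main point of friction is not the geometry but the measurability bookkeeping around $T_{(f)}$ and the Fubini step; under the standing measurability assumptions on $F$ all sets in sight are standardly measurable and Proposition~\ref{prop.measure} suffices.
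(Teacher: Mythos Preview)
Your proof is correct. Part (ii) matches the paper verbatim. For part (i) you take a slightly different route from the paper, though the core combinatorics coincide.

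The paper argues via an explicit $\varepsilon$-approximation: for each $k$ it picks $n_k$ with $\m(\{F < n_k\}) > 1 - 2^{-k}$, isolates the length-$n_k$ cylinders $C_{s_{k,l}}$ meeting $\{F < n_k\}$ in positive measure $r_{k,l}$, and observes that almost every $(f)$ hits each such cylinder via some $f_j$ with $F(f_j) < n_k$ (since $\prod_j(1 - r_{k,l}) = 0$); whenever this happens, $C_{s_{k,l}} \subseteq C_{\bar f_j(F(f_j))}$, so the union of neighbourhoods has measure $> 1 - 2^{-k}$. A countable intersection over $k$ finishes. Your argument instead invokes Fubini (Proposition~\ref{prop.measure}(ii)) to swap the two variables and reduce to showing $p_g > 0$ for almost every $g$; you then establish this by partitioning the domain into the level sets $A_n = F^{-1}(\{n\})$ and the associated length-$n$ cylinders of positive intersection. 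The cylinder decomposition is essentially the same device the paper uses (level-wise rather than cumulatively), but your Fubini packaging avoids the $k$-indexed approximation and is arguably cleaner; the paper's version is more hands-on and stays within the special case of Fubini stated in Proposition~\ref{prop.measure}(ii) without needing the integral form you wrote (though, as you note, Proposition~\ref{prop.measure}(ii) applied twice to $\{(g,(f)) : g \in T_{(f)}\}$ already suffices for your argument).
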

\begin{proof}
Proof of item \eqref{godverdoeme1}: 
for $k\in \N$, we will prove that the set of $(f)$ such that $ \m(\bigcup_{j \in \N}C_{\bar f_j(F(f_j))}) > 1-2^{-k}$, has measure 1.
To this end, let $n_k$ be so large that $\m(\{f \in C \mid F(f) < n_k\}) > 1 - 2^{-k}.$
Let $s_{k,1}, \ldots , s_{k,m_k}$ be the binary sequences $s_{k,l}$ of length $n_k$ such that
$\m(\{f \in C_{s _{k,l}}\mid F(f) < n_k\}) > 0$. Let  $ r_{k,l} $ be this positive measure.
Then $\m(\bigcup_{l = 1}^{m_k}C_{s_{k,l}}) > 1-2^{-k}$ and for each $s_{k,l}$ the set of $(f)$ such that for some $f_j$, $F(f_j) < n_k$ and $f_j$ extends $s_{k,l}$, has measure~1.  Indeed the probability of not satisfying this is $\prod_{j = 0}^\infty (1-r_{k,l}) = 0$.
Since a finite intersection of sets of measure 1 still has measure 1, our claim follows; the previous generalises to countable intersections and item \eqref{godverdoeme1} holds.

\medskip

Proof of item \eqref{isdanouzom}: in this case, the probability that $f_i$ is in the domain of $F$ is smaller than 1 by a fixed value. Then the infinite product of the domain of $F$ has measure 0. Thus, $(f)$ fails $F$ when $(f)$ is in the complement of this product.
\end{proof}
\begin{lemma}\label{5.3} 
Let $F:C^2 \rightarrow \N$ be a partial, measurable functional defined on a measurable set  and put $F_g(f) = F(f,g)$. 
Then the set of $(f)\in C^{\N}$ such that  for a.a.\ $g\in C$ we have that
\begin{itemize}
\item[(i)]  If the domain of $F_g$ has measure 1, then $(f)$ suffices for $F_g$
\item[(ii)] If the domain of $F_g$ has measure $<1$ then $(f)$ fails $F_g$
\end{itemize}has measure 1.
\end{lemma}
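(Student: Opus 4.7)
The plan is to derive Lemma \ref{5.3} from Lemma \ref{5.2} via the Fubini-style slicing in Proposition \ref{prop.measure}(ii). For each fixed $g \in C$, the functional $F_g$ is a partial measurable functional on a measurable domain, so Lemma \ref{5.2} applies directly: writing
\[
A_g := \{(f) \in C^{\N} : \text{clause (i) or clause (ii) of Lemma \ref{5.3} holds for } F_g\},
\]
Lemma \ref{5.2}(i) handles the case $\m(\mathrm{dom}(F_g)) = 1$ and Lemma \ref{5.2}(ii) handles the case $\m(\mathrm{dom}(F_g)) < 1$, so $\m(A_g) = 1$ for \emph{every} $g \in C$.

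Next I would form the subset $X \subseteq C^{\N} \times C$ consisting of those pairs $((f), g)$ with $(f) \in A_g$. For each $g$, the $g$-slice of $X$ is precisely $A_g$, which has measure $1$ by the previous paragraph; in particular this holds for a.a.\ $g$, so one direction of Proposition \ref{prop.measure}(ii) yields $\m(X) = 1$. Applying the other direction of Proposition \ref{prop.measure}(ii) then gives that for a.a.\ $(f) \in C^{\N}$ the $(f)$-slice $\{g : ((f), g) \in X\}$ has measure $1$, which unwinds exactly to the statement of the lemma.

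The only genuine obstacle is verifying that $X$ is measurable. This decomposes along the measurable partition $C = \{g : \m(\mathrm{dom}(F_g)) = 1\} \cup \{g : \m(\mathrm{dom}(F_g)) < 1\}$, on each piece of which membership in $X$ is the condition of sufficing or failing for $F_g$. Both conditions are expressible from the measurable functional $F$ via countable operations (the predicate ``$F(f_j, g)$ is defined for all $j$'' is a countable intersection of measurable conditions, and the map $((f),g) \mapsto \m(\bigcup_j C_{\bar f_j(F(f_j,g))})$ is measurable since the sets in the union are basic clopen cylinders depending measurably on the parameters). As already remarked before Proposition \ref{prop.measure}, all sets in sight here are in fact $\Sigma_1^1$ or $\Pi_1^1$ in the relevant type-one parameters, so measurability is not a serious obstruction; it is just a verification one has to carry out carefully.
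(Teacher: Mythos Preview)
Your proof is correct and follows essentially the same route as the paper: form the set $X$ of pairs $\langle g,(f)\rangle$ satisfying the disjunction, use Lemma~\ref{5.2} to see that every $g$-slice has measure~1, and then apply Proposition~\ref{prop.measure}(ii) twice to flip the quantifiers. Your explicit discussion of the measurability of $X$ is more careful than the paper, which simply relies on the blanket convention stated before Proposition~\ref{prop.measure} that all sets arising are $\Sigma^1_1$ or $\Pi^1_1$ in the parameters and hence measurable.
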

\begin{proof}
Let $X$ be the set of $\langle g,(f)\rangle$ such that either $F_g$ is defined on a set of measure 1 and $(f)$ suffices for $F_g$ or $F_g$ is defined on a set of measure $< 1$ and $(f)$ fails $F_g$. 
By Lemma \ref{5.2}, this set has measure 1, since for all $g$ we have for almost all $(f)$ that $\langle g, (f)\rangle \in X$. Then, by Proposition \ref{prop.measure}, the set of $(f)$ such that $\langle g,(f)\rangle \in X$ for almost all $g$ has measure 1, and we are done.  
\end{proof}
Since all (finite or countable) products of $C$ we consider are isomorphic (with the exception of $C^0$), we will apply the previous lemma in other cases than for $C^2$ as well. For technical reasons, we shall need a strengthening of Lemma \ref{5.3} as follows.  
\begin{definition}{\em 
Let  $\vec c$ be a non-repeating sequence from $\N$. We define $(f)_{\vec c}$ as the sequence of $f_i$ indexed via $\vec c$.}\end{definition}
\begin{lemma}\label{1.22} Let $\vec c$ be a non-repeating sequence of length $k'$ and let $X \subseteq C^{k'+k}$ have measure 1.
Let $F:C\times C^{k' + k} \rightarrow \N$ be a partial functional that is measurable with a measurable domain and put $F_{\vec h,\vec g}(g) := F( g,\vec h , \vec g)$ for $\vec h \in C^{k'}$ and $\vec g \in C^k$. Then the set of pairs $\langle (f), \vec g\rangle$ such that $\langle (f)_{\vec c},\vec g\rangle \in X$ and
\begin{enumerate}
 \renewcommand{\theenumi}{\roman{enumi}}
\item if the domain of $F_{(f)_{\vec c},\vec g}$ has measure 1, then $(f)$ suffices for  $F_{(f)_{\vec c},\vec g}$, and
\item if the domain of $F_{(f)_{\vec c},\vec g}$ has measure $< 1$,  then $(f)$ fails $F_{\(f)_{\vec c},\vec g}$,
\end{enumerate}
has measure 1.
\end{lemma}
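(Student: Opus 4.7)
The lemma is a parameterised strengthening of Lemma~\ref{5.3}, and the plan is to prove it by the same Fubini-style argument carried out in a larger product space. Since $\vec c$ is non-repeating of length $k'$, the assignment
\[
((f),\vec g)\longmapsto\bigl((f)_{\vec c},\,(f)_{\overline{\vec c}},\,\vec g\bigr)
\]
is a measure-preserving bijection $C^{\N}\times C^{k}\to C^{k'}\times C^{\N}\times C^{k}$, where $(f)_{\overline{\vec c}}$ denotes the sequence of coordinates of $(f)$ whose index lies outside the range of $\vec c$, reindexed by $\N$. Under this decomposition, set $\vec h:=(f)_{\vec c}$ and $(f'):=(f)_{\overline{\vec c}}$; by non-repetition, $\vec h$, $(f')$, and $\vec g$ are jointly independent and each uniformly distributed on the respective factor.

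I would then handle the three conjuncts of the target set separately. First, since $\m(X)=1$ and $((f),\vec g)\mapsto((f)_{\vec c},\vec g)$ is measure-preserving, the set $\{((f),\vec g):\langle(f)_{\vec c},\vec g\rangle\in X\}$ has measure $1$ in $C^{\N}\times C^{k}$. Second, for the suffice/fail conjuncts I would apply Lemma~\ref{5.2} sectionwise: for each fixed $(\vec h,\vec g)\in C^{k'+k}$, either $\m(\mathrm{dom}(F_{\vec h,\vec g}))=1$ and Lemma~\ref{5.2}(i) yields a measure-$1$ set of $(f')\in C^{\N}$ sufficing for $F_{\vec h,\vec g}$, or $\m(\mathrm{dom}(F_{\vec h,\vec g}))<1$ and Lemma~\ref{5.2}(ii) yields a measure-$1$ set of $(f')$ failing $F_{\vec h,\vec g}$. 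By Proposition~\ref{prop.measure}(ii) (Fubini), the set of triples $(\vec h,\vec g,(f'))$ on which the correct alternative holds has measure $1$ in $C^{k'+k}\times C^{\N}$. Intersecting with the preimage of $X$ and pulling back through the bijection of the first paragraph produces a measure-$1$ set of pairs $((f),\vec g)$.

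The main obstacle is the interplay between the witness appearing in Lemma~\ref{5.2} (the independent sequence $(f')=(f)_{\overline{\vec c}}$) and the witness demanded by the conclusion (the full sequence $(f)$, whose $\vec c$-indexed coordinates coincide with $\vec h$). The failure case transfers at once, since any $f'_j\notin\mathrm{dom}(F_{\vec h,\vec g})$ is also an $f_j$. The sufficeness case is the delicate point: the measure inequality $\m(\bigcup_j C_{\bar f_j(F_{\vec h,\vec g}(f_j))})=1$ can only improve when one enlarges the cover, so what must be checked is that the additional terms $f_{c_i}=h_i$ actually lie in $\mathrm{dom}(F_{\vec h,\vec g})$ for a.a.\ $((f),\vec g)$ in our set. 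I expect to handle this by one further application of Proposition~\ref{prop.measure}(ii) to the measurable domain of $F$, exploiting the independence of $(f)_{\overline{\vec c}}$ from $((f)_{\vec c},\vec g)$ and absorbing any residual exceptional behaviour on the diagonals $\{g=h_i\}$ into the null exception permitted by the conclusion.
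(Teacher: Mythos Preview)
Your approach is exactly what the paper's one-line proof intends: decompose $(f)$ via the non-repeating index sequence $\vec c$ into $((f)_{\vec c},(f)_{\overline{\vec c}})$, apply Lemma~\ref{5.2} sectionwise to the independent piece, and glue with Proposition~\ref{prop.measure}(ii). The paper's remark that ``$\vec c$ non-repeating is essential, since otherwise the set of possible $(f)_{\vec c}$ will have measure $0$'' is precisely your measure-preserving bijection.

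Your third paragraph, however, locates a genuine gap, and the fix you sketch does not close it. Take $k'=1$, $k=0$, $\vec c=(0)$, $X=C$, and $F(g,h)=0$ for $g\neq h$, undefined on the diagonal. For every $h$ the domain of $F_h$ is $C\setminus\{h\}$, of measure $1$; yet for \emph{every} $(f)$ one has $F_{f_0}(f_0)=F(f_0,f_0)$ undefined, so $(f)$ never suffices for $F_{(f)_{\vec c}}$ and the set in the conclusion is empty. The diagonal obstruction here is not a null event absorbable by Fubini: it is a deterministic constraint on $\vec h=(f)_{\vec c}$ that holds for all $\vec h$. Thus no application of Proposition~\ref{prop.measure}(ii) to $\mathrm{dom}(F)$ will rescue the lemma in the generality stated, and the paper's brief proof glosses over exactly this point.

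What your first two paragraphs do establish is that for a.a.\ $((f),\vec g)$ the subsequence $(f)_{\overline{\vec c}}$ suffices or fails $F_{(f)_{\vec c},\vec g}$ according to whether the domain has measure $1$ or not; upgrading this to the full sequence $(f)$ in the strict sense of Definition~\ref{kelow} needs the extra hypothesis that $F(h_i,\vec h,\vec g)$ is defined for a.a.\ $(\vec h,\vec g)$. In the single application of this lemma (the proof of Lemma~\ref{lemma.ind}) the relevant $F$ arises from a computation in which the $g$-slot and the $(f)_{\vec c}$-slots play distinct syntactic roles, so this extra hypothesis is available there---but it is not a consequence of the hypotheses of Lemma~\ref{1.22} as written.
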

\begin{proof}
Combining Proposition \ref{prop.measure}.(ii) with  the arguments of Lemmas~\ref{5.2} and~\ref{5.3}, the lemma follows easily. The assumption that $\vec c$ is non-repeating is essential here, since otherwise the set of possible $(f)_{\vec c}$ will have measure 0 and not 1.
\end{proof}
Now we have established the measure-theoretical lingo we need for the construction of $\Lambda_1$.  %
 In order to prove the main technical lemma, we also need some theorems from higher computability theory. We have formulated them in the form we need. For proofs, see \cite{Sacks.high}*{Sections IV.1-2 and Section X.4}. We will actually need some of these results in relativised forms, as follows.  
 \begin{proposition}\label{prop1} ~
 \begin{enumerate}
  \renewcommand{\theenumi}{\roman{enumi}}
 \item If $A \subset C$ is computable  in $f$ and $\mu$ via index $e$, then the relation $\m(A) = 1$ is decidable in $\mu$, uniformly in $f$ and $e$.\label{godverdoeme}
 \item {\em [Sacks,Tanaka] } If $A \subset C$ is hyperarithmetical and $\m(A) > 0$, then $A$ contains a hyperarithmetical element.\label{nohard} 
 \item{\em [Gandy Selection]} If a $\Pi^1_1$-set of functions contains a hyperarithmetical element, we may find one, effectively in $\mu$.\label{links} 
 \item {\em [Sacks, Tanaka]} The set of $g \in C$ such that $\omega^g_1 = \omega_1^{\textup{\textsf{CK}}}$ has measure 1.\label{huggo} 
 \end{enumerate}
 \end{proposition}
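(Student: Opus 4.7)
The four items of Proposition \ref{prop1} are classical facts from higher recursion theory, and my plan is to sketch the standard proofs in the forms needed, pointing out the uniformities required for the later construction of $\Lambda_1$. I would handle them roughly in the order (i), (iii), (ii), (iv), since (ii) needs (iii) in its standard proof and (iv) rests on a measure-theoretic form of the basis theorem.

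For item (i), the key observation is that if $A \subseteq C$ is computable in $f$ and $\mu$ via index $e$, then its complement is as well, and both are $\Delta_1^1(f)$. The statement $\m(A) = 1$ is equivalent to $(\forall k)(\exists n)\,\m(A_n) \geq 1 - 2^{-k}$, where $A_n$ is the union of basic neighbourhoods of length $n$ entirely contained in $A$. Using $\mu$, one can decide, for each length $n$ and each binary string $s$ of length $n$, whether $[s] \subseteq A$, and hence compute $\m(A_n)$ as a rational. This turns $\m(A) = 1$ into an arithmetical formula (in $e$, $f$ and $\mu$), and its truth value is then $\mu$-decidable uniformly in $e$ and $f$.

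For item (iii) (Gandy Selection), the plan is to use the standard trick: if $P$ is $\Pi_1^1$ and contains a hyperarithmetical element, then by the $\Pi_1^1$-uniformisation theorem one can select a member of $P$ from an index of a hyperarithmetical presentation; effectively in $\mu$ one searches through indices and uses $\mu$ to decide the $\Pi_1^1$-membership of the corresponding hyperarithmetical function (which is $\Delta_1^1$, hence $\mu$-decidable for this restricted class). Item (ii) then follows in the Sacks--Tanaka style: given a hyperarithmetical $A$ of positive measure, the set $A$ itself is $\Pi_1^1$, so by (iii) it suffices to show $A$ contains \emph{some} hyperarithmetical element. This is established by the measure-theoretic basis theorem, whose proof exploits the fact that for a $\Delta_1^1$ set of positive measure, one can hyperarithmetically compute a descending sequence of basic clopen sets $[s_n]$ with $\m(A \cap [s_n]) > 0$ converging to a hyperarithmetical point; the main non-triviality is the \emph{measure-uniformity} of the hyperarithmetical hierarchy, which ensures that measures of $\Delta_1^1$-sets are $\Delta_1^1$-reals.

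For item (iv), the standard Sacks--Tanaka argument shows that the set $\{g \in C : \omega_1^g > \omega_1^{\textup{\textsf{CK}}}\}$ is null. The key step is that for each countable ordinal $\alpha < \omega_1^{\textup{\textsf{CK}}}$, the set of $g$ coding a well-ordering of type $\geq \alpha$ that is computable in $g$ can be analysed inductively, and one shows that the collection of $g$ which ``see'' a new ordinal beyond $\omega_1^{\textup{\textsf{CK}}}$ is contained in a countable union of null $\Sigma_1^1$-sets. The main obstacle in writing this up from scratch would be giving a careful treatment of this last point, namely the measure-zero estimate for Church--Kleene-exceeding reals; but since we are only citing the result, it suffices to invoke \cite{Sacks.high}*{Ch.\ IV and X.4} for all four items.
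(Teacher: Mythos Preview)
The paper's own proof is purely a citation: it points to the relevant theorems in Sacks' \emph{Higher Recursion Theory} (IV.1.3, IV.2.2, X.4.1, IV.1.6) and says nothing further. Your proposal goes beyond this by sketching actual arguments, which is fine in spirit, but your sketch for item~(i) contains a genuine gap.

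You claim that, given $A$ computable in $f$ and $\mu$, one can ``using $\mu$, decide for each binary string $s$ whether $[s] \subseteq A$'' and thereby compute the inner approximation $\m(A_n)$. This step fails: the predicate $[s] \subseteq A$ is $(\forall g \in [s])(g \in A)$, a universal type-one quantifier over a $\Delta_1^1(f)$ matrix, hence only $\Pi_1^1(f)$ in general. Such predicates are \emph{semi}-decidable in $\mu$, not decidable, so you cannot compute $A_n$ this way. The actual content of Sacks IV.1.3 is precisely the ``measure-theoretic uniformity'' you invoke later under item~(ii): one shows by effective transfinite induction along the hyperarithmetical hierarchy (or via the approximation of $\Sigma_1^1$ sets by recursive open sets) that $\m(A)$ is a $\Delta_1^1(f)$ real uniformly in the index, from which the decidability of $\m(A)=1$ follows. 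That machinery is the substance of item~(i), not an afterthought for item~(ii). Your sketches for (ii)--(iv) are broadly on the right track, and your final remark that it suffices to cite Sacks is exactly what the paper does.
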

 \begin{proof}
 The items from the theorem are proved as follows in \cite{Sacks.high}.  
Item \eqref{godverdoeme} is Theorem IV.1.3. Item \eqref{nohard} is  Theorem IV.2.2. Item \eqref{links} is proved as Theorem X.4.1 in a more general form.  Item \eqref{huggo} is Corollary IV.1.6.
 \end{proof}
We have established the general machinery needed below, and now start working towards the main result of his section.
 \begin{convention}\label{confla}\rm
From now on, we let `$\prec$' be a total, computable  ordering of $\N$  such that the well-ordered initial segment has length $\omega_1^{\textup{\textsf{CK}}}$; $\prec$ may not be a well-ordering, but we will not actively use this fact.  
 We let $W = W(\prec)$ be the elements in the well-ordered part, and for $i \in W$ we let $\alpha_i$ be the ordinal rank of $i$ in $\prec$. 
\end{convention}
 It is well-known that orderings as in Convention \ref{confla} exist.  The set of computable total orderings that in addition are well-orderings, is complete $\Pi^1_1$; the set of  computable total orderings without hyperarithmetical infinite descending sequences is $\Sigma^1_1$.  Thus there is one ordering that is of the latter kind but that is not of the former kind. Such orderings are known as \emph{computable pseudo-wellorderings}. The well-ordered initial segment is $\Pi^1_1$, but not $\Delta^1_1$ or hyperarithmetical. 
 
 \medskip
 
Let $[f]$ be a (double) sequence $\{(f_i)\}_{i \in W} = \{f_{i,j}\}_{i \in W,j \in \N}$ in $(W\times \N) \rightarrow (\N \rightarrow \{0,1\})$. Each $[f]$ like this will define a \emph{partial} approximation $\Lambda_{[f]}$ to a weak fan functional in the following sense:
\begin{definition}{\em Let $[f]$ be as above.  For $F:C \rightarrow \N$, we define 
\begin{enumerate}
  \renewcommand{\theenumi}{\roman{enumi}}
\item $\Lambda_{[f]}(F) = (f_i)$ if $i \in W$, $(f_i)$ is sufficient for $F$ and no $(f_{i'})$ is sufficient for $F$ for $i' \prec i$.
\item $\Lambda_{[f]}(F)$ is undefined if there is no such $i \in W$.
\end{enumerate}}
\end{definition}
 Our aim is to construct $[f]$ is such a way that all functions computable in any total extension $\Lambda$ of $\Lambda_{[f]}$ and $\mu^{2}$ are hyperarithmetical.  However, such a construction requires controlling the complexity of computations relative to {any} such extension. One obstacle is the requirement in Kleene's S8 that the input functional $F$ must be total. We get around this obstacle by considering a more liberal interpretation of S8, so that it works for partial inputs as well, as long as they contain the relevant information. Such interpretations are well-established; see e.g.\ \cite{longmann}*{\S6.4}.

 \begin{definition}{\em We define the relation `$\{e\}_{[f]}(\Lambda_{[f]} , \mu , \vec g , \vec b) = a$' by transfinite recursion.  
 We keep the schemes S1-S7, S8.1 and S9 from Definition \ref{1.1}, only adding $[f]$ as an index everywhere. We omit S8.2 and give a new interpretation of S8.3:
 \begin{itemize}
\item[(S8.3)] If $e = \langle 8,3,e_1 \rangle$, let $F(g) = \{e_1\}_{[f]}(\Lambda_{[f]} , \mu , g,\vec g,\vec b)$. Then $F$ is in general a partial function of type 2. Let $i \in W$ be the $\prec$-least number such that:
\begin{itemize}
\item[(i)] the value $F(f_{i' , j})$ is defined for all $i' \preceq i$ and all $j \in \N$, 
 \item[(ii)] the sequence $(f_i)$ suffices for $F$.  If there is one such $i$, then define $\{e\}_{[f]}(\Lambda_{[f]} , \mu ,\vec g,a,b,\vec b) := f_{i,a}(b)$; undefined otherwise.  
\end{itemize}
\end{itemize}}
\end{definition}
Since `$\{e\}_{[f]}(\Lambda_{[f]} , \mu , \vec g , \vec b) = a$' as in the previous definition is defined as the least fixed point of a positive inductive operator, each sequence $\langle e, \Lambda_{[f]} , \mu , \vec g , \vec b , a\rangle$ in the relation has an ordinal rank.   
Since we only require -even in the case of S8.3- a countable set of immediate sub-computations to terminate, the rank of any terminating computation modulo a given $[f]$ is countable, and actually an ordinal computable in $[f]$ and the argument list $\vec g$. 

\medskip

We will only apply this definition in the case where the map $i \mapsto (f_i)$ is a function that is partially Kleene-computable in $\mu^{2}$. In this case, the partial function $\{e\}_{[f]}(\Lambda_{[f]} , \mu , \vec g , \vec b)$
will be computable in $\mu^{2}$ as well.  Our goal is to construct $[f]$ in such a way that for all indices $e$, input arguments $\vec b$ from $\N$, and total extensions $\Lambda$ of $\Lambda_{|f]}$, we have that 
\be\label{takske}
\{e\}(\Lambda_{[f]}, \mu , \vec b) \simeq \{e\}_{[f]}(\Lambda_{[f]}, \mu, \vec b),
\ee
where `$\simeq$' means that both sides are undefined or both sides are defined and equal. If we succeed, we obviously have that all functions Kleene-computable in $\Lambda$ will be hyperarithmetical when $\Lambda$ is a total extension of $\Lambda_{|f]}$. We will not need that the total extension $\Lambda$ itself is a weak fan functional for this argument.

\medskip

After the construction of $[f]$, we will prove \eqref{takske} by induction on the ordinal rank of the true Kleene-computation $\{e\}(\Lambda, \mu , \vec b)$.  
In order to make this proof work, we have to take into account that there are sub-computations with arguments from $C$. We will see that it will be possible to construct $[f]$ such that we only have to consider argument sequences  $\vec g$ of length $k$ from a $\Sigma^1_1$-set $X_k$ of measure 1. Let us now outline the construction, and what we attempt to achieve at each step:
\begin{enumerate}
  \renewcommand{\theenumi}{\roman{enumi}}
\item We will construct $[f]$ by defining $(f_i)$ by recursion on $i \in W$
\item In parallel to defining $(f_i)$, we will for each integer $k$ construct a hyperarithmetical set $X_{i,k} \subseteq C^k$ of measure 1 such that for any set of parameters $\vec g \in X_{i,k}$, integer parameters $\vec b$, and index $e$, we will have that the intended proof by induction will work for computations $\{e\}(\Lambda , \mu , \vec g , \vec b)$ of ordinal rank bounded by $\alpha_i$, the rank of $i$ in $(W, \prec)$. 
\item  $(f_i)$ will be chosen as a hyperarithmetical sequence that is sufficient for almost all functionals that are total on a set of measure 1 via computations strictly  bounded by $\alpha_i$, and fails  almost all the others.
\item To verify the key Lemma \ref{lemma.ind}, we have to consider inputs $\vec g$   together with inputs of the form $f_{i,j}$. Thus we will consider  input sequences $\vec h,\vec g$ where the sequences $\vec h$ are 'specified' as certain $f_{i,j}$ and the sequences $\vec g$ will vary over sets of measure 1. 
\end{enumerate}
 We will point out where the  sequences $\vec h$  are needed in our technical argument. The underlying idea is that we may pick $\Lambda(F)$ `at random' and the probability of success is 1.  However, this random value must be random with respect to values of $\Lambda$ obtained while computing $F$ from $\Lambda$. Making this idea precise, the need arises to take previous values of $\Lambda$ into account.

\medskip

 As a convention, when we write $\{e\}_{\ind}(\Lambda_{\ind} , \mu  , \vec g, \vec b)$, where `$\ind$' is any index, we assume without mentioning that the length of $\vec g$ fits the expression.
 \begin{convention}
 {\em If $(f_k)$ is a sequence $\{f_{k,j}\}_{j \in \N}$ for all $k \preceq i \in W$, then we write $[f]_i$ for $\{f_{k,j}\}_{k \preceq i , j \in \N}$.
 Similarly, if $(f_k)$ is a sequence $\{f_{k,j}\}_{j \in \N}$ for all $k \prec i$, then we write $[f]_{\prec i}$ for $\{f_{k,j}\}_{k \prec i , j \in \N}$.
 }\end{convention}
 Our definition of $\Lambda_{[f]}$ readily generalises to $\Lambda_{[f]_i}$ and $\Lambda_{[f]_{\prec i}}$, and so does the recursive definition of $\{e\}_{\Lambda_{[f]}}(\Lambda_{[f]},\mu, \vec g , \vec b)$. 
 \begin{convention}\label{con.1}{\em 
 In the formulation of the next lemma, we make us of three kinds of inputs: integers, elements of the form $f_{i,j}$ that can be seen as parameters, and sequences $\vec g$ from $C^k$ that can be seen as variables. 
 As a convention, we order them $(\vec h , \vec g, \vec b)$.  There is no harm in this since we may always use S6 to permute inputs. 
 In the proof, we shall introduce a fourth category $(f_i)_{\vec c}$ in the recursion step, objects that may be in the $\vec h$-part at later stages, but whose values have not been decided before we select the one $(f_i)$ we want to use. 
 
 \medskip
 
There is a small twist to this notation: for our construction and argument it is important that the sequence $\vec c$ is non-repeating, but for our application we may want to consider computations where the same function is used in several locations in the list of arguments. Instead of building up an unbearable notation, we assume that we have one case for each way of distributing the arguments $( \vec h , (f_i)_{\vec c} , \vec g, \vec b)$ as a list of inputs. Thus, each case we treat in the proof in theory covers countably many cases.  We will inform the reader when we actually make use of this.
 }
 \end{convention}
We have formulated our next item as a lemma, but it is in reality a combination of a construction by recursion and a verification of the key properties of this construction. 
We will refer to the details of the construction in later proofs.
\begin{lemma}\label{lemma.ind}
By  transfinite recursion on $i \in W$, we can construct $[f] = \{(f_i)\}_{i \in W}$ and sets $X_{i,k} \subseteq C^k$ of measure 1 \(for each $k \in \N$ and $i \in W$\) such that an alleged computation $\{e\}_{[f]_i}\big(\Lambda_{[f]_i},\mu,\vec h, \vec g, \vec b\big)$ will terminate whenever the parameters satisfy the following:
\begin{enumerate}
  \renewcommand{\theenumi}{\roman{enumi}}
\item $i \in W$ has norm $\alpha_i$, $e$ is a Kleene-index, $\vec b\in \textup{seq}$ and $\vec g \in X_{i,k}$, 
\item $\vec h $ is a  sequence from $\{f_{i',j} \mid i' \preceq i \wedge j \in \N\}$
\item there is some extension $[f']$ of ${[f]_i}$ such that $\{e\}_{[f']}(\Lambda_{[f']},\mu, \vec h, \vec g, \vec b)\!\!\downarrow$ with a computation of ordinal rank at most $\alpha_i$. 
\end{enumerate} 
\end{lemma}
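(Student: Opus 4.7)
The proof is by transfinite recursion on $i \in W$, simultaneously constructing the hyperarithmetical sequence $(f_i)$ and the measure-$1$ sets $X_{i,k}$ for each $k$, followed by verification of the termination property by induction on the ordinal rank of the alleged $[f']$-computation. For every scheme S1--S7, S8.1, and S9, termination at rank $\alpha_i$ reduces to termination of subcomputations of rank strictly less than $\alpha_i$, which is delivered by the outer induction on $i$; the transition from $[f]_{i'}$ to $[f]_i$ preserves values because extending the family can only introduce new $\prec$-later candidates to $\Lambda$, which do not supplant the $\prec$-least existing witness. The only rank-$\alpha_i$ case requiring genuinely new work at stage $i$ is an S8.3 call whose inner partial functional $F$ has rank strictly less than $\alpha_i$.

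For such a call, the functional $F(g,\vec g) = \{e_1\}_{[f]_{\prec i}}(\Lambda_{[f]_{\prec i}},\mu,g,\vec h,\vec g,\vec b)$, with $\vec h$ drawn from $\{f_{i',j} : i' \preceq i,\, j \in \N\}$, is partial parameterised and $\mu$-computable in the hyperarithmetical data $[f]_{\prec i}$. Enumerate all countably many tuples $(e_1,\vec h,\vec b)$ arising at stage $i$, absorbing into the enumeration the argument-distribution subtlety of Convention \ref{con.1} that separates previously-fixed inputs $\vec h$ from freshly-drawn inputs $(f_i)_{\vec c}$. For each tuple, Lemma \ref{1.22} supplies a measure-$1$ set of pairs $((f),\vec g)$ for which $(f)$ either suffices for $F(\cdot,\vec g)$ (when that partial functional has measure-$1$ domain) or fails it (otherwise). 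The countable intersection of these sets retains measure $1$ by Proposition \ref{prop.measure}.(i); it is $\Pi^1_1$ in $[f]_{\prec i}$, so Sacks--Tanaka (Proposition \ref{prop1}.(ii)) provides a hyperarithmetical element and Gandy selection (Proposition \ref{prop1}.(iii)) picks one effectively in $\mu$. Take $(f_i)$ to be such a witness.

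Define $X_{i,k}$ as the (countable) intersection of the sets $X_{i',k}$ for all $i' \prec i$ together with, for each enumerated tuple, the set of $\vec g \in C^k$ satisfying the suffices-or-fails dichotomy against the fixed $(f_i)$. Each of these slices has measure $1$ by Proposition \ref{prop.measure}.(ii), and the intersection remains of measure $1$. Termination at rank $\alpha_i$ for an S8.3 call now follows: the inductive hypothesis ensures $F(f_{i',l})$ is defined for every $i' \prec i$ and every $l$; if the call terminates under some extension $[f']$, then $F(\cdot,\vec g)$ has measure-$1$ domain, so the chosen $(f_i)$ suffices for it and takes values at each $f_{i,l}$; hence the $\prec$-least $j$ witnessing S8.3 exists with $j \preceq i$, making $\{e\}_{[f]_i}(\Lambda_{[f]_i},\mu,\vec h,\vec g,\vec b)$ terminate as required.

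The main obstacle is the bookkeeping of Convention \ref{con.1}: inputs of the form $f_{i,\cdot}$ appear both as fixed parameters $\vec h$ at later recursion stages and as freshly drawn components $(f_i)_{\vec c}$ at the current stage, and Lemma \ref{1.22} is applicable only under non-repetition of the indexing sequence $\vec c$. The plan handles this by folding the categorisation into the countable enumeration of tuples and by exploiting the full strength of Lemma \ref{1.22}, which decouples the $\vec h$-inputs (treated as fixed parameters) from the $(f_i)_{\vec c}$-inputs (treated as uniformly random and therefore almost surely in the good set). A secondary subtlety is that the measure-$1$ set of admissible $(f_i)$'s is only $\Pi^1_1$ in $[f]_{\prec i}$; Sacks--Tanaka together with Gandy selection is precisely what lets us still pick a hyperarithmetical $(f_i)$, keeping the construction within the hyperarithmetical sets as required for the overall goal.
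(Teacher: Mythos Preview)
Your overall architecture matches the paper's, but the S8.3 verification has a genuine gap. You assert that ``if the call terminates under some extension $[f']$, then $F(\cdot,\vec g)$ has measure-$1$ domain,'' yet this is precisely what the \emph{fails} half of your suffices/fails dichotomy is meant to establish, and you never cash it in. The paper argues by case analysis on the $\Lambda_{[f']}$-witness. If that witness is some $(f_{i'})$ with $i' \prec i$, one applies the induction hypothesis directly with each $f_{i'',j}$ (for $i'' \preceq i'$) absorbed into the $\vec h$-block---this, not the ``fresh'' case, is the spot the paper flags as the reason $\vec h$-parameters are needed at all. Otherwise the witness sits at or beyond $i$, so the $[f']$-call requires the $[f']$-version of $F$ to be defined at every $f_{i,j}$ with rank $<\alpha_i$. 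Now if the domain of $F_{(f_i),\vec g}$ has measure $<1$, the ``fails'' clause you built in via Lemma~\ref{1.22} guarantees some $f_{i,j}$ with $(f_i)_{\vec c}, f_{i,j}, \vec g \in X_{\prec i,1+k'+k}$ at which $F$ is undefined; the induction hypothesis used \emph{negatively} on this tuple then says the corresponding $[f']$-subcomputation cannot terminate with rank $<\alpha_i$, contradicting termination of the $[f']$-call at rank $\le \alpha_i$. Your proposal collapses the two cases into one sentence and omits this contrapositive step entirely, so as written the argument does not close.

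A secondary issue: you describe the good set of candidate $(f)$'s as $\Pi^1_1$ in $[f]_{\prec i}$ and then invoke Proposition~\ref{prop1}.(ii), but that proposition is stated for hyperarithmetical sets. The paper carries along, as part of the induction hypothesis, that each $X_{\prec i,k}$ is hyperarithmetical with an index computable from $\mu$, $i$, and $k$, which makes $Z_{i,k}$ and $Y_i$ hyperarithmetical as well; you need to maintain this explicitly both to apply Sacks--Tanaka as stated and to keep the entire construction $\mu$-effective.
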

\begin{proof}
We will show how to construct $(f_i)$ and $X_{i,k}$ from $[f]_{\prec i}$ and $\{X_{i',l} \mid i' \prec i, l \in \N\}$.  The key steps in our construction are:
\begin{enumerate}
  \renewcommand{\theenumi}{\alph{enumi}}
\item For each $k$,   find a hyperanalytical set $Z_{i,k} \subseteq C^\N \times C^k $  with measure 1, such that the induction step works for all $\langle(f),\vec g\rangle \in Z_{i,k}$ if we use $(f)$ as our $(f_i)$.\label{gkaf}  
\item We let  $Y_i$ be the set of $(f)$ such that for each $k\in \N$, we have ${\bf m}(\{\vec g \mid \langle (f) , \vec g\rangle \in Z_{i,k}\}) = 1$.
Then $Y_i$ has measure 1 and is hyperarithmetical by Proposition \ref{prop1}.(i). 
\item We then select $(f_i) \in Y_i$ computably in $\mu$ by the Sacks-Tanaka basis theorem (see Proposition \ref{prop1}.(ii)) and Gandy Selection (see Proposition \ref{prop1}.(iii)) computably in $\mu$.
\item 
Finally, we define $X_{i,k} := \{\vec g \mid \langle (f_i),\vec g \rangle \in Z_{i,k}\}$.
\end{enumerate}
The hard work will be to carry out step \eqref{gkaf}: the remaining steps then all follow by our general machinery.

\medskip

Now assume that $[f]_{\prec i}$ and each $X_{i',l}$, for $i' \prec i$ and $l \in \N$,  are constructed satisfying the claim of the lemma. 
We define $ X_{\prec i , k} = \bigcap_{i' \prec i} X_{i',k}$, noting that if $i_0$ is the $\prec$-least integer, then $X_{\prec i_0,k}$ is $C^k$. 
The \emph{induction hypothesis} is that ${\bf m}(X_{\prec i , k}) = 1$ and that for each $e$, each $\vec b$, each $\vec h$ from 
$\{f_{i',j }\mid i' \prec i \wedge j \in \N\}$ , each $k \in \N$ and each $\vec g \in X_{\prec  i , k}$ we have that if there is any extension $[f']$ of $[f]_{\prec i}$ such that 
$\{e\}_{[f']}(\Lambda_{[f']},\mu , \vec h , \vec g, \vec b) \!\!\downarrow$ with a computation of ordinal rank less than $\alpha_i $, then $\{e\}_{[f]_{\prec i}}(\Lambda_{[f]_{\prec i}},\mu, \vec h , \vec g, \vec b) \downarrow$.

\medskip

Since at the end, we use the recursion theorem for $\mu$, we also assume that $X_{\prec i , k}$ is hyperartithmetical, with an index computable from $\mu$, $i$ and $k$.

\medskip

Firstly, we construct sets of measure 1 dealing with each of the following cases:
\[
\{e\}(\Lambda,\mu,  \vec h,(f_i)_{\vec c}, \vec g, \vec b),
\]
where $e$ is a fixed Kleene-index, $\vec b$ is a fixed input of integers, $\vec h$ is as above , $\vec c$ is a  sequence  of length $k'$ and $\vec g \in C^k$.  Recall that each such case covers countably many cases by Convention \ref{con.1}.
Then we let $Z_{i,k}$ be the intersection of the sets constructed for each of the cases.
The purpose of $\vec c$ is to specify which elements in the sequence $(f_i)$ we are about to construct, will be used as arguments in the computation \emph{without specifying $(f_i)$ itself}.  
All together, there are only \emph{countably} many cases, so our set $Z_{i,k}$ will also have measure 1. The constructions are quite explicit and the induction hypothesis readily implies that $Z_{i,k}$ is hyperarithmetical.

\medskip

We now show what to do in the two cases of composition and application of $\Lambda$; the rest of the cases are trivial, or they follow the pattern of `S4 - composition'. 
We first treat the scheme S4 as follows: let $\vec h$ and $\vec c$ be as above and consider the case 
\[
\{e\}(\Lambda,\mu , \vec h, (f_i)_{\vec c}, \vec g, \vec b) = \{e_1\}(\Lambda, \mu  , \vec h,(f)_{\vec c},\vec g,  \{e_2\}(\Lambda,\mu, \vec h, (f)_{\vec c}, \vec g, \vec b),\vec b) .
\]
We need to find a set of pairs $\langle (f), \vec g\rangle$ of measure 1 that guarantees that the induction step for this case goes through.
Now, by Proposition \ref{prop.measure}.(ii) and the induction hypothesis, the set of $\langle (f),\vec g\rangle$ such that $(f)_{\vec c},\vec g \in X_{\prec i , k' + k}$ has measure 1. 
Choose $(f)$ and $\vec g$ in this set, and let $[f']$ be any extension of $[f]_{\prec i}(f)$, where we add the sequence $(f)$ to the end of the double sequence $[f]_{\prec i}$.
If we have
\[
\{e\}_{[f']}(\Lambda_{[f']},{\mu},  \vec h, (f)_{\vec c}, \vec g, \vec b) =\] \[\{e_1\}_{[f']}(\Lambda_{[f']}, {\mu} , \vec h,(f)_{\vec c},\vec g,  \{e_2\}_{[f']}(\Lambda_{[f']},{\mu}, \vec h, (f)_{\vec c}, \vec g, \vec b), \vec b) = a
\]
via a computation of ordinal rank at most $\alpha_i$, then $\{e_2\}_{[f']}(\Lambda_{[f']},{\mu}, \vec h, (f)_{\vec c}, \vec g, \vec b) = c$
for some $c$, and also $\{e_1\}_{[f']}(\Lambda_{[f']}, {\mu} ,  \vec h,(f)_{\vec c}, \vec g,c,\vec b ) = a,$
both with computational ranks \emph{strictly} below $\alpha_i$.
Then, since $(f)_{\vec c},\vec g \in X_{\prec i,k'+k}$, we can apply the induction hypothesis and conclude that 
\[
\{e_2\}_{[f]_{\prec i}}(\Lambda_{[f]_{\prec i}},{\mu}, \vec h, (f)_{\vec c}, \vec g, \vec b) = c \textup{ and } \{e_1\}_{[f]_{\prec i}}(\Lambda_{[f]_{\prec i}}, {\mu} ,\vec h,(f)_{\vec c}, \vec g , c,\vec b) = a.
\]
Thus, with any choice of $(f)$ as $(f_i)$ and $\vec g$ as above, we have 
\[
\{e\}_{[f]_i}(\Lambda_{[f]_i}, {\mu} ,\vec h,(f_i)_{\vec c}, \vec g , \vec b) = a,
\]
 as required for this case.  

\medskip
\noindent
We now turn to the cases with application of $\Lambda$, i.e. computations of the form 
\[
\Lambda(\lambda g. \{e\}(\Lambda, {\mu} , \vec h , g, (f)_{\vec c}, \vec g,\vec b)).
\]
As before, we see that the set of $\langle (f),g,\vec g\rangle$ such that $g,(f)_{\vec c},\vec g \in X_{\prec i , 1+k'+k}$ has measure 1.

\medskip

We now define  $F_{(f),\vec g}(g) := \{e\}_{[f]_{\prec i}}(\Lambda_{[f]_{\prec i}} , {\mu} , \vec h , g , (f)_{\vec c} , \vec g,\vec b)$ provided this computation terminates with ordinal rank $< \alpha_i$. 
We claim  that the following three sets all have measure 1:  
\begin{enumerate}
\renewcommand{\theenumi}{\roman{enumi}}
\item The set of $\langle (f),\vec g \rangle$ such that   \label{telemachus}
\[
\text{${\bf m}(\{g \mid  (f)_{\vec c},g,\vec g \in X_{\prec i , 1+k'+k}\}) = 1$ and $(f)_{\vec c} , f_{j}, \vec g \in X_{\prec i , 1+k'+k}\; {\rm for}\;{\rm all}\; j \in \N$}.
\]
\item The set of $\langle (f), \vec g \rangle$ such that $(f)_{\vec c},\vec g \in X_{\prec i , k' + k}$.\label{teefje}
\item The set of  $\langle (f),\vec g\rangle$ such that either (a) or (b) holds, as follows:\label{lop}
\begin{enumerate}
\item[(a)] The domain of $F_{(f),\vec g}$ has measure 1 and $(f) $ is sufficient for $F_{(f),\vec g}$.
\item[(b)] The domain of $F_{(f),\vec g}$ has measure  $< 1$ and $(f)$ fails $F_{(f),\vec g}$ via an $f_j$ such that the sequence $(f)_{\vec c} ,f_{j}, \vec g$ is in $X_{\prec i,1+k'+k}$.
\end{enumerate}
\end{enumerate}
For item \eqref{telemachus}, we use the second item of Proposition \ref{prop.measure} and the fact that a countable product of sets of measure 1 will have measure 1. Item \eqref{teefje} is a consequence of the second item of Proposition \ref{prop.measure} and item \eqref{lop} is a consequence of Lemma \ref{1.22}.

\medskip

We now consider $\langle (f),\vec g\rangle$ in the intersection of these three sets.  
Let $[f']$ be an extension of $[f]_{\prec i}(f)$. Assume that $\Lambda_{[f']}\big(\lambda g \{e\}_{|f']}(\Lambda_{[f']},{\mu} , \vec h, g,(f_a)_{\vec c} , \vec g,\vec b)\big)$ terminates with a computation of ordinal rank at most $\alpha_i $. 

\medskip

First assume that $\Lambda_{[f']}(F_{(f),\vec g}) = (f_{i'})$ for some $i'  \prec i$. Then, for all $i'' \preceq i' $and all $j$ we have that
\[
\{e\}_{[f']}(\Lambda_{[f']},\mu , f_{i'',j},(f)_{\vec c} ,\vec g,\vec b) \downarrow
\]
by a computation of ordinal rank below $\alpha_i$. Since $(f)_{\vec c},\vec g \in  X_{\prec i , k' + k}$, we may apply  the induction hypothesis, and see that 
$\{e\}_{[f]_{\prec i}}(\Lambda_{[f]_{\prec i}},\mu , f_{i'',j},(f)_{\vec c} ,\vec g,\vec b) \downarrow$ by the same computation. 
Then by a computation of ordinal rank not exceeding $\alpha_i$:
\[
\Lambda_{[f]_{\prec i}(f)}(\lambda g. \{e\}_{[f]_{\prec i}}(\Lambda_{[f]_{\prec i}}, {\mu}, \vec h , g, (f)_{\vec c}, \vec g,\vec b )) = (f_j)
\]
This is the one spot where we need extra parameters from $[f]_{\prec i}$, in his case  $f_{i'',j}$, when we formulate the properties of $\vec g$ used at step $\alpha_i$. 
Since $f_{i'',j}$ may already be in $\vec h$, this is also the spot where we need Convention \ref{con.1}.

\medskip
\noindent
Secondly, suppose that the assumption from the previous paragraph is not the case. By the argument above, we then have the following:
\[
\{e\}_{[f]_{\prec i}}(\Lambda_{[f]_{\prec i}},\mu , f_{i',j},(f)_{\vec c} ,\vec g,\vec b) \downarrow
\]
 for all $i' \prec i$ and $j \in \N$.
There are two sub-cases to consider:
\begin{enumerate}
  \renewcommand{\theenumi}{\roman{enumi}}
\item If the domain of $F_{(f),\vec g}$ has measure 1, we get that $\Lambda_{[f]_{\prec i}(f)}(F_{(f), \vec g}) = (f)$ since $(f)$ is the first single sequence in the double sequence $[f]_{\prec i}(f)$ that is sufficient for $F_{(f),\vec g}$. By the definition of $F_{(f),\vec g}$, this observation verifies the induction step in this case.
\item If the domain of $F_{(f),\vec g}$ has measure $ < 1$, there is one $f_{j}$ for which $F_{(f), \vec g}$ does not terminate. In light of item \eqref{telemachus}, we have that $(f)_{\vec c},f_{j} , \vec g$ is in $X_{\prec i , 1+k'+k}$, and using the induction hypothesis negatively, we see that $\{e\}(\Lambda_{[f']},{\mu} , \vec h , f_{j},\vec g , \vec b)$ does not terminate before $\alpha_i$. Since this value is required for the $\Lambda_{[f']}$-computation in question to terminate at all, the latter cannot terminate at stage $\alpha_i$ or earlier.   
 \end{enumerate}
 We are now through all cases, i.e.\ the proof of Lemma \ref{lemma.ind} is finished. 
 \end{proof}
 We now let $[f]$ and each $X_{i,k}$ be as constructed above. For each $k\in \N$, we define the intersection $X_k = \bigcap_{i \in W} X_{i,k}$.  
 \begin{lemma}\label{lemma.sup}
 For each $k$ and $\vec g \in X_k$ we have ${\bf m}(\{g \mid g,\vec g \in X_{k+1}\}) = 1$.
 \end{lemma}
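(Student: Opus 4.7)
The plan is to reduce the claim to a condition that is already baked into the construction from Lemma~\ref{lemma.ind}. Since $X_{k+1}=\bigcap_{i\in W}X_{i,k+1}$ and $W$ is countable, the slice $\{g:(g,\vec g)\in X_{k+1}\}$ is the countable intersection $\bigcap_{i\in W}\{g:(g,\vec g)\in X_{i,k+1}\}$, so by Proposition~\ref{prop.measure}(i) it suffices to prove, for every fixed $i\in W$, that ${\bf m}(\{g:(g,\vec g)\in X_{i,k+1}\})=1$.

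To handle a fixed $i\in W$, the idea is to exploit a \emph{later} index $i'\in W$ with $i\prec i'$; such $i'$ exists because $W$ has order type $\omega_1^{\textup{\textsf{CK}}}$, a limit ordinal, so every element of $W$ has $\prec$-successors in $W$. From $\vec g\in X_k\subseteq X_{i',k}$ we have $\langle(f_{i'}),\vec g\rangle\in Z_{i',k}$. The set $Z_{i',k}$ was defined as the intersection of the measure-$1$ sets provided by the case analysis in the proof of Lemma~\ref{lemma.ind}, and in particular contains the set attached to the $\Lambda$-application case with both $\vec h$ and the specifier $\vec c$ empty. That clause is precisely item~\eqref{telemachus}, whose defining condition includes ${\bf m}(\{g:(g,\vec g)\in X_{\prec i',k+1}\})=1$.

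Combining this with the inclusion $X_{\prec i',k+1}=\bigcap_{i''\prec i'}X_{i'',k+1}\subseteq X_{i,k+1}$, which holds because $i\prec i'$, yields ${\bf m}(\{g:(g,\vec g)\in X_{i,k+1}\})=1$. A countable intersection over $i\in W$ then finishes the proof. The only mildly subtle step is isolating the right clause of the construction: one must notice that the `vanilla' case (no parameter $(f)_{\vec c}$, no $\vec h$, with $\vec g$ of length $k$ and a single fresh $\Lambda$-variable $g$) is among the countably many cases considered when defining $Z_{i',k}$, and that its associated measure-$1$ condition is exactly the one stated in \eqref{telemachus}. Measurability of the slices is automatic since each $X_{i,k+1}$ is hyperarithmetical.
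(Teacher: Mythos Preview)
Your proof is correct and follows essentially the same approach as the paper's: both exploit the $\Lambda$-application case of the construction in Lemma~\ref{lemma.ind} with $\vec h$ and $\vec c$ empty, whose item~\eqref{telemachus} forces ${\bf m}(\{g:(g,\vec g)\in X_{\prec i',k+1}\})=1$ for $\vec g\in X_{i',k}$, and then use that $W$ has no $\prec$-maximum to cover every $X_{i,k+1}$. One wording slip: you write that $Z_{i',k}$ ``contains'' the set attached to that case, but since $Z_{i',k}$ is the intersection of all case-sets it is \emph{contained in} each of them; your subsequent use of the implication is in the correct direction, so this does not affect the argument.
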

 \begin{proof} It suffices to show that ${\bf m}(\{g \mid g,\vec g \in X_{i,k+1}\}) = 1$ for co-finally many $i \in W$,  and it is the requirement in item \eqref{telemachus} from the proof of Lemma \ref{lemma.ind} in the treatment of $\Lambda$-application that does the trick. 
 Let $\vec g \in A_k$ and consider $\{e\}(\Lambda,{\mu} , \vec g) = \Lambda(\lambda g.\{e_1\}(\Lambda,{\mu} , g,\vec g))$ for any $e_1$ of suitable arity.  
 When we treat this case stepping from $X_{\prec i , k}$ to $X_{i,k}$, the aforementioned item \eqref{telemachus} restricts our attention to  $\vec g$ additionally satisfying ${\bf m}(\{g \mid g,\vec g \in X_{\prec a , k+1}\}) = 1$.
 In the limit, this required property thus holds. 
 \end{proof}
 Let $\Lambda_0$ be any weak fan functional, and let $[f]$ be as constructed in the proof of Lemma \ref{lemma.ind}. We define $\Lambda_{1}$ as follows:
 \be\label{lambda1337}
 \Lambda_{1}(F) = \left \{ \begin{array}{llc} \Lambda_{[f]}(F)& {\rm if\;defined}\\\Lambda_0(F)& {\rm otherwise}\end{array}\right.
 \ee
 and prove our main theorem as follows. 
\begin{theorem}\label{theorem.lambda}
If $f:\N\di \N$ is computable in $\Lambda_{1}+\mu$, then it is computable in ${\mu}$.
\end{theorem}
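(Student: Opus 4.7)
The plan is to prove, by transfinite induction on the ordinal rank $\alpha$ of Kleene-computations, that for every index $e$, every finite tuple $\vec h$ drawn from $\{f_{i,j} : i \in W, j \in \N\}$, every $\vec g \in X_k$ and every integer tuple $\vec b$,
\[
\{e\}(\Lambda_1, \mu, \vec h, \vec g, \vec b) = a \text{ at rank } \leq \alpha \quad \Longrightarrow \quad \{e\}_{[f]}(\Lambda_{[f]}, \mu, \vec h, \vec g, \vec b) = a.
\]
Specialising to empty $\vec h, \vec g$ and a single integer input $\vec b = (n)$ yields the theorem: the double sequence $[f]$ is constructed uniformly in $\mu$ along $W \subseteq \omega_1^{\textup{\textsf{CK}}}$ via Sacks--Tanaka and Gandy selection (steps (b)--(c) in the proof of Lemma \ref{lemma.ind}), so it is hyperarithmetical, and evaluating $\Lambda_{[f]}$ reduces to a $\mu$-computable search for the $\prec$-least $(f_i)$ sufficient for the given partial functional. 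Since the modified computation scheme $\{\cdot\}_{[f]}(\Lambda_{[f]}, \mu, \cdot)$ is then itself computable in $\mu$, the implication above yields that $n \mapsto \{e\}(\Lambda_1, \mu, n)$ is $\mu$-computable.

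For the schemes S1--S7, S9 and S8.1, the induction step is routine and mirrors the `S4 -- composition' case in the proof of Lemma \ref{lemma.ind}: one applies the inductive hypothesis to the subcomputations, using Lemma \ref{lemma.sup} to preserve the measure-theoretic invariant that any newly introduced function variable lies in $X_{k+1}$ almost surely. The value agreement is then immediate because $\Lambda_1$ extends $\Lambda_{[f]}$ and the Kleene schemes are deterministic once the oracles agree.

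The real work lies in the S8.3 case. Suppose the computation reduces to $\Lambda_1(F)(i)(a)$ with $F(g) := \{e_1\}(\Lambda_1, \mu, g, \vec h, \vec g, \vec b)$ total and with the relevant subcomputations all of ordinal rank $< \alpha$. Pick $i_0 \in W$ whose ordinal norm $\alpha_{i_0}$ strictly exceeds all these ranks; since $\vec g \in X_k = \bigcap_{i \in W} X_{i,k}$, in particular $\vec g \in X_{i_0,k}$. For each $i' \preceq i_0$ and each $j \in \N$, the value $F(f_{i',j})$ is witnessed by a computation of rank $< \alpha_{i_0}$ (hence there is an extension of $[f]_{i_0}$ derived from the true $\Lambda_1$-values that witnesses termination), so Lemma \ref{lemma.ind} yields $\{e_1\}_{[f]}(\Lambda_{[f]}, \mu, f_{i',j}, \vec h, \vec g, \vec b) = F(f_{i',j})$. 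The partial functional $F^*(g) := \{e_1\}_{[f]}(\Lambda_{[f]}, \mu, g, \vec h, \vec g, \vec b)$ is therefore defined on all of $\{f_{i',j} : i' \preceq i_0, j \in \N\}$; by the choice of $(f_{i_0}) \in Y_{i_0}$ made in the construction (steps (b)--(d) of Lemma \ref{lemma.ind}), some $(f_{i_1})$ with $i_1 \preceq i_0$ is sufficient for $F^*$, so $\Lambda_{[f]}(F^*) = (f_{i_1})$ is defined. Since $\Lambda_1$ extends $\Lambda_{[f]}$ we conclude $\Lambda_1(F) = \Lambda_{[f]}(F^*) = (f_{i_1})$, and the induction step closes. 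The main obstacle throughout is precisely the risk that $\Lambda_1$ could fall back on the arbitrary $\Lambda_0$ from \eqref{lambda1337}, which is not in general hyperarithmetical; the careful selection of each $(f_i)$ in Lemma \ref{lemma.ind} -- sufficient for all functionals arising at rank $\leq \alpha_i$ -- is what rules this out.
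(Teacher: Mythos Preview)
Your overall strategy matches the paper's---induction on the ordinal rank of the $\Lambda_1$-computation, reducing to the partial scheme $\{\cdot\}_{[f]}$---but the S8.3 step contains a real gap.

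You write ``Pick $i_0 \in W$ whose ordinal norm $\alpha_{i_0}$ strictly exceeds all these ranks.'' The ranks you mean are those of the sub-computations $\{e_1\}(\Lambda_1,\mu,g,\ldots)$ in the \emph{$\Lambda_1$-computation tree}, and there is no reason these are bounded below $\omega_1^{\textup{\textsf{CK}}}$: a priori the rank $\alpha$ of the top computation can be any countable ordinal, so no such $i_0$ need exist in $W$. Your appeal to Lemma~\ref{lemma.ind} then conflates two notions of rank (that lemma concerns ranks of $\Lambda_{[f']}$-computations for extensions $[f']$ of $[f]_{i_0}$, not $\Lambda_1$-ranks), and the parenthetical ``extension of $[f]_{i_0}$ derived from the true $\Lambda_1$-values'' presupposes that every $\Lambda_1$-call inside the sub-computation already returns some $(f_i)$ from the sequence---which is precisely what the induction is meant to establish. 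Finally, the bare membership $(f_{i_0})\in Y_{i_0}$ does not guarantee that some $(f_{i_1})$ suffices for your specific $F^*$: the construction only ensures sufficiency when the domain of the relevant partial functional \emph{at stage $i_0$} already has measure one, and you have not shown that.

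The paper closes this gap with two ingredients you omit. First, it restricts further to $\vec g$ satisfying $\omega_1^{\textup{\textsf{CK}}}=\omega_1^{\textup{\textsf{CK}},\vec g}$ (still a measure-one set by Proposition~\ref{prop1}(iv)). Second, rather than bounding ranks directly, it argues measure-theoretically: by the induction hypothesis and Lemma~\ref{lemma.sup}, for almost all $g$ there is \emph{some} $i_g\in W$ with $\{e_1\}_{[f]_{i_g}}(\ldots,g,\ldots)\!\!\downarrow$; the map $i\mapsto \m(\{g:\{e_1\}_{[f]_i}(\ldots,g,\ldots)\!\!\downarrow\})$ is then increasing with limit $1$ and computable in $\mu$ and $\vec g$; $\Sigma_1$-admissibility of $\omega_1^{\textup{\textsf{CK}}}$ relative to $\vec g$ now produces a single $i\in W$ at which the measure already equals $1$, and only then does the construction at stage $i$ guarantee that $(f_i)$ suffices. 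This admissibility step is the missing idea in your S8.3 argument.
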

\begin{proof}
We will prove the stronger claim \eqref{calem} below by induction on the length of the computation.  We need some notation as follows.
Let $e$ be a Kleene index, let $\vec b$ be a sequence from $\N$ and let $\vec g$ of length $k$ be a sequence from $\bigcap_{i \in W}X_{i,k}$ such that $\omega_1^{\textup{\textsf{CK}}} = \omega_1^{{\CK} ,\vec g}$.  
By Proposition \ref{prop1}.(iv), the final restriction does not alter the measure of the set.
Now consider the claim:
\be\label{calem}
\{e\}(\Lambda_{1} , {\mu} , \vec g, \vec b ) = a\di (\exists i\in W) (\{e\}_{[f]_i}(\Lambda_{[f]_i},{\mu} , \vec g,\vec b ) = a).
\ee 
The theorem follows from the claim \eqref{calem} and the total instances $\lambda c. \{e\}(\Lambda_{\mu},c)$. 
 
\medskip 
 
We now prove the claim \eqref{calem} by induction on the ordinal rank of the computation $\{e\}(\Lambda_{1} , {\mu}  , \vec g , \vec b) = a.$ 
The proof is split into cases according to which Kleene scheme $e$ represents, and all cases except those for application of ${\mu}$ or $\Lambda_{1}$ are trivial. 
We will consider the two cases \eqref{cake1} and \eqref{cake2}.  First, we consider
\be\label{cake1}
\{e\}(\Lambda_{1} , {\mu} , \vec g, \vec b) = {\mu}(\lambda c.\{e_1\}(\Lambda_{1} , \mu , \vec g, c, \vec b)).
\ee 
Then, by the induction hypothesis, we have the following termination property:
\[
( \forall c \in\N) (\exists i \in W) \big[\{e_1\}_{[f]_i}(\Lambda_{[f]_i} , \vec g, c,\vec b)\!\!\downarrow \big].
\]
Since $\omega_1^{\textup{\textsf{CK}}}$ is $\Sigma_1$-admissible relative to $\vec g$ (see footnote \ref{admi}), there is a bound on how far out in $W$ we need to go, i.e.\ $(\exists i \in W) (\forall c \in \N )[\{e_1\}a_{[f]_{\prec i}}(\Lambda_{[f]_{\prec i}}, \vec g,c,\vec b)\!\!\downarrow]$, 
and $\{e\}_{[f]_i}\big(\Lambda_{[f_i]} , {\mu} , \vec h , \vec g,  \vec b \big)\!\!\downarrow$ follows.

\medskip
\noindent
For the second case, consider the following (involving a slight abuse of notation):
\be\label{cake2}
\{e\}(\Lambda_{1},{\mu} , \vec g, \vec b) = \Lambda_{1}(\lambda g. \{e_1\}(\Lambda_{1} , \mu  , g , \vec g,\vec b)).
\ee 
Since this is a classically valid Kleene computation, we have that $\lambda g. \{e_1\}(\Lambda_{1} , \mu  , g , \vec g,\vec b)$ is total. By Lemma \ref{lemma.sup} and the induction hypothesis, for almost all $g$ there is an $i_g \in W$ such that $\{e_1\}_{[f]_{{i_g}}}\big(\Lambda_{[f]_{i_g}},{\mu}  , g , \vec g,\vec b\big)\!\! \downarrow$.
Now consider the sequence 
\[
i \mapsto {\bf m}\big(\{g \mid \{e_1\}_{[f]_i}(\Lambda_{[f]_i} , {\mu}  ,g, \vec g, \vec b)\!\!\downarrow\}\big).
\]
This sequence is increasing, computable in $\vec g$ and ${\mu}$, and has limit 1, implying that
\[
(\forall k) (\exists i_k \in W)\big( {\bf m}(\{g \mid \{e_1\}_{[f]_{i_k}}(\Lambda_{[f]_{i_k}} , {\mu}  ,g, \vec g, \vec b)\!\!\downarrow\}) >1 -  2^{-k}\big).
\]
Hence, by the fact that $\omega_1^{\textup{\textsf{CK}}} = \omega_1^{\textup{\textsf{CK}},\vec g}$ and is $\Sigma_1$ -admissible in $\vec g$, we see that there must be $i \in W$ such that 
\[
{\bf m}(\{g \mid \{e_1\}_{[f]_i}(\Lambda_{[f]_i }, {\mu} ,g, \vec g , \vec b)\!\!\downarrow\}) = 1. 
\] 
In this light, our construction guarantees that $(f_i)$ is sufficient for $\lambda g. \{e_1\}(\Lambda_{1} , {\mu},  g , \vec g,\vec b )$. Unless some $(f_{i'})$ already does the job for $i' \prec i$,
we may conclude that 
\[ 
\{e\}(\Lambda_{1},{\mu}   , \vec g, \vec b) = (f_i) = \{e\}_{[f]_i}( \Lambda_{[f]_i},{\mu}  , \vec g, \vec b).
\] 
This ends the induction step, and we are done.  
\end{proof}

\section{Reverse Mathematics of the special fan functional}\label{akihiro}
We show how $\Theta$ (and its generalisations) can reach the current outer edge of RM (and its higher-order generalisation).

\medskip

First of all, in Section~\ref{SIXTUS}, we investigate the strength of the combination of $\Theta$ and $S^{2}$, which will be seen to reach the current \emph{upper limit} of RM.  
Indeed, we have shown in \cite{dagsamIII} that the combination $S^{2}+\Theta$ computes Gandy's \emph{Superjump}, a functional intimately connected to $\FIVEFIVE$.  
As a complementary result, we show in Section~\ref{SIXTUS} that the system $\FIVE^{\omega}+\QFAC^{2,1}+\HBU$ behaves as follows: (i) it implies $\FIVEFIVE$, and (ii) it proves the same $\Pi_{3}^{1}$-sentences as $\SIX$.  
To establish these results, we derive $[\SIX]^{\st}$ in $\P_{0}+\Paai+\STP$.  

\medskip

Secondly, $\Theta$, $\STP$, and $\HBU$ express the compactness of Cantor space and the unit interval (in various forms).  Since the compactness of \emph{function spaces} is 
essential to the study of the gauge integral (see e.g.\ \cite{mullingitover, secondmulling}), it is a natural question how strong such compactness properties are. 
As a first step, we study in Section \ref{schweber} the strength of such a compactness property inspired by $\STP$.
In particular, we formulate a generalisation of Theorems~\ref{mikeh} and \ref{stokeheo} to higher types suggested by \cites{schtreber,schtreberphd}. 
As a result, the compactness of function spaces seems quite strong from the point of view of RM.  

\subsection{At the limit of Reverse Mathematics}\label{SIXTUS}
In this section, we derive $[\SIX]^{\st}$ in $\P_{0}+\Paai+\STP$, which is a result similar to Theorem \ref{mikeh}.  
We obtain interesting corollaries involving $\FIVEFIVE$ and $\SIX$.  
We first discuss some of the history of $\SIX$ and related systems.

\medskip

The system $\SIX$ appears in the study of the Reverse Mathematics of topology by Mummert and Simpson (\cite{mummy}), who identify this system as the `current limit' of RM.
The coding used by Mummert and Simpson is however not unproblematic, as discussed by Hunter (\cite{hunterphd}).  
Furthermore, it is known that $\SIX$ is equivalent to $\Sigma_{2}^{1}\textsf{-DC}_{0}$ and $\Sigma_{2}^{1}\mSEP_{0}$ by \cite{simpson2}*{VII.6.9 and VII.6.14}.  

\medskip

To the best of our knowledge, $\SIX$ is also the current limit of \emph{ordinal analysis}; according to Rathjen (\cite{rathjenICM}), the strength of $\SIX$ \emph{dwarfs} that of $\FIVE$.  
By the following theorem and its corollaries, $\STP$ and $\HBU$ are however all that is needed to step from the latter system to the former (in various guises).  
\begin{thm}\label{stokeheo}
The system $\P_{0}+\Paai+\STP$ proves $[\SIX]^{\st}$.
\end{thm}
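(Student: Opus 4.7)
The plan is to mirror the proof of Theorem~\ref{mikeh}, one level up in the analytical hierarchy, exploiting that $\SIX$ is equivalent to $\Sigma_{2}^{1}$-separation by \cite{simpson2}*{VII.6.14}. Theorem~\ref{mikeh} combines two ingredients: $\paai$ renders standard $\Pi_{1}^{0}$-statements decidable standardly, producing Feferman's $\mu^{2}$ via term extraction (Corollary~\ref{consresultcor2}), while $\STP$ produces a standard special fan functional $\Theta$ via the normal form in Theorem~\ref{lapdog}; then the direct construction of Section~\ref{atrsec} (Theorem~\ref{theorem.ATR}) shows $\Theta$ and $\mu^{2}$ together uniformly realise $\ATR_{0}$. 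We aim to replay this with $\Paai$ in place of $\paai$: the former renders standard $\Pi_{1}^{1}$-statements standardly decidable, producing a standard Suslin functional $S^{2}$ (equivalently $\mu_{1}$), as already noted in the paragraph immediately following the definition of $\Paai$ in Section~\ref{pampson}.

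The core step is then to produce a uniform realiser for $\Sigma_{2}^{1}$-separation (equivalently, $\Pi_{2}^{1}$-comprehension) from $\Theta$ and $S^{2}$, in direct analogy with Theorem~\ref{theorem.ATR}. Given standard $\Sigma_{2}^{1}$-formulas $\varphi_{i}(n)\equiv (\exists Y)\psi_{i}(n,Y)$ for $i=0,1$ with $\psi_{i}$ being $\Pi_{1}^{1}$ and $\neg(\exists n)(\varphi_{0}(n)\wedge \varphi_{1}(n))$, one would design a functional $G^{2}$ on Cantor space whose canonical cover $\bigcup_{f}[\overline{f}G(f)]$ traps exactly the \emph{failed} candidate separating sets (an $f$ that either misses a point where $\varphi_{0}$ holds or contains one where $\varphi_{1}$ holds). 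Because the test for failure at a given $f$ is itself $\Sigma_{1}^{1}$ or $\Pi_{1}^{1}$ in $f$, one application of $S^{2}$ suffices to compute $G(f)$. Applying $\STP$ in the form \eqref{frukkklk} then yields a standard finite sequence $w^{1^{*}}$ covering $2^{\N}$; by the disjointness hypothesis, a contradiction argument analogous to the final case-analysis in Theorem~\ref{theorem.ATR} shows that $w$ must in fact contain a genuinely separating $f$.

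Once this realiser is in place, Corollary~\ref{consresultcor2} translates the result back: the standard existence of $\Theta$ and $S^{2}$ inside $\P_{0}+\Paai+\STP$ then gives, for every standard $\Sigma_{2}^{1}$-separation instance, a standard separating set, which is precisely $[\SIX]^{\st}$. The main obstacle will be the careful design of $G^{2}$: one must verify that $G$ is genuinely computable from $S^{2}$ (rather than demanding something of higher type, e.g.\ Gandy's superjump), and that the $<_{A}$-minimal-disagreement bookkeeping from Theorem~\ref{theorem.ATR} lifts cleanly when the underlying operator is $\Pi_{1}^{1}$ rather than arithmetic. Should the direct construction prove too delicate, a fall-back is to work entirely inside Nonstandard Analysis, combining the normal form \eqref{frukkklk} with the standard decidability of $\Pi_{1}^{1}$-formulas granted by $\Paai$, and imitating the original proof of Theorem~\ref{mikeh} step-by-step with `$\Pi_{1}^{1}$' replacing `arithmetic' throughout.
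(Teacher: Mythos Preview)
Your primary approach contains a complexity miscalculation that blocks the construction of $G^{2}$. With $\varphi_{i}(n)\equiv(\exists Y)\psi_{i}(n,Y)$ and $\psi_{i}\in\Pi_{1}^{1}$, the predicate ``$f$ fails to separate'' is
\[
(\exists n)(\exists Y)\big[(\psi_{0}(n,Y)\wedge n\notin f)\vee(\psi_{1}(n,Y)\wedge n\in f)\big],
\]
which is genuinely $\Sigma_{2}^{1}$ in $f$, not $\Sigma_{1}^{1}$ or $\Pi_{1}^{1}$. One application of $S^{2}$ therefore does \emph{not} suffice to compute $G(f)$, and the analogy with Theorem~\ref{theorem.ATR} breaks exactly here: in the $\ATR$ case the correctness check $H_{\theta}(X,Y[g])$ is arithmetical, whereas here the separation check is $\Pi_{2}^{1}$. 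Having $g$ encode only a candidate separating set is too little; one would need to bundle in the $\Sigma_{2}^{1}$-witnesses, but those live at type~$1$ and there is no natural ``minimal point of disagreement'' to anchor the finite-cover contradiction.

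Your fall-back is in fact what the paper does, but the mechanics differ from what you sketch. The paper does \emph{not} use $\STP$ in the fan-functional normal form \eqref{frukkklk}; it uses raw $\STP$ together with \emph{Idealisation}. Concretely: from $\Paai$ one obtains a standard $\mu_{1}$, which removes the inner $(\exists^{\st}h_i)$ from the standardised disjointness hypothesis, leaving for all standard $n,g_{1},g_{2}$ a disjunction $A_{1}(n,g_{1})\vee A_{2}(n,g_{2})$ of (quantifier-free after fixing a nonstandard bound $N$ via $\paai$) formulas. One then applies \textsf{I} directly---not to a cover of $2^{\N}$, but to the trivially true statement that every standard finite list of pairs $(n,g)$ extends to a list on which $A$ still holds---to obtain a single nonstandard sequence $w^{1^{*}}$ containing \emph{all} standard functions and a sequence $y^{0^{*}}$ containing all standard numbers, with $A$ holding throughout $w\times w\times y$. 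Now $Z_{0}$ is defined by a \emph{bounded} search over $w$, namely $n\in Z_{0}\Leftrightarrow(\exists g_{1}\in w)\neg A_{1}(n,g_{1})$, and raw $\STP$ yields a standard $Z\approx_{1}Z_{0}$. The verification that $Z$ separates is then a short chain of implications using that $w$ contains all standard functions and that $A$ holds on $w$. The moral is that the ``compactness'' here is supplied by Idealisation collecting all standard type-$1$ objects, not by a Heine--Borel-style finite sub-cover of Cantor space.
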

\begin{proof}
As noted in \cite{simpson2}*{VII.6.14}, $\ACA_{0}$ proves that $\SIX$ is equivalent to $\Sigma_{2}^{1}\mSEP$, where the latter is: For $\varphi_{1}, \varphi_{2}\in \Sigma_{2}^{1}$ not involving the variable $Z^{1}$,
\be\label{heffer2}
(\forall n^{0})(\neg\varphi_{1}(n)\vee \neg\varphi_{2}(n))\di (\exists Z^{1})(\forall n^{0})\big[\varphi_{1}(n)\di n\in Z\wedge \varphi_{2}(n)\di n\not\in Z \big].
\ee
We shall prove $[\Sigma_{2}^{1}\mSEP]^{\st}$ in $\P_{0}+\Paai+\STP$.  Since $\P_{0}+\paai$ proves the axioms of $\ACA_{0}$ relative to `st', we obtain $[\SIX]^{\st}$.  

\medskip

Let $\varphi_{i}(n)$ be short for the formula $(\exists g^{1}_{i})(\forall h^{1}_{i})(\exists x_{i}^{0})(f_{i}(\overline{h_{i}}x_{i}, \overline{g_{i}}x_{i}, n)=0)$ and fix standard $f_{i}^{1}$ for $i=1,2$.  Then assume $\big[(\forall n^{0})(\neg\varphi_{1}(n)\vee \neg\varphi_{2}(n))\big]^{\st}$, which is
\begin{align*}
(\forall^{\st}n^{0})
\big[
(\forall^{\st} g^{1}_{1})(\exists^{\st}h_{1}^{1})&(\forall^{\st} x_{1}^{0})(f_{1}(\overline{h_{1}}x_{1},\overline{g_{1}}x_{1}, n)\ne 0) \\
&\vee  
(\forall^{\st} g^{1}_{2})(\exists^{\st}h_{2}^{1})(\forall^{\st} x_{2}^{0})(f_{2}(\overline{h_{2}}x_{2},\overline{g_{2}}x_{2}, n)\ne 0)
\big].
\end{align*}
Using $(\mu_{1})^{\st}$, which follows\footnote{In the definition of $\Paai$, bring outside the standard quantifiers and apply $\HAC_{\INT}$. Introduce standard quantifiers in the antecedent using $\paai$ to obtain $(\mu_{1})^{\st}$.} from $\Paai$, the previous formula implies that:
\begin{align}
(\forall^{\st}n^{0}, g_{1}^{1}, g_{2}^{1})
\big[~~\(\forall^{\st}  x_{1}^{0})(f_{1}&\big(~\overline{\mu_{1}(\lambda \sigma_{1}.f_{1})}x_{1},\overline{g_{1}}x_{1}, n\big)\ne 0)\label{lahiel2} \\
&\vee  
(\forall^{\st} x_{2}^{0})(f_{2}\big(~\overline{\mu_{1}(\lambda \sigma_{2}.f_{2})}x_{2},\overline{g_{2}}x_{2}, n\big)\ne 0)\notag
\big],
\end{align}
where we suppressed parameters, as the `full' notation of $\lambda\sigma_{i}.f_{i}$ is $\lambda \sigma^{0^{*}}_{i}.f_{i}(\sigma_{i}, \overline{g_{i}}x_{i}, n)$.  Now fix nonstandard $N^{0}$ and apply $\paai$ to \eqref{lahiel2} to obtain: 
\begin{align}
(\forall^{\st}n^{0}, g_{1}^{1}, g_{2}^{1})
\big[~~\(\forall  x_{1}^{0}\leq N)(f_{1}&\big(~\overline{\mu_{1}(\lambda \sigma_{1}.f_{1})}x_{1},\overline{g_{1}}x_{1}, n\big)\ne 0)\label{lahiel22} \\
&\vee  
(\forall x_{2}^{0}\leq N)(f_{2}\big(~\overline{\mu_{1}(\lambda \sigma_{2}.f_{2})}x_{2},\overline{g_{2}}x_{2}, n\big)\ne 0)\notag~
\big].
\end{align}
Now let $A_{i}(n, g_{i})$ be the (equivalent to quantifier-free) following formula 
\[
(\forall x_{i}^{0}\leq N)(f_{i}(~\overline{\mu_{1}(\lambda \sigma_{i}.f_{i})}x_{i},\overline{g_{i}}x_{i}, n)\ne 0), 
\]
and let $A(n, g_{1}, g_{2})$ be the formula $A_{1}(n, g_{1})\vee A_{2}(n, g_{2})$, i.e.\ the formula in square brackets in \eqref{lahiel22}.  By assumption, $(\forall^{\st}n^{0}, g_{1}^{1}, g_{2}^{1})A(n,g_{1},g_{2})$.  
Now consider:
\begin{align}\label{ideaal2}
(\forall^{\st} v^{1^{*}}, x^{0^{*}})(\exists & w^{1^{*}}, y^{0^{*}})(\forall g^{1} \in v, n^{0}\in x)\\
&\big[ g\in w \wedge n\in y \wedge (\forall h_{1}, h_{2}\in w, m\in y)A(m,h_{1}, h_{2}) \big].\notag
\end{align}
Note that \eqref{ideaal2} holds by taking $w=v$ and $y=x$.  Applying \textsf{I} to \eqref{ideaal2}, we obtain  
\be\label{forgik2}
(\exists w^{1^{*}}, y^{0^{*}})(\forall^{\st} g^{1}, n^{0})\big[ g\in w \wedge n\in y \wedge (\forall h_{1}, h_{2}\in w, m\in y)A(m,h_{1}, h_{2}) \big], 
\ee
which -intuitively speaking- provides two sequences $w, y$ (of nonstandard length) encompassing all standard functions and standard numbers \emph{and} such that all of its elements satisfy $A$.  
In particular, one can view \eqref{forgik2} as obtained by applying overspill to \eqref{lahiel22} while making sure all standard functions are in $w$.  

\medskip

Next, define the set $Z_{0}^{1}$ (actually a binary sequence) as follows:  $n\in Z_{0}\asa (\exists g_{1}\in w)\neg A_{1}(n,g)$, where $w$ is the sequence from \eqref{forgik2}.  
Note that the right-hand side of the equivalence is actually `$(\exists i^{0}<|w|)\neg A_{1}(n, w(i))$', i.e.\ $Z_{0}$ is definable in $\P_{0}$.

\medskip

Let $Z^{1}$ be a standard set such that $Z_{0}\approx_{1} Z$  as provided by $\STP$.  Furthermore, since $\mu_{1}$ is standard, we have the following implications (for standard $n$):
\begin{align*}
(\exists^{\st} g^{1}_{1})(\forall^{\st}h_{1}^{1})(\exists^{\st} x_{1}^{0})&(f_{1}(\overline{h_{1}}x_{1},\overline{g_{1}}x_{1}, n)=0) \\
&\di (\exists^{\st} g^{1}_{1})(\exists^{\st} x_{1}^{0})(f_{1}(\overline{\mu_{1}(\lambda \sigma_{1}.f_{1})}x_{1},\overline{g_{1}}x_{1}, n)=0) \\
&\di (\exists g^{1}_{1}\in w)(\exists x_{1}^{0}\leq N)(f_{1}(\overline{\mu_{1}(\lambda \sigma_{1}.f_{1})}x_{1},\overline{g_{1}}x_{1}, n)=0) \\
&\di (\exists g^{1}_{1}\in w)\neg A_{1}(n,g_{1})\di n\in Z_{0}\di n\in Z.
\end{align*}
Now, since $y$ from \eqref{forgik2} contains all standard numbers, the second conjunct of \eqref{forgik2} implies (by definition)
that for standard $m$ (by the definition of $A$): 
\be\label{fronxi2}
(\forall h_{1}\in w)A_{1}(m,h_{1})\vee (\forall h_{2}\in w)A_{2}(m,h_{2}).
\ee
Similarly, consider the following series of implications (for standard $n$):
\begin{align}
(\exists^{\st} g^{1}_{2})(\forall^{\st}h_{2}^{1})(\exists^{\st} x_{2}^{0})&(f_{2}(\overline{h_{2}}x_{2},\overline{g_{2}}x_{2}, n)=0) \notag\\
&\di (\exists^{\st} g^{1}_{2})(\exists^{\st} x_{2}^{0})(f_{2}(\overline{\mu_{1}(\lambda \sigma_{2}.f_{2})}x_{2},\overline{g_{2}}x_{2}, n)=0) \notag\\
&\di (\exists g^{1}_{2}\in w)(\exists x_{2}^{0}\leq N)(f_{2}(\overline{\mu_{1}(\lambda \sigma_{2}.f_{2})}x_{2},\overline{g_{2}}x_{2}, n)=0) \notag\\
&\di (\exists g^{1}_{2}\in w)\neg A_{2}(n,g_{2})\label{hoi12}\\
&\di (\forall g^{1}_{1}\in w) A_{1}(n,g_{1})\label{hoi22}\\
&\di n\not\in Z_{0}\di n\not\in Z.\notag
\end{align}
Note that \eqref{hoi22} follows from \eqref{hoi12} by \eqref{fronxi2}.  
Thus, $Z$ is as required for [$\Sigma_{2}^{1}\mSEP]^{\st}$.  
\end{proof}
The following corollary was proved in \cite{dagsam} by using the fact that no type two functional (hence including $\mu_{1}$) can compute an instance of $\Theta$.  
Hence, we observe that the computability-theoretic approach `scales' better than our above approach via Nonstandard Analysis, but the latter may be called `conceptually simpler'.  
\begin{cor}
The system $\P_{0}+\Paai$ cannot prove $\STP$.
\end{cor}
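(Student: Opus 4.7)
The plan is to argue by contradiction, using Theorem \ref{stokeheo} as the conceptual bridge promised in the paragraph preceding the corollary. Assume, towards a contradiction, that $\P_{0}+\Paai\vdash \STP$. Combining this assumption with Theorem \ref{stokeheo} yields $\P_{0}+\Paai\vdash [\SIX]^{\st}$, so the whole of $\SIX$ (in the equivalent form $\Sigma_{2}^{1}$-separation) becomes provable in a nonstandard system whose only genuine computational content is $\Paai$.

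The remaining work is to exploit the fact that $\Paai$ only realises $(\mu_{1})$, which is much weaker than any realiser for $\SIX$. To this end, I would pick any $\Pi_{3}^{1}$-sentence $\varphi$ that is provable in $\SIX$ but not in $\FIVE$; for instance, a formalised consistency statement for $\FIVE$, which $\SIX$ proves by virtue of its much larger proof-theoretic ordinal. Since $\varphi$ is standard, the starred version $[\varphi]^{\st}$ is provable in $\P_{0}+\Paai$; using $\paai$ (a consequence of $\Paai$) together with the fact noted in Section \ref{pampson}, that $\P_{0}+\paai$ proves $\psi\leftrightarrow \psi^{\st}$ for internal arithmetical $\psi$ with standard parameters, one rearranges $[\varphi]^{\st}$ into a normal form $(\forall^{\st}X^{1})(\exists^{\st}Y^{1})\theta(X,Y)$ with $\theta$ internal. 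Applying the template in Remark \ref{doeisnormaal} then converts the derivation into $\RCAo+(\mu_{1})\equiv \FIVE^{\omega}\vdash \varphi$, where $(\mu_{1})$ appears as the extracted realiser of the antecedent $\Paai$, as explicitly noted just after $\Paai$ is introduced. The $\Pi_{3}^{1}$-conservativity of $\FIVE^{\omega}$ over $\FIVE$, stated in Section \ref{knowledge}, then yields $\FIVE\vdash \varphi$, contradicting the choice of $\varphi$.

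The main obstacle I foresee is a careful verification that the quantifier bookkeeping in the term-extraction step goes through uniformly: $[\SIX]^{\st}$ is a schema involving higher-order quantifier alternations, and one has to check that for a fixed standard $\varphi$ the contribution of $\Paai$ reduces cleanly to $(\mu_{1})$ of type one, rather than leaking an object of higher type through the functional interpretation. This is precisely the pattern singled out just after $\Paai$ is introduced (\emph{if the antecedent of \eqref{nora} is $\Paai$, the antecedent of \eqref{nora6} is $(\mu_{1})$}), so it is a matter of routine, if delicate, application of Remark \ref{doeisnormaal}. Once this is done, the argument is conceptually transparent: $\STP$, together with Theorem \ref{stokeheo}, would collapse the strength of $\SIX$ down into $\FIVE^{\omega}$, which is impossible by conservativity.
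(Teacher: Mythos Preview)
Your proposal is correct and follows essentially the same route as the paper's proof: assume $\P_{0}+\Paai\vdash\STP$, invoke Theorem~\ref{stokeheo} to obtain $[\SIX]^{\st}$, pick a sentence provable in $\SIX$ but not in $\FIVE$, pass to the internal version, apply term extraction (which turns $\Paai$ into $(\mu_{1})$), and contradict the $\Pi_{3}^{1}$-conservativity of $\RCAo+(\mu_{1})$ over $\FIVE$.

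The only difference worth noting is that the paper takes $\varphi$ to be \emph{arithmetical} from the outset (rather than merely $\Pi_{3}^{1}$), which makes the passage from $\varphi^{\st}$ to $\varphi$ a one-line application of $\paai$ and sidesteps the normal-form bookkeeping you flag as an obstacle. Your concrete example $\con(\FIVE)$ is in fact $\Pi_{1}^{0}$, hence arithmetical, so your argument collapses to exactly the paper's once you observe this; the more general $\Pi_{3}^{1}$ phrasing is not needed (and for genuinely $\Pi_{3}^{1}$ sentences you would need $\Paai$, not just $\paai$, to handle the innermost $(\forall^{\st}Z^{1})$---compare the proof of Corollary~\ref{poliop}).
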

\begin{proof}
The system $\EPRA^{\omega}+(\mu_{1})$ is a $\Pi_{3}^{1}$-conservative extension of $\FIVE$ by \cite{yamayamaharehare}*{Theorem 2.2}.  
Furthermore, let $\varphi$ be an arithmetical sentence (resp.\ not) provable in $\SIX$ (resp.\ $\FIVE$).  
Suppose $\P_{0}+\Paai\vdash \STP$ and note that $\P_{0}+\Paai\vdash \varphi$ by the theorem (and the fact that $\varphi\asa \varphi^{\st}$ given $\paai$).
Since $\Paai$ is converted into $(\mu_{1})$ by term extraction, we obtain $\RCAo+(\mu_{1})\vdash \varphi$, a contradiction with the aforementioned conservation result for $(\mu_{1})$. 
\end{proof}
To be absolutely clear, we now discuss what does, and more importantly, \emph{what does not} follow from Theorem \ref{stokeheo}.
\begin{rem}\label{exul2}\rm
First of all, one of the main consequences of the \emph{Transfer} axiom of $\IST$ is the equivalence  $\varphi\asa \varphi^{\st}$ (for any internal $\varphi$ with standard parameters).  
In the absence of the \emph{full} axiom of \emph{Transfer}, as is the case for e.g.\ the system in Theorem~\ref{stokeheo}, this equivalence may no longer hold.  
Hence, the system from Theorem ~\ref{stokeheo} does \emph{not} necessarily prove $\SIX$.  By contrast, the former system \emph{does} prove the \emph{arithmetical} consequences of $\SIX$, thanks\footnote{For internal and \emph{arithmetical} $\varphi$ with standard parameters,  $\P_{0}+\paai\vdash [\varphi\asa \varphi^{\st}]$.} to $\paai$.   

\medskip

Secondly, an interesting corollary of Theorem \ref{mikeh} is that $(\mu^{2})+(\exists \Theta)\SFF(\Theta)$ implies $\ATR_{0}$ over $\RCAo$ (see Theorem \ref{frigjr}).  
To obtain this corollary, one observes that $\ATR_{0}^{\st}$ implies (using $\paai$) the following normal form:
\be\label{desnol}
(\forall^{\st} X^{1}, f^{1})(\exists^{\st} Y^{1})\big[\WO(X)\di H_{f}(X, Y) \big].
\ee
One then applies term extraction to $\P_{0}+\paai+\STP\vdash \eqref{desnol}$; omitting the extracted term, one obtains that $[(\mu^{2})+(\exists \Theta)\SFF(\Theta)]\di \ATR_{0}$ over $\RCAo$.  
However, we can only obtain the latter implication \emph{because} $\ATR_{0}^{\st}$ implies an equivalent normal form, namely \eqref{desnol}, which is \emph{highly similar} to $\ATR_{0}$ itself.  The existence of such a `highly similar' normal form (given a relatively weak system) is exceptional in that e.g.\ $[\WKL]^{\st}$, $[\Sigma_{1}^{1}\textsf{-SEP}]^{\st}$, and $[\SIX]^{\st}$ do not\footnote{Let $\WKL_{\ns}$ be the statement that a \emph{standard} and infinite binary tree has a \emph{standard} path if the former contains sequences of arbitrary length.  
Then $\P_{0}+\WKL^{\st}$ (resp.\ $\P_{0}+\WKL_{\ns}$) has the proof-theoretic strength of $\WKL_{0}$ (resp.\ $\ACA_{0}$), i.e.\ $\WKL^{\st}\not\asa \WKL_{\ns}$ over $\P_{0}$.} have them, to the best of our knowledge.  More generally, applying a proof interpretation (on which term extraction as in Theorem \ref{consresultcor} is based) to the proof of a theorem, tends to completely warp the latter, i.e.\ $\ATR_{0}^{\st}$ is the exception, not the rule.
\end{rem}
In light of Remark \ref{exul2}, it seems the system from Theorem \ref{stokeheo} cannot prove $\SIX$; we now derive `the next best thing' $\FIVEFIVE$ from the result in Theorem~\ref{stokeheo}.  
\begin{cor}\label{csdf}
The system $\RCAo+\QFAC^{2,1}+(\mu_{1})+\HBU$ proves $\FIVEFIVE$.
\end{cor}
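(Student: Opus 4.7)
The plan is to combine Theorem \ref{stokeheo} with term extraction, following the template of Remark \ref{exul2}. First, observe that $\SIX$ trivially implies $\FIVEFIVE$ over $\RCA_{0}$: given a $\Sigma_{2}^{1}$-formula $\varphi$ and a $\Pi_{2}^{1}$-formula $\psi$ provably equivalent for each $n^{0}$, $\Pi_{2}^{1}$-comprehension applied to $\psi$ directly yields the required separating set. Invoking Theorem \ref{XXX} to transfer this implication to the `st'-relativised setting, Theorem \ref{stokeheo} yields
\[
\P_{0} + \Paai + \STP \vdash [\FIVEFIVE]^{\st}.
\]

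The next step is to cast a fixed instance of $[\FIVEFIVE]^{\st}$, parameterised by standard type-one objects $f_{1}, f_{2}$ coding the defining $\Sigma_{2}^{1}$- and $\Pi_{2}^{1}$-formulas, into a normal form. Since $\Paai$ proves $(\mu_{1})^{\st}$ (as observed just before Theorem \ref{lapdog}), every standard $\Sigma_{2}^{1}$- or $\Pi_{2}^{1}$-formula with standard parameters becomes, provably in $\P_{0}+\Paai$, equivalent to one in which the inner $\Pi_{1}^{1}$- or $\Sigma_{1}^{1}$-matrix is absorbed into the standard functional $\mu_{1}$, leaving only leading standard quantifiers over $\N^{\N}$. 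Pulling standard quantifiers outside, one obtains a normal form $(\forall^{\st} f_{1}^{1}, f_{2}^{1})(\exists^{\st} X^{1})\, \Phi(f_{1}, f_{2}, X, \mu_{1})$ with $\Phi$ internal, expressing that $X$ is the characteristic function of the $\Delta_{2}^{1}$-set defined by $f_{1}, f_{2}$ (conditional on the equivalence hypothesis, which is internal modulo $\mu_{1}$).

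Applying Theorem \ref{consresultcor} together with Corollary \ref{consresultcor2} to the proof of this normal form in $\P_{0}+\Paai+\STP$ then extracts a term $t$ of G\"odel's $T$, involving the standard functionals $\mu_{1}$ (from $\Paai$) and $\Theta$ (from $\STP$), such that $\RCAo$ proves $(\forall f_{1}, f_{2})(\exists X \in t(\mu_{1}, \Theta, f_{1}, f_{2}))\Phi(f_{1}, f_{2}, X, \mu_{1})$. In the target system $\RCAo + \QFAC^{2,1} + (\mu_{1}) + \HBU$, the functional $\mu_{1}$ is assumed, and since $(\mu_{1})$ implies $(\exists^{2})$ and hence $\ACAo$, Theorem \ref{nolapdog} supplies an instance of $\Theta$ from $\HBU$. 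Substituting these realisers into $t$ yields the given instance of $\FIVEFIVE$, establishing the scheme.

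The principal obstacle is the normal-form step. As flagged in Remark \ref{exul2}, standardisation followed by term extraction generally warps a statement; for $\FIVEFIVE$ one expects this to go through because, once $\mu_{1}$ has absorbed the innermost arithmetical quantifiers of the $\Sigma_{2}^{1}$- and $\Pi_{2}^{1}$-formulas, what remains is a $\forall\exists$-statement over $\N^{\N}$ of precisely the shape that term extraction recovers faithfully. Verifying this ``normal form preservation'' rigorously for the equivalence hypothesis requires careful use of $\paai$ to push the `st' past the arithmetical quantifiers, entirely in the spirit of the proof of Theorem \ref{stokeheo}.
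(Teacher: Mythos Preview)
Your overall strategy---derive $[\FIVEFIVE]^{\st}$ from Theorem~\ref{stokeheo}, cast it as a normal form, and apply term extraction---matches the paper's approach. The difference, and the source of a genuine gap, lies in the normal-form step.

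The paper works not with $\FIVEFIVE$ directly but with the equivalent principle $\Pi_{2}^{1}\mSEP$ (equivalent over $\ACA_{0}$ by \cite{simpson2}*{VII.6.14}). This choice is not cosmetic: the consequent of $\Pi_{2}^{1}\mSEP$ has the form $(\exists Z)(\forall n)(\exists g)(\forall h)\psi$ with $\psi$ arithmetical, i.e.\ it is genuinely $\Sigma_{3}^{1}$. After relativising to `st', the paper uses $\Paai$ to strip the `st' from the innermost $(\forall^{\st} h)$, then $\HAC_{\INT}$ to replace $(\exists^{\st} g)$ by a bounded quantifier $(\exists g\in \Phi(n))$ with $\Phi$ standard, and only then does the formula following $(\forall^{\st} n)$ become quantifier-free modulo the standard $\mu_{1}$, so that $\paai$ extends $(\forall^{\st} n)$ to $(\forall n)$.

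Your direct route through $\FIVEFIVE$ stalls at exactly this point. The consequent of $\Delta_{2}^{1}$-comprehension is $(\exists X)(\forall n)(n\in X \leftrightarrow \varphi(n))$ with $\varphi\in \Sigma_{2}^{1}$; the biconditional makes the matrix a conjunction of a $\Sigma_{2}^{1}$ and a $\Pi_{2}^{1}$ formula. From $[\FIVEFIVE]^{\st}$ you obtain a standard $X$ with $(\forall^{\st} n)(n\in X \leftrightarrow \varphi^{\st}(n))$, and $\Paai$ does give $\varphi^{\st}(n)\leftrightarrow \varphi(n)$ for standard $n$. But to pass to the \emph{internal} $(\forall n)(n\in X \leftrightarrow \varphi(n))$, the formula after $(\forall^{\st} n)$ must reduce to something quantifier-free in standard parameters---and ``$n\in X \leftrightarrow (\exists g)(\forall h)\theta$'' retains an unbounded type-one existential that $\mu_{1}$ does not absorb. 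Your phrase ``what remains is a $\forall\exists$-statement over $\N^{\N}$'' understates the difficulty: the biconditional leaves a $\exists g$ on one side and a $\forall g$ on the other, and neither $\paai$ nor $\Paai$ lets you drop the `st' from $(\forall^{\st} n)$ in that situation. The natural fix---rewriting the biconditional as $(\varphi(n)\rightarrow n\in X)\wedge(n\in X\rightarrow \psi(n))$ using the $\Pi_{2}^{1}$ equivalent $\psi$---is precisely the passage to $\Pi_{2}^{1}\mSEP$ that the paper exploits from the outset.

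A secondary point: after term extraction the paper still needs $\QFAC^{0,1}$ (subsumed by $\QFAC^{2,1}$) in the target system to produce the Skolem functional $\Phi$ witnessing the antecedent of $\Pi_{2}^{1}\mSEP$; this step is absent from your sketch.
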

\begin{proof}
Note that $\Pi_{2}^{1}\textsf{-SEP}\asa \FIVEFIVE$ over $\ACA_{0}$ by \cite{simpson2}*{VII.6.14}, where $\Pi_{2}^{1}\textsf{-SEP}$ is \eqref{heffer2} for $\varphi_{1}, \varphi_{2}\in \Pi_{2}^{1}$.   
By Theorem \ref{stokeheo}, $\P_{0}+\Paai+\STP$ proves $[\Pi_{2}^{1}\textsf{-SEP}]^{\st}$.  
The antecedent of the latter has the form $(\forall^{\st} n^{0})(\exists^{\st} g^{1})(\forall^{\st} h^{1})\varphi^{\st}(n, g, h)$, where $\varphi^{\st}$ is arithmetical.  
Hence, the antecedent in $[\Pi_{2}^{1}\textsf{-SEP}]^{\st}$ may be strengthened to 
\be\label{ankorage}
(\exists^{\st}\Phi^{0\di 1^{*}})(\forall n^{0})(\exists  g^{1}\in \Phi(n))(\forall h^{1})\varphi(n, g, h)
\ee
using $\paai$.   
On the other hand, the consequent of $[\Pi_{2}^{1}\textsf{-SEP}]^{\st}$ has the form 
\be\label{karellen}
(\exists^{\st}Z^{1})(\forall^{\st} n^{0})(\exists^{\st} g^{1})\underline{(\forall^{\st} h^{1})\psi^{\st}(n, g, h, Z)}, 
\ee
where $\psi^{\st}$ is arithmetical.  Now apply $\Paai$ (which readily follows from $(\exists^{\st}\mu_{1})\MUO(\mu_{1})$) to the underlined formula in \eqref{karellen}.  In the resulting formula, apply $\HAC_{\INT}$ to obtain a standard functional $\Phi^{0\di 1^{*}}$ such that:
\be\label{karellencor}
(\exists^{\st}Z^{1})(\forall^{\st} n^{0})(\exists g^{1}\in \Phi(n))\underline{(\forall h^{1})\psi(n, g, h, Z)}, 
\ee
Now apply $\paai$ to the `$(\forall^{\st}n^{0})$' quantifier in \eqref{karellencor}; note that $(\exists^{\st}\mu_{1})\MUO(\mu_{1})$ guarantees that the formula following the `$(\forall^{\st}n^{0})$' quantifier is equivalent to a quantifier-free one.
Thus, we obtain:
\be\label{ankorage2}
(\exists^{\st}\Psi^{0\di 1^{*}}, Z^{1} )(\forall n^{0})(\exists  g^{1}\in \Psi(n))(\forall h^{1})\psi(n, g, h, Z)
\ee
using $(\exists^{\st}\mu_{1})\MUO(\mu_{1})$ and $\HAC_{\INT}$.  Now apply term extraction to 
\[
\P_{0}+(\exists^{\st}\mu_{1})\MUO(\mu_{1})+\STP\vdash [\eqref{ankorage}\di \eqref{ankorage2}]
\]
and omit all terms.  Finally note that $(\mu_{1})+\QFAC^{0,1}$ yields $\Phi^{0\di 1^{*}}$ satisfying $(\forall n^{0})(\exists  g^{1}\in \Phi(n))(\forall h^{1})\varphi(n, g, h)$ from $(\forall n^{0})(\exists  g^{1})\underline{(\forall h^{1})\varphi(n, g, h)}$, as the underlined formula may be treated as quantifier-free. 
One thus obtains that $\RCAo+\QFAC^{0,1}+(\mu_{1})+(\exists \Theta)\SFF(\Theta)$ proves $\Pi_{2}^{1}\textsf{-SEP}$, and hence $\FIVEFIVE$ as discussed above.  
\end{proof}
If one repeats the previous proof for $[\Sigma_{2}^{1}\textsf{-SEP}]^{\st}$ (instead of $[\Pi_{2}^{1}\textsf{-SEP}]^{\st}$), one will observe that \emph{Transfer} for $\Pi_{2}^{1}$-formulas
seems needed to treat the consequent of $[\Sigma_{2}^{1}\textsf{-SEP}]^{\st}$ in the same way as in the previous proof.  However, this instance of \emph{Transfer} of course yields $\SIX$ after term extraction.  
In other words, the system from the corollary does not imply $\SIX$ \emph{using the same proof}.  We do obtain the following corollary where $\Pi_{1}^{1}\TRO$ is transfinite recursion for $\Pi_{1}^{1}$-formulas (see \cite{simpson2}*{VI.7.1}), i.e.\ $\ATR_{\theta}$ from Section \ref{desnol} for any $\theta\in \Pi_{1}^{1}$.
\begin{cor}
The system $\RCAo+(\mu_{1})+(\exists \Theta)\SFF(\Theta)$ proves $\Pi_{1}^{1}\TRO$.
\end{cor}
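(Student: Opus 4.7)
The plan is to derive $[\Pi_{1}^{1}\TRO]^{\st}$ inside $\P_{0}+\Paai+\STP$ and apply term extraction, in the spirit of Corollary \ref{csdf}. By Theorem \ref{stokeheo}, this system proves $[\SIX]^{\st}$. Since $\SIX$ implies $\Pi_{1}^{1}\TRO$ provably in $\ACA_{0}$ (via the chain $\SIX\di \FIVEFIVE$ combined with \cite{simpson2}*{VII.2.7}), and since $\P_{0}+\paai$ proves the axioms of $\ACA_{0}$ relative to `st', we obtain $[\Pi_{1}^{1}\TRO]^{\st}$ for every $\Pi_{1}^{1}$ formula $\theta$ with standard parameters.

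To reach a normal form, observe that $[\ATR_{\theta}]^{\st}$ reads $(\forall^{\st} X^{1})[\WO^{\st}(X)\di (\exists^{\st} Y^{1})H_{\theta}^{\st}(X,Y)]$. Since $\Paai$ yields $(\exists^{\st}\mu_{1})\MUO(\mu_{1})$, the $\Pi_{1}^{1}$ formula $\theta$ is equivalent to a quantifier-free formula in $\mu_{1}$, \emph{uniformly} in its function parameters; together with $\paai$, this gives $\WO^{\st}(X)\asa \WO(X)$ and $H_{\theta}^{\st}(X,Y)\asa H_{\theta}(X,Y)$ for standard $X,Y$. Applying $\HAC_{\INT}$ yields a standard functional $\Phi^{1\di 1}$ with $(\forall^{\st} X^{1})[\WO(X)\di H_{\theta}(X,\Phi(X))]$, which is in normal form. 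Term extraction (Theorem \ref{consresultcor}) then produces a closed term of $\mathcal{T}^{*}$, built from $\mu_{1}$ and $\Theta$, realising $\Phi$; hence $\RCAo+(\mu_{1})+(\exists \Theta)\SFF(\Theta)$ proves $\ATR_{\theta}$, and thus $\Pi_{1}^{1}\TRO$.

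The main obstacle is the uniform reduction of $\theta$, and more importantly of the iteration predicate $H_{\theta}$, to quantifier-free form via $\mu_{1}$: one must verify that $\mu_{1}$ handles not only $\theta$ in isolation but also the full iteration, which at each stage $a$ reads off the previously-constructed segment $Y^{a}$ as a function parameter. This follows because $\mu_{1}$ decides $\Pi_{1}^{1}$-formulas uniformly in all function parameters, including $Y$ itself, so the reduction $\theta \asa \theta'$ propagates through the transfinite recursion without complication, and the application of $\HAC_{\INT}$ described above is justified.
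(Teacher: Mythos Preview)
Your proposal is correct and follows essentially the same route as the paper: derive $[\Pi_1^1\TRO]^{\st}$ from Theorem~\ref{stokeheo} via $\SIX\vdash\Pi_1^1\TRO$, use $\Paai$ (equivalently the standard $\mu_1$) to pass to the internal normal form $(\forall^{\st} X)(\exists^{\st} Y)[\WO(X)\to H_\theta(X,Y)]$, and apply term extraction. The only difference is cosmetic: your detour through $\HAC_{\INT}$ is unnecessary, since the displayed formula is already a normal form (the bracket is internal once $\mu_1$ is available as a standard parameter), and $\HAC_{\INT}$ would in any case yield $\Phi^{1\to 1^*}$ rather than $\Phi^{1\to 1}$.
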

\begin{proof}
It is known that $\SIX$ implies $\Pi_{1}^{1}\TRO$ (see e.g.\ \cite{simpson2}*{VII.7.12}).  
By Theorem \ref{stokeheo}, $\P_{0}+\Paai+\STP$ proves $[\Pi_{1}^{1}\TRO]^{\st}$.  
Similar to the second part of Remark \ref{exul2}, one observes that $\Pi_{1}^{1}\TRO^{\st}$ implies (using $\Paai$) the following normal form:
\be\label{desnol2}
(\forall^{\st} X^{1})(\exists^{\st} Y^{1})\big[\WO(X)\di H_{\theta}(X, Y) \big], 
\ee
for any fixed $\theta\in \Pi_{1}^{1}$.  
One then applies term extraction to $\P_{0}+\Paai+\STP\vdash \eqref{desnol2}$; omitting the extracted term, one obtains the corollary.
\end{proof}
We now discuss some interesting proof-theoretic corollaries.  
Let $\con(S)$ be the $\Pi_{1}^{0}$-sentence expressing the consistency of $S$ (see \cite{simpson2}*{II.8.2}).   
\begin{cor}\label{poliop}
The system $\RCAo+\QFAC^{2,1}+(\mu_{1})+\HBU$ proves $\con(\FIVE)$; the same holds for any $\Pi_3^1$-sentence provable in $\SIX$.
\end{cor}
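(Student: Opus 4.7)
The plan is to combine Theorem~\ref{stokeheo} with the term-extraction mechanism of Theorem~\ref{consresultcor} and Corollary~\ref{consresultcor2}, in exactly the manner illustrated by the proof of Corollary~\ref{csdf} and by Remark~\ref{exul2}.

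Let $\varphi \equiv (\forall X^{1})(\exists Y^{1})(\forall Z^{1})\psi(X,Y,Z)$ be an arbitrary $\Pi_{3}^{1}$-sentence with $\psi$ arithmetical, provable in $\SIX$. Theorem~\ref{stokeheo} yields $\P_{0}+\Paai+\STP \vdash \varphi^{\st}$, where $\varphi^{\st} \equiv (\forall^{\st}X)(\exists^{\st}Y)(\forall^{\st}Z)\psi^{\st}(X,Y,Z)$. Because $\psi$ is arithmetical and all its parameters are standard, $\paai$ yields the bootstrap equivalence $(\forall^{\st}Z)\psi^{\st}(X,Y,Z) \asa (\forall Z)\psi(X,Y,Z)$ indicated in Section~\ref{pampson}. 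Hence over $\P_{0}+\Paai+\STP$ the sentence $\varphi^{\st}$ reduces to the normal form $(\forall^{\st}X)(\exists^{\st}Y)(\forall Z)\psi(X,Y,Z)$, which has an internal matrix.

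Applying Corollary~\ref{consresultcor2} to this normal form then produces a closed term $t$ of $\mathcal{T}^{*}$ such that $\RCAo$ proves
\[
(\forall X^{1})\big[\MUO(\mu_{1}) \wedge \SFF(\Theta) \di (\exists Y\in t(X,\mu_{1},\Theta))(\forall Z^{1})\psi(X,Y,Z)\big],
\]
since $\Paai$ and $\STP$ term-extract to $(\mu_{1})$ and $(\exists\Theta)\SFF(\Theta)$ respectively (cf.\ the discussion following Theorem~\ref{lapdog} and at the end of Section~\ref{pampson}). Now $(\mu_{1})$ implies $(\mu^{2})$ and hence $\ACAo$, so the target system $\RCAo+\QFAC^{2,1}+(\mu_{1})+\HBU$ contains $\ACAo+\QFAC^{2,1}+\HBU$; by Theorem~\ref{nolapdog} the latter proves $(\exists\Theta)\SFF(\Theta)$. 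Instantiating the displayed implication with the available $\mu_{1}$ and $\Theta$ and then weakening $(\exists Y\in t(\dots))$ to $(\exists Y)$ yields $\varphi$, establishing the $\Pi_{3}^{1}$-clause of the corollary. For the first claim, $\con(\FIVE)$ is a $\Pi_{1}^{0}$-sentence, hence $\Pi_{3}^{1}$, and is provable in $\SIX$ by well-known proof-theoretic facts, so the general result just established delivers $\RCAo+\QFAC^{2,1}+(\mu_{1})+\HBU \vdash \con(\FIVE)$.

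The only delicate step is the normal-form reduction of $\varphi^{\st}$: it is essential that $\psi$ be \emph{arithmetical}, so that $\paai$ alone suffices to collapse $(\forall^{\st}Z)\psi^{\st}$ into the internal formula $(\forall Z)\psi$. For a $\Pi_{4}^{1}$-sentence one would encounter an additional alternation inside the matrix that $\paai$ cannot flatten on its own, which is exactly why the conclusion is restricted to the $\Pi_{3}^{1}$-fragment.
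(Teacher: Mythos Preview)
Your argument follows the paper's approach and is essentially correct, but there is one misattribution worth fixing. The collapse $(\forall^{\st}Z)\psi^{\st}(X,Y,Z)\asa(\forall Z)\psi(X,Y,Z)$ is \emph{not} obtainable from $\paai$ alone: $\paai$ only bootstraps to $\varphi\asa\varphi^{\st}$ for \emph{arithmetical} $\varphi$, whereas $(\forall Z^{1})\psi(X,Y,Z)$ is a genuine $\Pi^{1}_{1}$-formula in the standard parameters $X,Y$. What is actually used here is $\Paai$ (contraposed), exactly as the paper states when it writes ``$\Paai$ yields $A^{\st}\asa(\forall^{\st}X)(\exists^{\st}Y)(\forall Z)\varphi(X,Y,Z)$''. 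This does not damage your proof, since $\Paai$ is available in the system anyway; but your closing explanation for the $\Pi^{1}_{3}$ restriction is accordingly off: the limitation is that $\Paai$ gives Transfer only at the $\Pi^{1}_{1}$ level, so a $\Pi^{1}_{4}$-sentence would require $\Pi^{1}_{2}$-Transfer to flatten $(\forall^{\st}Z)(\exists^{\st}W)\psi^{\st}$ into $(\forall Z)(\exists W)\psi$, not that $\paai$ ``cannot flatten'' an extra alternation.
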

\begin{proof}
Since $\SIX\vdash \con(\FIVE)$, the system $\P_{0}+\Paai+\STP$ proves $[\con(\FIVE)]^{\st}$ and applying $\paai$ yields $ \con(\FIVE)$.
Hence, by Theorem~\ref{lapdog}, the stronger system $\P_{0}+(\exists^{\st}\mu_{1})\MUO(\mu_{1})+(\exists^{\st}\Theta)\SFF(\Theta)$ proves $\con(\FIVE)$.  
Applying term extraction as in Theorem \ref{consresultcor}, the corollary follows.  For a $\Pi_{3}^{1}$-sentence $A\equiv (\forall X^{1})(\exists Y^{1})(\forall Z^{1})\varphi(X, Y, Z)$, 
note that $\Paai$ yields $A^{\st}\asa (\forall^{\st} X^{1})(\exists^{\st} Y^{1})(\forall Z^{1})\varphi(X, Y, Z)$.  Hence, if $\SIX\vdash A$, the same proof as for $\con[\FIVE]$ yields that $\RCAo+(\mu_{1})+(\exists \Theta^{3})\SFF(\Theta)$ proves $A$.
\end{proof}
This corollary is interesting as $(\mu_{1})$ yields a conservative extension of $\FIVE$ (see \cite{yamayamaharehare}*{Theorem 2.2}), while $\HBU$ is acceptable in intuitionistic mathematics, and finitistically reducible (in the sense of yielding a conservative extension of $\WKL_{0}$).  
\begin{cor}\label{konkkinkel}
The systems $\P+\Paai+\STP$ and  $\EPA^{\omega*}+(\mu_{1})+\QFAC^{2,1}+\HBU$ prove the consistency of $\SIX$, i.e.\ $\con(\SIX)$.
\end{cor}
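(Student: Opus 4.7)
The plan is to derive both consistency statements from Theorem \ref{stokeheo} via a formalised soundness step and then term extraction.

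For the first system, note that $\P$ extends $\P_0$, so Theorem \ref{stokeheo} yields $\P+\Paai+\STP\vdash [\SIX]^{\st}$: every axiom of $\SIX$ is provable under the $\st$-relativisation. This relativisation preserves the first-order logical structure (sending $\forall X$ to $\forall^{\st}X$, and leaving the connectives alone), so it embeds $\SIX$ as an inner theory inside $\P+\Paai+\STP$. Since $\P$ contains the full induction schema of $\EPA^{\omega*}$, one can formalise the standard inner-model argument: a hypothetical $\SIX$-derivation of $0=_{0}1$ would translate line-by-line into a derivation (inside our system) of $(0=_{0}1)^{\st}$, which is just $0=_{0}1$, contradicting arithmetic. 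Hence $\con(\SIX)$.

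For the second system, apply term extraction to the proof just obtained. The sentence $\con(\SIX)$ is an internal $\Pi_1^0$-sentence $(\forall n^0)\varphi_0(n)$ with $\varphi_0$ quantifier-free, so $\paai$ (a consequence of $\Paai$) delivers the trivial normal form $(\forall^{\st}n^0)\varphi_0(n)$. Theorem \ref{consresultcor} then produces a (vacuous) term together with a proof
\[
\EPA^{\omega*}+(\mu_1)+(\exists\Theta)\SFF(\Theta)\vdash \con(\SIX),
\]
using -- as discussed in Section~\ref{pampson} -- that $\Paai$ term-extracts to $(\mu_1)$ while $\STP$ term-extracts to $(\exists\Theta)\SFF(\Theta)$. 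Now Theorem~\ref{nolapdog} provides $(\exists\Theta)\SFF(\Theta)$ from $\ACAo+\QFAC^{2,1}+\HBU$, and $\ACAo$ is contained in $\EPA^{\omega*}+(\mu_1)$ since $\mu_1$ computes $\mu^2$; this yields $\con(\SIX)$ in $\EPA^{\omega*}+(\mu_1)+\QFAC^{2,1}+\HBU$.

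The main obstacle lies in the formalised-soundness step of the first paragraph: because $\SIX$ is schematic (with infinitely many $\Pi_2^1$-comprehension and induction instances), one must unpack ``the standard part is a model of $\SIX$'' into something uniform enough to support induction on $\SIX$-proof length. It is precisely the full arithmetic induction of $\P$ -- absent from $\P_0$ -- together with the stratified truth predicates for $\Pi_k^1$-formulas made available by $\Paai$ and $\STP$, that permits this otherwise delicate step; the rest of the argument is a straightforward application of machinery already developed in the paper.
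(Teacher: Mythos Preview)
Your term-extraction argument for the second system is fine and matches the paper's pattern (compare the proof of Corollary~\ref{poliop}). The problem is the first paragraph: the ``formalised inner-model'' step does not go through as stated. To run a soundness argument \emph{inside} the system you need a single truth predicate against which you can do induction on proof length; the $\st$-relativisation is a syntactic translation, not a truth predicate, so ``translate line by line'' is a meta-level fact, not something the object theory can quantify over. Your appeal to ``stratified truth predicates for $\Pi_k^1$-formulas made available by $\Paai$ and $\STP$'' is the wrong fix: $\Paai$ is only $\Pi_1^1$-Transfer, and Theorem~\ref{stokeheo} gets you to $[\SIX]^{\st}$, which yields at best a $\Pi_2^1$-truth predicate in the standard part---not enough to handle the $\Pi_3^1$ axiom of $\SIX$ uniformly.

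The paper avoids this entirely by quoting a finished proof-theoretic fact and relativising \emph{at the meta-level}: Simpson shows $\SIX + \Sigma_3^1\textup{-}\textsf{IND}\vdash \con(\Sigma_3^1\textup{-}\textsf{CA}_0)$ and $\SIX\equiv_{\Pi_3^1}\Sigma_3^1\textup{-}\textsf{CA}_0$, so $\SIX+\Sigma_3^1\textup{-}\textsf{IND}\vdash\con(\SIX)$. Now $\P+\Paai+\STP$ proves $[\SIX]^{\st}$ by Theorem~\ref{stokeheo} and proves $[\Sigma_3^1\textup{-}\textsf{IND}]^{\st}$ directly from the external induction $\textsf{IA}^{\st}$ of $\P$; hence, relativising the fixed Simpson derivation formula by formula (a meta-argument, no internal truth predicate needed), $\P+\Paai+\STP$ proves $[\con(\SIX)]^{\st}$, and $\paai$ drops the $\st$ since $\con(\SIX)$ is $\Pi_1^0$. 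The full induction of $\P$ is indeed the key ingredient---but it enters as $[\Sigma_3^1\textup{-}\textsf{IND}]^{\st}$, not as the engine of a formalised soundness proof.
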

\begin{proof}
Note that \cite{simpson2}*{VII.6.21} states $\SIX\equiv_{\Pi^{1}_{3}}\Sigma_{3}^{1}\textsf{-CA}_{0}$ and $\SIX+\Sigma_{3}^{1}\textsf{-IND}\vdash \con(\Sigma_{3}^{1}\textsf{-CA}_{0})$.
Since $\Sigma_{3}^{1}\textsf{-CA}_{0}\di \SIX$ by \cite{simpson2}*{VII.6.6}, the corollary follows.  
\end{proof}
Finally, by way of \emph{mathematical} applications of Corollary \ref{poliop}, the \emph{graph minor theorem} is a $\Pi_{1}^{1}$-sentence provable in $\FIVE+\textsf{BI}$ (\cite{friedrosey}); the latter system is derivable in $\SIX$, yielding the following corollary.
\begin{cor}
$\RCAo+(\mu_{1})+\QFAC^{2,1}+\HBU$ proves the graph minor theorem.  
\end{cor}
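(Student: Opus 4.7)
The plan is to simply combine the two facts stated immediately before the corollary with the $\Pi_3^1$-conservation result already established in Corollary \ref{poliop}. First I would recall that the graph minor theorem is a $\Pi_1^1$-sentence (in particular, a $\Pi_3^1$-sentence) and that it is provable in $\FIVE + \textsf{BI}$. Next I would invoke the fact, cited in the paragraph preceding the corollary, that $\FIVE + \textsf{BI}$ is derivable in $\SIX$; hence the graph minor theorem is provable in $\SIX$ as well.

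Having placed the graph minor theorem inside $\SIX$, I would then apply Corollary \ref{poliop}: since that corollary tells us that every $\Pi_3^1$-sentence provable in $\SIX$ is already provable in $\RCAo + \QFAC^{2,1} + (\mu_1) + \HBU$, the graph minor theorem is provable there too. Essentially the entire proof is citational — the mathematical content lies in Theorem \ref{stokeheo} and Corollary \ref{poliop}, and the only new observation is matching the complexity class ($\Pi_1^1 \subseteq \Pi_3^1$) and the axiomatic envelope ($\FIVE + \textsf{BI} \subseteq \SIX$).

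I do not anticipate any real obstacle; the step that would require the most care is verifying that Friedman--Robertson--Seymour's proof is indeed formalisable in $\FIVE + \textsf{BI}$ with the graph minor theorem formulated as a genuine $\Pi^1_1$-statement (rather than, say, a $\Pi^1_2$-statement about infinite sequences of finite graphs), but this is precisely the content of the reference \cite{friedrosey} cited in the preceding paragraph, and I would simply appeal to it. Thus the proof is a one-line deduction chaining the cited result with Corollary \ref{poliop}.
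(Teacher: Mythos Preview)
Your proposal is correct and matches the paper's approach exactly: the paper simply observes (in the paragraph preceding the corollary) that the graph minor theorem is a $\Pi_1^1$-sentence provable in $\FIVE+\textsf{BI}$, that $\FIVE+\textsf{BI}$ is derivable in $\SIX$, and then invokes Corollary~\ref{poliop}.
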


\subsection{Generalisations to higher types}\label{schweber}
In this section, we study compactness properties of function spaces.  
In particular, we study a generalisation of Theorems~\ref{mikeh} and \ref{stokeheo} to higher types inspired by \cites{schtreber,schtreberphd}.  
We first discuss the results in the latter and its relation to our results.   We discuss the mathematical naturalness of compactness properties of function spaces in Remark \ref{feynman}.

\medskip

First of all, recall that Theorem \ref{mikeh} was first proved in \cite{dagsam} by proving $[\Sigma_{1}^{1}\mSEP]^{\st}$ in $\P_{0}+\paai+\STP$, where $\Sigma_{1}^{1}\mSEP$ states that for any $\varphi_{1},\varphi_{2}\in \Sigma_{1}^{1}$:
\[
(\forall n^{0})(\neg\varphi_{1}(n)\vee \neg\varphi_{2}(n))\di (\exists Z^{1})(\forall n^{0})\big(\varphi_{1}(n)\di n\in Z\wedge \varphi_{2}(n)\di n\not\in Z \big).
\]
The equivalence $\ATR_{0}\asa \Sigma_{1}^{1}\mSEP$ in \cite{simpson2}*{V.5.1} guarantees that $\ATR_{0}^{\st}$ is provable in $\P_{0}+\paai+\STP$.  
In this section, we study the higher type generalisation of $\P_{0}+\paai+\STP \vdash [\Sigma_{1}^{1}\mSEP]^{\st}$, inspired by results in \cites{schtreberphd, schtreber}, sketched next. 

\medskip

Schweber discusses a higher-order generalisation of the RM of $\ATR_{0}$ in \cites{schtreber,schtreberphd}.  
This generalisation consists in taking theorems from second-order arithmetic and `bumping up all types with one' to obtain a theorem of third-order arithmetic.  
By way of example, compare $\Sigma_{1}^{1}\mSEP$ to the `one level up' separation principle $\Sigma_{1}^{2}\mSEP$ (which is still provable in $\textsf{ZF}$) as follows.
\begin{defi}[$\Sigma^{2}_{1}\mSEP$]
For any $\varphi_{1},\varphi_{2}\in \Sigma^{2}_{1}$, we have that
\[
(\forall f^{1})(\neg\varphi_{1}(f)\vee \neg\varphi_{2}(f))\di (\exists Z^{2})(\forall f^{1})\big(\varphi_{1}(f)\di Z(f)=1 \wedge\varphi_{2}(f)\di Z(f)=0 \big).
\]
\edefi
As noted by Schweber (\cite{schtreber}), $\Sigma_{1}^{2}\mSEP$ implies $\Delta_{1}^{2}$-comprehension, and two determinacy axioms $\Sigma_{1}^{\R}\textsf{-DET}$ and $\Delta_{1}^{\R}\textsf{-DET}$ when combined with the axiom of choice as in $\textsf{SF}(\R)$.  As noted by Hachtman in \cite{schacht, schacht2}, $\Sigma_{1}^{\R}\textsf{-DET}$ is strictly stronger than $\Sigma_{4}^{0}\textsf{-DET}$, and $\Pi_{3}^{0}\textsf{-DET}$ already goes beyond second-order arithmetic (\cite{shoma}*{Cor.~1.3}).  

\medskip

As observed in \cite{schtreber}*{\S1}, many implications in the Reverse Mathematics of $\ATR_{0}$ fail when the theorems are generalised from second-order to third-order arithmetic.  
It is thus a natural question if the implication $ [\paai+\STP]\di [\Sigma_{1}^{1}\mSEP]^{\st}$ generalises to third-order arithmetic.  We answer this question positively as follows:  
The (obvious) generalisation of the system $\P_{0}+\paai+\STP$ to third-order arithmetic is $\P_{0}+\SOT+\STP_{2}$ where the latter axioms are:  
\be\tag{$\SOT$}
(\forall^{\st} Y^{2})\big[ (\exists f^{1})(Y(f)=0)\di  (\exists^{\st} f^{1})(Y(f)=0)  \big],
\ee
\be\tag{$\STP_{2}$}
(\forall Y^{2}\leq_{2}1)(\exists^{\st}Z^{2}\leq_{2}1)(Z\approx_{2}Y), 
\ee  
which are respectively $\paai$ and $\STP$ with all types `bumped up by one'.  
Recall that `$Z\approx_{2}Y$' is $(\forall^{\st} g^{1})(Z(g)=_{0}Y(g))$.
We have the following theorem.  
\begin{thm}\label{mikeh3}
The system $\P_{0}+\SOT+\STP_{2}$ proves $[\Sigma_{1}^{2}\mSEP]^{\st}$.  
\end{thm}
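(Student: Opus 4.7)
The plan is to bump every type up by one in the proof that $\P_0+\paai+\STP \vdash [\Sigma_1^1\mSEP]^{st}$ underlying Theorem \ref{mikeh}; this also mirrors the architecture of the proof of Theorem \ref{stokeheo}, with $\SOT$ now playing the role that $\Paai$ played there (via the associated type-$2$ selector extracted through $\HAC_{\INT}$), and $\STP_2$ replacing $\STP$.

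I would fix standard $\Sigma_1^2$-formulas $\varphi_i(f)\equiv(\exists G_i^2)\psi_i(G_i,f)$ for $i=1,2$, where each $\psi_i$ has only type-$\leq 1$ quantifiers. First, using $\SOT$ together with $\HAC_{\INT}$ (just as $\Paai$ and $\HAC_{\INT}$ yield $(\mu_1)^{st}$ in Theorem \ref{stokeheo}), put each $\psi_i$ in the normal form $(\exists h_i^1)(\forall x^0)\chi_i(\overline{h_i}x,G_i,f,x)=0$ with quantifier-free standard $\chi_i$. Assuming $[(\forall f)(\neg\varphi_1(f)\vee\neg\varphi_2(f))]^{st}$, unfolding and applying $\paai$ (which follows from $\SOT$) plus overspill to convert the innermost $(\exists^{st} x)$ into $(\exists x\le N)$ for fixed nonstandard $N$, I obtain quantifier-free predicates $B_i(h_i,G_i,f)$ with
\[
(\forall^{st} f^1, G_1^2, G_2^2, h_1^1, h_2^1)\bigl[B_1(h_1,G_1,f)\vee B_2(h_2,G_2,f)\bigr].
\]

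Next, I would apply Idealisation \textsf{I} simultaneously at types $1$ and $2$ --- exactly as in the step from \eqref{ideaal2} to \eqref{forgik2} in the proof of Theorem \ref{stokeheo} --- to obtain finite sequences $W^{2^*},U^{1^*},V^{1^*}$ of nonstandard length containing every standard type-$2$ functional, type-$1$ function, and binary sequence respectively, and for which the bracketed condition above holds for all $G_1,G_2\in W$, $h_1,h_2\in U$, $f\in V$. I then define the internal type-$2$ functional $Z_0\le_2 1$ by
\[
Z_0(f)=1 \leftrightarrow (\exists G_1\in W)(\exists h_1\in U)\,\neg B_1(h_1,G_1,f),
\]
and apply $\STP_2$ to get a standard $Z^2\le_2 1$ with $Z\approx_2 Z_0$. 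Verification that $Z$ witnesses $[\Sigma_1^2\mSEP]^{st}$ then proceeds exactly as in Theorem \ref{stokeheo}: a standard witness for $\varphi_1^{st}(f)$ gives $\neg B_1$ at a standard pair in $W\times U$ via $\paai$-overspill, forcing $Z_0(f)=1=Z(f)$; a standard witness for $\varphi_2^{st}(f)$ gives $\neg B_2$ at a standard pair, which via the bracketed condition forces $B_1(h_1,G_1,f)$ for all $G_1\in W, h_1\in U$, yielding $Z_0(f)=0=Z(f)$.

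The main obstacle is establishing the normal form for $\psi_i$ whose type-$\leq 1$ quantifier structure may be arbitrarily iterated; whereas Theorem \ref{stokeheo} works with a fixed $\Sigma_2^1$ matrix $(\forall h)(\exists x)\chi$, here $\psi_i$ can be an arbitrary formula analytical-in-$G_i$. However, the type-$2$ selector derived from $\SOT+\HAC_{\INT}$ is precisely the bumped-up analogue of $(\mu_1)^{st}$, and its availability should let me iteratively Skolemize type-$1$ quantifiers inside standard parameters until only the single $(\exists h_i^1)(\forall x^0)$-alternation remains. A secondary technical point is that Idealisation in $\P_0$ must be used at type $2$, which is licensed by the standard polymorphic formulation of \textsf{I} for all finite types.
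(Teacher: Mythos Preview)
Your core architecture---use Idealisation to build finite nonstandard sequences capturing all standard objects, define an internal $Z_0^2\le_2 1$ from these, apply $\STP_2$ to get a standard $Z$, then verify---is exactly what the paper does. The verification logic you sketch is also correct.

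Where you diverge from the paper is in the preliminary normalisation. The paper simply \emph{takes} $\Sigma_1^2$-formulas in the form $\varphi_i(f)\equiv(\exists Y_i^2)(\forall f_i^1)(\psi_i^3(Y_i,f_i,f)=0)$ with $\psi_i^3$ a standard type-$3$ object, i.e.\ the direct type-bump of the $\Sigma_1^1$ normal form $(\exists g^1)(\forall n^0)R$. With this convention your ``main obstacle'' evaporates: there is no iterated type-$\le 1$ quantifier structure to Skolemise, and no selector needs to be extracted from $\SOT$ via $\HAC_{\INT}$. The paper uses $\SOT$ \emph{only once and directly}, namely in the verification step to pass from $(\forall^{\st}f_1)(\psi_1=0)$ to $(\forall f_1)(\psi_1=0)$; it never builds a $\mu_1$-analogue. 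Your analogy with Theorem~\ref{stokeheo} is therefore misleading: in that proof $(\mu_1)^{\st}$ is genuinely needed to collapse the inner $(\forall h)(\exists x)$ alternation of a $\Sigma_2^1$-formula, but here the matrix after $(\exists Y^2)$ is already a single $(\forall f^1)$ followed by something quantifier-free, so no such collapse is required.

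Concretely, the paper first fixes (by Idealisation) a sequence $u^{1^*}$ containing all standard type-$1$ functions, sets $A_i(f,Y_i)\equiv(\exists f_i\in u)(\psi_i(Y_i,f_i,f)\ne 0)$, then idealises again to obtain $w^{2^*},y^{1^*}$ containing all standard objects of the respective types on which $A_1\vee A_2$ holds, defines $Z_0$ from $w$, and applies $\STP_2$. Your extra sequences $U^{1^*}$ for the $h_i$-variables and the overspill bound $N$ are artefacts of the unnecessary normalisation step. Your route would still go through, but the paper's is shorter and shows that $\SOT$ is doing strictly less work than you suggest.
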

\begin{proof}
Let $\varphi_{i}(f^{1})$ be short for the formula $(\exists Y^{2}_{i})(\forall f_{i}^{1})(\psi_{i}^{3}(Y_{i}, f_{i},f)=0)$ and fix standard $\psi_{i}^{3}$ for $i=1,2$.  
Then assume $\big[(\forall f^{1})(\neg\varphi_{1}(f)\vee \neg\varphi_{2}(f))\big]^{\st}$, which is:
\[
(\forall^{\st}f^{1})
\big[ 
(\forall^{\st} Y^{2}_{1})(\exists^{\st} f_{1}^{1})(\psi_{1}(Y_{1}, f_{1},f)\ne0)
\vee  
(\forall^{\st} Y^{2}_{2})(\exists^{\st} f_{2}^{1})(\psi_{2}(Y_{2}, f_{2},f)\ne0)
\big].
\]
Now fix nonstandard $u^{1^{*}}$ containing all standard sequences (which exists by \emph{Idealisation} \textsf{I}) and note that we have that for all standard $f^{1}, Y_{1}^{2}, Y_{2}^{1}$:
\be\label{lahiel}
(\exists f_{1}^{1}\in u)(\psi_{1}(Y_{1}, f_{1},f)\ne0)
\vee  
(\exists f_{2}^{1}\in u)(\psi_{2}(Y_{2}, f_{2},f)\ne0)
\ee
Let $A_{i}(f, Y_{i})$ be the (equivalent to quantifier-free) formula $(\exists f_{i}^{1}\in u)(\psi_{1}(Y_{i}, f_{i},f)\ne0)$ and let $A(f, Y_{1}, Y_{2})$ be the formula $A_{1}(f, Y_{1})\vee A_{2}(f, Y_{2})$, i.e.\ the formula in \eqref{lahiel}.  By assumption, $(\forall^{\st}f^{1}, Y_{1}^{2}, Y_{2}^{2})A(f,Y_{1},Y_{2})$.  
Now consider:
\begin{align}\label{ideaal}
(\forall^{\st} v^{2^{*}}, x^{1^{*}})(\exists  w^{2^{*}}, y^{1^{*}})&(\forall Y^{2} \in v, f^{1}\in x)\\
&\big[ Y\in w \wedge f\in y \wedge (\forall Y_{1}, Y_{2}\in w, f\in y)A(f,Y_{1}, Y_{2}) \big].\notag
\end{align}
Note that \eqref{ideaal} holds by taking $w=_{2^{*}}v$ and $y=_{1^{*}}x$.  Applying \textsf{I} to \eqref{ideaal} yields
\be\label{forgik}
(\exists w^{2^{*}}, y^{1^{*}})(\forall^{\st} Y^{2}, f^{1})\big[ Y\in w \wedge f\in y \wedge (\forall Y_{1}, Y_{2}\in w, f\in y)A(f,Y_{1}, Y_{2}) \big], 
\ee
which -intuitively speaking- provides two sequences $w, y$ (of nonstandard length) encompassing all standard functionals of type two and standard functions and such that all of its elements satisfy $A$.   In particular, one can view \eqref{forgik} as obtained by applying overspill to \eqref{lahiel} while making sure all standard functionals and functions are in $w$ and $y$.  
Next, define the functional $Z_{0}^{2}$ as follows:  $Z_{0}(f)=0 $ if $ (\exists Y_{1}\in w)\neg A_{1}(f,Y_{1})$ and $1$ otherwise, where $w^{2^{*}}$ is the sequence from \eqref{forgik}.  
Note that $ (\exists Y_{1}\in w)\neg A_{1}(f,Y_{1})$ is actually `$(\exists i^{0}<|w|)\neg A_{1}(f, w(i))$', i.e.\ $Z_{0}^{2}$ is definable in $\P_{0}$.    

\medskip

Let $Z^{2}$ be a standard functional such that $Z_{0}\approx_{2} Z$  as provided by $\STP_{2}$.  Furthermore, $\SOT$ establishes the following implications (for standard $f^{1}$):
\begin{align*}
(\exists^{\st} Y^{2}_{1})(\forall^{\st} f_{1}^{1})(\psi_{1}(Y_{1}, f_{1},f)=0)
&\di (\exists^{\st} Y^{2}_{1})(\forall f_{1}^{1})(\psi_{1}(Y_{1}, f_{1},f)=0)\\
&\di (\exists^{\st} Y^{2}_{1})(\forall f_{1}^{1}\in u)(\psi_{1}(Y_{1}, f_{1},f)=0)\\
&\di (\exists  Y^{2}_{1}\in w)(\forall f_{1}^{1}\in u)(\psi_{1}(Y_{1}, f_{1},f)=0)\\
&\di (\exists Y^{2}_{1}\in w)\neg A_{1}(f,Y_{1})\di Z_{0}(f)=0\di Z(f)=0.
\end{align*}
Note that $\SOT$ is (only) necessary to establish the first implication.  Now, since $y$ from \eqref{forgik} contains all standard functions, the second conjunct of \eqref{forgik} implies (by definition)
that for standard $h^{1}$ (by the definition of $A$): 
\be\label{fronxi}
(\forall Y^{2}_{1}\in w)A_{1}(h,Y_{1})\vee (\forall Y^{2}_{2}\in w)A_{2}(h,Y_{2}).
\ee
Similarly, consider the following series of implications (for standard $f^{1}$):
\begin{align}
(\exists^{\st} Y^{2}_{2})(\forall^{\st} f_{2}^{1})(\psi_{2}(Y_{2}, f_{2},f)=0)
&\di (\exists^{\st} Y^{2}_{2})(\forall f_{2}^{1})(\psi_{1}(Y_{2}, f_{2},f)=0)\notag\\
&\di (\exists^{\st} Y^{2}_{2})(\forall f_{2}^{1}\in u)(\psi_{2}(Y_{2}, f_{2},f)=0)\notag\\
&\di (\exists  Y^{2}_{2}\in w)(\forall f_{2}^{1}\in u)(\psi_{2}(Y_{2}, f_{2},f)=0)\notag\\
&\di (\exists Y^{2}_{2}\in w)\neg A_{2}(f,Y_{2})\label{hoi2}\\
&\di (\forall Y^{2}_{1}\in w) A_{1}(f,Y_{1})\label{hoi1}\\
&\di Z_{0}(f)=1 \di Z(f)=1.\notag
\end{align}
Note that $\SOT$ is (only) necessary to establish the first implication, while \eqref{hoi1} follows from \eqref{hoi2} by \eqref{fronxi}.  
Thus, we observe that $Z^{2}$ is as required for $\Sigma_{2}^{1}\mSEP$ relative to `st', and we are done.  
\end{proof}
Note that $\P_{0}+\SOT$ exists at the level of second-order arithmetic, while $\Sigma_{1}^{2}\mSEP$ goes beyond that.  
In other words, $\STP_{2}$ yields a non-trivial step up in strength.  
The previous proof is readily generalised as follows: $ [\Sigma_{2}^{2}\mSEP]^{\st}$ follows from $\STP_{2}$ and \emph{Transfer} for $\Sigma_{1}^{1}$-formulas.  
Finally, the axiom $\STP_{2}$ has a normal form as follows.  
\begin{thm}\label{moel}
In $\P$, $\STP_{2}$ is equivalent to 
\begin{align}\label{notsobad3}
(\forall^{\st} \Psi^{2\di 1^{*}})(\exists^{\st}W^{2^{*}})(\forall Y^{2}\leq_{2}1)(\exists Z^{2}\in W)(\forall f\in \Psi(Z))(Z(f)=_{0}Y(f)  ).
\end{align}
\end{thm}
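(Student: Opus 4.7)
The proof plan mirrors the derivation of \eqref{kuuit} in the proof of Theorem~\ref{lapdog}, carried out one type up. The strategy has three steps: (i) an $\HAC_{\INT}$-driven equivalence rewriting the inner ``standard'' clause of $\STP_{2}$, (ii) a universal-quantifier swap to bring a new $\forall^{\st}\Psi$ outside, and (iii) a single application of Idealisation \textsf{I} to convert the resulting $(\forall Y)(\exists^{\st}Z)$-pattern into the finite-witness pattern $(\exists^{\st}W^{2^{*}})(\forall Y)(\exists Z\in W)$.

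First, inside $\P$ I would establish the type-$2$ analogue of the key equivalence used for \eqref{kuuit}: for each $Y^{2}\leq_{2}1$,
\[
(\exists^{\st}Z^{2}\leq_{2}1)(\forall^{\st}g^{1})(Z(g)=_{0}Y(g)) \asa (\forall^{\st}\Psi^{2\di 1^{*}})(\exists^{\st}Z^{2}\leq_{2}1)(\forall g\in\Psi(Z))(Z(g)=_{0}Y(g)).
\]
The $\rightarrow$-direction is immediate: if $Z$ is standard then, for standard $\Psi$, the finite sequence $\Psi(Z)$ is standard and all its entries are standard. For $\leftarrow$, consider the negations: if the LHS fails then $(\forall^{\st}Z^{2}\leq_{2}1)(\exists^{\st}g^{1})(Z(g)\ne Y(g))$, whose body is internal, so $\HAC_{\INT}$ yields a standard $\Psi^{2\di 1^{*}}$ with $(\forall^{\st}Z^{2}\leq_{2}1)(\exists g\in\Psi(Z))(Z(g)\ne Y(g))$, contradicting the RHS applied to this very $\Psi$.

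Next, prepend $(\forall Y^{2}\leq_{2}1)$ to both sides of the equivalence and commute the two leading universal quantifiers; this shows that $\STP_{2}$ is equivalent in $\P$ to
\[
(\forall^{\st}\Psi^{2\di 1^{*}})(\forall Y^{2}\leq_{2}1)(\exists^{\st}Z^{2}\leq_{2}1)(\forall g\in \Psi(Z))(Z(g)=_{0}Y(g)).
\]
Setting $\phi(Y,Z)\equiv (\forall g\in \Psi(Z))(Z(g)=_{0}Y(g))$, which is internal for each fixed standard $\Psi$, I then apply Idealisation \textsf{I} (in the contrapositive direction) to the pattern $(\forall Y)(\exists^{\st}Z)\phi(Y,Z)$, producing $(\exists^{\st}W^{2^{*}})(\forall Y)(\exists Z\in W)\phi(Y,Z)$; substituting back gives exactly \eqref{notsobad3}.

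The main obstacle, as in Theorem~\ref{lapdog}, is bookkeeping rather than mathematics: one must verify that $\HAC_{\INT}$ and \textsf{I} are being applied to formulas with genuinely internal matrices (which holds, since $Z(g)=_{0}Y(g)$ and $Z\leq_{2}1 \equiv (\forall f^{1})(Z(f)\leq 1)$ are both internal), and one must reconcile the bound $Z\leq_{2}1$ present in $\STP_{2}$ with its absence on $Z\in W$ in~\eqref{notsobad3}. The mismatch is routine in either direction: a standard $W$ of $\leq_{2}1$-bounded witnesses trivially also witnesses the unbounded \eqref{notsobad3}, while for the converse one pre-composes $\Psi$ with the standard truncation $Z\mapsto \min(Z,1)$ and replaces the resulting $W$ by $\{\min(Z,1):Z\in W\}$, thereby recovering a standard $\leq_{2}1$-bounded $Z$ that agrees with $Y$ on every standard input.
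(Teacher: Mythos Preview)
Your proposal is correct and follows essentially the same route as the paper: both arguments pass through the intermediate form $(\forall^{\st}\Psi)(\forall Y\leq_{2}1)(\exists^{\st}Z\leq_{2}1)(\forall f\in\Psi(Z))(Z(f)=Y(f))$, using $\HAC_{\INT}$ on the negation to obtain the equivalence with $\STP_{2}$ and then Idealisation \textsf{I} to pull the standard existential out to a finite witness set $W$. Your treatment is in fact slightly more complete than the paper's, since you explicitly address the mismatch between the bound $Z\leq_{2}1$ in $\STP_{2}$ and its absence in \eqref{notsobad3} via the truncation $Z\mapsto\min(Z,1)$, a point the paper leaves implicit.
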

\begin{proof}
Clearly, $\STP_{2}$ implies (as standard sequences consist of standard elements): 
\be\label{angeli}
(\forall^{\st} \Psi^{2\di 1^{*}})(\forall Y^{2}\leq_{2}1)(\exists^{\st}Z^{2}\leq_{2}1)(\forall f\in \Psi(Z))(Z(f)=_{0}Y(f)  ), 
\ee
and the implication $\eqref{angeli}\di \STP_{2}$ is established as follows:  Suppose $\neg\STP_{2}$, i.e.\ there is $Y_{0}^{2}\leq_{2}1$ such that $(\forall^{\st}Z^{2}\leq_{2}1)(\exists^{\st}f^{1})(Z(f)\ne_{0}Y(f))$.  Applying $\HAC_{\INT}$ to the latter, we obtain the negation of \eqref{angeli}, and the latter is seen to be equivalent to $\STP_{2}$.
Finally, applying \emph{Idealisation} \textsf{I} to \eqref{angeli}, we obtain exactly \eqref{notsobad3}.
\end{proof}
The normal form \eqref{notsobad3} gives rise to the (non-unique) functional $\Sigma^{(2\di 1^{*})\di2^{*}}$ defined by 
the following specification:
\be\tag{$\textsf{CFS}(\Sigma)$}
 (\forall \Psi^{2\di 1^{*}})(\forall Y^{2}\leq_{2}1)(\exists Z^{2}\in \Sigma(\Psi))(\forall f\in \Psi(Z))(Z(f)=_{0}Y(f)  ), 
\ee
Intuitively, the open cover $\cup_{Y\in \{0,1\}^{\N^{\N}}}J_{Y}^{\Psi}$ has a finite sub-cover provided by $\Sigma(\Psi)$, 
where $J^{\Psi}_Y$ is the neighbourhood of all $Z \in \{0,1\}^{\N^\N}$ which agree with $Y$ on the finite sequence $\Psi(Y)$.
In contrast\footnote{Define $\SOT(\xi)\equiv (\forall Y^{2})\big[ (\exists f^{1})(Y(f)=0)\di Y(\xi(Y))=0  \big]$.  Combining the results from \cites{samflo,dagsamIII}, any $\xi^{3}$ satisfying $\SOT(\xi)$ computes $\Theta$ via a term of G\"odel's $T$, provable in $\RCAo+(\exists^{3})$.  Note that $\exists^{3}$ introduced in Section \ref{knowledge} is a variation of such $\xi$.} to special fan functionals, the functional $\Sigma$ requires a non-trivial instance of the {axiom of choice}.  The exact properties of $\Sigma$ are beyond the scope of this paper and will be studied in a subsequent paper.  

\medskip

Finally, we discuss the mathematical naturalness of compactness properties of function spaces, and the associated gauge integrals.  
\begin{rem}\label{feynman}\rm
The Feynman path integral is a central and fundamental object in physics, especially quantum mechanics. 
The Lebesgue integral does not provide an adequate formalisation for the path integral, but the latter \emph{can} be formalised using the gauge integral (\cite{mullingitover, secondmulling}) over \emph{function spaces}.  As shown in \cite{dagsamIII}*{\S3.3}, compactness as in $\HBU$ is essential for the development of the gauge integral on the unit interval, and the compactness of function spaces is similarly essential for the formalisation of the Feynman path integral.  However, as discussed in \cite{mulkerror}*{\S7}, the compactness of function spaces can be treacherous waters.  Hence, we only study $\STP_{2}$ as above in this paper, and will establish the exact connection to the gauge integral in a later publication.  
\end{rem}
\section{Conclusion}\label{kloi}

\subsection{Summary of results}\label{klap}
In this section, we provide a summary of the results in this paper and \cites{dagsam,dagsamIII}.
Figure \ref{clap} below summarises these results concisely.  

\medskip

By way of a legend, in the right column are the linearly ordered `Big Five' systems of RM, with above them full second-order arithmetic $\textsf{Z}_{2}$ and below them the system $\WWKL_{0}\equiv \RCA_{0}+\WWKL$.  
In the middle column, we classify the functionals studied in this paper as follows: $\RCAo$ plus the existence of the pictured functional is (at least or exactly) at the level of the corresponding system on the right; (struck out) arrows denote (non) S1-S9-computability.     
In the left column, we classify the nonstandard axioms studied in this paper as follows: $\P_{0}$ plus the pictured nonstandard axioms is (at least or exactly) at the level of the corresponding system on the right; (struck out) arrows denote (non)implication over $\P_{0}$. 
Many questions regarding this diagram remain unanswered, as discussed in Section \ref{future}.  
\begin{figure}[h!]
\begin{tikzpicture}[description/.style={fill=white,inner sep=2pt}]
\matrix (m) [matrix of math nodes, row sep=1.5em,
column sep=0.3em, text height=1.5ex, text depth=0.25ex]
{ ~&~&\textsf{SOT}&~&~&~& \exists^3&~&~&\textsf{Z}_{2}\\
 (\SIX)^{\st} &~ &\Pi_{2}^{1}\textsf{-TRANS}   &~ & \Paai+\STP &~& S^{2}+\Theta&~&~&\SIX\\
(S^{2})^{\st}& ~&\Paai &~ &~ & ~ &S^{2}&~&~&\FIVE\\
\!\!\!\!\!\!\textsf{\textup{ATR}}^{\st}&~&~ &~&  \paai+\STP&~& ~&&\exists^{2}+\Theta&\ATR_{0}\\ 
&&&& \paai+\LMP&&&&\exists^{2}+\Lambda&~\\
&&\paai&&&~& \exists^{2}&~& ~&\ACA_{0}\\
~ &~&~&~& ~~ &~&~&~& ~& \\
&\STP& \WKL^{\st} &~&&~&&\Theta^{3}& &\WKL_{0} \\
~ &&&& ~ \\
&& \WWKL^{\st} && \LMP&&~&&\Lambda^{3}&\WWKL_{0}\\};

\path[-]  (m-1-10) edge[->] (m-2-10);
\path[-]  (m-2-10) edge[->] (m-3-10);
\path[-]  (m-3-10) edge[->] (m-4-10);
\path[-]  (m-4-10) edge[->] (m-6-10);
\path[-]  (m-6-10) edge[->] (m-8-10);
\path[-]  (m-8-10) edge[->] (m-10-10);

  \draw[->] (m-5-9.south) -- node[ sloped, strike out,draw,-]{} (m-8-8);
\path[-]  (m-2-7.south east) edge[->] (m-4-9.north);
\draw[<->] (m-3-7.east)  -- node[sloped, strike out,draw,-]{} (m-4-9.north west);
\draw[->] (m-3-7.south)  -- node[sloped, strike out,draw,-]{} (m-5-9.north west);
\path[-]  (m-4-9) edge[->] (m-5-9.north);
\path[-]  (m-6-7.north east) edge[<-] (m-5-9.west);
\path[-]  (m-1-7) edge[->] (m-2-7);
\path[-]  (m-2-7) edge[->] (m-3-7);

\path[-]  (m-3-7) edge[->] (m-6-7);
  \draw[->] (m-6-7) -- node[ strike out,draw,-]{} (m-8-8);
		\path[-]  (m-3-7.south) edge[->, bend right=300,looseness=0.2] (m-8-8.north);
\path[-]  (m-1-7.west) edge[->, bend left=300,looseness=0.5] (m-8-8);
\path[-]  (m-8-8) edge[->, bend right=300,looseness=0.4] (m-10-9);
  \draw[->] (m-10-9) -- node[ strike out,draw,-]{} (m-8-8);
  
 \draw (3,0.5) -- (3.5,1);

\path[-]  (m-1-3) edge[->] (m-2-3);
\path[-]  (m-2-3) edge[->] (m-3-3);
\path[-]  (m-3-3) edge[->] (m-6-3);
\path[-]  (m-1-3.north west) edge[->, bend left=300,looseness=0.4] (m-2-1);
\path[-]  (m-2-1.south) edge[->] (m-3-1);
\path[-]  (m-4-5.north) edge[<-] (m-2-5);
\path[-]  (m-2-5.north west) 
		edge[->, bend left=300,looseness=0.4] (m-2-1.north east);
\path[-]  (m-4-5.west) 
		edge[->, bend right=300,looseness=0.4] (m-4-1.east);
\path[-]  (m-1-3.west) 
		edge[->, bend left=300,looseness=0.4] (m-8-2.north);
		 (m-6-3)   edge[<- ](m-3-3); 
	\path[-](m-6-3.south)	 edge[->] (m-8-3);
		      
  \draw[->] (m-6-3.south) -- node[ strike out,draw,-]{} (m-10-5);
    \draw[->] (m-6-3.south) -- node[sloped, strike out,draw,-]{} (m-8-2);
  \draw[->] (m-3-3.south) -- node[ strike out,draw,-]{} (m-10-5);
    \draw[->] (m-3-3.south) -- node[strike out,draw,-]{} (m-5-5.north west);
        \draw[->] (m-3-3.south) -- node[]{} (m-4-1.north east);
           \draw[<-] (m-8-2.north west) -- node[sloped, strike out,draw,-]{}  (m-4-1.south west);
           
    \draw[->] (m-3-3.south) -- node[sloped, strike out,draw,-]{} (m-8-2);
\path[-]	(m-8-2) edge[->] (m-8-3);
  \draw[->] (m-8-3.south) -- node[ strike out,draw,-]{} (m-10-5);
\path[-]	(m-8-3.south) edge[->] (m-10-3);
\path[-]	(m-8-2.south) edge[->] (m-10-3);
  \draw[->] (m-10-3) -- node[sloped, strike out,draw,-]{} (m-10-5);
\path[-]  (m-8-2.south) edge[->, bend right=400,looseness=0.4] (m-10-5.north west);
  \draw[->] (m-5-5.south) -- node[sloped, strike out,draw,-]{} (m-8-2);
    \path[-]  (m-5-5.south) edge[->] (m-6-3.east);
       \path[-] (m-5-5) edge[->] (m-10-5);
       \path[-]  (m-1-3.north east) 
		edge[->, bend right=300,looseness=0.5] (m-2-5);
		\path[-]	(m-2-3) edge[->] (m-2-1);
			  \draw[<-] (m-2-5) -- node[ strike out,draw,-]{} (m-2-3);
	\path[-](m-5-5.north) edge[<-] (m-4-5.south);
		    \draw[<->] (m-3-3.south east) -- node[ strike out,draw,-]{} (m-4-5.north west);
		        \draw[->] (m-3-3.west) -- node[-]{} (m-3-1.east);
		        \draw[<-] (m-4-1.north) -- node[-]{} (m-3-1.south);
		        \draw[<-] (m-4-1.south) -- node[ strike out,draw,-]{} (m-5-5.west);
\end{tikzpicture}

\caption{Summary of results}
\label{clap}
\end{figure}
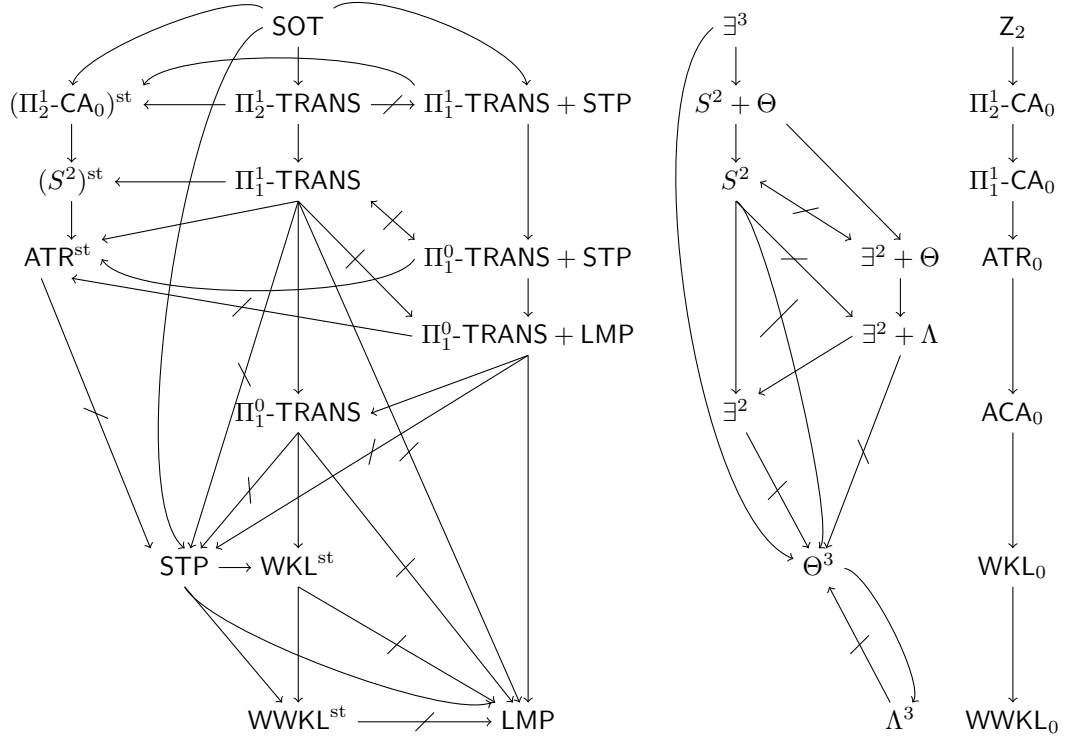
\subsection{Future research}\label{future}
We discuss some open questions and future research.
\begin{enumerate}
 \renewcommand{\theenumi}{\roman{enumi}}
\item The system $\SIX+\Pi_{3}^{1}\textsf{-TI}_{0}$ proves $\Delta_{3}^{0}$-determinacy, while $\SIX $ does not (see \cite{taka}).  
Hence, it is a natural question whether $\P+\STP+\Paai$ proves transfinite induction as in $[\Pi_{3}^{1}\textsf{\textup{-TI}}_{0}]^{\st}$.
\item What is the strength of nonstandard versions of \emph{Hindman's theorem} (\cite{dsliceke}*{\S10.3.5})?
The latter is strictly between $\ACA_{0}$ and $\ATR_{0}$.  
\item What is the strength of nonstandard versions of $\DNR$?  Can these be derived from $\Lambda$ and $\LMP$?
\item What is the strength of nonstandard versions of $\POS$ and $2$-$\WWKL$?  What is their relation to $\Lambda$ and $\LMP$?
\item Combining $\exists^{2}$ or $\mu_{1}$ with $\Theta$ results in a considerable jump in logical strength.  Which functionals yield a similar jump in strength?
\item Does the RM of $\WWKL$ give rise to interesting variations of $\Lambda$?
\item There are numerous theorems in classical analysis essentially of the form $(\forall x^2) (\exists y^{1/0}) \Phi(x,y)$, 
and each of these defines a class of realisers $\zeta^{2\di 1/0}$ such that $(\forall x^2)   \Phi(x,\zeta(x))$. A general investigation of the relative computational powers of such realisers, say modulo $\mu^{2}$ or $\exists^2$, is warranted.
We believe this study is intimately related to the RM study of the original theorems $(\forall x^2) (\exists y^{1/0}) \Phi(x,y)$, and associated theorems from Nonstandard Analysis.
\end{enumerate}
Furthermore, we have established a close link between $\Lambda$ and the \emph{Vitali covering lemma}, which we hope to develop further.  
Finally, the combination of $\Theta$ and the Suslin functional yields Gandy's \emph{Superjump} (\cite{dagsamIII}), and we have additionally 
established that the former combination goes \emph{far} beyond the latter functional. 
We hope to establish the exact (logical and computational) strength of the aforementioned combination in the future. 

\appendix

\section{Some systems of Nonstandard Analysis}\label{tomzeiker}
In this section, we introduce Nelson's axiomatic approach to Nonstandard Analysis \emph{internal set theory} (\cite{wownelly}), and it fragments based on Peano arithmetic from \cite{brie}. 
This background provides the definition for the systems $\P_{0}$ and $\P$ used above. 
\subsection{Internal set theory}\label{IIST}
In Nelson's \emph{syntactic} approach to Nonstandard Analysis (\cite{wownelly}), as opposed to Robinson's semantic one (\cite{robinson1}), a new predicate `st($x$)', read as `$x$ is standard' is added to the language of \textsf{ZFC}, the usual foundation of mathematics.  
The notations $(\forall^{\st}x)$ and $(\exists^{\st}y)$ are short for $(\forall x)(\st(x)\di \dots)$ and $(\exists y)(\st(y)\wedge \dots)$.  A formula is called \emph{internal} if it does not involve `st', and \emph{external} otherwise.   
The three external axioms \emph{Idealisation}, \emph{Standard Part}, and \emph{Transfer} govern the new predicate `st';  They are respectively defined\footnote{The superscript `fin' in \textsf{(I)} means that $x$ is finite, i.e.\ its number of elements are bounded by a natural number.} as:  
\begin{enumerate}
\item[\textsf{(I)}] $(\forall^{\st~\textup{fin}}x)(\exists y)(\forall z\in x)\varphi(z,y)\di (\exists y)(\forall^{\st}x)\varphi(x,y)$, for any internal $\varphi$.  
\item[\textsf{(S)}] $(\forall^{\st} x)(\exists^{\st}y)(\forall^{\st}z)\big((z\in x\wedge \varphi(z))\asa z\in y\big)$, for any $\varphi$.
\item[\textsf{(T)}] $(\forall^{\st}t)\big[(\forall^{\st}x)\varphi(x, t)\di (\forall x)\varphi(x, t)\big]$, where $\varphi(x,t)$ is internal, and only has free variables $t, x$.  
\end{enumerate}
The system \textsf{IST} is just \textsf{ZFC} extended with the aforementioned external axioms;  
$\IST$ is a conservative extension of \textsf{ZFC} for the internal language, as proved in \cite{wownelly}.    

\medskip

Clearly, the extension from $\ZFC$ to $\IST$ can also be done for \emph{subsystems} of the former.   
Such extensions are studied in \cite{brie} for the classical and constructive formalisations of arithmetic, i.e.\ \emph{Peano arithmetic} and \emph{Heyting} arithmetic.  
In particular, the systems studied in \cite{brie} are \textsf{E-HA}$^{\omega}$ and $\textsf{E-PA}^{\omega}$, respectively \emph{Heyting and Peano arithmetic in all finite types and the axiom of extensionality}.       
We refer to \cite{kohlenbach3}*{\S3.3} for the exact definitions of the (mainstream in mathematical logic) systems \textsf{E-HA}$^{\omega}$ and $\textsf{E-PA}^{\omega}$.  
We introduce in Section \ref{PIPI} the system $\P$, the (conservative) extension of $\textsf{E-PA}^{\omega}$ with fragments of the external axioms of $\IST$.  

\medskip

Finally, \textsf{E-PA}$^{\omega*}$ is the definitional extensions of \textsf{E-PA}$^{\omega}$ with types for finite sequences, as in \cite{brie}*{\S2}.  For the former system, we require some notation.  
\begin{nota}[Finite sequences]\label{skim}\rm
The systems $\textsf{E-PA}^{\omega*}$ and $\textsf{E-HA}^{\omega*}$ have a dedicated type for `finite sequences of objects of type $\rho$', namely $\rho^{*}$.  Since the usual coding of pairs of numbers goes through in both, we shall not always distinguish between $0$ and $0^{*}$.  
Similarly, we do not always distinguish between `$s^{\rho}$' and `$\langle s^{\rho}\rangle$', where the former is `the object $s$ of type $\rho$', and the latter is `the sequence of type $\rho^{*}$ with only element $s^{\rho}$'.  The empty sequence for the type $\rho^{*}$ is denoted by `$\langle \rangle_{\rho}$', usually with the typing omitted.  Furthermore, we denote by `$|s|=n$' the length of the finite sequence $s^{\rho^{*}}=\langle s_{0}^{\rho},s_{1}^{\rho},\dots,s_{n-1}^{\rho}\rangle$, where $|\langle\rangle|=0$, i.e.\ the empty sequence has length zero.
For sequences $s^{\rho^{*}}, t^{\rho^{*}}$, we denote by `$s*t$' the concatenation of $s$ and $t$, i.e.\ $(s*t)(i)=s(i)$ for $i<|s|$ and $(s*t)(j)=t(j-|s|)$ for $|s|\leq j< |s|+|t|$. For a sequence $s^{\rho^{*}}$, we define $\overline{s}N:=\langle s(0), s(1), \dots,  s(N)\rangle $ for $N^{0}<|s|$.  
For a sequence $\alpha^{0\di \rho}$, we also write $\overline{\alpha}N=\langle \alpha(0), \alpha(1),\dots, \alpha(N)\rangle$ for \emph{any} $N^{0}$.  By way of shorthand, $q^{\rho}\in Q^{\rho^{*}}$ abbreviates $(\exists i<|Q|)(Q(i)=_{\rho}q)$.  Finally, we shall use $\underline{x}, \underline{y},\underline{t}, \dots$ as short for tuples $x_{0}^{\sigma_{0}}, \dots x_{k}^{\sigma_{k}}$ of possibly different type $\sigma_{i}$.          
\end{nota}    
\begin{rem}[Notation]\label{equ}\rm
The system $\textsf{E-PA}^{\omega*}$ includes equality between natural numbers `$=_{0}$' as a primitive.  Equality `$=_{\tau}$' and inequality $\leq_{\tau}$ for $x^{\tau},y^{\tau}$ is:
\be\label{aparth}
[x=_{\tau}y] \equiv (\forall z_{1}^{\tau_{1}}\dots z_{k}^{\tau_{k}})[xz_{1}\dots z_{k}=_{0}yz_{1}\dots z_{k}],
\ee
\be\label{aparth1}
[x\leq_{\tau}y] \equiv (\forall z_{1}^{\tau_{1}}\dots z_{k}^{\tau_{k}})[xz_{1}\dots z_{k}\leq_{0}yz_{1}\dots z_{k}],
\ee
if the type $\tau$ is composed as $\tau\equiv(\tau_{1}\di \dots\di \tau_{k}\di 0)$.
In the spirit of Nonstandard Analysis, we define `approximate equality $\approx_{\tau}$' as follows (with the type $\tau$ as above):
\be\label{aparth2}
[x\approx_{\tau}y] \equiv (\forall^{\st} z_{1}^{\tau_{1}}\dots z_{k}^{\tau_{k}})[xz_{1}\dots z_{k}=_{0}yz_{1}\dots z_{k}]
\ee
All the above systems include the \emph{axiom of extensionality} for all $\varphi^{\rho\di \tau}$ as follows:
\be\label{EXT}\tag{\textsf{E}}  
(\forall  x^{\rho},y^{\rho}) \big[x=_{\rho} y \di \varphi(x)=_{\tau}\varphi(y)   \big].
\ee
However, as noted in \cite{brie}*{p.\ 1973}, the so-called axiom of \emph{standard} extensionality \eqref{EXT}$^{\st}$ is problematic and cannot be included in $\P$ or $\P_{0}$.  
\end{rem}

\subsection{The classical systems $\P$ and $\P_{0}$}\label{PIPI}
We first introduce the system $\P$, a conservative extension of $\textsf{E-PA}^{\omega}$ with fragments of Nelson's $\IST$.  

\medskip

To this end, we first introduce the base system $\textsf{E-PA}_{\st}^{\omega*}$.  
We use the same definition as \cite{brie}*{Def.~6.1}, where \textsf{E-PA}$^{\omega*}$ is the definitional extension of \textsf{E-PA}$^{\omega}$ with types for finite sequences as in \cite{brie}*{\S2}.  
The set $\T^{*}$ is defined as the collection of all the constants in the language of $\textsf{E-PA}^{\omega*}$.    
\bdefi\label{debsa}
The system $ \textsf{E-PA}^{\omega*}_{\st} $ is defined as $ \textsf{E-PA}^{\omega{*}} + \T^{*}_{\st} + \textsf{IA}^{\st}$, where $\T^{*}_{\st}$
consists of the following axiom schemas.
\begin{enumerate}
\item The schema\footnote{The language of $\textsf{E-PA}_{\st}^{\omega*}$ contains a symbol $\st_{\sigma}$ for each finite type $\sigma$, but the subscript is essentially always omitted.  Hence $\T^{*}_{\st}$ is an \emph{axiom schema} and not an axiom.\label{omita}} $\st(x)\wedge x=y\di\st(y)$,
\item The schema providing for each closed term $t\in \T^{*}$ the axiom $\st(t)$.
\item The schema $\st(f)\wedge \st(x)\di \st(f(x))$.
\end{enumerate}
The external induction axiom \textsf{IA}$^{\st}$ states that for any (possibly external) $\Phi$:  
\be\tag{\textsf{IA}$^{\st}$}
\Phi(0)\wedge(\forall^{\st}n^{0})(\Phi(n) \di\Phi(n+1))\di(\forall^{\st}n^{0})\Phi(n).     
\ee
\edefi
Secondly, we introduce some essential fragments of $\IST$ studied in \cite{brie}.  
\bdefi~
\begin{enumerate}
\item$\HAC_{\INT}$: For any internal formula $\varphi$, we have
\be\label{HACINT}
(\forall^{\st}x^{\rho})(\exists^{\st}y^{\tau})\varphi(x, y)\di \big(\exists^{\st}F^{\rho\di \tau^{*}}\big)(\forall^{\st}x^{\rho})(\exists y^{\tau}\in F(x))\varphi(x,y),
\ee
\item $\textsf{I}$: For any internal formula $\varphi$, we have
\[
(\forall^{\st} x^{\sigma^{*}})(\exists y^{\tau} )(\forall z^{\sigma}\in x)\varphi(z,y)\di (\exists y^{\tau})(\forall^{\st} x^{\sigma})\varphi(x,y), 
\]
\item The system $\P$ is $\textsf{E-PA}_{\st}^{\omega*}+\textsf{I}+\HAC_{\INT}$.
\end{enumerate}
\end{defi}
Note that \textsf{I} and $\HAC_{\INT}$ are fragments of Nelson's axioms \emph{Idealisation} and \emph{Standard part}.  
By definition, $F$ in \eqref{HACINT} only provides a \emph{finite sequence} of witnesses to $(\exists^{\st}y)$, explaining its name \emph{Herbrandized Axiom of Choice}.   

\medskip

The system $\P$ is connected to $\textsf{E-PA}^{\omega}$ by Theorem \ref{consresultcor} which expresses that we may obtain effective results as in \eqref{effewachten} from any theorem of Nonstandard Analysis which has the same form as in \eqref{bog}.  The scope of this theorem includes the Big Five systems of Reverse Mathematics (\cite{sambon}), the Reverse Mathematics zoo (\cite{samzooII}), and both classical and higher-order computability theory (\cite{samGH, sambon3}).  

\medskip

We now introduce the system $\P_{0}$, a conservative extension of $\RCAo$ with fragments of Nelson's $\IST$.  
Recall that the system $\RCAo\equiv \textsf{E-PRA}^{\omega}+\QFAC^{1,0}$ is Kohlenbach's \emph{base theory of higher-order Reverse Mathematics} as introduced in \cite{kohlenbach2}*{\S2}.  
The system $ \textsf{\textup{E-PRA}}^{\omega*}$ is an obvious definitional extensional as in Remark~\ref{skim}. 
Recall that we permit ourselves a slight abuse of notation by also referring to $\textsf{E-PRA}^{\omega*}+\QFAC^{1,0}$ as $\RCAo$.  
\bdefi
The system $\P_{0}$ is $\textsf{E-PRA}^{\omega*}+\QFAC^{1,0}+\T_{\st}^{*}+\textsf{I}+\HAC_{\INT}$.
\edefi
Finally, the system $\P_{0}$ is connected to $\RCAo$ by Corollary \ref{consresultcor2}.

\begin{ack}\rm
Our research was supported by FWO Flanders, the John Templeton Foundation, the Alexander von Humboldt Foundation, LMU Munich (via their Excellence Initiative), the University of Oslo, and the Japan Society for the Promotion of Science.  
The authors express their gratitude towards these institutions. 
The authors thank Tom Powell, Martin Hyland, Ulrich Kohlenbach, and Anil Nerode for their valuable advice.   The referee and editor were also instrumental in greatly improving this paper. 
\end{ack}

\begin{bibdiv}
\begin{biblist}
\bibselect{allkeida}
\end{biblist}
\end{bibdiv}

\bye